\documentclass[11pt,thmsa]{article}

%
% some macro packages
%
%latex -interaction=nonstopmode %.tex
\usepackage{amssymb,latexsym}
\usepackage{pb-diagram}
\usepackage{amsmath,amscd}
\usepackage[all]{xy}

%
% page dimensions

\setlength{\topmargin}{1.2cm} \setlength{\parindent}{10pt}
\setlength{\textwidth}{16cm} \setlength{\textheight}{22.5cm}
\setlength{\hoffset}{-1.5cm} \setlength{\voffset}{-2cm}

 \usepackage[latin1]{inputenc}
 \usepackage{amsmath}
 \usepackage{amsthm}
 \usepackage{amsfonts}
 \usepackage{amssymb}
 \usepackage[pdftex]{graphicx}
 \usepackage{wrapfig}
 \usepackage{layout}
 \usepackage{verbatim}
 \usepackage{alltt}
 \usepackage{xypic}
 \usepackage{bbm}
 \usepackage{tikz}
 \usetikzlibrary{matrix,arrows,decorations.pathmorphing}
 \input xy
 \xyoption{all}
 \usepackage{ upgreek }
 \usepackage{tikz-cd}
\usetikzlibrary{cd}
\usepackage{multicol}

 \DeclareMathAlphabet{\mathpzc}{OT1}{pzc}{m}{it}

% \makeatletter
 %\def\R{\mathds{R}}
% \makeatother

% \makeatletter
% \def\Z{\mathds{Z}}
 %\makeatother

 %\makeatletter
% \def\T{\mbox{T\hspace{-.47em}T}}
% \makeatother

% \makeatletter
% \def\C{\mathds{C}}
% \makeatother

% \makeatletter
 %\def\te{\theta}
% \makeatother

 \newtheorem{theorem}{Theorem}[section]
 \newtheorem{lemma}[theorem]{Lemma}
 \newtheorem{proposition}[theorem]{Proposition}
 
 \newtheorem{corollary}[theorem]{Corollary}
 \newtheorem{definition}[theorem]{Definition}
 \newtheorem*{theoremnn}{Theorem}
 \newtheorem*{conjecturenn}{Conjecture}
  \theoremstyle{definition}
 \newtheorem{example}[theorem]{Example}
 \newtheorem{exercise}{Exercise}
 \newtheorem{remark}[theorem]{Remark}

 %(CHERCHER LE PACKAGE TUGBOAT.STY et utiliser la commande $\backslash$ sfrac)
%\renewcommand{\thethm}{\arabic{chapter}.\arabic{section}.\arabic{theorem}}
\renewenvironment{proof}{\noindent{\it
Proof.}}{\bgroup\hspace{\stretch{1}}$\square$\egroup\medskip\par}

\newcommand{\id}{\mathrm{id}}

\newcommand{\R}{\mathbb{R}}

\newcommand{\MC}{\ensuremath{\mathbb C}}
\begin{document}

% Macro for drawing a tetrahedron
\def\tetra{\draw(A) -- (B) -- (C) -- (D) -- cycle; \draw[shade](A)--(B)--(D); \draw[shade](B)--(C)--(D);\draw(B)--(D); \draw[dotted](A)--(C);\draw[dotted](A)--(C); \draw(A)--(D);}
\def\tetraA{\draw(A) -- (B) -- (C) -- (D) -- cycle; \draw[shade=gray](B)--(C)--(D);\draw[thin](B)--(D); \draw[dotted](A)--(C);\draw[dotted](A)--(C); \draw[thick](A)--(D);}
\def\tetraB{\draw(A) -- (B) -- (C) -- (D) -- cycle; \shade(A)--(C)--(D);\draw(B)--(D); \draw(A)--(D); \draw[dotted](A)--(C);\draw(C)--(D);}
\def\tetraC{\draw(A) -- (B) -- (C) -- (D) -- cycle; \draw[shade=gray](A)--(B)--(D);\draw(B)--(D); \draw[dotted](A)--(C); \draw[dotted](A)--(C); \draw(A)--(D);\draw[thick](B)--(C);}
\def\tetraD{\draw(A) -- (B) -- (C) -- (D) -- cycle; \draw[shade=gray](A)--(B)--(C);\draw(B)--(D); \draw[dotted](A)--(C);\draw[dotted](A)--(C);\draw(A)--(D);  }

% Macro for drawing a triangle
\def\triangle{\draw(A) -- (B) -- (C)--cycle; \draw[shade](A)--(B)--(C);\draw(A)--(C);}
\def\triangleA{\draw(A) -- (B) -- (C)  -- cycle;\draw[ultra thick](A)--(C);\draw[ultra thick](C)--(B);}
 \def\triangleB{\draw(A) -- (B) -- (C)  -- cycle;\draw[ultra thick](A)--(B);}

%Macro for drawing a line

\def\line{\draw[thick](A) -- (B);\fill[ultra thick](A);}

% Macro for drawing brackets

\def\bleft{\draw(X) -- (Y) -- (Z)--(W);}
\def\bright{\draw(x) -- (y) -- (z)--(w);}

\vspace{15cm}
 \title{Lectures on the Euler characteristic affine manifolds }
\author{Camilo Arias Abad\\
Sebasti\'an V\'elez V\'asquez\footnote{Universidad Nacional de Colombia, Medellín.  camiloariasabad@gmail.com, svelezv@unal.edu.co.
} }

 \maketitle
 \begin{abstract}
These are lecture notes prepared for the summer school \emph{Geometric, algebraic and topological methods in quantum field theory}, held in Villa de Leyva in July 2017. Our goal is to provide an introduction to a conjecture of Chern that states that the Euler characteristic of a closed affine manifold vanishes. We present part of the history and motivation for the conjecture as well as some recent developments. All comments and corrections are most welcome!\footnote{Camilo would like to thank the organisers and participants of the school for a great time at Villa de Leyva.}\footnote{We thank Alexander Quintero V\'elez for his valuable comments and corrections on a preliminary version of the notes.}
 \end{abstract}

\tableofcontents

\section{Introduction}
An affine structure on a manifold is an atlas whose transition functions are affine transformations. The existence of such a structure is equivalent to the existence of a flat torsion free connection on the tangent bundle. Chern's conjecture states the following:

\begin{conjecturenn}[Chern $\sim$ 1955]
 The Euler characteristic of a closed affine manifold is zero.
 \end{conjecturenn}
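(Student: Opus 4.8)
The plan is to translate $\chi(M)$ into a statement about the tangent bundle $TM$ viewed as a \emph{flat} vector bundle, and then to try to force the relevant Euler number to vanish. An affine structure equips $TM$ with a flat linear connection $\nabla$, i.e.\ with a linear holonomy representation $\rho\colon\pi_1(M)\to\mathrm{GL}(n,\R)$ together with an isomorphism $TM\cong E_\rho$ onto the associated flat bundle, and $\chi(M)$ is then the Euler number of $E_\rho$. The first thing to get straight is what flatness does and does \emph{not} give: by Chern--Weil it kills the rational Pontryagin classes, but — as Milnor's examples over surfaces already show — it does \emph{not} kill the Euler class, and the Chern--Gauss--Bonnet theorem does not apply because $\nabla$ is not an $\mathrm{SO}(n)$-connection for any Riemannian metric. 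So any proof must use the \emph{torsion-freeness} of $\nabla$ in an essential way; this is not optional, since Smillie constructed closed manifolds with $\chi\neq 0$ whose tangent bundle carries a flat (necessarily non-torsion-free) connection. Torsion-freeness is what upgrades the flat \emph{linear} connection on $TM$ to an affine structure in the sense of charts: it produces the developing map $\mathrm{dev}\colon\widetilde M\to\R^n$ whose derivative is a $\rho$-equivariant trivialization of $T\widetilde M$, and I would build the argument around this developing data rather than around the underlying flat bundle alone.

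The second step is the case in which the linear holonomy is unimodular, $\rho(\pi_1 M)\subset\mathrm{SL}^{\pm}(n,\R)$ — equivalently, the case in which $M$ carries a $\nabla$-parallel volume form. Here the strategy is to combine a Milnor--Wood-type bound on the \emph{bounded} Euler class of $E_\rho$ with the unimodularity of $\rho$, using the parallel volume to produce an invariant measure (or current) on the relevant flag/sphere bundle and hence a vanishing argument for the bounded Euler class of a measure-preserving flat bundle, pushing the Milnor--Wood bound all the way down to $0$. This step should recover the complete case (Kostant--Sullivan) and the surface case (Benzécri), and it is the content of Klingler's recent theorem; I would treat it largely as a black box and spend effort only on making the reduction \emph{to} it as clean as possible.

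The genuinely hard step — and the one I expect to be the main obstacle — is passing from unimodular holonomy to arbitrary holonomy in $\mathrm{GL}(n,\R)$. The natural hope is to use torsion-freeness to tame the volume-distortion cocycle $\gamma\mapsto\log\lvert\det\rho(\gamma)\rvert$ and the associated radiance obstruction of Goldman--Hirsch, so that the non-unimodular part of $\rho$ either can be deformed away without changing the Euler number, or can be shown to contribute $0$ to it. The difficulty is twofold: there is at present no reduction of the general affine case to the special affine one, and, by Smillie's examples, no argument phrased purely in terms of the underlying flat vector bundle can possibly work — one is forced to extract genuine geometric consequences from torsion-freeness about the dynamics of the affine holonomy group acting on $\R^n$ (control of the developing image, syndeticity, Margulis-type structure theory for the holonomy). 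So I would expect the crux not to be a soft topological vanishing statement but a hard input from the geometry and dynamics of affine holonomy, in the spirit of Klingler's techniques pushed beyond the volume-preserving setting.
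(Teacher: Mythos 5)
There is a genuine gap, and it is the whole problem: the statement you are asked to prove is Chern's conjecture itself, which (as these notes state explicitly) remains open in general, and your proposal does not close it. Your first two steps are sound and match the partial results presented here --- torsion-freeness is essential because of Smillie's counterexamples, flatness alone kills Pontryagin classes but not the Euler class, and the unimodular (special affine) case is exactly Klingler's theorem, which you reasonably treat as a black box. But your third step, passing from unimodular holonomy to arbitrary holonomy in $\mathrm{GL}(n,\R)$, is not an argument: you describe a ``natural hope'' of taming the determinant cocycle and the radiance obstruction, and then you yourself concede that ``there is at present no reduction of the general affine case to the special affine one.'' A proof cannot end with the acknowledgement that its decisive step is an open problem; so what you have written is a research program, not a proof, and no proof of the general statement exists in the paper either --- the notes only establish the cases of dimension two (Benz\'ecri/Milnor), complete manifolds (Kostant--Sullivan), and special affine manifolds (Klingler).

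Two smaller inaccuracies are worth flagging. First, Klingler's special affine theorem does \emph{not} recover the complete case of Kostant--Sullivan: the implication ``closed and complete $\Rightarrow$ parallel volume form'' is precisely the (open) Markus conjecture, so the two results are logically independent, and the paper accordingly proves Kostant--Sullivan by a separate Chern--Weil argument (reduction of the frame bundle to a $1$-spectral group with singular Lie algebra), not as a corollary of the unimodular case. Second, the surface case is likewise not a consequence of the special affine case: Milnor's inequality $|D(E)|<g$ applies to \emph{all} flat oriented rank-two bundles, with no volume-preservation hypothesis, and that is how the $d=2$ case is settled here. So even the parts of your plan that you present as settled are organized around a reduction (everything through the unimodular case) that does not actually hold.
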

 In case the connection $\nabla$ is the Levi-Civita connection of a riemannian metric, the Chern-Gauss-Bonnet formula:
\[ \chi(M)= \Big(\frac{1}{2\pi}\Big)^n \int_M \mathsf{Pf}(K)\]
implies that the Euler characteristic is zero. However, not all flat torsion free connections on $TM$ admit a compatible metric, and therefore, Chern-Weil theory cannot be used in general to write down the Euler class in terms of the curvature.\\

These notes contain an exposition of the following results concerning  the Euler characteristic of affine manifolds:
\begin{itemize}
\item In 1955, Benz\'ecri \cite{B} proved that a closed affine surface has zero Euler characteristic.

\item In 1958, Milnor \cite{Milnor} proved inequalities which completely characterise those oriented rank two bundles over a surface that admit a flat connection.

\item In 1975, Kostant and Sullivan \cite{KS} proved Chern's conjecture in the case where the manifold is complete.

\item In 1977, Smillie \cite{Smillie} proved that the condition that the connection is torsion free matters. For each even dimension greater than $2$, Smillie constructed closed manifolds with non-zero Euler characteristic that admit a flat connection on their tangent bundle.
\item In 2015, Klingler \cite{K} proved the conjecture for special affine manifolds. That is, affine manifolds that admit a parallel volume form.

\end{itemize}
As far as we can tell, the general case of Chern's conjecture remains open.\\ \\
We have added appendices with preliminary material.
Appendix A contains a review of the theory of connections, curvature and the Chern-Gauss-Bonnet theorem.
Appendix B contains an introduction to spectral sequences and Appendix C to sheaf theory.
\subsection{Affine manifolds}

\begin{definition}
A diffeomorphism $\varphi$ between open subsets of $\R^m$ is affine if it has the form
\[ \varphi(x)= Ax+b,\]
where $A \in {\mathrm{G}\mathrm{L}}(m,\R)$ and $b \in \R^m$.
\end{definition}

\begin{definition}
An affine structure on a manifold is an atlas such that all transition functions are affine and it is maximal with this property.
An affine manifold is a manifold together with an affine structure.
\end{definition}

It is possible to characterise affine structures in a more intrinsic manner that can be expressed without reference to an atlas, as the following lemma shows:

\begin{lemma}
Let $M$ be a manifold. There is a natural bijective correspondence between affine structures on $M$ and flat torsion free connections on $TM$.
\end{lemma}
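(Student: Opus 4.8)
The plan is to construct maps in both directions and verify they are mutually inverse. For the forward direction, given an affine atlas I would define a connection $\nabla$ chart by chart by declaring the coordinate vector fields to be parallel, i.e.\ setting all Christoffel symbols to zero in every affine chart. The only thing to check is independence of the choice of affine chart: under a change of coordinates the Christoffel symbols pick up an inhomogeneous term built from the second derivatives of the transition map, and since an affine map $x\mapsto Ax+b$ has vanishing second derivatives, the symbols remain zero in any other affine chart. Hence $\nabla$ is globally well defined, and because its curvature and torsion are expressed through the (vanishing) Christoffel symbols and their first derivatives, $\nabla$ is flat and torsion free.

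For the reverse direction, given a flat torsion-free $\nabla$ I would first invoke the basic fact that a flat connection admits, near each point, a frame $e_1,\dots,e_m$ of $\nabla$-parallel vector fields; this is the integrability statement that flatness is exactly the Frobenius condition for the horizontal distribution on the frame bundle, equivalently that parallel transport over a simply connected neighbourhood is path independent. Torsion-freeness then forces $[e_i,e_j]=\nabla_{e_i}e_j-\nabla_{e_j}e_i=0$, so the $e_i$ commute, and the simultaneous flow-box theorem yields coordinates $(x^i)$ with $\partial/\partial x^i=e_i$. I declare all such coordinate systems to be charts; on a connected overlap of two of them, $(x^i)$ and $(y^j)$, both coordinate frames are parallel, and writing $\partial/\partial x^i=\sum_j A_i^j\,\partial/\partial y^j$ and applying $\nabla$ gives $dA_i^j=0$, so $A$ is constant and the transition map is affine. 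The maximal atlas built from these charts is the desired affine structure.

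It then remains to check that the two constructions are inverse to one another. Starting from an affine atlas, the associated $\nabla$ has vanishing Christoffel symbols precisely in the original charts, so its parallel coordinate frames agree, on connected pieces, with the original affine coordinates up to a constant affine change; hence the maximal atlas recovered is the one we began with. Conversely, starting from $\nabla$, the charts produced above have vanishing Christoffel symbols, and since a connection is determined by its Christoffel symbols in any atlas, we recover $\nabla$. Naturality under affine diffeomorphisms is immediate from the local character of both constructions.

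The main obstacle, and essentially the only nontrivial analytic input, is the existence of local $\nabla$-parallel frames for a flat connection; the rest is the transformation law for Christoffel symbols, the Frobenius and flow-box theorems for commuting vector fields, and routine bookkeeping about passing to connected chart overlaps and maximal atlases.
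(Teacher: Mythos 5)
Your proof is correct and follows the same route as the paper: an affine atlas determines the connection with vanishing Christoffel symbols in affine charts, and conversely the coordinates in which the Christoffel symbols of a flat torsion-free connection vanish form the affine atlas. You simply supply the details (transformation law for Christoffel symbols, existence of local parallel frames, commuting frames and the flow-box theorem) that the paper's two-sentence proof leaves implicit.
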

\begin{proof}
Let $(U_\alpha,\varphi_\alpha)$ be an affine structure on $M$. There is a unique connection $\nabla$ on $TM$ whose Christoffel symbols vanish in affine coordinates. Conversely, given a flat torsion free connection $\nabla$ on $TM$, the set of coordinates for which the Christoffel symbols of $\nabla$ vanish gives an affine structure on $M$.
\end{proof}

\begin{example}
The torus $\mathbb{T}^m:= \R^m/\mathbb{Z}^m$ has a natural affine structure for which the projection map $\pi: \R^m \rightarrow \mathbb{T}^m$ is an affine local difeomorphism.
\end{example}

\begin{example}[Hopf manifolds]
Let us fix a real number $\lambda>1$ and consider the action of the group $\mathbb{Z}$ on $\R^m-\{0\}$ given by:
\[ n \star x:= \lambda^n x.\]
Since the action is free and proper, the quotient is a smooth manifold called the Hopf manifold $\mathsf{Hopf}^m_\lambda$. Since the group $\mathbb{Z}$ acts by affine transformations the quotient space is an affine manifold. Topologically, these manifolds are the union of two circles for $m=1$ and diffeomorphic to $S^{m-1}\times S^1$ for $m>1$.
\end{example}

\begin{definition}
An affine structure on a Lie group $G$ is called left invariant if for all $g \in G$:
\[ (L_g)^*(\nabla)=\nabla,\]
where $L_g$ denotes the diffeomorphism given by left multiplication by $g$.
\end{definition}

\begin{definition}
Let $V$ be a real vector space. A bilinear map:
\[\beta: V \otimes V \rightarrow V;\,\, v \otimes w \mapsto v\cdot w\]
is called left symmetric if:
\[ v\cdot (w \cdot z)-(v\cdot w) \cdot z= w\cdot (v \cdot z)-(w\cdot v)\cdot z.\]
\end{definition}

\begin{definition}
An affine structure on a finite dimensional real Lie algebra $\frak{g}$ is a left symmetric bilinear form on $\frak{g}$ such that for all $v,w \in \frak{g}$:
\[ [v,w]=v\cdot w- w \cdot v.\]
\end{definition}

\begin{lemma}
Let $G$ be a Lie group with Lie algebra $\frak{g}$. There is a natural bijective correspondence between
left invariant affine structures on $G$ and affine structures on $\frak{g}$.
\end{lemma}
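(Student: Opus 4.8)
The plan is to construct the correspondence in both directions and check they are mutually inverse. First I would pass from a left invariant affine structure on $G$ to an affine structure on $\frak{g}$. Given such a connection $\nabla$ on $TG$, invariance under all $L_g$ means $\nabla$ is determined by its effect on left invariant vector fields; so for $v,w\in\frak{g}$, identified with the left invariant vector fields they generate, I would set
\[ v\cdot w:=\nabla_v w,\]
which is again a left invariant vector field, hence an element of $\frak{g}$. This defines a bilinear map $\beta\colon\frak{g}\otimes\frak{g}\to\frak{g}$. The next step is to translate the two defining properties of an affine connection into algebraic identities on $\beta$. Torsion freeness, $\nabla_v w-\nabla_w v=[v,w]$, becomes exactly the compatibility condition $[v,w]=v\cdot w-w\cdot v$. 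Flatness, $\nabla_v\nabla_w-\nabla_w\nabla_v-\nabla_{[v,w]}=0$ applied to $z$, expands using the previous line into the left symmetric identity
\[ v\cdot(w\cdot z)-(v\cdot w)\cdot z=w\cdot(v\cdot z)-(w\cdot v)\cdot z,\]
so $\beta$ is an affine structure on $\frak{g}$ in the sense defined above.

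For the reverse direction, given an affine structure $\beta$ on $\frak{g}$, I would define a connection $\nabla$ on $TG$ by declaring $\nabla_v w:=v\cdot w$ on left invariant vector fields $v,w$ and extending by the Leibniz rule: writing arbitrary vector fields as $\smooth(G)$-combinations of a global left invariant frame $e_1,\dots,e_m$, one sets $\nabla_X(f^j e_j):=X(f^j)e_j+f^j X^i\,(e_i\cdot e_j)$. This is visibly a connection, and it is left invariant by construction since the frame $e_j$ and the structure constants of $\beta$ are $L_g$-invariant. The same two computations as above, read in reverse, show that the compatibility condition on $\beta$ gives torsion freeness and the left symmetry of $\beta$ gives flatness (here one uses that it suffices to check these tensorial identities on the frame $e_j$, where everything reduces to the algebraic identities for $\beta$).

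Finally I would check the two assignments are inverse to each other: starting from $\nabla$, forming $\beta$, and rebuilding a connection gives back $\nabla$ because both agree on a left invariant frame and a connection is determined by its values on a frame; starting from $\beta$, building $\nabla$, and extracting $\nabla_v w$ on left invariant fields returns $\beta$ by definition. Naturality is automatic from the coordinate free construction. I do not expect any serious obstacle here; the only mildly delicate point is bookkeeping the Leibniz rule carefully enough to see that flatness and torsion freeness are genuinely \emph{equivalent} to (not merely implied by) the two axioms on $\beta$, which comes down to the observation that the torsion and curvature are tensors and hence determined by their values on the left invariant frame.
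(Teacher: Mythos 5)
Your proposal is correct and follows essentially the same route as the paper: define $v\cdot w$ via the covariant derivative of left invariant vector fields, identify torsion freeness with the bracket compatibility and flatness (given torsion freeness) with left symmetry, and use that a left invariant connection is determined by its values on a left invariant frame for the converse. The extra detail you give on extending by the Leibniz rule and verifying the two assignments are mutually inverse is just a more explicit version of the paper's closing remark that "the computations above show that this connection is flat and torsion free."
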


\begin{proof}
For any $v \in \frak{g}=T_eG$ denote by $\widehat{v}$ the corresponding left invariant vector field. Given a left invariant affine structure on $G$ we define a bilinear form on $\frak{g}$ by:
\[ v\cdot w:= \nabla_{\widehat v}\widehat{w}(e).\]
Since $\nabla$ is torsion free we know that:
\[ v \cdot w - w \cdot v =  \nabla_{\widehat v}\widehat{w}(e)-  \nabla_{\widehat w}\widehat{v}(e) =[v,w].\]
Using the fact that $\nabla$ is flat and left invariant, we compute:
\begin{eqnarray*}
 v\cdot (w \cdot z)-(v\cdot w) \cdot z-w\cdot (v \cdot z)+(w\cdot v)\cdot z&=& \nabla_{\widehat{v}} (\nabla_{\widehat{w}}(\widehat{z}))-\nabla_{\nabla_{\widehat{v}}(\widehat{w})}(\widehat{z}) - \nabla_{\widehat{w}} (\nabla_{\widehat{v}}(\widehat{z}))+\nabla_{\nabla_{\widehat{w}}(\widehat{v})}(\widehat{z})\\
 &=&\nabla_{[\widehat{v},\widehat{w}]}(\widehat{z})-\nabla_{[\widehat{v},\widehat{w}]}(\widehat{z})\\
 &=&0.
\end{eqnarray*}
We conclude that the bilinear form is left symmetric. Conversely, given an affine structure on $\frak{g}$ there is a unique left invariant connection $\nabla$ such that:
\[ \nabla_{\widehat{v}}\widehat{w}(e)=v\cdot w.\]
The computations above show that this connection is flat and torsion free.
\end{proof}

\begin{example}
The Lie algebra $\frak{gl}(n,\R)$ admits a natural affine structure given by:
\[ A \cdot B:= AB.\]
We conclude that the Lie group $\mathrm{G}\mathrm{L}(n, \R)$ admits a left invariant affine structure.
\end{example}

The question of which Lie groups admit left invariant affine structures is an interesting problem. In \cite{Milnor2}, Milnor asked whether solvable Lie algebras admit affine structures. In \cite{Bu}, Burde showed that the answer to this question is negative.

\subsection{Complete manifolds and the developing map}

In this section we introduce the developing map of a simply connected affine manifold and use it to characterise complete affine manifolds.

\begin{theorem}
Let $M$ be an affine manfiold of dimension $m$ and $G$ be the group $\mathsf{Aff}(\mathbb{R}^m)= \mathrm{G}\mathrm{L}(n, \R)\ltimes \R^n$ seen as a discrete group. There is a natural principal $G$-bundle $\pi:\tau(M) \rightarrow M$ such that sections  of $\pi$ are in natural bijective correspondence with affine immersions from $M$ to $\R^m$. \end{theorem}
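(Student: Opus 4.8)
The plan is to exhibit the assignment sending an open set $U \subseteq M$ to the set of affine immersions $U \to \R^m$ as a sheaf on $M$ that is locally isomorphic to the constant sheaf associated to $G$ and carries a fibrewise free and transitive $G$-action, and then to let $\tau(M)$ be its \'etale space. With this construction, a section of $\pi$ will be, essentially by definition, a global affine immersion.

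First, let $\mathcal{A}$ be the presheaf on $M$ with $\mathcal{A}(U)$ the set of affine immersions $U \to \R^m$ and the obvious restriction maps. Since being an affine immersion is a condition local on the source, $\mathcal{A}$ is a sheaf of sets, and the group $G = \mathsf{Aff}(\R^m)$ acts on it by post-composition, $(g,f) \mapsto g \circ f$, compatibly with restrictions. Next I would compute $\mathcal{A}$ over chart domains. After refining the maximal affine atlas, we may assume it consists of charts $(U_\alpha,\varphi_\alpha)$ with $U_\alpha$ and $\varphi_\alpha(U_\alpha)$ connected. For such an $\alpha$ and any affine immersion $f\colon U_\alpha \to \R^m$, the map $f \circ \varphi_\alpha^{-1}$ is affine on the connected open set $\varphi_\alpha(U_\alpha)$, hence is the restriction of a unique affine map of $\R^m$; its linear part is invertible because $f$ is an immersion and $\dim M = m$, so it lies in $G$. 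Thus $g \mapsto g \circ \varphi_\alpha$ is a bijection from $G$ onto $\mathcal{A}(U_\alpha)$, and in fact an isomorphism of $\mathcal{A}|_{U_\alpha}$ with the constant sheaf $\underline{G}|_{U_\alpha}$ intertwining post-composition with left translation.

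Consequently every stalk $\mathcal{A}_x$ is a left $G$-torsor and $\mathcal{A}$ is a locally trivial sheaf of $G$-torsors. Its \'etale space $\tau(M)$, with projection $\pi\colon \tau(M) \to M$ and discrete fibres $\pi^{-1}(x) = \mathcal{A}_x$, then inherits a fibrewise free and transitive $G$-action that is locally trivial over each $U_\alpha$; that is, $\pi$ is a principal $G$-bundle for the discrete group $G$ (equivalently, a flat bundle). Alternatively, one may build $\tau(M)$ directly by gluing the trivial pieces $U_\alpha \times G$ along the locally constant transition maps $g_{\alpha\beta} = \varphi_\alpha \circ \varphi_\beta^{-1} \in G$, which satisfy the cocycle identity on triple overlaps; this produces the same bundle and exhibits its dependence on the affine structure as canonical. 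Finally, unwinding the definition of the \'etale space, a global section of $\pi$ is exactly a global section of the sheaf $\mathcal{A}$, i.e.\ an affine immersion $M \to \R^m$, and the correspondence is natural and $G$-equivariant, which is the assertion.

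The substantive points, and the only place I expect any friction, are the preliminary reductions: that the atlas may be refined to connected charts with connected images without leaving the affine structure, and the rigidity fact that an affine immersion on a connected open subset of $\R^m$ is the restriction of a single element of $G$. This rigidity statement is what simultaneously yields local triviality, the torsor structure of the stalks, and (through the corresponding uniqueness, an identity-theorem-type assertion for affine maps) is what makes the gluing and the section correspondence behave as claimed. Granting it, the rest is the standard dictionary between locally constant sheaves of $G$-torsors and principal $G$-bundles with discrete structure group.
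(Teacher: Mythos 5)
Your construction is correct and is essentially the paper's own proof in sheaf-theoretic clothing: the paper builds $\tau(M)$ by hand as the space of germs at each point of affine charts, topologized so that the maps $(g,p)\mapsto [g\circ\varphi]_p$ are homeomorphisms, which is exactly the \'etale space of your locally constant sheaf of $G$-torsors, and its section correspondence $\sigma\mapsto(p\mapsto\sigma(p)(p))$ is your identification of sections of the \'etale space with sections of the sheaf. The rigidity fact you flag (a locally affine immersion of a connected open subset of $\R^m$ is the restriction of a unique element of $\mathsf{Aff}(\R^m)$) is used, equally implicitly, in the paper's claim that $G$ acts freely and transitively on each fibre of germs, so your level of detail matches the original.
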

\begin{proof} For each $p\in M$ we define
	\[C_{p}:= \left\{  \varphi: U\to V\subseteq \R^m\mid  \varphi \text{ is an affine diffeomorphism and }p\in U \right\}\]
There is an equivalence relation $\sim$ on $C_p$ given by declaring that $\varphi \sim \varphi'$ if an only if there exists an open subset $W \subseteq U \cap U'$ such that $p \in W$ and $\varphi\vert_W=\varphi' \lvert_W$. Let us denote by $L_p$ the set of equivalence classes of elements in $C_p$ and set
	 \[\tau(M):=\coprod_{p} L_{p}.\]
	There is a natural map:
 $\pi:\tau(M) \rightarrow M$
 that sends $L_p$ to $p$.
 The Lie group $G$ acts on each set $L_p$ by composition:
 \[g \cdot \varphi := g \circ \varphi,\]
 and this action is free and transitive. Moreover, an affine chart $\varphi: U \rightarrow V\subseteq \R^m$ induces a natural identification:
\[ \widehat{\varphi}: \mathsf{Aff}(\R^m)\times U \rightarrow \pi^{-1}(U); \,\, (g,p)\mapsto [ g \circ \varphi]_p, \]
where $[g \circ \varphi]_p$ denotes the class of $g \circ \varphi$ in $L_p$. There is a unique topology on $\tau(M)$ such that $\widehat{\varphi}$ is a homeomorphism for all affine diffeomorphisms $\varphi$. The map $\pi: \tau(M) \rightarrow M$ is a principal $G$ bundle with respect to this topology. Let $\sigma$ be a section of $\pi$. Then we define $\widetilde{\sigma}:M \rightarrow \R^m$ by:
\[ \widetilde{\sigma}(p):= \sigma(p) (p).\]
By construction, the map $\widetilde{\sigma}$ is an affine immersion. Conversely, an affine immersion $f: M \rightarrow \R^m$ defines a section $\sigma_f$ given by:
\[ \sigma_f(p):= [f]_p.\]
\end{proof}

\begin{corollary}\label{extend}
Let $M$ be a simply connected affine manifold. Any affine chart $\varphi: U \rightarrow V \subseteq \R^m$ extends uniquely to an affine immersion from $M$ to $\R^m$. \end{corollary}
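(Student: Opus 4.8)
The plan is to deduce Corollary \ref{extend} directly from the theorem, using the hypothesis that $M$ is simply connected. By the theorem, there is a principal $G$-bundle $\pi:\tau(M)\to M$ (with $G=\mathsf{Aff}(\R^m)$ viewed as a discrete group) whose sections correspond bijectively to affine immersions $M\to\R^m$. An affine chart $\varphi:U\to V$ is in particular an affine immersion defined on the open set $U$, hence by the same correspondence applied to the affine manifold $U$ it determines a local section $\sigma_\varphi$ of $\pi$ over $U$, namely $p\mapsto[\varphi]_p$. So the task reduces to showing that this local section extends uniquely to a global section of $\pi$ over all of $M$.

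First I would recall the standard fact that a principal bundle for a \emph{discrete} structure group is the same thing as a covering space: each fibre $L_p$ is discrete, and the local trivializations $\widehat\varphi:\mathsf{Aff}(\R^m)\times U\to\pi^{-1}(U)$ exhibit $\tau(M)\to M$ as a covering map. A (continuous) section of a covering map over a connected base is determined by its value at a single point — indeed two sections agreeing at one point agree on the open-closed set where they coincide — which immediately gives the \emph{uniqueness} half of the statement, as well as uniqueness of any local extension. For \emph{existence}, I would use that $M$ is connected, locally path-connected, and simply connected, hence the total space $\tau(M)$ (or rather the connected component containing the image of $\sigma_\varphi$) is mapped by $\pi$ homeomorphically onto $M$: a connected covering of a simply connected, locally path-connected space is trivial, i.e. $\pi$ restricted to that component is a homeomorphism. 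Its inverse is the desired global section $\sigma:M\to\tau(M)$, and it restricts to $\sigma_\varphi$ on $U$ because both are sections agreeing somewhere on the connected set $U$.

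Concretely, one can phrase existence without invoking the classification theorem: pick $p_0\in U$, and for an arbitrary $q\in M$ choose a path $\gamma$ from $p_0$ to $q$; cover $\gamma$ by finitely many affine charts and analytically continue $\varphi$ along $\gamma$ chart by chart (on overlaps the two affine diffeomorphisms differ by an element of $\mathsf{Aff}(\R^m)$, so they glue after composing with that affine map), obtaining a germ at $q$, i.e. a point of $L_q$. Simple connectedness of $M$ ensures, via the monodromy theorem, that this germ is independent of the chosen path $\gamma$; this is exactly the statement that the flat ``developing'' data has trivial holonomy. Setting $\sigma(q)$ to be this germ defines a section of $\pi$, which is continuous because the continuation is locally constant in the chart trivializations, and $\widetilde\sigma$ is then the required affine immersion extending $\varphi$.

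The main obstacle is the independence-of-path step: one must check that analytic continuation of an affine chart along a loop returns the original germ when the loop is null-homotopic. I expect to handle this by the usual monodromy argument — continuation along homotopic paths agrees because it does so for paths that differ by a small homotopy supported in a single chart, and a general homotopy is a composition of such elementary moves; here the rigidity of affine maps (an affine diffeomorphism is determined by its germ at a point) makes each elementary move transparent. Everything else — that $\pi$ is a covering because $G$ is discrete, uniqueness from connectedness, and the translation between sections of $\pi$ and affine immersions — is immediate from the construction in the theorem.
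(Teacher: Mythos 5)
Your proof is correct and follows essentially the same route as the paper: the theorem makes $\tau(M)\to M$ a covering space (discrete structure group), simple connectedness makes it trivial, and so the local section determined by $\varphi$ extends uniquely to a global section, which corresponds to the desired affine immersion. Your additional analytic-continuation/monodromy description is a valid unpacking of the same argument rather than a different approach.
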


\begin{proof}
Since $M$ is simply connected, the covering space $\pi: \tau(M) \rightarrow M$ is trivial and therefore any local section extends uniquely to a global one.
\end{proof}

\begin{definition}
A developing map for an affine simply connected manifold is an affine immersion into $\R^m$.
 \end{definition}
	
\begin{corollary}
	If $M$ is an affine connected manifold with finite fundamental group then $M$ is not compact.
\end{corollary}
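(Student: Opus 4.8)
The plan is to reduce to the simply connected case by passing to the universal cover, and then use the developing map to contradict the compactness of $\R^m$. Assume, for contradiction, that $M$ is compact, and write $m = \dim M$ (we may assume $m \geq 1$, since a $0$-dimensional connected manifold is a point). Let $p : \widetilde{M} \to M$ be the universal covering. Since $\pi_1(M)$ is finite, $p$ is a finite-sheeted covering, so $\widetilde{M}$ is compact; it is connected and simply connected, and it inherits an affine structure by pulling back the affine atlas of $M$ along the local diffeomorphism $p$ (composing affine charts of $M$ with $p$ produces an affine atlas on $\widetilde{M}$).

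By Corollary \ref{extend} applied to the simply connected affine manifold $\widetilde{M}$, any affine chart of $\widetilde{M}$ extends to a global affine immersion $D : \widetilde{M} \to \R^m$, a developing map. Since $\dim \widetilde{M} = m = \dim \R^m$, the immersion $D$ is a local diffeomorphism by the inverse function theorem; in particular $D$ is an open map, so $D(\widetilde{M})$ is a nonempty open subset of $\R^m$. On the other hand $\widetilde{M}$ is compact, so $D(\widetilde{M})$ is compact, hence closed in $\R^m$. As $\R^m$ is connected, a nonempty subset that is simultaneously open and closed must be all of $\R^m$, so $D(\widetilde{M}) = \R^m$.

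This is the desired contradiction: $\R^m$ is not compact for $m \geq 1$, whereas $D(\widetilde{M})$ is compact. Hence $M$ cannot be compact. The only points requiring a little care are the verification that the affine structure lifts to $\widetilde{M}$ and that $\widetilde{M}$ is compact — both routine consequences of $\pi_1(M)$ being finite — together with the standard fact that an immersion between manifolds of equal dimension is an open map; I do not anticipate any serious obstacle.
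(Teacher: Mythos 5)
Your proof is correct and follows essentially the same route as the paper: pass to the compact, simply connected universal cover and use Corollary \ref{extend} to obtain a developing immersion into $\R^m$, which cannot exist. You simply make explicit the final contradiction (the image is open, closed, and nonempty, hence all of $\R^m$, contradicting compactness), which the paper leaves as ``this is impossible.''
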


\begin{proof}
If $M$ is compact then so is its universal cover $\widetilde{M}$ which is simply connected and therefore admits an immersion to $\R^m$. This is impossible.
\end{proof}

\begin{definition}
An affine manifold $M$ with affine connection $\nabla$ is called complete if all geodesics can be extended to arbitrary time.
\end{definition}

\begin{lemma}\label{unique}
Let $M$ and $N$ be connected affine manifolds of the same dimension and $f,g:M \rightarrow N$ affine immersions. If $f$ and $g$ coincide on a nonempty open subset of $M$ then they are equal.
\end{lemma}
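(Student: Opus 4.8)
The plan is to use a standard connectedness argument. First I would introduce the set
\[ A := \{ p \in M : f \text{ and } g \text{ agree on a neighbourhood of } p\}. \]
By hypothesis $A$ is nonempty, and by definition $A$ is open, so it suffices to show that $A$ is closed in $M$; connectedness of $M$ then forces $A = M$, and in particular $f = g$ everywhere. The heart of the matter is therefore the closedness of $A$: given a point $p$ in the closure of $A$, I must produce a neighbourhood of $p$ on which $f$ and $g$ coincide.

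Second, to prove closedness I would pass to local affine coordinates. Choose an affine chart $\psi$ around $p$ in $M$ and affine charts around $f(p)$ (note $f(p) = g(p)$ by continuity, since $p$ is a limit of points where $f$ and $g$ agree) in $N$; in these coordinates $f$ and $g$ are, locally, affine maps $x \mapsto A_1 x + b_1$ and $x \mapsto A_2 x + b_2$ between open subsets of $\R^m$ — this is exactly the content of an affine immersion being locally, in affine coordinates, an affine diffeomorphism onto its image, together with the chain rule showing the composite of affine maps is affine. Now pick any point $q \in A$ lying in this coordinate neighbourhood (such a $q$ exists because $p$ is in the closure of $A$). On a neighbourhood of $q$ the two affine maps $A_1 x + b_1$ and $A_2 x + b_2$ agree, and two affine maps of $\R^m$ that agree on a nonempty open set are identical (compare linear parts and translation parts). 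Hence $A_1 = A_2$, $b_1 = b_2$, and $f = g$ on the whole coordinate neighbourhood of $p$, so $p \in A$.

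The only mild technical point — which I would phrase carefully but not belabour — is the reduction to "affine map in coordinates": one must check that expressing $f$ in the source affine chart and a target affine chart yields an affine map, which follows because $f$ is an affine immersion (locally an affine diffeomorphism onto an open subset of $\R^m$ in suitable coordinates) and because transition functions between affine charts are affine, so everything in sight is a composite of affine maps. I expect this bookkeeping to be the main, though still routine, obstacle; the connectedness skeleton of the argument is entirely standard. (One can alternatively phrase the whole proof via the developing map / Corollary \ref{extend}, but the direct coordinate argument above is self-contained and does not require simple connectedness.)
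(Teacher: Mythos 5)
Your proposal is correct and follows essentially the same route as the paper: define the set where $f$ and $g$ agree locally, note it is open and nonempty, and prove closedness by passing to affine charts around a limit point and around its image, using that two affine maps of $\R^m$ agreeing on a nonempty open set are equal (the paper packages the closedness step as a path-and-infimum argument, which is only a cosmetic difference). The one point to state explicitly, as the paper does, is that the chart domains should be chosen connected, so that the coordinate expression of each immersion is a single affine map on the whole chart rather than merely locally affine.
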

\begin{proof}
First we observe that the lemma is true if $M$ and $N$ are connected open subsets of $\R^m$.
For the general case, let $X\subset M$ be the subset of $M$ that consists of points $p\in M$ such that $f$ and $g$ coincide in an open neighborhood of $p$. Clearly, $X$ is open. It suffices to show that $X$ is closed.
Fix $p \in X$ and $q \in M$ arbitrary. Since $M$ is connected, there is a path $\gamma: I \rightarrow M$ such that
$\gamma(0)=p$ and $\gamma(1)=q$. Set \[Y:= \gamma^{-1}(M-X).\] We would like to show that $Y=\emptyset$. Suppose that $Y$ is nonempty and set $y:=\inf(Y)$ and $z=\gamma(y)$. Choose affine coordinates with connected domains $\varphi: U \rightarrow V$ around $z$ and $ \psi: W \rightarrow Z $ around $f(z)$ such that $f(U) \subseteq W$ and $g(U) \subseteq W$. Since $p$ is in $X$ there exists $l<y$ such that $\gamma(l) \in U\cap X$. Then $f\vert_U$ and $g\vert_U$ coincide in an open neighborhood. Since both $U$ and $ W$ are isomorphic to open subsets of $\R^m$ we conclude that $f\vert_U=g\vert_U$. This is a contradiction.
\end{proof}

\begin{lemma}\label{local}
Let $M$ be a simply connected complete affine manifold of dimension $m$. Then any affine coordinate $\varphi: U \rightarrow V\subseteq \R^m$ can be extended uniquely to a diffeomorphism from $M$ to $\R^m$
\end{lemma}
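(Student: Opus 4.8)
The plan is to realise the extension as the \emph{developing map} of $M$ and then show that it is a global diffeomorphism, using completeness through the exponential map of $\nabla$.

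First I would apply Corollary~\ref{extend}: since $M$ is simply connected, the given affine chart $\varphi\colon U\to V$ extends \emph{uniquely} to an affine immersion $D\colon M\to\R^m$, the developing map. (Uniqueness of the extension already follows from Corollary~\ref{extend}, or alternatively from Lemma~\ref{unique}.) So it remains to prove that $D$ is a diffeomorphism. Observe that $D$, being an immersion between manifolds of the same dimension, is automatically a local diffeomorphism; in particular $dD_q$ is a linear isomorphism for every $q\in M$.

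The core of the argument is the following identity. Fix a base point $p\in M$ and let $\exp_p^{\nabla}\colon T_pM\to M$ be the exponential map of $\nabla$ at $p$; by completeness this is defined on all of $T_pM$. Since $D$ is affine, it carries $\nabla$-geodesics to straight lines in $\R^m$, so for $v\in T_pM$ the curve $t\mapsto D(\exp_p^{\nabla}(tv))$ is the geodesic of $\R^m$ issuing from $D(p)$ with initial velocity $dD_p(v)$, namely $t\mapsto D(p)+t\,dD_p(v)$. Evaluating at $t=1$ gives
\[ D\circ\exp_p^{\nabla}=\Phi_p,\qquad \Phi_p(v):=D(p)+dD_p(v),\]
and $\Phi_p\colon T_pM\to\R^m$ is an affine isomorphism because $dD_p$ is invertible. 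From this I extract everything: $\exp_p^{\nabla}$ is injective (it is injective after composing with $\Phi_p$) and is a local diffeomorphism (its differential is injective, since that of $\Phi_p=D\circ\exp_p^{\nabla}$ is, and the dimensions agree), hence an open embedding; its image is therefore open in $M$. The image is also closed: if $\exp_p^{\nabla}(v_n)\to q$ then $\Phi_p(v_n)=D(\exp_p^{\nabla}(v_n))\to D(q)$, so $v_n\to v:=\Phi_p^{-1}(D(q))$ and hence $q=\exp_p^{\nabla}(v)$ by continuity. Since $M$ is connected, $\exp_p^{\nabla}$ is onto, hence a diffeomorphism $T_pM\to M$, and consequently $D=\Phi_p\circ(\exp_p^{\nabla})^{-1}$ is a diffeomorphism onto $\R^m$.

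The step I expect to be the main obstacle is precisely the passage from ``$D$ is a local diffeomorphism'' to ``$D$ is bijective'': for a general simply connected affine manifold the developing map need be neither surjective nor injective, and all the leverage comes from completeness, which makes $\exp_p^{\nabla}$ globally defined, together with the identity $D\circ\exp_p^{\nabla}=\Phi_p$, which transports the triviality of the exponential map of $\R^m$ back to $M$; the open--closed argument is what turns this into surjectivity. One should also be a little careful that this identity holds \emph{globally} on $M$ and not merely over the chart domain $U$ — this is where one uses that geodesics of $M$ extend to all of $\R$ and that being a geodesic is a local condition preserved by the affine map $D$.
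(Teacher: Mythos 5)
Your argument is correct, and it reaches the same candidate inverse as the paper --- in the notation of the text, your $\exp_p^{\nabla}\circ\Phi_p^{-1}$ is exactly the map $\psi(y)=\mathsf{Exp}\circ D_x(\varphi^{-1})(y-x)$ --- but the way you globalize differs. The paper checks that $\psi$ and the extension $\widetilde{\varphi}$ are mutually inverse only on the chart domain, and then invokes the rigidity statement of Lemma \ref{unique} (affine immersions agreeing on an open set agree everywhere) to promote this to a global statement; that route is short but quietly requires knowing that the relevant composites are themselves affine immersions before the lemma applies. You instead prove the identity $D\circ\exp_p^{\nabla}=\Phi_p$ globally, straight from the facts that $D$ maps $\nabla$-geodesics to straight lines and that completeness makes $\exp_p^{\nabla}$ everywhere defined, and then run an open--closed argument to show $\exp_p^{\nabla}$ is a surjective injective local diffeomorphism, so that $D=\Phi_p\circ(\exp_p^{\nabla})^{-1}$ is a diffeomorphism. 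What your version buys is that you never need $\psi$ (or any composite) to be affine, and you bypass Lemma \ref{unique} entirely at the cost of the connectedness argument; what the paper's version buys is brevity, since all the analysis is delegated to the uniqueness lemma and a two-line local computation. Both uses of completeness are the same: it is what makes $\exp_p^{\nabla}$, and hence the inverse, globally defined. Your proof is complete as stated; the only points worth making explicit in a final write-up are the smoothness of $\exp_p^{\nabla}$ on all of $T_pM$ (smooth dependence of geodesics on initial conditions) and the uniqueness of geodesics in $\R^m$, which is what pins down $D(\exp_p^{\nabla}(tv))=D(p)+t\,dD_p(v)$.
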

\begin{proof}
Fix a point $p \in M$ and affine coordinates $\varphi: U \rightarrow V$. By Corollary \ref{extend},
$\varphi$ can be extended uniquely to an affine immersion $\widetilde{\varphi}: M \rightarrow \R^m$. We will prove that $\widetilde{\varphi}$ is a diffeomorphism. Since $M$ is complete, the exponential map is defined on the whole tangent space $T_pM$:
\[ \mathsf{Exp}: T_pM \rightarrow M.\]
On the other hand, the derivative of $\varphi^{-1}$ at $x=\varphi(p)$ is an isomorphism from $\R^m$ to $T_pM$. We claim that
the map: \[\psi(y):=\mathsf{Exp} \circ D_x(\varphi^{-1})(y-x)\]
is the inverse of $\widetilde{\varphi}$. By Lemma \ref{unique}, it suffices to show that the functions are inverses to each other in a small open neighborhood. For $q \in U$ we compute:
\[ \psi (\widetilde{\varphi}(q))= \mathsf{Exp} \circ D_x(\varphi^{-1})(\varphi(q)-x)= \varphi^{-1}\circ \mathsf{Exp}(\varphi(q)-x))=q.\]
Conversely, for $y \in V$ we have:
\[ \widetilde{\varphi}\circ \psi(y)= \varphi \circ \mathsf{Exp} \circ  D_x(\varphi^{-1})(y-x)=\mathsf{Exp}\circ D \varphi \circ D_x(\varphi^{-1})(y-x) =y. \]
\end{proof}

\begin{proposition}\label{develop}
	Let $M$ be an affine manifold. The following statements are equivalent:
	\begin{enumerate}
		\item  $M$ is complete.
		\item There is an affine diffeomorphism $M \cong \R^m /\Gamma$, where $\Gamma$ is a discrete subgroup of $\mathsf{Aff}(\R^m)$ and the projection $\pi: \R^m \rightarrow M$ is the universal cover of $M$.
	\end{enumerate}
\end{proposition}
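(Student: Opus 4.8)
The plan is to prove the two implications separately, with Lemma \ref{local} doing the real work in the harder direction; throughout I assume, as the statement implicitly requires, that $M$ is connected.

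\textbf{$(2) \Rightarrow (1)$.} Recall first that $\R^m$ with its standard affine structure is complete: its geodesics are the straight lines $t \mapsto a + tv$, defined for all $t \in \R$. Now suppose $M$ is affinely diffeomorphic to $\R^m/\Gamma$ with $\Gamma$ a discrete subgroup of $\mathsf{Aff}(\R^m)$, so that the projection $\pi \colon \R^m \to \R^m/\Gamma$ is a covering map and a local affine diffeomorphism. I would argue that completeness descends along $\pi$: given a geodesic $\gamma \colon [0,b) \to M$, choose a point in $\pi^{-1}(\gamma(0))$ and lift $\gamma$ to a curve $\widetilde\gamma$ in $\R^m$ using the path-lifting property; since $\pi$ preserves the connection, $\widetilde\gamma$ is a geodesic of $\R^m$, hence extends to all of $\R$, and $\pi \circ \widetilde\gamma$ then extends $\gamma$. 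So $M$ is complete.

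\textbf{$(1) \Rightarrow (2)$.} Assume $M$ is complete and let $p \colon \widetilde M \to M$ be its universal cover, equipped with the affine structure pulled back along $p$ (which makes $p$ a local affine diffeomorphism). First I would check that $\widetilde M$ is again complete: a geodesic of $\widetilde M$ defined on $[0,b)$ projects to a geodesic of $M$, which extends by completeness of $M$, and this extension lifts back — relative to the original lift — to an extension of the geodesic upstairs, again a geodesic since $p$ is a local affine diffeomorphism. Since $\widetilde M$ is simply connected and complete, Lemma \ref{local} applies: any affine chart on $\widetilde M$ extends uniquely to an affine diffeomorphism $D \colon \widetilde M \to \R^m$.

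Next I would transport the deck group $\Gamma := \mathrm{Deck}(\widetilde M / M) \cong \pi_1(M)$ through $D$. Each deck transformation commutes with $p$, hence is locally affine, hence affine on the connected manifold $\widetilde M$ (one can patch local affine descriptions, or invoke Lemma \ref{unique}). Conjugation by $D$ therefore gives a homomorphism $\rho \colon \Gamma \to \mathsf{Aff}(\R^m)$, $\rho(\gamma) = D \circ \gamma \circ D^{-1}$, which is injective because the deck action is free. Its image $\rho(\Gamma)$ acts on $\R^m$ freely and properly discontinuously, being conjugate to the deck action; and a subgroup $H \leq \mathsf{Aff}(\R^m)$ acting properly discontinuously must be discrete, since a sequence $h_n \to \mathrm{id}$ in $H$ with $h_n \neq \mathrm{id}$ would satisfy $h_n x \to x$ for a fixed $x$ with a neighbourhood $U$ meeting none of its nontrivial translates, forcing $h_n U \cap U \neq \emptyset$ and hence $h_n = \mathrm{id}$, a contradiction. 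Finally, $D$ is $\Gamma$-equivariant by construction, so it descends to an affine diffeomorphism $M \cong \widetilde M/\Gamma \to \R^m/\rho(\Gamma)$ intertwining $p$ with the projection $\R^m \to \R^m/\rho(\Gamma)$; as $\widetilde M$ is simply connected, this is the universal cover, which is exactly statement (2).

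The main obstacle is the harder direction, and inside it the single step of extending a local affine chart to a global affine diffeomorphism $\widetilde M \cong \R^m$ — precisely the content of Lemma \ref{local}, where completeness enters through the exponential map. The remaining points, that deck transformations of an affine cover are affine and that a properly discontinuous group of affine maps is discrete, are comparatively soft.
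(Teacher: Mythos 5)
Your proof is correct and follows essentially the same route as the paper: pass to the universal cover, note it is complete and simply connected, apply Lemma \ref{local} (which the paper's own proof cites, with a typo, as Proposition \ref{develop}) to get an affine diffeomorphism $\widetilde M \cong \R^m$, and transport the deck action to obtain $\Gamma \subseteq \mathsf{Aff}(\R^m)$ with $M \cong \R^m/\Gamma$. The only difference is that you spell out several points the paper leaves implicit — completeness of $\widetilde M$, affineness of deck transformations, discreteness of $\Gamma$, and the geodesic-lifting argument for the converse — which is a welcome amplification rather than a change of method.
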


  \begin{proof}
  	Let us assume that $M$ is complete. Then $\widetilde{M}$ is also complete and simply connected. By Proposition \ref{develop} there is an affine difeomorphism $\varphi: \widetilde{M} \rightarrow \R^m$. Let us fix a point $p \in M$. Then there is an action of $\pi_1(M,p)$ on $\widetilde{M}$ and therefeore on $\R^m$. Since this action is by affine transformations, it defines a homomorphism $\rho: \pi_1(M,p) \rightarrow \mathsf{Aff}(\R^m)$. If we set $\Gamma:= \mathsf{Im}(\rho)$ we get by construction that $ M \cong \R^m/\Gamma$. Conversely, any manifold of the form $\R^m/\Gamma$ is complete.

  \end{proof}

\section{Some classical results}

\subsection{Milnor's inequalities and the conjecture in dimension $d=2$}
Throughout this section we will set $G=\mathrm{G}\mathrm{L}^{+}(2,\R)$, $S=\mathrm{S}\mathrm{O}(2)=\mathrm{U}(1)$ and $\Sigma$ will be a closed oriented surface of positive genus $g$.
Oriented rank two vector bundles over $\Sigma$ are classified by their Euler class. Milnor considered the question
of which of those bundles admit a flat connection. The answer is given by his famous inequality.

\begin{definition}
Let $\pi: E \rightarrow \Sigma$ be a rank two oriented vector bundle. The degree of $E$ is the number
\[ D(E)= \int_\Sigma e(E).\]
Here $e(E)$ denotes the Euler class of $E$.
\end{definition}
The Theorem we aim to prove is the following
\begin{theoremnn}
A rank two oriented vector bundle $\pi:E \rightarrow \Sigma$ admits a flat connection if and only if
\[ |D(E)|<g.\]
\end{theoremnn}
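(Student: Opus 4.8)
\emph{Reduction to representations.} The plan is to convert the statement into a question about representations of $\pi_1(\Sigma)$ and then to prove a sharp bound on products of commutators in the universal cover of $G=\mathrm{GL}^+(2,\R)$. A flat connection on an oriented rank two bundle $E\to\Sigma$ corresponds, up to conjugation, to a representation $\rho\colon\pi_1(\Sigma)\to G$, with $E\cong\widetilde\Sigma\times_\rho\R^2$. Since $G$ deformation retracts onto $\mathrm{SO}(2)$, the universal cover $p\colon\widetilde G\to G$ is a central extension by $\pi_1(G)\cong\mathbb Z=\langle z\rangle$. Using the presentation $\pi_1(\Sigma)=\langle a_1,b_1,\dots,a_g,b_g\mid\prod_{i=1}^g[a_i,b_i]\rangle$, one picks arbitrary lifts $\widetilde A_i,\widetilde B_i\in\widetilde G$ of $\rho(a_i),\rho(b_i)$ and sets $\varepsilon(\rho):=\prod_{i=1}^g[\widetilde A_i,\widetilde B_i]$. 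Commutators in a central extension do not depend on the chosen lifts, so $\varepsilon(\rho)$ is well defined; and since its image in $G$ is $\rho(\prod_i[a_i,b_i])=1$, it lies in $\ker p=\mathbb Z$. A short obstruction theoretic computation on the CW structure of $\Sigma$ with a single $2$-cell identifies $\varepsilon(\rho)$ with $\pm D(E)$, so the theorem is equivalent to the statement that the integers of the form $\varepsilon(\rho)$ are exactly those with $|n|<g$.

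\emph{The upper bound $|D(E)|\le g-1$.} This is the core of the theorem. The $G$-action on the circle of rays $S(\R^2)$ lifts to an action of $\widetilde G$ on its universal cover $\R$, and the associated translation number defines a function $\widetilde{\mathrm{rot}}\colon\widetilde G\to\R$ which is a homogeneous quasimorphism with $\widetilde{\mathrm{rot}}(z)=1$; in particular $\widetilde{\mathrm{rot}}(z^n)=n$. Writing $\varepsilon(\rho)=z^{\varepsilon(\rho)}$, one gets $\widetilde{\mathrm{rot}}(\varepsilon(\rho))=\varepsilon(\rho)$, so the bound reduces to Milnor's sharp inequality $\bigl|\widetilde{\mathrm{rot}}\bigl(\prod_{i=1}^g[\widetilde A_i,\widetilde B_i]\bigr)\bigr|<g$. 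I would prove this through a case analysis of the Jordan form of an element of $\mathrm{GL}^+(2,\R)$: a non-elliptic element fixes a ray, whereas an elliptic one is conjugate, up to a positive scalar, into $\mathrm{SO}(2)$ and has connected abelian centraliser isomorphic to $\mathrm{SO}(2)\times\R_{>0}$. This rigidity forces a single commutator $[\widetilde A,\widetilde B]$ to satisfy $|\widetilde{\mathrm{rot}}|<1$ and, more importantly, lets these displacements be summed without paying the quasimorphism defect.

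\emph{Realisation of every $n$ with $|n|\le g-1$.} For the converse I would construct representations explicitly. The essential local fact is that on a one holed torus one can choose $A,B\in\mathrm{GL}^+(2,\R)$ --- for instance a hyperbolic matrix together with a large rotation --- so that $[A,B]$ is a prescribed small rotation while its canonical lift $[\widetilde A,\widetilde B]$ attains any prescribed value of $\widetilde{\mathrm{rot}}$ in the open interval $(-1,1)$. Gluing $g$ such pieces, one chooses the rotations $C_i:=[\rho(a_i),\rho(b_i)]$ so that $\prod_iC_i=I$ in $G$, while arranging $|n|$ of the lifted commutators to have rotation number close to $\operatorname{sign}(n)$ and the remaining ones close to $0$; then $\prod_i[\widetilde A_i,\widetilde B_i]=z^n$, which is possible exactly when $|n|\le g-1$, in agreement with the upper bound. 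The opposite sign is obtained by conjugating $\rho$ by $\operatorname{diag}(1,-1)$, an automorphism of $G$ that reverses the orientation of $\R^2$ and hence negates $\varepsilon(\rho)$. Together with the previous step, this proves the theorem.

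\emph{Main difficulty.} The substantive point is the sharp constant in the upper bound. Generic bounded cohomology or quasimorphism estimates give the Euler class of $\mathrm{GL}^+(2,\R)$ a norm of the right order of magnitude but off by roughly a factor of two, and for $\mathrm{Homeo}^+(S^1)$ the optimal constant really is larger, namely $|\chi(\Sigma)|=2g-2$; getting the optimal $<g$ here --- equivalently, that $z^n$ is a product of $g$ commutators in $\widetilde G$ only when $g\ge|n|+1$ --- genuinely uses the linear structure, via the explicit geometry of $2\times 2$ real matrices acting on $\R^2$. A minor additional chore is pinning down orientation conventions so that $\varepsilon(\rho)$ equals $\int_\Sigma e(E)$ on the nose.
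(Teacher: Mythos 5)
Your first step (flat bundle $\leftrightarrow$ representation $\rho$, the central lift $\varepsilon(\rho)\in\ker p\cong\mathbb Z$ well defined because commutators ignore central corrections, and $\varepsilon(\rho)=\pm D(E)$) is exactly the paper's $\delta(\rho)$ together with Lemma \ref{milnornumber}, where the identification with the Euler number is done by exhibiting a section of $E^\rho$ over the $4g$-gon with a single zero of index $\delta(\rho)$. The genuine gap is in the upper bound, which you correctly flag as the main difficulty and then dispose of with the phrase that the linear structure ``lets these displacements be summed without paying the quasimorphism defect.'' That sentence \emph{is} the theorem. For the homogeneous rotation quasimorphism $\widetilde{\mathrm{rot}}$ the standard estimate $|\widetilde{\mathrm{rot}}(\prod_{i=1}^g c_i)|\le\sum_i|\widetilde{\mathrm{rot}}(c_i)|+(g-1)D(\widetilde{\mathrm{rot}})$ only yields a bound of order $2g$ (and for $\mathrm{Homeo}^+(S^1)$ the sharp answer really is $2g-2$, as you note), so the defect terms cannot simply be dropped; the Jordan-form dichotomy you invoke (non-elliptic elements fix a ray, elliptic ones have connected abelian centraliser) produces no inequality by itself. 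The paper's mechanism is quantitatively different: it works with the non-homogeneous lift $\widetilde\theta$ of the polar-decomposition angle and proves the sharp defect bound $|\widetilde\theta(\alpha\beta)-\widetilde\theta(\alpha)-\widetilde\theta(\beta)|<\pi/2$ (Lemma \ref{angle}, resting on the fact that a product of symmetric positive definite matrices has positive trace, via Exercise \ref{trace}), then applies it $4g-1$ times to the $4g$ letters and uses the cancellation $\widetilde\theta(\alpha)+\widetilde\theta(\alpha^{-1})=0$ to get $|\delta(\rho)|<g-\tfrac14$. Some estimate of this kind --- a defect strictly below a quarter turn for a specific (non-homogeneous) lift, plus the pairwise cancellation over generators and their inverses --- is the missing idea in your outline.

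The realisation direction also leans on an unproved assertion: that one can choose $A,B\in\mathrm{GL}^+(2,\R)$ so that $[A,B]$ is a prescribed small rotation while the lifted commutator $[\widetilde A,\widetilde B]$ attains any prescribed value of $\widetilde{\mathrm{rot}}$ in $(-1,1)$. This ``essential local fact'' is essentially the sharp one-holed-torus case and needs a proof; the paper does the corresponding work explicitly with the matrices $A_0=\mathrm{diag}(2,1/2)$ and $A_1$, showing via their conjugacy classes that every element of $\pi\widetilde K$ is a product of two elements of $\widetilde K$ and hence a single commutator (Lemmas \ref{productmil} and \ref{lema_3}), and then assembling products inductively (Lemma \ref{list}) to hit exactly $2\pi(g-1)$ after reducing to the case $D(E)=g-1$. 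Note also that your gluing argument needs the central element $\prod_i[\widetilde A_i,\widetilde B_i]$ to equal $z^n$ exactly, not merely to have rotation numbers ``close to'' $\mathrm{sign}(n)$ and $0$; since that element is a locally constant function of the data, a continuity or deformation argument must be supplied to pin down the integer, which your sketch omits.
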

Milnor's inequality gives, in particular, a positive answer to Chern's conjecture in dimension $d=2$, a result that had been previously obtained by Benzecri \cite{B}.\\

The classification of rank two oriented bundles is of course the same as the classification of $\mathrm{GL}^{+}(2,\R)$ principal bundles.

\begin{lemma}
The inclusion $\iota: S \hookrightarrow G$ is a homotopy equivalence and therefore the classifying spaces $BS$ and $BG$ are homotopy equivalent.
\end{lemma}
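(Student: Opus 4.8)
The plan is to show that the inclusion $\iota \colon \mathrm{SO}(2) \hookrightarrow \mathrm{GL}^{+}(2,\R)$ is a homotopy equivalence, and then invoke the functoriality of the classifying space construction together with the fact that a weak homotopy equivalence of groups induces a homotopy equivalence of classifying spaces.

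First I would produce an explicit deformation retraction of $\mathrm{GL}^{+}(2,\R)$ onto $\mathrm{SO}(2)$ via the Gram--Schmidt process. Concretely, given $A \in \mathrm{GL}^{+}(2,\R)$, one applies Gram--Schmidt to its columns to factor $A = Q R$, where $Q \in \mathrm{SO}(2)$ and $R$ is upper triangular with positive diagonal entries; this factorization is unique and depends continuously (indeed smoothly) on $A$. The set $T^{+}$ of upper triangular matrices with positive diagonal is convex (it is an open convex cone inside the upper triangular matrices, being cut out by the conditions that the diagonal entries are positive), hence contractible, and the straight-line homotopy $R_t = (1-t)R + t\,\mathrm{Id}$ stays inside $T^{+}$. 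Setting $H(A,t) = Q\big((1-t)R + t\,\mathrm{Id}\big)$ then gives a deformation retraction of $\mathrm{GL}^{+}(2,\R)$ onto $\mathrm{SO}(2)$ fixing $\mathrm{SO}(2)$ pointwise. Hence $\iota$ is a homotopy equivalence. (As a sanity check, $\mathrm{GL}^{+}(2,\R) \simeq \mathrm{SO}(2) \times T^{+} \simeq S^1$, matching the well-known homotopy type.)

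Next, to pass to classifying spaces, I would note that $B(-)$ sends continuous group homomorphisms to continuous maps and that if a homomorphism of topological groups is a weak homotopy equivalence then the induced map on classifying spaces is a weak homotopy equivalence; for the groups at hand, which have the homotopy type of CW complexes, Whitehead's theorem upgrades this to a genuine homotopy equivalence $B\mathrm{SO}(2) \simeq B\mathrm{GL}^{+}(2,\R)$. Alternatively, and perhaps more cleanly for these notes, one can argue directly with universal bundles: if $EG \to BG$ is a universal $G$-bundle, then restricting the structure group along $\iota$, the space $EG$ with the $\mathrm{SO}(2)$-action is a model for $E\mathrm{SO}(2)$ provided $EG$ is contractible and the $\mathrm{SO}(2)$-action is free with the appropriate local triviality — which holds here because $\mathrm{SO}(2) \hookrightarrow \mathrm{GL}^{+}(2,\R)$ is a closed subgroup, so $\mathrm{GL}^{+}(2,\R) \to \mathrm{GL}^{+}(2,\R)/\mathrm{SO}(2)$ is a fiber bundle. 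Then $B\mathrm{SO}(2) \simeq EG/\mathrm{SO}(2)$ fibers over $BG = EG/\mathrm{GL}^{+}(2,\R)$ with fiber $\mathrm{GL}^{+}(2,\R)/\mathrm{SO}(2) \cong T^{+}$, which is contractible, so the projection is a homotopy equivalence.

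The only real subtlety — the ``hard part'' — is the passage from the homotopy equivalence of groups to that of classifying spaces, since this requires a little care about point-set topology (CW homotopy type of the groups, or properness/local triviality of the relevant actions) rather than anything deep; the deformation-retraction step itself is elementary linear algebra. Given the expository nature of the notes, I would present the Gram--Schmidt retraction in full and then simply cite the standard fact that $B(-)$ preserves homotopy equivalences of well-pointed topological groups.
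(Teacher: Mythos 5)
Your proof is correct, but it takes a different route from the notes at the key step. Where you deformation retract $\mathrm{GL}^{+}(2,\R)$ onto $\mathrm{SO}(2)$ via Gram--Schmidt (the $QR$ factorization, with the convex cone $T^{+}$ of upper triangular matrices with positive diagonal as the contractible complement), the notes instead use the polar decomposition: every $\gamma\in G$ is written uniquely as $\gamma=OR$ with $O$ orthogonal and $R$ symmetric positive definite, and the retraction $\theta:G\to S$, $\gamma\mapsto O$, is taken as the homotopy inverse of $\iota$. Both decompositions do the job, and your write-up is in fact more complete than the paper's on two points the notes leave implicit: exhibiting the actual deformation retraction (straight-line homotopy in the contractible factor) and justifying the passage $BS\simeq BG$ (via $B$ preserving homotopy equivalences of well-pointed groups, or via the fibration $EG/\mathrm{SO}(2)\to EG/G$ with contractible fiber $G/\mathrm{SO}(2)$). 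What the paper's choice buys, however, is not the lemma itself but the sequel: the specific map $\theta$ coming from the polar decomposition is computed explicitly in Exercise \ref{trace}, lifted to $\widetilde{\theta}:\widetilde{G}\to\R$, and then used throughout the proof of Milnor's inequality (positivity-of-trace arguments in Lemma \ref{angle}, the definition of $\delta(\rho)$, etc.), so the polar retraction is the one the rest of the argument is built on; your Gram--Schmidt retraction proves the stated lemma equally well but would not directly furnish that machinery.
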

\begin{proof}
	By the spectral theorem every nonsingular matrix $\gamma$ can be written uniquely in the form $\gamma=OR$, where $O$ is orthogonal matrix and $R$ is symmetric positive definite matrix. Then the following function defines a retraction:
	$$
	\begin{array}{rcc}
	\theta :G & \longrightarrow & S\\
	\gamma & \longmapsto & O
	\end{array}
	$$
\end{proof}

\begin{exercise}\label{trace}
Show that if
\[A= \left( {\begin{array}{cc}
			a & b\\
			c & d \\
			\end{array} } \right) \in G,\] then
		\[\theta(A)=\frac{1}{\sqrt{x^{2}+y^{2}} } \left( {\begin{array}{cc}
				x & y\\
				-y &  x \\
				\end{array} } \right). \]
			Here $x=a+d$ and $y=b-c$.
Moreover, show that:			
\begin{enumerate}
\item If $A \in G$ has positive trace, then $\theta(A)$ has positive trace.
\item If $A$ and $B$ are symmetric and positive definite matrices, then $AB$ has positive trace.
\item If $A,B \in G$ are symmetric and positive definite matrices, then $\theta(A B)$ has positive trace.
\item $\theta(A^{-1})=\theta(A)^{-1}$.
\end{enumerate}
\end{exercise}

\begin{lemma}
Let $\mathsf{Bun}(\Sigma)$ be the set of isomorphism classes of rank two oriented vector bundles over $\Sigma$. The degree map
$$
	\begin{array}{rccc}
	D:& \mathsf{Bun}(\Sigma) &\longrightarrow &\mathbb{Z}\\
	&E &\longmapsto &D(E)
	\end{array}
	$$
is a bijection.
\end{lemma}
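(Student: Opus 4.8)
The plan is to reduce the classification of rank two oriented bundles to a computation of homotopy classes of maps into a classifying space. Isomorphism classes of rank two oriented vector bundles over $\Sigma$ are in natural bijection with isomorphism classes of principal $G$-bundles (pass to the oriented frame bundle, resp. the associated vector bundle), and the latter are classified by the homotopy set $[\Sigma,BG]$. By the previous lemma the inclusion $\iota\colon S\hookrightarrow G$ is a homotopy equivalence, so $BS\simeq BG$ and $[\Sigma,BG]\cong[\Sigma,BS]$. Now $S=\mathrm{SO}(2)=\mathrm{U}(1)$, so $BS=B\mathrm{U}(1)=\mathbb{C}\mathrm{P}^{\infty}$, which is a $K(\mathbb{Z},2)$; hence $[\Sigma,BS]\cong H^{2}(\Sigma;\mathbb{Z})$. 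Finally, since $\Sigma$ is a closed oriented surface, evaluation on the fundamental class gives an isomorphism $H^{2}(\Sigma;\mathbb{Z})\xrightarrow{\ \sim\ }\mathbb{Z}$.

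The remaining point is to check that, under this chain of bijections, the integer attached to a bundle $E$ is exactly $D(E)$. The class in $H^{2}(\Sigma;\mathbb{Z})$ corresponding to $E$ is the pullback along a classifying map $\Sigma\to BS$ of the canonical generator of $H^{2}(BS;\mathbb{Z})$, which is by definition the Euler class $e(E)$; pairing with $[\Sigma]$ yields $D(E)=\int_{\Sigma}e(E)$. Thus $D$ is a composite of bijections, hence a bijection, and we even recover that $D$ identifies $\mathsf{Bun}(\Sigma)$ with $\mathbb{Z}$ group-theoretically (Whitney sum corresponding to addition of Euler classes).

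I expect the main obstacle to be precisely the last identification: one must verify that the universal class pulled back from $BS$ really is the Euler class and not some multiple or twist of it, and that the orientation conventions line up. The cleanest self-contained way to handle this — and the one I would actually write out, since it also sets the stage for Milnor's later arguments — is obstruction theory with the standard CW structure on $\Sigma$ (one $0$-cell, $2g$ $1$-cells, one $2$-cell attached along $\prod_{i=1}^{g}[a_{i},b_{i}]$). Since $G$ is connected, $E$ is trivial over the $1$-skeleton $\Sigma^{(1)}$; a choice of trivialization there together with the trivialization over the $2$-cell produces a clutching function $S^{1}\to G$ whose class in $\pi_{1}(G)\cong\pi_{1}(S)\cong\mathbb{Z}$ is independent of the chosen trivialization, because changing the trivialization modifies the clutching class by the image of $\prod_{i}[a_{i},b_{i}]$ under a homomorphism $\pi_{1}(\Sigma^{(1)})\to\pi_{1}(G)$ into an abelian group, hence by $0$. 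This gives a well-defined invariant $\mathsf{Bun}(\Sigma)\to\mathbb{Z}$; surjectivity is immediate by clutching with loops of arbitrary degree, injectivity is the standard clutching argument (homotopic clutching functions give isomorphic bundles), and the identification of this invariant with $\langle e(E),[\Sigma]\rangle$ is exactly the statement that the Euler class is the primary obstruction to a nowhere-vanishing section.
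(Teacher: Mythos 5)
Your argument is essentially the same as the paper's: both classify oriented rank two bundles via $[\Sigma,BG]\cong[\Sigma,BS]\cong[\Sigma,\mathbb{CP}^\infty]\cong H^2(\Sigma;\mathbb{Z})\cong\mathbb{Z}$ and identify the resulting integer with the degree through the Euler class. The only difference is that you flesh out, via clutching/obstruction theory, the final identification that the paper dispatches with the phrase ``by definition of the Euler class,'' which is a reasonable supplement but not a different route.
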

\begin{proof}
The set $\mathsf{Bun}(\Sigma)$ is in bijective correspondence with the set of isomorphism classes of principal $G$ bundles over $\Sigma$. These are
classified by the set $[\Sigma, BG]$ of homotopy classes of maps to $BG$. Moreover:
\[[\Sigma,BG]\cong[\Sigma,BS]\cong[\Sigma,\mathbb{CP}^{\infty}]=[\Sigma,K(\mathbb{Z},2)]\cong H^2(\Sigma,\mathbb{Z}) \cong \mathbb{Z}.\]
By definition of the Euler class, this bijection is given by the degree.
\end{proof}

\begin{exercise}
Fix a point $p \in \Sigma$ and a homomorphism $\rho: \pi_1(\Sigma, p) \rightarrow G$. The group $\pi=\pi_1(\Sigma,p)$ acts on the right on the universal cover $\widetilde{\Sigma}$ and on the left on $\R^2$ via the representation $\rho$. Let $\widetilde{\Sigma}\times_\rho \R^2$ be the quotient of $\widetilde{\Sigma} \times \R^2$ via the equivalence relation
\[ (m g,v)\sim (m,\rho(g)v).\]
Show that $\widetilde{\Sigma}\times_\rho \R^2$ has a natural structure of a flat oriented vector bundle over $\Sigma$. We will denote this vector bundle $E^\rho$ and call it the bundle associated to $\rho$.
\end{exercise}

\begin{lemma}
A vector bundle over $\Sigma$ admits a flat connection if an only if it is isomorphic to one of the form $E^\rho$.
\end{lemma}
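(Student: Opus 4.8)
The plan is to prove both implications by connecting flatness of a vector bundle with the notion of a connection whose holonomy is ``locally constant.''

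\textbf{The easy direction.} First I would show that any bundle of the form $E^\rho$ admits a flat connection. This is essentially the content of the preceding exercise: on $\widetilde{\Sigma}\times \R^2$ one has the trivial (product) connection, whose connection form is the pullback along the projection to $\R^2$ of the standard flat connection. Since the $\pi$-action is by constant linear maps $\rho(g)$ in the second factor and by deck transformations in the first, this connection is $\pi$-invariant and therefore descends to a connection $\nabla^\rho$ on $E^\rho=\widetilde{\Sigma}\times_\rho \R^2$. Its curvature is the descent of the (vanishing) curvature upstairs, hence $\nabla^\rho$ is flat.

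\textbf{The hard direction.} Conversely, suppose $\pi\colon E\to\Sigma$ carries a flat connection $\nabla$. The key point is that a flat connection provides, near every point, a frame of parallel (horizontal) sections: flatness means the horizontal distribution on the total space is integrable, so by Frobenius each point of $E$ lies in a leaf, and these leaves are the images of local parallel sections. Concretely, over a simply connected open set $U\subseteq\Sigma$ parallel transport is path-independent, so fixing a frame at one point and transporting it gives a parallel frame over $U$; the transition functions between two such parallel frames are locally constant $G$-valued functions. Now I would fix a basepoint $p$, let $\widetilde\Sigma\to\Sigma$ be the universal cover, and pull $E$ back to $\widetilde E\to\widetilde\Sigma$ with its pulled-back flat connection. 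Since $\widetilde\Sigma$ is simply connected, parallel transport on $\widetilde E$ is globally path-independent, so choosing a parallel frame over the fiber at the basepoint and transporting it trivialises $\widetilde E\cong\widetilde\Sigma\times\R^2$ compatibly with the connection. The deck group $\pi=\pi_1(\Sigma,p)$ acts on $\widetilde E$ covering its action on $\widetilde\Sigma$ and preserving the connection, hence in the trivialisation it must act in the second factor by a constant matrix $\rho(g)\in G$ (constancy forced by preservation of the parallel frame, and orientation-preservation forced by the orientation of $E$). One checks $g\mapsto\rho(g)$ is a homomorphism $\pi\to G$. Then $E=\widetilde E/\pi\cong \widetilde\Sigma\times_\rho\R^2=E^\rho$ as oriented bundles, as desired.

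\textbf{Main obstacle.} The step requiring the most care is verifying that the $\pi$-action in the trivialisation is genuinely by \emph{constant} linear maps and that the resulting assignment is a homomorphism into $\mathrm{GL}^+(2,\R)$ rather than merely into $\mathrm{GL}(2,\R)$; this amounts to checking that the chosen global parallel frame on $\widetilde E$ is carried to another parallel frame by each deck transformation (so the comparison matrix is covariantly constant on the connected $\widetilde\Sigma$, hence constant) and that it is compatible with the orientations. Once this bookkeeping is in place, identifying $E$ with $E^\rho$ is immediate from the definition of the associated bundle.
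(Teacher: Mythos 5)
Your proof is correct and takes essentially the same route as the paper: the easy direction via descent of the trivial connection (the preceding exercise), and the hard direction via the holonomy representation, which you simply spell out explicitly by trivialising the pullback to $\widetilde{\Sigma}$ with a global parallel frame. The extra care you take about constancy of the deck action and landing in $\mathrm{GL}^+(2,\R)$ is a welcome elaboration of what the paper leaves implicit.
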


\begin{proof}
By the previous exercise, $E^\rho$ admits a flat connection. Assume that $E$ admits a flat connection $\nabla$ and fix a point $p \in \Sigma$ and an isomorphism $E_p\cong \R^2$. The holonomy of $\nabla$ gives a representation $\rho: \pi_1(\Sigma, p) \rightarrow G$ and an isomorphism $E \cong E^\rho$.
\end{proof}

An oriented closed surface $\Sigma$ of genus $g$ admits a CW structure with one zero-dimensional cell, $2g$  one-dimensional cells and one two-dimensional cell. For example, the surface of genus $2$ can be obtained by the following identification:
\begin{center}
\includegraphics[scale=0.5]{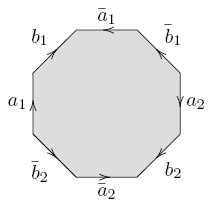}
\end{center}
In general, this decomposition gives the following presentation of the fundamental group of $\Sigma$:
\[\pi_1(\Sigma)=\bigg{\langle} a_1,\dots, a_g, b_1, \dots, b_g  \,\big\vert\,  \prod_{i=1}^g [a_i,b_i]=1 \bigg{\rangle} \]
Here $[a,b]$ denotes the commutator $aba^{-1}b^{-1}$. Thus, a representation $\rho$ of $\pi_1(\Sigma)$ on $G$ is the same as a choice of matrices
$A_1,\dots,A_g,B_1, \dots B_g \in G$ such that \[\prod_{i=1}^g [A_i,B_i]=\mathsf{id}.\]
We have seen that such a representation gives rise to a vector bundle $E^\rho$ and the natural question arises of computing the degree of $E^\rho$ in terms of
the matrices $A_i, B_i$.\\
Let $\widetilde{G}$ be the universal cover of $G$ and consider the short exact sequence
\[0 \rightarrow \pi_1(G) \xrightarrow{\iota} \widetilde{G} \xrightarrow{p} G \rightarrow 0.\]
Consider also the map $\phi: \R \rightarrow S$ given by:
$$\begin{array}{rcc}
 \alpha & \longmapsto & \phi(\alpha)=\left[ {\begin{array}{cc}
                              	\cos \alpha & \sin \alpha\\
                             	-\sin \alpha & \cos \alpha \\
                           	\end{array} } \right]
\end{array}$$
The map $\phi$ is the universal covering map of $S$ and therefore the retraction map $\theta: G \rightarrow S$ lifts naturally to a map:
\[ \widetilde{\theta}: \widetilde{G} \rightarrow \R.\]
Thus we obtain the following commutative diagram:
\begin{center}
\begin{tikzcd}
	& 0 \arrow[d] & 0 \arrow[d] &  \\
	& \pi_1(G) \arrow[r] \arrow[d,"\iota"] & \mathbb{Z} \arrow[d, "2 \pi"] & \\
	& \widetilde{G} \arrow[r, "\widetilde{\theta}"] \arrow[d,"p"] & \mathbb{R}  \arrow[d,"\phi"] &\\
	\pi_1(\Sigma) \arrow [r, "\rho"]	&G \arrow[r,"\theta"]  & S &
\end{tikzcd}	
\end{center}
\begin{exercise}\label{center}
	Let $G$ be a connected Lie group with universal cover $p:\widetilde{G}\to G$. Show that if $g \in \widetilde{G}$ is such that $p(g) \in Z(G)$ then $g \in Z(\widetilde{G})$.
\end{exercise}
Given a homomorphism $\rho: \pi_1(\Sigma) \rightarrow G$ determined by matrices $A_1, \dots, A_g, B_1,\dots, B_g$, we define the number $\delta(\rho)$ by
\[ \delta(\rho):= \frac{1}{2\pi}\widetilde{\theta}\left(\prod_{i=1}^g [\alpha_i,\beta_i]\right).\]
Here $\alpha_i, \beta_i$ are elements in $\widetilde{G}$ such that $p(\alpha_i)=A_i$ and $p(\beta_i)=B_i$.
\begin{lemma}
The number $\delta(\rho)$ is an integer and depends only on the representation $\rho$.
\end{lemma}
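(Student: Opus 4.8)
The plan is to read both assertions directly off the commutative diagram above, the only non-formal input being that the kernel of a universal covering map is central.

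\textbf{Integrality.} First I would use that $p\colon \widetilde G \to G$ is a homomorphism, so that
\[
p\Big(\prod_{i=1}^g [\alpha_i,\beta_i]\Big)=\prod_{i=1}^g [p(\alpha_i),p(\beta_i)]=\prod_{i=1}^g [A_i,B_i]=\mathsf{id},
\]
the last equality being the defining relation of $\pi_1(\Sigma)$, which $\rho$ satisfies. Hence $x:=\prod_{i=1}^g[\alpha_i,\beta_i]$ lies in $\ker p=\iota(\pi_1(G))$. Next I would invoke commutativity of the lower square, $\phi\circ\widetilde\theta=\theta\circ p$: applying it to $x$ gives $\phi(\widetilde\theta(x))=\theta(p(x))=\theta(\mathsf{id})=\mathsf{id}$, and since $\phi$ is the universal covering of $S$ we have $\ker\phi=2\pi\mathbb{Z}$, so $\widetilde\theta(x)\in 2\pi\mathbb{Z}$. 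Therefore $\delta(\rho)=\tfrac{1}{2\pi}\widetilde\theta(x)\in\mathbb{Z}$. (This is precisely the content of the top square: $\widetilde\theta$ carries $\pi_1(G)=\ker p$ into $2\pi\mathbb{Z}=\ker\phi$.)

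\textbf{Independence of the chosen lifts.} The quantity $\delta(\rho)$ is defined after choosing lifts $\alpha_i,\beta_i\in\widetilde G$ of $A_i,B_i$, so I must check it is insensitive to that choice. Any other lifts are of the form $\alpha_i'=\alpha_i z_i$, $\beta_i'=\beta_i w_i$ with $z_i,w_i\in\ker p$. By Exercise \ref{center} applied to an element of $\ker p$ (whose image in $G$ is $\mathsf{id}$, certainly central), every element of $\ker p$ lies in $Z(\widetilde G)$. A direct computation, using only that $z,w$ are central, then shows $[\alpha z,\beta w]=[\alpha,\beta]$ (the central factors slide through the commutator word and cancel in pairs). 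Consequently each commutator $[\alpha_i,\beta_i]$, hence the product $\prod_i[\alpha_i,\beta_i]$ and the number $\widetilde\theta\big(\prod_i[\alpha_i,\beta_i]\big)$, is unchanged. Thus $\delta(\rho)$ depends only on the matrices $A_i=\rho(a_i)$, $B_i=\rho(b_i)$, that is, only on $\rho$.

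\textbf{Expected obstacle.} There is no real computational difficulty; the one step that genuinely needs attention is the centrality of $\ker p$ in $\widetilde G$. Without it, changing lifts alters the commutator by a product of commutators of the $\alpha_i,\beta_i$ with central elements, which need not be trivial in an arbitrary group, and the definition would not be well posed; Exercise \ref{center} is exactly what rescues this. Once centrality is in hand, both claims are formal consequences of the diagram. (If one also wanted independence of the particular standard generators $a_i,b_i$ of $\pi_1(\Sigma)$ one would argue separately, e.g. by identifying $\delta(\rho)$ with the degree $D(E^\rho)$; but the statement as phrased only requires independence of the lifts.)
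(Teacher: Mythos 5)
Your proof is correct and follows essentially the same route as the paper: integrality comes from observing that $\prod_i[\alpha_i,\beta_i]\in\ker p$ and that $\widetilde\theta$ sends $\ker p$ into $2\pi\mathbb{Z}$ (you justify this via the lower square of the diagram, the paper via the upper one — the same fact), and well-definedness comes from Exercise \ref{center} giving centrality of $\ker p$, so that changing lifts leaves each commutator unchanged. No gaps; your write-up is simply a more detailed version of the paper's argument.
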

\begin{proof}
Let us first prove that for any choice of the $\alpha_i$ and $\beta_i$ the number $\delta(\rho)$ is an integer. By construction, $\prod_{i=1}^g [\alpha_i,\beta_i] \in \mathsf{ker}(p)$. This implies that $\widetilde{\theta}(\prod_{i=1}^g [\alpha_i,\beta_i])$ is a multiple of $2\pi$ and the result follows. Let us now show that the number is well defined. Suppose that $\alpha_i'$ and $\beta_i'$ is a different choice. Then, for each $i$ we know that:
\[ \alpha_i'=\alpha_i x_i\;\, \beta_i'= \beta_i y_i ,\]
where the $x_i$ and the $y_i$ are in the kernel of $p$ which, by the previous exercise, is contained in the center of $\widetilde{G}$. This implies that:
\[ [\alpha_i, \beta_i]=[\alpha_i',\beta_i'],\]
and the result follows.
\end{proof}
\begin{lemma}\label{milnornumber}
Given a representation $\rho: \pi_1(\Sigma) \rightarrow G$ we have:
\[ \delta(\rho)=D(E^\rho).\]
\end{lemma}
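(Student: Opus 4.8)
The plan is to compute $D(E^\rho)=\int_\Sigma e(E^\rho)$ by obstruction theory, using the CW structure on $\Sigma$ with one $0$-cell, the $2g$ one-cells $a_1,\dots,a_g,b_1,\dots,b_g$, and a single $2$-cell attached along the word $w=\prod_{i=1}^g[a_i,b_i]$. Since $\theta\colon G\to S$ is a homotopy equivalence, $E^\rho$ reduces to an $S=\mathrm{SO}(2)$-bundle, and its Euler class is the primary obstruction to a nowhere-zero section; as $\Sigma$ is two-dimensional this obstruction lives entirely in $H^2(\Sigma;\pi_1(S^1))\cong\mathbb{Z}$ and equals $\pm D(E^\rho)$. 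So I would build a nowhere-zero section of $E^\rho$ over the $1$-skeleton and identify the obstruction to extending it over the $2$-cell.

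First I would trivialise $E^\rho$ over the $1$-skeleton explicitly. Choose smooth paths $\gamma_i,\delta_i\colon[0,1]\to G$ with $\gamma_i(0)=\delta_i(0)=I$, $\gamma_i(1)=A_i$ and $\delta_i(1)=B_i$; their lifts to $\widetilde G$ starting at the identity end at elements $\alpha_i,\beta_i$ with $p(\alpha_i)=A_i$, $p(\beta_i)=B_i$, and by the previous lemma $\delta(\rho)$ is independent of these choices. Identifying $E^\rho$ over each edge with the mapping torus $[0,1]\times\R^2/(0,v)\sim(1,A_iv)$ (respectively with $B_i$), the assignments $t\mapsto\gamma_i(t)e_1$ and $t\mapsto\delta_i(t)e_1$ patch to a nowhere-zero section $s$ of $E^\rho$ over the $1$-skeleton, with $s$ equal to $e_1$ at the unique vertex.

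Next I would read off $s$ along the boundary of the $2$-cell. Following the gluings prescribed by $w$, the restriction $s|_{\partial(\text{$2$-cell})}$ is the composite of a loop $\ell\colon S^1\to G$ based at $I$ with the evaluation map $\ev\colon G\to\R^2\setminus\{0\}$, $g\mapsto ge_1$, where $\ell$ is the concatenation of the paths $\gamma_i,\delta_i$ and their appropriately left-translated reverses dictated by the commutators $[a_i,b_i]$. Three facts then finish the computation: (i) $\ell$ is a genuine loop because $\prod_{i=1}^g[A_i,B_i]=\id$; (ii) the lift of $\ell$ to $\widetilde G$ starting at the identity ends at $\prod_{i=1}^g[\alpha_i,\beta_i]\in\ker(p)=\pi_1(G)$ (commutators of lifted paths project to commutators); and (iii) $\ev$ is a homotopy equivalence since its fibres are contractible, so the obstruction to extending $s$ over the $2$-cell is the image of $[\ell]$ under the induced isomorphism $\pi_1(G)\cong\pi_1(\R^2\setminus\{0\})\cong\mathbb{Z}$. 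On the other hand $\theta_*\colon\pi_1(G)\to\pi_1(S)\cong\mathbb{Z}$ is an isomorphism, and the commutative diagram relating $\widetilde\theta$, $\phi$ and $p$ identifies $\theta_*[\ell]$ with $\tfrac{1}{2\pi}\widetilde\theta\big(\prod_{i=1}^g[\alpha_i,\beta_i]\big)=\delta(\rho)$. Comparing, $D(E^\rho)=\pm\delta(\rho)$, and the sign is $+1$ once the orientations of $\Sigma$ and of the bundle are fixed compatibly with the conventions already in force.

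The main obstacle I expect is bookkeeping rather than ideas: one must fix orientation conventions for the polygon, for the attaching map of the $2$-cell and for the trivialisation of $E^\rho$, and then check carefully that the loop in $G$ obtained by concatenating the boundary paths of $s$ lifts to a path in $\widetilde G$ whose endpoint is exactly the group-theoretic product $\prod_{i=1}^g[\alpha_i,\beta_i]$, i.e.\ that the topological and algebraic commutators match. With that identification in place, facts (i)--(iii) together with the diagram in the text give the result. The same computation can be packaged cohomologically: $B\rho$ pulls the Euler class of $BG$ back to the class in $H^2(\pi_1(\Sigma);\mathbb{Z})$ of the central extension $0\to\pi_1(G)\to\widetilde G\to G\to0$ restricted along $\rho$, and evaluating this on the fundamental $2$-cycle of $\Sigma$ coming from the polygon presentation yields $\delta(\rho)$.
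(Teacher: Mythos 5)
Your argument is correct and, at its computational core, coincides with the paper's: both build a nowhere-vanishing section over the $1$-skeleton out of paths lifting $A_i,B_i$ (your $\gamma_i,\delta_i$, the paper's $\alpha_i(t),\beta_i(t)$), and both reduce $D(E^\rho)$ to the winding of the section along the attaching word, identified with $\frac{1}{2\pi}\widetilde\theta\big(\prod_i[\alpha_i,\beta_i]\big)=\delta(\rho)$. The difference is in how the last step is packaged. The paper extends the section radially over the $2$-cell so that it has a single zero and invokes the Hopf index theorem, using Lemma \ref{homotopy1} to convert the concatenated boundary path into the pointwise commutator path $\gamma(t)=\prod_i[\alpha_i(t),\beta_i(t)]$ and Lemma \ref{homotopy2} to replace evaluation on a fixed vector by $\theta$. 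You instead quote the obstruction-theoretic characterisation of the Euler class, observe that $\ev\colon G\to\R^2\setminus\{0\}$, $g\mapsto ge_1$, is a homotopy equivalence (transitive action with contractible stabiliser, playing the role of Lemma \ref{homotopy2}), and use the standard fact that the lift of a concatenation of left-translated paths ends at the corresponding product in $\widetilde{G}$ --- which is exactly the content Lemma \ref{homotopy1} supplies in the paper, and is the ``bookkeeping'' you rightly flag; it is genuine but routine. Your route buys a cleaner identification of the obstruction (and the nice reformulation via the central extension $0\to\pi_1(G)\to\widetilde{G}\to G\to 0$ pulled back along $\rho$), at the modest cost of leaving the overall sign to a convention check and of citing obstruction theory where the notes stay with the more elementary explicit section plus Poincar\'e--Hopf.
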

The idea of the proof is to construct a section of $E^\rho$ with a single zero whose index is precisely $\delta(\rho)$. Hopf's Index Theorem then guarantees that $\delta(\rho)=D(E^\rho)$. The first step towards constructing the section will be to define a path homotopic to the one used in the definition of $\delta(\rho)$. The following two lemmas will serve to prove said homotopy.
\begin{lemma}\label{homotopy1}
Let $A,B\in G$ and $\alpha,\beta:[0,1]\to G$ be paths connecting the identity to $A$ and $B$ respectively. The paths $\alpha\beta$ and $A\beta\ast\alpha$ connecting the identity to $AB$ are homotopic.
\end{lemma}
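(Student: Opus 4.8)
The plan is to exhibit both paths as the images, under one continuous map out of the unit square, of two paths in $[0,1]^2$ that are homotopic for the trivial reason that the square is convex; composing with the map then transports the homotopy into $G$.

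Concretely, I would define $\Phi\colon [0,1]\times[0,1]\to G$ by $\Phi(u,v):=\alpha(u)\beta(v)$, which is continuous because multiplication in $G$ is continuous. Inside the square, consider two paths from $(0,0)$ to $(1,1)$: the diagonal $\delta(s):=(s,s)$, and the broken path $\varepsilon$ that first traverses the bottom edge, $s\mapsto(2s,0)$ on $[0,\tfrac12]$, and then the right edge, $s\mapsto(1,2s-1)$ on $[\tfrac12,1]$. A direct check shows $\Phi\circ\delta\,(s)=\alpha(s)\beta(s)$ is the pointwise product $\alpha\beta$, while $\Phi\circ\varepsilon$ equals $\alpha(2s)\beta(0)=\alpha(2s)$ on $[0,\tfrac12]$ and $\alpha(1)\beta(2s-1)=A\beta(2s-1)$ on $[\tfrac12,1]$, i.e. the concatenation $A\beta\ast\alpha$ in the convention where the rightmost path is traversed first (which is forced here, since $\alpha$ ends at $A$, where $A\beta$ begins).

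Since $[0,1]^2$ is convex, the straight-line homotopy $(s,r)\mapsto (1-r)\,\delta(s)+r\,\varepsilon(s)$ stays in the square, equals $\delta$ at $r=0$ and $\varepsilon$ at $r=1$, and fixes the endpoints $(0,0)$ and $(1,1)$ for every $r$ because $\delta$ and $\varepsilon$ share those endpoints. Composing this homotopy with $\Phi$ yields a homotopy rel endpoints from $\alpha\beta$ to $A\beta\ast\alpha$, the endpoints being $\Phi(0,0)=e$ and $\Phi(1,1)=AB$ throughout. This proves the lemma.

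There is essentially no obstacle: the only thing requiring a moment's care is the bookkeeping of the concatenation convention and verifying that $\Phi$ pulls $\delta$ and $\varepsilon$ back to the two named paths. If one prefers to avoid the square picture altogether, the same homotopy can be written explicitly as $H(s,r)=\alpha\bigl(\min(\tfrac{2s}{2-r},1)\bigr)\,\beta\bigl(\max(\tfrac{2s-r}{2-r},0)\bigr)$, whose boundary values $H(\cdot,0)=\alpha\beta$, $H(\cdot,1)=A\beta\ast\alpha$, $H(0,\cdot)=e$, $H(1,\cdot)=AB$ are checked by inspection; the convexity argument is just the conceptual reason such a formula exists.
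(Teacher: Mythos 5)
Your proof is correct and is essentially the paper's argument in different clothing: the paper writes the same homotopy explicitly via reparametrized pieces $\kappa_s,\sigma_s$ with a sliding break point, which is exactly the family of paths in the square from the diagonal to the bottom-right edge composed with $(u,v)\mapsto\alpha(u)\beta(v)$, and your closing formula for $H(s,r)$ matches it up to reparametrization. Your endpoint and convention checks are all in order, so nothing further is needed.
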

\begin{proof}
  For every $s\in[0,1]$ define paths $\kappa_s,\sigma_s$ as follows:
\[
\kappa_s(t)=\alpha(s(t-1)+1),\qquad \sigma_s(t)=\alpha(t-st).
\]
Define the homotopy $H(s,t)=(\kappa_s\beta\ast\sigma_s)(t)$. Note that $\sigma_s(1)=\alpha(1-s)=\kappa_s(0)\beta(0)$, therefore the concatenation is well defined. One readily verifies that
\[
H(0,t)=(A\beta\ast\alpha)(t),\quad H(1,t)=\alpha(t)\beta(t),\quad H(s,0)=\mathsf{id},\quad H(s,1)=AB.
\]
\end{proof}
\begin{lemma}\label{homotopy2}
  Fix $v\in\mathbb{R}^2-\{0\}$. Consider the map $\sigma_v:G\to \mathbb{S}^1$ defined by \[\sigma_v(A)=\frac{Av}{||Av||}\]
Then all maps $\sigma_v$ are homotopic; furthermore, they are all homotopic to the map $\theta:G\to S\cong\mathbb{S}^1$.
\end{lemma}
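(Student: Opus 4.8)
The plan is to reduce everything to the polar decomposition $A=OR$ (with $O\in S=\mathrm{SO}(2)$ and $R$ symmetric positive definite) that was used to define the retraction $\theta\colon G\to S$; recall this decomposition gives a homeomorphism $G\cong S\times\mathrm{Sym}^{+}(2)$. First I would dispose of the claim that all the $\sigma_v$ are mutually homotopic: since $G$ acts transitively on $\R^2-\{0\}$, given nonzero $v,w$ there is $C\in G$ with $Cv=w$, whence $\sigma_w(A)=\frac{ACv}{\|ACv\|}=\sigma_v(AC)$, and a path $C_t$ from $\id$ to $C$ in the connected group $G$ yields a homotopy $(t,A)\mapsto\sigma_v(AC_t)$ from $\sigma_v$ to $\sigma_w$. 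So it is enough to prove $\sigma_v\simeq\theta$ for a single (equivalently every) $v$.

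Next I would make explicit the deformation retraction of $G$ onto $S$ underlying the proof that $\iota\colon S\hookrightarrow G$ is a homotopy equivalence: using $A=OR$, set $\Theta_t(A):=O\big((1-t)R+t\,\id\big)$. A convex combination of positive definite matrices is positive definite, so $\Theta_t(A)\in G$; moreover $\Theta_0=\id_G$, $\Theta_1=\iota\circ\theta$, $\Theta_t|_S=\id_S$, and $\Theta_t$ depends continuously on $A$ because the polar decomposition does. Composing with $\sigma_v$, the map $(t,A)\mapsto\sigma_v(\Theta_t(A))$ is a homotopy from $\sigma_v$ to $\sigma_v\circ\iota\circ\theta=(\sigma_v|_S)\circ\theta$.

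It remains to compare $\sigma_v|_S$ with the chosen identification $S\cong\mathbb{S}^1$. Taking this identification to be $O\mapsto Oe_1$, a point $z=Oe_1$ is sent by $\sigma_v|_S$ to $Ov/\|v\|=OO_ve_1=O_vz$, where $O_v\in S$ is the rotation with $O_ve_1=v/\|v\|$ (rotations commute); that is, $\sigma_v|_S$ is rotation of $\mathbb{S}^1$ by a fixed angle. Any rotation of the circle is homotopic to the identity (linearly shrink the angle to $0$), so $\sigma_v|_S\circ\theta\simeq\theta$, and combining with the previous paragraph $\sigma_v\simeq\theta$. Alternatively, once one knows that $\iota^{*}\colon[G,\mathbb{S}^1]\to[S,\mathbb{S}^1]$ is a bijection and that $[\mathbb{S}^1,\mathbb{S}^1]\cong\mathbb{Z}$ via the degree, it suffices to note that $\sigma_v|_S$ and $\theta|_S=\id_S$ both have degree one.

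All the computations above are routine; the only point that genuinely deserves care is the continuity (in fact smoothness) of the map $A\mapsto(O,R)$, which is what makes $\Theta_t$ a bona fide homotopy, together with fixing once and for all the identification $S\cong\mathbb{S}^1$ (any two natural choices differ by a rotation and hence do not affect homotopy classes).
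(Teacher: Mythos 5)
Your proof is correct, but it takes a genuinely different route from the paper's. For the first claim the paper simply joins $v$ to $v'$ by a path $\gamma(t)$ in $\R^2-\{0\}$ and uses $H(A,t)=A\gamma(t)/\|A\gamma(t)\|$, whereas you invoke transitivity of the $G$-action and connectedness of $G$ to move the vector by a path $AC_t$ in the group; both work, the paper's version needing slightly less input. For the comparison with $\theta$, the paper works in coordinates: it takes the explicit formula for $\theta$ from Exercise \ref{trace}, writes $\sigma_{e_1}$ in the same coordinates, and interpolates the two formulas directly by $H_t(A)=\bigl((a+td)^2+(tb-c)^2\bigr)^{-1/2}\left(\begin{smallmatrix}a+td\\ c-tb\end{smallmatrix}\right)$, the only real work being the check that the denominator cannot vanish when $\det A>0$. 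You instead promote the retraction $\theta$ of the earlier lemma to a genuine deformation retraction $\Theta_t(A)=O\bigl((1-t)R+t\,\id\bigr)$ coming from the polar decomposition, reduce to comparing $\sigma_v|_S$ with $\id_S$, and observe that $\sigma_v|_S$ is a rotation of the circle, hence homotopic to the identity. Your argument is more conceptual and avoids the explicit formula for $\theta$ entirely (it never needs the nonvanishing computation), at the cost of having to justify continuity of $A\mapsto(O,R)$ — which you correctly flag, and which follows from $R=(A^{\mathsf T}A)^{1/2}$, $O=AR^{-1}$ — and of fixing the identification $S\cong\mathbb{S}^1$ by first columns, the same one the paper uses. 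The paper's proof buys brevity and self-containedness from a single explicit homotopy; yours buys reusability, since the deformation retraction and the ``restriction to $S$ is a rotation'' observation immediately handle every $\sigma_v$ at once and make the degree-one statement transparent.
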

\begin{proof}
  For the first part of the lemma take vectors $v,v'\in\mathbb{R}^2-\{0\}$ and a path $\gamma:[0,1]$ from $v$ to $v'$ such that $\gamma(t)\neq0$ for every $t$. Then $H(A,t)=A\gamma(t)/||A\gamma(t)||$ is a homotopy between $\sigma_v$ and $\sigma_{v'}$.\\

  For the second part we take the first element of the canonical basis $e=(1,0)$ and prove that $\sigma_{e}$ is homotopic to $\theta$. For this purpose we recall the explicit formula for $\theta$ proven in Exercise \ref{trace}:
\[
\theta(A)=\theta\left[\begin{array}{cc}a&b\\c&d\end{array}\right]=\frac{1}{\sqrt{(a+d)^2+(b-c)^2}}\left[\begin{array}{cc}a+d&b-c\\c-b&a+d\end{array}\right]
\]
The isomorphism $S\cong\mathbb{S}^1$ sends a matrix to its first column, therefore $\theta$ seen as a map with values in $\mathbb{S}^1$ is given by
\[
\theta(A)=\theta\left[\begin{array}{cc}a&b\\c&d\end{array}\right]=\frac{1}{\sqrt{(a+d)^2+(b-c)^2}}\left[\begin{array}{c}a+d\\c-b\end{array}\right]
\]
On the other hand, the formula for $\sigma_e$ is
\[
\sigma_e(A)=\sigma_e\left[\begin{array}{cc}a&b\\c&d\end{array}\right]=\frac{1}{\sqrt{a^2+c^2}}\left[\begin{array}{c}a\\c\end{array}\right]
\]
A homotopy between $\theta$ and $\sigma_e$ is then given by
\[
H_t(A)=H_t\left[\begin{array}{cc}a&b\\c&d\end{array}\right]=\frac{1}{\sqrt{(a+td)^2+(tb-c)^2}}\left[\begin{array}{c}a+td\\c-tb\end{array}\right]
\]
We check that $H_t$ is well defined. Suppose that $(a+td)^2+(tb-c)^2=0$, then we have $a=-td$ and $c=tb$ for some $t\in[0,1]$, whence $\det(A)=-t(b^2+d^2)\leq0$.
\end{proof}
Now we are ready to prove Lemma \ref{milnornumber}:\\

\begin{proof}
Recall that we wish to construct a section of $E^\rho$ with a single zero with index $\delta(\rho)$.\\
Consider the path $\gamma:[0,1]\to G$ given by the formula
\[
t\mapsto \gamma(t)=\prod_{i=1}^g[\alpha_i(t),\beta_i(t)].
\]
Clearly we have $\gamma(0)=\gamma(1)=\mathsf{id}$ and therefore the composition $\theta\circ \gamma$ induces a map $\widehat\gamma:\mathbb{S}^1\to\mathbb{S}^1$. Let $\widetilde \gamma:[0,1]\to\mathbb{R}$ be the unique lifting of $\theta\circ\gamma$ that begins at $0$. Considering the paths $\alpha_i,\beta_i,\gamma$ as elements of $\widetilde G$, we see that $\widetilde \gamma$ is actually given by the formula
\[
\widetilde \gamma=\widetilde\theta(\gamma)=\widetilde\theta\left(\prod_{i=1}^g[\alpha_i,\beta_i]\right)
\]
Since the degree of $\widehat\gamma$ is just its homotopy class, it can be calculated via the lifting $\widetilde \gamma$ as follows
\[
\text{deg}(\widehat\gamma)=\frac{1}{2\pi}\widetilde\theta\left(\prod_{i=1}^g[\alpha_i,\beta_i]\right)=\delta(\rho).
\]
Now let us define the section.
\begin{figure}[ht!]
\includegraphics[trim= 0mm 10mm 0mm 10mm, scale=1]{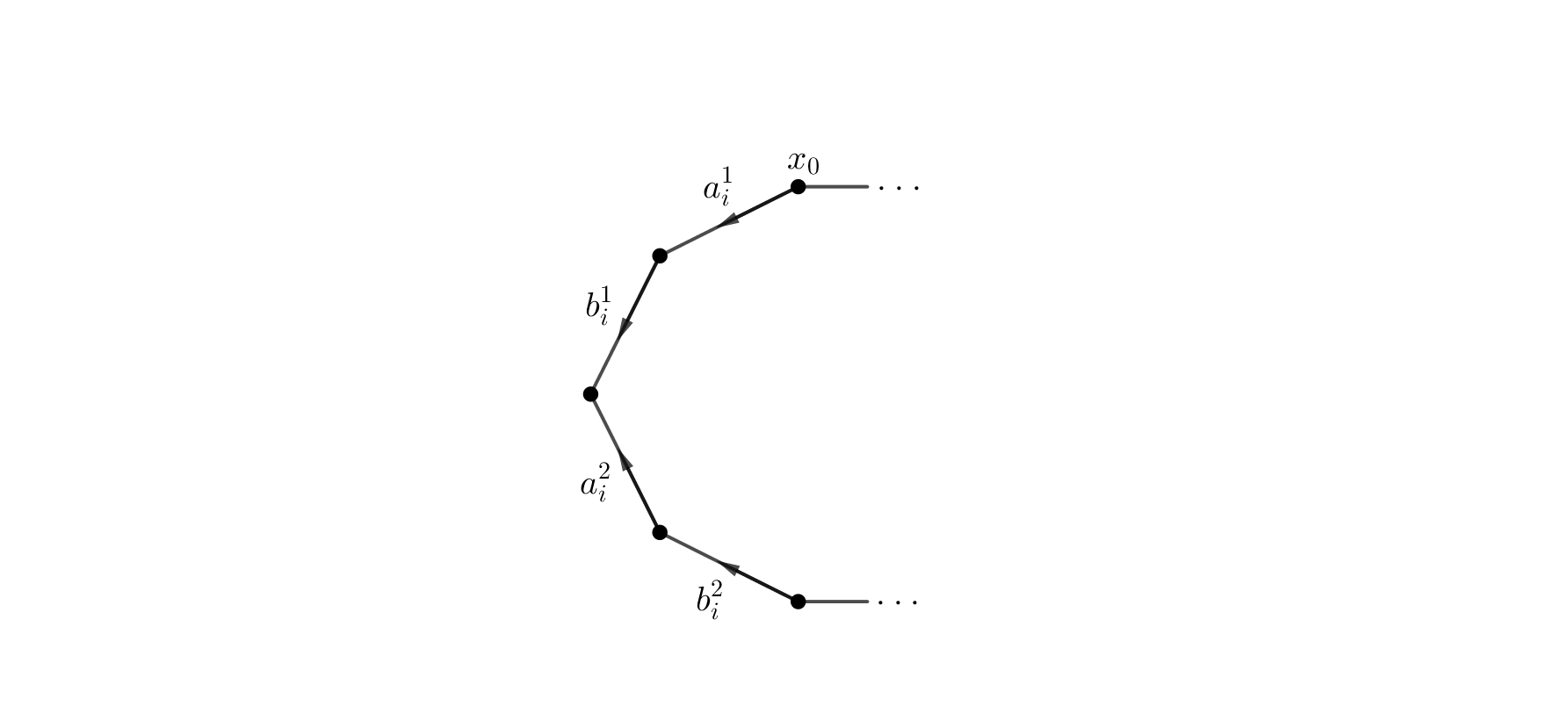}
\caption{Polygon}
\label{polygon}
\end{figure}
If $\Sigma$ has genus $g$, then it may be obtained from a polygon with $4g$ sides glued together as follows: suppose that we have a portion of the polygon as shown in Figure \ref{polygon}, then the sides are glued following the rule
\[
a_i^1(t)\sim a_i^2(t),\qquad b_i^1(t)\sim b_i^2(t),
\]
for $i=1,\cdots,g$. Notice that sides $a_i^1,a_i^2$ map to the generator $[a_i]$ of $\pi_1(\Sigma)$. Similarly, paths $b_i^1,b_i^2$ map to generator $[b_i]$.
\medskip
We first define the section $s:\Sigma\to \widetilde\Sigma\times\mathbb{R}^2$ in the boundary of the polygon using the liftings $\alpha_i,\beta_i$. Suppose that we have $s(x_0)=([x_0], v)$, where $[x_0]\in\widetilde\Sigma$ denotes the class of the constant path. Then we define $s$ by
\begin{align*}
  &s(a_i^1(t))=([a_{i,t}],\alpha_i^{-1}(t)v)\\
  &s(b_i^1(t))=([b_{i,t}\ast a_i], A_i^{-1}\beta_i^{-1}(t)v)\\
  &s(a_i^2(t))=([a_{i,t}\ast\bar a_i\ast b_i\ast a_i], A_i^{-1}B_i^{-1}A_i\alpha_i^{-1}(t)v)\\
  &s(b_i^2(t))=([b_{i,t}\ast\bar b_i\ast\bar a_i\ast b_i\ast a_i], A_i^{-1}B_i^{-1}A_iB_i\beta_i^{-1}(t)v).
\end{align*}
In the previous equations, $[a_{i,t}]$ and $[b_{i,t}]$ denote the classes of the paths running from $a_i(0)$ to $a_i(t)$ and from $b_i(0)$ to $b_i(t)$ respectively. Notice that
\[
s(a_i^1(t))\sim s(a_i^2(t)),\qquad s(b_i^1(t))\sim s(b_i^2(t)),
\]
which means that the section is well defined after gluing the sides of the polygon. Furthermore, if $v\neq 0$ then $s$ is non-zero. Next we extend the section to the whole polygon by identifying it with the unitary disk $\mathbb{D}$. Then $s$ may be extended to the interior of $\mathbb{D}$ by
\[
s(x)=\left\{\begin{array}{ccc}||x||s\left(\frac{x}{||x||}\right)&\text{if}& x\neq0\\0&\text{if}&x=0\end{array}\right.
\]
The index of the single zero of this section is the degree of the map obtained by multiplying a closed path $\lambda:[0,1]\to G$ with a nonzero vector $v$ and then normalizing. From the formula for the section $s$ we get the $i$-th portion of the path, which we denote $\lambda_i$, as a concatenation of paths:
\[
\lambda_i=A_i^{-1}B_i^{-1}A_i\beta\ast A_i^{-1}B_i^{-1}\alpha\ast A_i^{-1}\beta_i^{-1}\ast \alpha_i^{-1}
\]
By Lemma \ref{homotopy1}, $\lambda_i$ is homotopic to path $\alpha_i^{-1}\beta_i^{-1}\alpha_i\beta_i=[\alpha_i^{-1},\beta_i^{-1}]$. Gluing together the $\lambda_i$, we get that $\lambda$ is homotopic to the path $\gamma$. Finally, Lemma \ref{homotopy2} guarantees that the index of $s$ is $\text{deg}(\widehat\gamma)=\delta(\rho)$.
\end{proof}

\begin{lemma}\label{angle}
	The function $\widetilde{\theta}: \widetilde{G} \to \mathbb{R} $ satisfies the property:
	\[|\widetilde{\theta}(\alpha \beta)-\widetilde{\theta}(\alpha)-\widetilde{\theta}(\beta)|<\pi/2.\]
\end{lemma}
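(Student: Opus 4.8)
The plan is to reduce the estimate to the multiplicativity defect of $\theta$ on $G$, which is exactly what Exercise~\ref{trace} controls. Given $A,B\in G$, write their polar decompositions $A=O_1R_1$ and $B=O_2R_2$, so that $\theta(A)=O_1$ and $\theta(B)=O_2$, and insert $O_2O_2^{-1}$ to get $AB=O_1O_2\,C$ with $C:=(O_2^{-1}R_1O_2)R_2$ a product of two symmetric positive definite matrices. Uniqueness of the polar decomposition gives $\theta(OX)=O\,\theta(X)$ for $O\in\mathrm{SO}(2)$, and since $S$ is abelian this yields
\[
\theta(A)^{-1}\theta(B)^{-1}\theta(AB)=\theta(C).
\]
By Exercise~\ref{trace}(3) the right-hand side has positive trace, i.e. it lies in the open arc $\phi\big((-\pi/2,\pi/2)\big)\subset S$ on which $\phi$ restricts to a homeomorphism onto its image.

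Next I would package this into a continuous correction term. The map $\psi\colon G\times G\to S$, $\psi(A,B):=\theta(A)^{-1}\theta(B)^{-1}\theta(AB)$, is continuous and, by the previous paragraph, lands in that open arc; hence it admits a unique continuous lift $\epsilon\colon G\times G\to(-\pi/2,\pi/2)$ with $\phi\circ\epsilon=\psi$ and $\epsilon(e,e)=0$. Now consider
\[
F\colon\widetilde{G}\times\widetilde{G}\to\mathbb{R},\qquad F(\alpha,\beta):=\widetilde{\theta}(\alpha\beta)-\widetilde{\theta}(\alpha)-\widetilde{\theta}(\beta)-\epsilon\big(p(\alpha),p(\beta)\big).
\]
Applying $\phi$ and using $\phi\circ\widetilde{\theta}=\theta\circ p$, the homomorphism property of $\phi\colon(\mathbb{R},+)\to S$ with kernel $2\pi\mathbb{Z}$, and commutativity of $S$, all terms cancel and $\phi\circ F\equiv e$; thus $F$ takes values in the discrete set $2\pi\mathbb{Z}$.

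Finally I would invoke connectedness: $\widetilde{G}\times\widetilde{G}$ is connected and $F$ is continuous with values in $2\pi\mathbb{Z}$, so $F$ is constant; evaluating at the pair of identity elements (where $\widetilde{\theta}$ vanishes by the chosen normalization of the lift and $\epsilon(e,e)=0$) shows $F\equiv 0$. Hence $\widetilde{\theta}(\alpha\beta)-\widetilde{\theta}(\alpha)-\widetilde{\theta}(\beta)=\epsilon\big(p(\alpha),p(\beta)\big)\in(-\pi/2,\pi/2)$, which is the claimed inequality. The delicate point is precisely this last step: the algebraic identity only determines the defect modulo $2\pi$, and it is the continuity of $\epsilon$ (hence of $F$) together with connectedness of $\widetilde{G}$ that eliminates the other branches; producing $\epsilon$ as a genuinely continuous function rather than a merely pointwise angle is where Exercise~\ref{trace}(3) — which confines the defect to a fixed open half-circle — does the essential work.
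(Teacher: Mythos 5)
Your proof is correct and follows essentially the same route as the paper: the polar-decomposition manipulation together with Exercise~\ref{trace}(3) to show that $\theta(A)^{-1}\theta(B)^{-1}\theta(AB)$ has positive trace, followed by a continuity-plus-connectedness argument pinning the lifted defect to $(-\pi/2,\pi/2)$. The only difference is presentational: you make explicit (via the lift $\epsilon$ and the $2\pi\mathbb{Z}$-valued function $F$) the step the paper compresses into ``$\Delta$ is continuous, $\cos\Delta>0$, and $\Delta$ vanishes at the identity.''
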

\begin{proof}
  	Given $A,B\in G$ there are symmetric positive definite matrices $R,T$ such that:  	
	\[AB=\theta(A)R \theta(B)T.\]
  	If we set $X=\theta(B)^{-1}R\theta(B)$ then:
  	\[AB=\theta(A)\theta(B)XT.\]
  	This implies that: \[\theta(AB)=\theta(A)\theta(B)\theta(XT).\]
	Therefore:
	\[\theta(B)^{-1}\theta(A)^{-1}\theta(AB)=\theta(XT)\]
	Since $X,T$ are positive definite, Exercise \ref{trace} implies that $\theta(B)^{-1}\theta(A)^{-1}\theta(AB)$ has positive trace.
	Fix $\alpha, \beta \in \widetilde{G}$ such that $p(\alpha)=A$ and $p(\beta)=B$ and
	set $\Delta=\widetilde{\theta}(\alpha \beta)-\widetilde{\theta}(\alpha)-\widetilde{\theta}(\beta)$. Then:
	\[ \phi(\Delta)=\left[ {\begin{array}{cc}
                              	\cos \Delta & \sin \Delta \\
                             	-\sin \Delta & \cos \Delta \\
                           	\end{array} } \right]=\theta(B)^{-1}\theta(A)^{-1}\theta(AB)\]
	has positive trace. So $\cos \Delta >0$. Since $\Delta$ is a continuous function of $\alpha$ and $\beta$ which vanishes when $\alpha=\beta=1$ we conclude that
	$|\Delta|<\pi/2$.

	  \end{proof}

\begin{lemma}\label{bound}
Given a representation $\rho: \pi_1(\Sigma) \rightarrow G$ we have:
\[ |\delta(\rho)|<g.\]
\end{lemma}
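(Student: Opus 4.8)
The plan is to estimate $\widetilde{\theta}\bigl(\prod_{i=1}^{g}[\alpha_i,\beta_i]\bigr)$ directly: by definition $\delta(\rho)=\frac{1}{2\pi}\widetilde{\theta}\bigl(\prod_{i=1}^{g}[\alpha_i,\beta_i]\bigr)$, so the lemma amounts to the estimate $\bigl|\widetilde{\theta}\bigl(\prod_{i=1}^{g}[\alpha_i,\beta_i]\bigr)\bigr|<2\pi g$. The only substantive input is the subadditivity estimate of Lemma \ref{angle}, applied a bounded number of times together with the triangle inequality.

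First I would record that $\widetilde{\theta}$ is odd, that is $\widetilde{\theta}(\gamma^{-1})=-\widetilde{\theta}(\gamma)$, once it is normalised so as to vanish at the identity of $\widetilde{G}$. Indeed, applying $\phi$ and using $\theta(A^{-1})=\theta(A)^{-1}$ from Exercise \ref{trace}, the map $\gamma\mapsto\widetilde{\theta}(\gamma)+\widetilde{\theta}(\gamma^{-1})$ is a continuous $2\pi\mathbb{Z}$-valued function on the connected group $\widetilde{G}$ which vanishes at the identity, hence is identically zero.

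Next I would bound a single commutator. Writing $[\alpha,\beta]=\alpha\beta\alpha^{-1}\beta^{-1}$ and applying Lemma \ref{angle} to the three successive products $\alpha\cdot\beta$, then $(\alpha\beta)\cdot\alpha^{-1}$, then $(\alpha\beta\alpha^{-1})\cdot\beta^{-1}$, the triangle inequality yields
\[
\Bigl|\widetilde{\theta}([\alpha,\beta])-\bigl(\widetilde{\theta}(\alpha)+\widetilde{\theta}(\beta)+\widetilde{\theta}(\alpha^{-1})+\widetilde{\theta}(\beta^{-1})\bigr)\Bigr|<\tfrac{3\pi}{2},
\]
and the oddness of $\widetilde{\theta}$ makes the parenthesised sum vanish, so $|\widetilde{\theta}([\alpha,\beta])|<\tfrac{3\pi}{2}$. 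Then I would assemble the product: setting $c_i=[\alpha_i,\beta_i]$ and splitting off one factor at a time, $g-1$ applications of Lemma \ref{angle} give $\bigl|\widetilde{\theta}\bigl(\prod_{i=1}^{g}c_i\bigr)-\sum_{i=1}^{g}\widetilde{\theta}(c_i)\bigr|<(g-1)\tfrac{\pi}{2}$, and combining this with the commutator bound,
\[
\Bigl|\widetilde{\theta}\bigl(\textstyle\prod_{i=1}^{g}c_i\bigr)\Bigr|<g\cdot\tfrac{3\pi}{2}+(g-1)\tfrac{\pi}{2}=2\pi g-\tfrac{\pi}{2}<2\pi g,
\]
whence $|\delta(\rho)|<g-\tfrac14<g$.

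I do not expect a genuine obstacle here: the proof is a disciplined accounting of the error term in Lemma \ref{angle}, and the crude constant $\tfrac{3\pi}{2}$ per commutator is already comfortably enough. The only point that requires a moment's care is the oddness $\widetilde{\theta}(\gamma^{-1})=-\widetilde{\theta}(\gamma)$, which must be obtained from connectedness of $\widetilde{G}$ rather than from any (non-existent) multiplicativity of $\widetilde{\theta}$. I would also remark that, since $\delta(\rho)$ is an integer, the estimate in fact gives $|\delta(\rho)|\le g-1$.
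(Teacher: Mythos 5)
Your proof is correct and is essentially the paper's argument: both amount to $4g-1$ applications of Lemma \ref{angle} (you group them commutator by commutator, the paper expands the full $4g$-fold product in one telescoping pass) combined with the oddness $\widetilde{\theta}(\gamma^{-1})=-\widetilde{\theta}(\gamma)$, and both land on the same bound $|\delta(\rho)|<g-\tfrac{1}{4}<g$. If anything, your connectedness argument for the oddness of $\widetilde{\theta}$ is a bit more careful than the paper's bare appeal to Exercise \ref{trace}, which only gives $\theta(A^{-1})=\theta(A)^{-1}$ in $G$ and leaves the lift to $\widetilde{G}$ implicit.
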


\begin{proof}
Applying Lemma \ref{angle} $4g-1$ times one obtains:
\[|\widetilde{\theta}([\alpha_1,\beta_1]\dots [\alpha_g,\beta_g] )-\widetilde{\theta}(\alpha_{1})-\widetilde{\theta}(\beta_{1})-\widetilde{\theta}(\alpha_{1}^{-1})-\widetilde{\theta}(\beta^{-1}_1) \dots-\widetilde{\theta}(\alpha_{g})-\widetilde{\theta}(\beta_{g})-\widetilde{\theta}(\alpha_{g}^{-1})-\widetilde{\theta}(\beta^{-1}_g)  |<(4g-1)\frac{\pi}{2}\]
By Exercise \ref{trace} we know that
\[\widetilde{\theta}(\alpha)+\widetilde{\theta}(\alpha^{-1})=0,\]
so that the inequality above becomes:
\[|\widetilde{\theta}([\alpha_1,\beta_1]\dots [\alpha_g,\beta_g])  |<(4g-1)\frac{\pi}{2}.\]

Dividing by $2\pi$ one obtains:
\[ |\delta(\rho)|<g-\frac{1}{4}<g.\]

\end{proof}

The lemma gives one direction of Milnor's inequalities. We will now show that the converse is also true.
Let us set $A_{0}:= \left( \begin{array}{cl}
	                               2 & 0 \\
	                              0 & 1/2
	                     \end{array} \right)  $                    and   $\alpha_{0} \in p^{-1}(A_{0})$ so that $\widetilde{\theta}(\alpha_{0})=0$.
         We will denote by $K$ and $\widetilde{K}$ the conjugacy classes of $A_{0}$ and $\alpha_{0}$ respectively. \\

         Since $\R$ is simply connected, the group homomorphism $\phi: \R \rightarrow S\subset G$ lifts to a homomorphism $\R \rightarrow \widetilde{G}$ and therefore
         $\R$ acts on $\widetilde{G}$ and also on conjugacy classes of $\widetilde{G}$.

\begin{lemma}\label{productmil}
	Any element of $\pi \widetilde{K}$ can be written as a product of two elements in $\widetilde{K}$.
\end{lemma}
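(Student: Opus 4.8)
The plan is to reduce the claim to an explicit identity between $2\times2$ matrices and then to remove the lifting ambiguity with the quasimorphism estimate of Lemma~\ref{angle}. Let $\widetilde{\phi}\colon\R\to\widetilde{G}$ be the homomorphism lifting $\phi$ and put $z:=\widetilde{\phi}(\pi)$, so $\widetilde{\theta}(z)=\pi$. Since $p(z)=\phi(\pi)=-\mathsf{id}$ is central in $G$, Exercise~\ref{center} gives $z\in Z(\widetilde{G})$, so $\pi\widetilde{K}=z\widetilde{K}$ is exactly the conjugacy class of $z\alpha_{0}$. Because conjugation carries a product of two elements of $\widetilde{K}$ to a product of two such elements, it is enough to write $z\alpha_{0}=\mu\nu$ with $\mu,\nu\in\widetilde{K}$. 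I also record that $\widetilde{\theta}(z\alpha_{0})=\pi$: indeed $p(z\alpha_{0})=-A_{0}$ and $\theta(-A_{0})=-\mathsf{id}=\phi(\pi)$, so $\widetilde{\theta}(z\alpha_{0})\in\pi+2\pi\mathbb{Z}$, while Lemma~\ref{angle} applied to $z,\alpha_{0}$ (using $\widetilde{\theta}(\alpha_{0})=0$) confines it to $(\pi/2,3\pi/2)$.

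The concrete input is a pair $M,N\in K$ with $MN=-A_{0}$; one may take
\[
M=\begin{pmatrix}5&1\\ -\tfrac{27}{2}&-\tfrac{5}{2}\end{pmatrix},\qquad
N=\begin{pmatrix}5&\tfrac12\\ -27&-\tfrac{5}{2}\end{pmatrix}.
\]
Both have determinant $1$ and trace $\tfrac52$, hence eigenvalues $2$ and $\tfrac12$, so $M,N\in K$, and one checks directly that $MN=-A_{0}$. Moreover, by the formula of Exercise~\ref{trace}, $\theta(M)=\phi(\psi_{M})$ and $\theta(N)=\phi(\psi_{N})$ with $\psi_{M},\psi_{N}\in(0,\pi/2)$, since for both matrices $x=a+d=\tfrac52>0$ and $y=b-c>0$.

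Now lift. Writing $M=hA_{0}h^{-1}$ and choosing any lift $\widetilde{h}$ of $h$, the element $\mu:=\widetilde{h}\,\alpha_{0}\,\widetilde{h}^{-1}$ lies in $\widetilde{K}$ with $p(\mu)=M$; choose $\nu\in\widetilde{K}$ with $p(\nu)=N$ in the same way. Then $p(\mu\nu)=-A_{0}$, so $\mu\nu=c^{n}z\alpha_{0}$ for some $n\in\mathbb{Z}$, where $c$ generates $\ker p=\pi_{1}(G)$, and $\widetilde{\theta}(c^{n}z\alpha_{0})=2\pi n+\pi$ (using $\widetilde{\theta}(c)=2\pi$, $\widetilde{\theta}(z\alpha_{0})=\pi$ and Lemma~\ref{angle}). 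To force $n=0$ I use two facts. First, $\widetilde{K}$ is connected (a continuous image of $\widetilde{G}$) and $\theta$ maps every matrix of $K$ to one of positive trace (the trace of $\theta(A)$ is proportional to $\mathrm{tr}\,A=\tfrac52$ on $K$), so $\widetilde{\theta}(\widetilde{K})$ lies in the interval $(-\pi/2,\pi/2)$ containing $\widetilde{\theta}(\alpha_{0})=0$; combined with $\theta(M)=\phi(\psi_{M})$ and $\theta(N)=\phi(\psi_{N})$ this gives $\widetilde{\theta}(\mu)=\psi_{M}$ and $\widetilde{\theta}(\nu)=\psi_{N}$, both in $(0,\pi/2)$. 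Second, Lemma~\ref{angle} gives $|\widetilde{\theta}(\mu\nu)-\widetilde{\theta}(\mu)-\widetilde{\theta}(\nu)|<\pi/2$, hence $\widetilde{\theta}(\mu\nu)\in(-\pi/2,3\pi/2)$; and since $p(\mu\nu)=-A_{0}$ we also have $\widetilde{\theta}(\mu\nu)\in\pi+2\pi\mathbb{Z}$. Therefore $\widetilde{\theta}(\mu\nu)=\pi$, so $2\pi n+\pi=\pi$, i.e. $n=0$ and $\mu\nu=z\alpha_{0}$.

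The heart of the argument is this last step: controlling the winding integer $n$ in the lift of $MN=-A_{0}$. The estimate of Lemma~\ref{angle} only provides a window of width just under $2\pi$, so the witnesses $M,N$ must be chosen so that $\theta(M)$ and $\theta(N)$ are rotations by angles on the same side of $0$; this is what squeezes $\widetilde{\theta}(\mu\nu)$ into $(-\pi/2,3\pi/2)$ and excludes the competitor $n=-1$. One must also keep track of the sign — that $\theta(-A_{0})$ is rotation by $\pi$ and not by $0$ — which is exactly the feature that makes $\pi\widetilde{K}$ differ from $\widetilde{K}$.
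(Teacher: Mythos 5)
Your proof is correct. It follows the same basic strategy as the paper's -- exhibit two explicit conjugates of $A_0$ whose product lies over $\pi K$, then use the quasimorphism estimate of Lemma~\ref{angle} to control the ambiguity in the lift -- but the execution differs in a worthwhile way. The paper takes $A_1\in K$ with $A_2=A_0A_1\in\pi K$, concludes only that $\alpha_0\alpha_1\in n\pi\widetilde{K}$ with $n$ odd and $|n|\le 1$, and then must split into the cases $n=1$ and $n=-1$, disposing of the latter by passing to inverses (which tacitly uses $\widetilde{K}^{-1}=\widetilde{K}$). You instead arrange the witnesses so that $MN=-A_0$ exactly and, crucially, so that $\theta(M)$ and $\theta(N)$ are rotations by angles in $(0,\pi/2)$; together with the connectedness observation that $\widetilde{\theta}(\widetilde{K})\subset(-\pi/2,\pi/2)$ (a point the paper uses implicitly but does not spell out), this squeezes $\widetilde{\theta}(\mu\nu)$ into a window containing a single element of $\pi+2\pi\mathbb{Z}$, so the winding integer is pinned to $n=0$ and no case analysis is needed. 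The price is a slightly heavier explicit computation ($M$, $N$, their traces, determinants, product, and the signs of $x=a+d$ and $y=b-c$), and you should make sure the identification $\pi\widetilde{K}=z\widetilde{K}=$ the conjugacy class of $z\alpha_0$ (via Exercise~\ref{center}) is stated where it is first used, as you did; what it buys is a cleaner endgame and an argument that makes the role of the sign of the rotation angles completely transparent.
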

\begin{proof}
	Consider the matrices:

	              \[A_{1}=\left(  \begin{array}{cl}
	                         -5/2 & 9/2\\
	                         -3 & 5
	              \end{array}\right)\] and
	          \[A_{2}=A_{0} A_{1} =
	              \left( \begin{array}{cl}
	                          -5 & 9\\
	                          -3/2 & 5/2
	
	              \end{array}\right).\]
Since the trace and determinant characterise the sets ${K}$ and $ \pi K$,
                                 then $A_{0}, A_{1} \in K$ and  $A_{2} \in \pi K$.
	            Let  $\alpha_{0},\alpha_{1} \in \widetilde{K} $ be such that $p(\alpha_i)=A_i$ and $\widetilde{\theta}(\alpha_{i})=0$. Then $\alpha_{0} \alpha_{1} \in  n\pi \widetilde{K}$ for some odd integer $n$. Let us show that $n=\pm1$.
	            Given  $\alpha \in \widetilde{K}$ we know that $p(\alpha)$ has positive trace and therefore $\cos(\widetilde{\theta}(\alpha))>0$.
Therefore, if $\beta \in n\pi \widetilde{K}$ with $|n|>1$ then:

\[|\widetilde{\theta}(\beta)|>\frac{5\pi}{2}.\]
 Applying Lemma \ref{angle} we obtain:

\[ |\widetilde{\theta}(\alpha_{0} \alpha_{1})|< |\widetilde{\theta}(\alpha_{0})|+|\widetilde{\theta}(\alpha_{1})| + \frac{\pi}{2} < \frac{3}{2} \pi \]
We conclude that $n=\pm1$. Now:
\begin{itemize}
\item If $n=1$ we set:\[\gamma(\alpha_{0} \alpha_{1})\gamma^{-1}=(\gamma \alpha_{0} \gamma^{-1})(\gamma \alpha_{1} \gamma^{-1}) \in \widetilde{K}\widetilde{K}.\]
\item If $n=-1$ then $\gamma(\alpha_{0} \alpha_{1})\gamma^{-1} \in -\pi \widetilde{K}$ and $\gamma(\alpha_{0} \alpha_{1})^{-1} \gamma^{-1} \in \pi \widetilde{K}$. We then write:
\[\gamma(\alpha_{0} \alpha_{1})^{-1}\gamma^{-1}=(\gamma \alpha_{1}^{-1} \gamma^{-1})(\gamma \alpha_{0}^{-1}\gamma^{-1}) \in \widetilde{K} \widetilde{K}.\]
\end{itemize}
\end{proof}

\begin{lemma}\label{lema_3}
	 For every $\alpha \in \pi \widetilde{K}$ there are elements $\beta_1 \beta_{2} \in \widetilde{G}$ such that $\alpha=\beta_{1}\beta_{2}\beta_{1}^{-1}\beta_{2}^{-1}$.
\end{lemma}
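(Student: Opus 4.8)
The plan is to combine Lemma \ref{productmil} with the observation that the conjugacy class $\widetilde K$ is closed under inversion. Granting this, write $\alpha = \gamma_1\gamma_2$ with $\gamma_1,\gamma_2 \in \widetilde K$ by Lemma \ref{productmil}. Then $\gamma_2^{-1}$ also lies in $\widetilde K$, so $\gamma_1$ and $\gamma_2^{-1}$ are conjugate in $\widetilde G$; choosing $c \in \widetilde G$ with $\gamma_2^{-1} = c\gamma_1 c^{-1}$, equivalently $\gamma_2 = c\gamma_1^{-1}c^{-1}$, we obtain
\[ \alpha = \gamma_1\gamma_2 = \gamma_1\, c\, \gamma_1^{-1}\, c^{-1}, \]
so the lemma holds with $\beta_1 := \gamma_1$ and $\beta_2 := c$.

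It remains to prove that $\widetilde K^{-1} = \widetilde K$, for which it is enough to check $\alpha_0^{-1} \in \widetilde K$. Since $\operatorname{tr}(A_0^{-1}) = \tfrac{5}{2} = \operatorname{tr}(A_0)$ and $\det(A_0^{-1}) = 1 = \det(A_0)$, and trace and determinant characterise conjugacy classes in $G$ (as in the proof of Lemma \ref{productmil}), we have $A_0^{-1} \in K$. Because $p$ is a surjective homomorphism, $p(\widetilde K) = K$, so there is $\delta \in \widetilde K$ with $p(\delta) = A_0^{-1} = p(\alpha_0^{-1})$; hence $\alpha_0^{-1} = z\delta$ for some $z$ in $\ker p$, and $\widetilde\theta(z) \in 2\pi\mathbb{Z}$. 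I now bound $\widetilde\theta(z)$. On one hand, every element of $\widetilde K$ projects to a matrix of trace $\tfrac{5}{2} > 0$, so its $\theta$-image has positive trace by Exercise \ref{trace}; thus $\widetilde\theta$ takes values in $(-\pi/2,\pi/2) + 2\pi\mathbb{Z}$ on $\widetilde K$, and since $\widetilde K$ is connected with $\widetilde\theta(\alpha_0) = 0$ we conclude $\widetilde\theta(\widetilde K) \subseteq (-\pi/2,\pi/2)$, in particular $|\widetilde\theta(\delta)| < \pi/2$. On the other hand $\widetilde\theta(\alpha_0^{-1}) = -\widetilde\theta(\alpha_0) = 0$ by Exercise \ref{trace}, so applying Lemma \ref{angle} to the product $\alpha_0^{-1} = z\delta$ and using $\widetilde\theta(z\delta) = 0$ yields
\[ |\widetilde\theta(z) + \widetilde\theta(\delta)| < \pi/2, \]
whence $|\widetilde\theta(z)| < \pi$. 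As $\widetilde\theta$ is injective on $\ker p$ with image $2\pi\mathbb{Z}$, this forces $z = 1$, so $\alpha_0^{-1} = \delta \in \widetilde K$.

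The only subtle point is the inversion-invariance of $\widetilde K$: although $A_0^{-1}$ is visibly $G$-conjugate to $A_0$, lifting this to $\widetilde G$ requires ruling out that $\alpha_0^{-1}$ falls into a $\ker p$-translate of $\widetilde K$ different from $\widetilde K$ itself, and this is precisely where the winding estimate above --- positivity of the trace together with connectedness of $\widetilde K$ --- is needed. The remaining steps are routine manipulations in $\widetilde G$.
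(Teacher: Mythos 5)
Your proof is correct and follows essentially the same route as the paper's: factor $\alpha$ as a product of two elements of $\widetilde{K}$ via Lemma \ref{productmil}, then use that $\widetilde{K}$ is closed under inversion to conjugate one factor into the inverse of the other and obtain a commutator. The only difference is that the paper simply asserts $\beta_1^{-1}\in\widetilde{K}$ without comment, whereas you supply the trace-positivity/connectedness winding estimate showing $\alpha_0^{-1}$ lands in $\widetilde{K}$ itself rather than in a nontrivial $\ker p$-translate --- a justified refinement of the same argument, not a different one.
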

\begin{proof}
	By the previous lemma we have $\beta_{1},\beta_{3} \in \widetilde{K}$ such that $\alpha=\beta_{1}\beta_{3}$. Since $\beta_{1} ^{-1} \in \widetilde{K}$, there exists $\beta_{2} \in \widetilde{G}$ so that $\beta_{2} \beta_{1}^{-1} \beta_{2}^{-1}=\beta_{3}$.\\
	Then $$\alpha=\beta_{1} \beta_{3}=\beta_{1} \beta_{2} \beta_{1}^{-1}\beta_{2}^{-1}$$
\end{proof}
\begin{lemma}\label{list}
	For every $n \geq1 $ there are $\gamma_1,\dots, \gamma_{n+1} \in \widetilde{K}$ so that $\gamma_1 \dots \gamma_{n+1}=n \pi\alpha_{0}.$
\end{lemma}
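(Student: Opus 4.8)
The plan is to prove the lemma by induction on $n$, using Lemma \ref{productmil} as the engine that splits off one extra factor at each step. The base case $n=1$ is immediate: the element $\pi\alpha_0$ lies in $\pi\widetilde K$ — indeed $\pi\widetilde K$ is precisely the conjugacy class of $\pi\alpha_0$, since (by Exercise \ref{center}, applied to $\phi(\pi)=-\mathrm{id}\in Z(G)$) the $\pi$-action on $\widetilde G$ is multiplication by a central element — so Lemma \ref{productmil} supplies $\gamma_1,\gamma_2\in\widetilde K$ with $\gamma_1\gamma_2=\pi\alpha_0$.

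For the inductive step, suppose $(n-1)\pi\alpha_0=\gamma_1\cdots\gamma_n$ with each $\gamma_i\in\widetilde K$. I would then write
\[
 n\pi\alpha_0=\pi\cdot\big((n-1)\pi\alpha_0\big)=\pi\cdot(\gamma_1\gamma_2\cdots\gamma_n)=(\pi\cdot\gamma_1)\,\gamma_2\cdots\gamma_n,
\]
where the first equality is additivity of the $\R$-action and the last uses that $\pi$ acts by left multiplication by a central element, so it only affects the first factor. Since $\gamma_1\in\widetilde K$ and the $\pi$-action commutes with conjugation, $\pi\cdot\gamma_1\in\pi\widetilde K$; applying Lemma \ref{productmil} to it produces $\delta_1,\delta_2\in\widetilde K$ with $\delta_1\delta_2=\pi\cdot\gamma_1$. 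Substituting gives $n\pi\alpha_0=\delta_1\delta_2\gamma_2\cdots\gamma_n$, a product of $n+1$ elements of $\widetilde K$, which closes the induction.

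The only point requiring care — and the nearest thing to an obstacle here — is the bookkeeping around the $\pi$-action: one must check that $\pi$ acts through a central element of $\widetilde G$, so that $\pi\widetilde K$ is genuinely a conjugacy class and $\pi\cdot\gamma\in\pi\widetilde K$ for every $\gamma\in\widetilde K$, and that $\pi\cdot(ab)=(\pi\cdot a)\,b$. This is exactly what Exercise \ref{center} provides. Granting it, the argument is a routine induction, since all the genuine work — exhibiting an arbitrary element of $\pi\widetilde K$ as a product of two elements of $\widetilde K$ via the explicit matrices $A_1,A_2$ — is already contained in Lemma \ref{productmil}.
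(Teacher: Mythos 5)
Your proof is correct and follows essentially the same route as the paper: an induction in which one multiplies the inductive identity by $\pi$, observes that $\pi$ acts through a central element of $\widetilde{G}$ so only the first factor is affected, and then applies Lemma \ref{productmil} to split $\pi\gamma_1$ into two elements of $\widetilde{K}$. The extra care you take in justifying the centrality of the $\pi$-action via Exercise \ref{center} is exactly the point the paper leaves implicit.
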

\begin{proof}
	We will argue by induction. The case $n=1$ is Lemma \ref{productmil}. Now assume that the lemma holds for $n$ so that we can write:
	\[\gamma_1 \dots \gamma_{n+1}=n \pi\alpha_{0}.\] Then:
	
	\[ (n+1)\pi \alpha_0= \pi (n \pi) \alpha_0=\pi (\gamma_1 \dots \gamma_{n+1})=(\pi \gamma_1) \dots \gamma_{n+1}=\gamma \gamma' \gamma_2 \dots \gamma_{n+1}.\]
	Here $\gamma, \gamma' \in \widetilde{K}$ and we have applied the Lemma \ref{productmil} to $\pi \gamma_1$.

\end{proof}

\begin{theorem}\label{Milnor}
A rank two oriented vector bundle $\pi:E \rightarrow \Sigma$ admits a flat connection if and only if
\[ |D(E)|<g.\]
\end{theorem}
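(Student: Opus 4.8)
The plan is to deduce Theorem~\ref{Milnor} from the structural results already assembled, using the dictionary between flat bundles and representations. First I would recall, via the Lemma characterising flat bundles, that $E$ admits a flat connection if and only if $E\cong E^\rho$ for some representation $\rho:\pi_1(\Sigma)\to G$, and that by Lemma~\ref{milnornumber} the degree of such a bundle equals $\delta(\rho)$. Thus the statement reduces to the purely group-theoretic claim: an integer $d$ is of the form $\delta(\rho)$ for some $\rho$ if and only if $|d|<g$.

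The forward implication is immediate from Lemma~\ref{bound}, which gives $|\delta(\rho)|<g$ for every $\rho$; combined with $D(E^\rho)=\delta(\rho)$ this shows that a flat bundle satisfies $|D(E)|<g$. For the converse I would argue that every integer $d$ with $|d|<g$ is realised. The key input is Lemma~\ref{list}: for every $n\ge 1$ there are $\gamma_1,\dots,\gamma_{n+1}\in\widetilde K$ with $\gamma_1\cdots\gamma_{n+1}=n\pi\alpha_0$, hence (pushing down by $p$, and noting $n\pi$ acts trivially after projection since it lands in $\ker p$) a product of $n+1$ elements of $K\subset G$ equal to $\mathrm{id}$ whose associated lift records the integer $n$. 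Using Lemma~\ref{lema_3}, each element of $\pi\widetilde K$ is a single commutator $[\beta_1,\beta_2]$, and more generally one packages $n+1$ factors in $\widetilde K$ (or their analogues) into $g$ commutators $\prod_{i=1}^g[\alpha_i,\beta_i]$ whose value in $\widetilde G$ is $d\cdot 2\pi$ worth of $\widetilde\theta$, i.e. $\delta(\rho)=d$, provided $|d|\le g-1$; the remaining commutators beyond those needed are taken trivial. Projecting gives matrices $A_i,B_i\in G$ with $\prod[A_i,B_i]=\mathrm{id}$, hence a representation $\rho$ with $\delta(\rho)=d$, and therefore a flat bundle $E^\rho$ with $D(E^\rho)=d$. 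One also treats $d=0$ trivially (trivial representation) and negative $d$ by the symmetry $\widetilde\theta(\alpha^{-1})=-\widetilde\theta(\alpha)$, e.g. replacing $\rho$ by a suitably conjugated/inverted representation.

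The main obstacle is the bookkeeping in the converse: one must carefully convert the length-$(n+1)$ product from Lemma~\ref{list} into exactly $g$ commutators while keeping precise track of the value of $\widetilde\theta$ on the resulting product in $\widetilde G$, so that $\delta(\rho)$ comes out to be the prescribed $d$ rather than something shifted by the ambiguities in choosing lifts. This requires using that $\widetilde\theta(\alpha_0)=0$ for the chosen lift, that $\ker p\subset Z(\widetilde G)$ (Exercise~\ref{center}) so commutators are insensitive to the choice of lifts, and that the element $n\pi\alpha_0$ projects to $A_0$ while contributing $n$ to $\tfrac{1}{2\pi}\widetilde\theta$. Once the combinatorial identity expressing $\prod_{i=1}^g[\alpha_i,\beta_i]$ as $n\pi\alpha_0\cdot(\text{something in }\widetilde K\text{-free part})$ with $n=d$ is in hand, the theorem follows by assembling the pieces. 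I would end by remarking that this simultaneously reproves Chern's conjecture for surfaces, since a flat tangent bundle has $|D(T\Sigma)|=|\chi(\Sigma)|=2g-2$, which violates $|D(E)|<g$ as soon as $g\ge 2$, while the genus $0$ and $1$ cases are handled directly.
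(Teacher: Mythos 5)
Your forward direction is exactly the paper's: Lemma \ref{milnornumber} identifies $D(E^\rho)$ with $\delta(\rho)$, Lemma \ref{bound} bounds $|\delta(\rho)|<g$, and the lemma characterising flat bundles as bundles of the form $E^\rho$ finishes that half. The converse, however, is where the real work lies, and your proposal stops short of it: you correctly list the ingredients (Lemmas \ref{list} and \ref{lema_3}, centrality of $\ker p$, $\widetilde\theta(\alpha_0)=0$, padding with trivial commutators to reduce to $|d|\le g-1$), but you defer the decisive step to ``once the combinatorial identity \dots is in hand,'' which is precisely what has to be proved. Worse, the one concrete assertion you do make in that direction is false: pushing $\gamma_1\cdots\gamma_{n+1}=n\pi\alpha_0$ down by $p$ does \emph{not} give a product of elements of $K$ equal to $\mathrm{id}$. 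The element $n\pi$ lies in $\ker p$ only when $n$ is even (since $\phi(n\pi)=(-1)^n\,\mathrm{id}$), and in any case the right-hand side still carries the factor $\alpha_0$, so the projected product is $\pm A_0\neq\mathrm{id}$. Without a product of commutators in $\widetilde G$ that projects to the identity in $G$, you have no representation of $\pi_1(\Sigma)$ at all, so the relation $\prod_i[A_i,B_i]=\mathrm{id}$ is never established.

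The paper's proof supplies exactly the missing mechanism, and it is not mere bookkeeping. One takes $\gamma_1,\dots,\gamma_{g-1}\in\widetilde K$ with $\gamma_1\cdots\gamma_{g-1}=(g-2)\pi\alpha_0$ and \emph{appends} $\gamma_g=\alpha_0^{-1}$, so that $\gamma_1\cdots\gamma_g=(g-2)\pi$ is central and free of $\alpha_0$. Then Lemma \ref{lema_3} is applied not to $\gamma_i$ but to $\pi\gamma_i$, writing $[\alpha_i,\beta_i]=\pi\gamma_i$ for each $i$; the $g$ extra central factors of $\pi$ combine with $(g-2)\pi$ to give $\prod_{i=1}^g[\alpha_i,\beta_i]=(2g-2)\pi$, which \emph{is} in $\ker p$ (an even multiple of $\pi$) and satisfies $\frac{1}{2\pi}\widetilde\theta=g-1$. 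Projecting then yields honest matrices $A_i,B_i$ with $\prod_i[A_i,B_i]=\mathrm{id}$ and $\delta(\rho)=g-1$, and the general case $|d|<g$ follows by the orientation flip and by setting unused generators equal to the identity, as you indicated. So your outline follows the paper's route, but the cancellation of $\alpha_0$ and the parity count of the central $\pi$'s --- the heart of the converse --- are missing, and the claim you substitute for them does not hold.
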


\begin{proof}
One direction is guaranteed by Lemma \ref{bound}. We want to prove that a bundle $E$ with $|D(E)|<g$ admits a flat connection. It suffices to show that
there is some representation $\rho: \pi_1(\Sigma) \rightarrow G$ such that $\delta(\rho)=D(E)$. Since the existence of a flat connection does not depend on the orientation we may assume that $D(E)>0$. Furthermore, some of the matrices in the representation can be set to equal the identity, therefore it suffices to show the case where $D(E)=g-1$ and $g>1$.
By Lemma \ref{list} there exist $\gamma_{1},...,\gamma_{g-1} \in \widetilde{K}$ so that:
\[\gamma_1 \dots \gamma_{g-1}=(g-2) \pi\alpha_{0}.\]
If we set $\gamma_g=\alpha_{0}^{-1}$ then:
\[\gamma_1 \dots \gamma_{g-1}\gamma_g=(g-2) \pi.\]
Now by Lemma \ref{lema_3}, for every $1 \leqslant i \leqslant g$ there exists $\alpha_i,\beta_i \in \widetilde{G}$ so that
\[[\alpha_i,\beta_i]= \pi \gamma_i.\]
Then
\[\prod_{i=1}^g[\alpha_i,\beta_i]=2\pi (g-1).\]
This implies that by setting $A_i=p(\alpha_i)$ and $B_i=p(\beta_i)$ one obtains a representation $\rho$ such that:
\[\delta(\rho)=\frac{1}{2\pi}\prod_{i=1}^g[\alpha_i,\beta_i]= g-1.\]
\end{proof}

\begin{corollary}
The torus is the only closed oriented surface that admits an affine structure. In particular, Chern's conjecture holds in dimension $d=2$.
\end{corollary}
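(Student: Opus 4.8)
The plan is to derive everything from Milnor's Theorem \ref{Milnor} applied to the tangent bundle. Suppose $\Sigma$ is a closed oriented surface of genus $g$ admitting an affine structure. By the correspondence between affine structures on a manifold and flat torsion free connections on its tangent bundle, $T\Sigma$ carries a flat connection. Its degree is the Euler characteristic: $D(T\Sigma)=\int_\Sigma e(T\Sigma)=\chi(\Sigma)=2-2g$.

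First I would dispose of the genus zero case on its own, since Theorem \ref{Milnor} is stated only for surfaces of positive genus: if $g=0$ then $\Sigma\cong S^2$ is compact with trivial, hence finite, fundamental group, so it admits no affine structure (an affine connected manifold with finite fundamental group cannot be compact, as shown above). For $g\ge 1$, Theorem \ref{Milnor} gives $|2-2g|=|D(T\Sigma)|<g$; but for $g\ge 2$ one has $|2-2g|=2g-2\ge g$, a contradiction, so necessarily $g=1$. Conversely, the torus $\mathbb{T}^2=\R^2/\mathbb{Z}^2$ does carry an affine structure, namely the quotient structure of the Example in Section 1, so it is indeed the unique closed oriented surface admitting an affine structure.

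Since $\chi(\mathbb{T}^2)=0$, this proves Chern's conjecture for closed oriented affine surfaces. To cover the non-orientable case as well, I would pass to the orientation double cover $\widehat\Sigma\to\Sigma$: it is a closed oriented surface, and since the covering map is a local diffeomorphism, pulling back the affine atlas of $\Sigma$ makes $\widehat\Sigma$ an affine manifold; by what was just proved $\widehat\Sigma\cong\mathbb{T}^2$, whence $\chi(\Sigma)=\tfrac12\chi(\widehat\Sigma)=0$. I do not expect any genuine obstacle here, as the whole content sits in Theorem \ref{Milnor}; the only points requiring care are handling the sphere separately, because Milnor's inequality is only available for $g>0$, and remembering to invoke the already-constructed affine structure on the torus in order to get the uniqueness clause rather than merely a list of excluded genera.
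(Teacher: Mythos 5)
Your argument is correct, and for the main case it coincides with the paper's: both apply Milnor's inequality (Theorem \ref{Milnor}) to $T\Sigma$ with $D(T\Sigma)=\chi(\Sigma)=2-2g$ and rule out $g\ge 2$ because $2g-2\ge g$. The only divergence is the sphere. The paper observes that over the simply connected $S^2$ a flat connection on $TS^2$ would trivialize the tangent bundle, contradicting $\chi(S^2)=2$ (this in fact excludes even flat connections with torsion); you instead invoke the earlier corollary that a compact connected affine manifold cannot have finite fundamental group, which rests on the developing map and only excludes genuine affine structures --- weaker, but entirely sufficient for the statement. Your two additions are sound and go slightly beyond the paper's proof: citing the standard affine structure on $\mathbb{T}^2=\R^2/\mathbb{Z}^2$ to secure the uniqueness clause (the paper leaves this implicit), and passing to the orientation double cover to handle non-orientable closed affine surfaces, using $\chi(\Sigma)=\tfrac12\chi(\widehat\Sigma)=0$, which makes the ``Chern's conjecture in dimension $2$'' claim literally complete rather than only for oriented surfaces.
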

\begin{proof}
Since $S^2$ is simply connected, a flat connection on $TM$ would give a trivialization of $TM$, which does not exist because the Euler characteristic of the sphere is $2$.
Let us now assume that $g>1$. Then $D(TM)=\chi(\Sigma)=2-2g$. Thererfore:
\[|D(TM)|= 2|(1-g)|=2g-2 \geq g.\]
\end{proof}
\subsection{The counterexamples of Smillie}

Chern's conjecture asks about the Euler characteristic of closed affine manifolds. For a while, it was not known whether the condition that the connection is torsion free was essential. A manifold $M$ is said to be flat if its tangent bundle admits a flat (not necessarily torsion free) connection. A strong version of Chern's conjecture asking whether the Euler characteristic of a closed flat manifold vanishes was an open problem for a while. This question was answered by Smillie \cite{Smillie}, who proved the following

\begin{theoremnn}[Smillie]
For each $n>1$ there are closed flat manifolds $M^{2n}$ which have non-zero Euler characteristic.
\end{theoremnn}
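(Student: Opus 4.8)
The plan is to build the examples by starting in dimension $4$ with a surface construction and then taking products. The key mechanism is Milnor's inequality in the direction we have already proved: a rank two oriented bundle $E \to \Sigma$ over a surface of genus $g$ carries a flat connection precisely when $|D(E)| < g$. But nothing in that statement forces $D(E) = \chi(\Sigma)$; the flat bundle need not be the tangent bundle. So first I would fix a closed oriented surface $\Sigma_g$ with $g$ large and choose a rank two oriented flat bundle $E \to \Sigma_g$ with $D(E) \neq 0$, for instance $D(E) = g - 1$, which exists by Theorem \ref{Milnor}. The total space of (the disk bundle of) $E$ is a $4$-manifold with boundary; I want instead a \emph{closed} $4$-manifold $M^4$ whose tangent bundle is flat and whose Euler characteristic is nonzero. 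The standard trick is to realise $M^4$ as the total space of a flat surface bundle: take a representation $\rho\colon \pi_1(\Sigma_h) \to \mathrm{GL}^+(2,\mathbb{R})$ (or better into the mapping class group / a surface group acting on a surface) so that the associated $\Sigma_g$-bundle $M^4 \to \Sigma_h$ is flat as a manifold bundle. Then $TM^4$ fits in an exact sequence $0 \to T^{\mathrm{vert}} \to TM^4 \to \pi^* T\Sigma_h \to 0$; the base tangent bundle is pulled back from $\Sigma_h$ (and can be arranged flat after stabilising or by using the flat structure of the bundle), and the vertical bundle is the flat bundle $E^\rho$ along the fibres. Hence $TM^4$ is an extension of flat bundles, so it is flat (an extension of bundles with flat connections admits a flat connection — the connection is block upper triangular with flat diagonal blocks, and flatness of the extension class can be arranged).

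Concretely, the cleanest route is Milnor's own: if $M^4 \to \Sigma_h$ is a flat $\Sigma_g$-bundle with nonzero signature — equivalently, by the multiplicativity formula for the Euler characteristic of a fibration, $\chi(M^4) = \chi(\Sigma_g)\chi(\Sigma_h) = (2-2g)(2-2h) \neq 0$ — then $M^4$ is our first example. The existence of flat surface bundles with the required nonvanishing is exactly what Milnor's construction of $\mathrm{GL}^+(2,\mathbb{R})$-bundles over surfaces with nonzero Euler number gives, once one unpacks that the vertical tangent bundle of such a bundle is the flat $\mathbb{R}^2$-bundle $E^\rho$. So the $4$-dimensional case reduces to: produce $\rho\colon \pi_1(\Sigma_h)\to\mathrm{GL}^+(2,\mathbb{R})$ with $\delta(\rho)\neq 0$ (done, $\delta$ can be made as large as $h-1$), form $E^\rho$, let $M^4$ be its unit sphere bundle $S(E^\rho \oplus \mathbb{R})$ or the projectivisation — I would use the sphere bundle $S(E^\rho)$, a flat $S^1$-bundle, whose total space has $\chi = 0$, so that does \emph{not} work; instead take the $S^2$-bundle $S(E^\rho \oplus \underline{\mathbb{R}})$, whose vertical tangent bundle is flat and whose Euler characteristic is $2\chi(\Sigma_h) \neq 0$ when $h \geq 2$. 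Its tangent bundle is (vertical flat) $\oplus$ (pullback of $T\Sigma_h$), and $T\Sigma_h$ is flat as a bundle since $\Sigma_h$ is parallelisable after adding a trivial line — so $TM^4$ is stably flat, hence flat because $\dim M^4 = 4 \le \operatorname{rk}$.

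For the higher even dimensions $2n$ with $n > 2$, I would simply take products: $M^{2n} := M^4 \times \mathbb{T}^{2n-4}$, or more economically $M^4 \times (M^4) \times \cdots$, using that the torus is flat and affine and that $TM$ of a product is the direct sum of the pullbacks, so flatness of tangent bundles is closed under products, while $\chi(M^{2n}) = \chi(M^4)^{k}\chi(\mathbb{T}^{\ell}) $ — but $\chi(\mathbb{T}^\ell) = 0$, which kills the product with a torus. So the right move is to multiply $M^4$ with copies of itself or with other even-dimensional flat manifolds of nonzero Euler characteristic; $\chi(M^4 \times M^4) = \chi(M^4)^2 \neq 0$ handles dimension $8$, and iterating, $(M^4)^{\times k}$ handles all dimensions $4k$. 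For dimensions $\equiv 2 \pmod 4$ one needs a flat closed manifold of dimension $6$ with nonzero Euler characteristic: take $M^4 \times \Sigma$ where $\Sigma$ is a closed surface of genus $\ge 2$ — but $T\Sigma$ is not flat. Instead use $M^6 := $ an $S^2$-bundle over $\Sigma_h \times \Sigma_{h'}$ or over $M^4$ built the same way from a flat rank-three bundle; more simply, $M^6 := M^4 \times N^2$ fails, so one genuinely repeats the construction one dimension up, taking a flat $\mathbb{R}^3$-bundle over a product base.

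\medskip

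\emph{The main obstacle.} The essential difficulty — and the content of Smillie's theorem — is not the product bookkeeping but showing that the tangent bundle of the total space $M^4$ of the flat surface bundle is itself flat. One has to verify that the extension $0 \to T^{\mathrm{vert}} \to TM^4 \to \pi^*T\Sigma_h \to 0$ admits a flat connection; this is where one uses that both the base $\Sigma_h$ (stably) and the fibres (via $E^\rho$) carry flat structures and that the bundle projection $\pi$ is a \emph{flat} bundle, so the horizontal distribution coming from the flat structure splits the sequence flatly up to a correction that is controlled. Making this precise — and in particular checking the Euler characteristic really is $\chi(\Sigma_g)\chi(\Sigma_h)$ and genuinely nonzero, while keeping $\delta(\rho)$ within the Milnor bound so $E^\rho$ is flat — is the crux. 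Everything else (products, torus factors) is formal once the four-dimensional example is in hand.
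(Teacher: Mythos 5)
There is a genuine gap, and it sits exactly at the two flatness claims on which your four-dimensional example rests. First, the vertical tangent bundle of $S(E^\rho\oplus\R)$ is not flat: a flat vector bundle pulled back to a simply connected submanifold is trivial, but the restriction of the vertical bundle to any fibre $S^2$ is $TS^2$, which is nontrivial because $\chi(S^2)=2$. (The same conflation occurs in your flat-surface-bundle variant: a flat structure on the fibre bundle, i.e.\ a foliation transverse to the fibres, does not make the vertical tangent bundle flat as a vector bundle.) Second, ``stably flat, hence flat because $\dim=\mathrm{rank}$'' is not a valid implication: stably isomorphic oriented rank-$m$ bundles over a closed oriented $m$-manifold can still differ by their Euler classes, and indeed $T\Sigma_h$ is stably trivial yet is never flat for $h\geq 2$, by the very inequality you quote ($|D(T\Sigma_h)|=2h-2\geq h$). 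So neither summand of $TM^4\cong T^{\mathrm{vert}}\oplus \pi^*T\Sigma_h$ is flat, and the ``extension of flat bundles is flat'' step (which would in any case need more than flat diagonal blocks) never gets off the ground. Finally, the dimensions $\equiv 2 \bmod 4$, in particular $6$, are left without any construction, since torus factors kill $\chi$ and products of copies of $M^4$ only reach dimensions divisible by $4$.

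The missing idea is the paper's mechanism for transferring a flat structure onto the tangent bundle: instead of trying to make $TM$ flat by geometric fiat, Smillie corrects the \emph{manifold} so that its tangent bundle is forced to be isomorphic to an already-flat bundle. One takes $E=h^*(TS^2)$ over $\Sigma_3$ (Euler number $2<3$, hence flat by Theorem \ref{Milnor}), forms the flat bundle $\pi_1^*E\oplus\pi_2^*E$ over $\Sigma_3\times\Sigma_3$ with Euler number $4$, and then connect-sums with six copies of $S^1\times S^3$ to lower $\chi$ from $16$ to $4$ while preserving almost parallelizability (Lemma \ref{connected}). On a closed oriented $m$-manifold, almost trivial oriented rank-$m$ bundles are classified by their Euler class via the Hopf degree theorem (Lemma \ref{Euler}); since $TM^4$ and the pullback of the flat bundle under the degree-one collapse map $M^4\to\Sigma_3\times\Sigma_3$ have equal Euler classes, they are isomorphic, so $TM^4$ is flat. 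The analogous $M^6=((\Sigma_3\times\Sigma_3)\#\,9(S^1\times S^3))\times\Sigma_3$, with $TM^6\cong\pi^*E\oplus\pi^*E\oplus\pi^*E$, settles dimension $6$, and products of $M^4$ and $M^6$ give all even dimensions greater than $2$. Your instinct to start from Milnor's flat rank-two bundles is the right one, but without this Euler-class rigidity step on almost parallelizable manifolds there is no way to identify the tangent bundle with a flat bundle.
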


In what follows we will present Smillie's construction.
\begin{definition}
Abusing the notation, the trivial vector bundle of rank $n$ over $M$ will be denoted $\R^n$. It will be clear from the context wether we are referring to the trivial bundle or the euclidean space. We will say that a vector bundle $\pi:E \rightarrow M$ is almost trivial if \[E \oplus \R\simeq \R^m.\]
We will say that $M$ is almost parallelizable if $TM$ is almost trivial.
\end{definition}

\begin{lemma}\label{pull}
An oriented vector bundle $\pi: E \rightarrow M$ is almost parallelizable if and only if there exists a function $f: M \rightarrow S^m$ such that:
\[ f^*(TS^m)\simeq E.\]
\end{lemma}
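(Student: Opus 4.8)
The plan is to prove the two implications separately. The ``if'' direction will follow at once from the stable triviality of $TS^m$, and the ``only if'' direction will be obtained by manufacturing a map $M\to S^m$ out of a stable trivialisation of $E$; throughout, $m$ denotes the rank of $E$, so that ``almost trivial'' reads $E\oplus\R\simeq\R^{m+1}$.

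\emph{The easy direction.} Suppose $E\simeq f^*(TS^m)$ for some $f\colon M\to S^m$. The outward unit normal along $S^m\subset\R^{m+1}$ trivialises the normal bundle, so that $TS^m\oplus\R\simeq\R^{m+1}$ (this is recalled in Appendix A). Since pullback commutes with $\oplus$ and carries trivial bundles to trivial bundles, $E\oplus\R\simeq f^*(TS^m\oplus\R)\simeq f^*(\R^{m+1})\simeq\R^{m+1}$, so $E$ is almost trivial.

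\emph{The main direction.} Assume $E\oplus\R$ is trivial and fix an isomorphism $\Phi\colon E\oplus\R\xrightarrow{\ \sim\ }\R^{m+1}$ together with the standard metric on $\R^{m+1}$. The section $g(x):=\Phi(0_x,1)$ is nowhere vanishing and spans a trivial line subbundle $\ell:=\Phi(0\oplus\R)\subseteq\R^{m+1}$; normalising it, put
\[
f\colon M\longrightarrow S^m,\qquad f(x):=\frac{g(x)}{\lVert g(x)\rVert}.
\]
Since $\langle f(x)\rangle=\ell_x$ and $T_{f(x)}S^m=f(x)^{\perp}\subseteq\R^{m+1}$, we have $f^*(TS^m)=\ell^{\perp}$, the orthogonal complement of $\ell$ inside the trivial bundle $\R^{m+1}$. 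On the other hand the composite
\[
E\hookrightarrow E\oplus\R\xrightarrow{\ \Phi\ }\R^{m+1}\twoheadrightarrow\R^{m+1}/\ell
\]
is a fibrewise isomorphism, since $E\oplus 0$ is a complement of $0\oplus\R$ and $\Phi$ is an isomorphism; and the orthogonal projection $\R^{m+1}\to\ell^{\perp}$ descends to an isomorphism $\R^{m+1}/\ell\xrightarrow{\ \sim\ }\ell^{\perp}$. Composing the two gives $E\simeq\ell^{\perp}=f^*(TS^m)$.

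\emph{Orientations and the main obstacle.} Both $E$ and $TS^m$ are oriented, so it remains to arrange that the isomorphism above is orientation-preserving. A direct determinant computation shows that, with the ``outward normal last'' convention orienting $S^m$, the sign of the identification $E_x\simeq T_{f(x)}S^m$ equals the locally constant sign by which $\Phi$ fails to preserve orientation; replacing $g$ by $-g$ on those components of $M$ where this sign is $-1$ -- equivalently composing $f$ there with the antipodal map -- repairs it without disturbing anything else. I expect this orientation bookkeeping, together with the purely notational care about ranks, to be the only delicate point: the geometric content of the lemma is simply the identity $f^*(TS^m)=\ell^{\perp}\cong\R^{m+1}/\ell$, exhibiting $f^*(TS^m)$ as the cokernel of the trivial line subbundle of $\R^{m+1}$ spanned by $g$.
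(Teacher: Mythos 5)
Your proof is correct and follows essentially the same route as the paper: both directions rest on the stable trivialization $TS^m\oplus\R\simeq\R^{m+1}$, and the converse manufactures $f:M\to S^m$ directly from a chosen isomorphism $E\oplus\R\simeq\R^{m+1}$. The only cosmetic difference is that the paper defines $f(p)$ as the oriented unit normal to $\varphi(E_p)$ inside $\R^{m+1}$, so that $\varphi$ itself restricts to the desired isomorphism with the orientation built in, whereas you take the normalized image of the trivial section $(0,1)$ and insert an orthogonal projection onto $\ell^{\perp}$, correcting orientations afterwards by an antipodal twist on the relevant components --- a valid variant of the same construction.
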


\begin{proof} Since $TS^m$ is almost parallelizable, so is $f^*(TS^m)$.
On the other hand, suppose that $E$ is almost parallelizable. Once we fix an isomorphism
\[ \varphi: E \oplus \R \longrightarrow \R^{m+1},\]
there is a unique smooth function $f: M \rightarrow S^m$ such that:
\begin{itemize}
\item $f(p) \in (\varphi(E_p))^\bot$.
\item If $\{e_1, \dots , e_m\}$ is an oriented basis for $E_p$ then \[\{ f(p), \varphi(e_1),\dots, \varphi(e_m)\}\] is
an oriented basis for $\R^{m+1}$.
\end{itemize}
By construction $\varphi$ restricts to an isomorphism from $E$ to $f^*(TS^m)$.
\end{proof}

\begin{exercise}\label{emb}
Show that there exists an embedding:
\[ \iota:S^m \times S^n \rightarrow \R^{m+n+1}\]
with trivial normal bundle. Conclude that $S^m \times S^n$ is almost parallelizable.
\end{exercise}
\begin{lemma}\label{product}
Let $\pi_1: E \rightarrow M$ and $\pi_2: E' \rightarrow N$ be almost trivial vector bundles. Then the vector bundle ${p_1}^*E \oplus {p_2}^*E'$ over $M \times N$ is almost trivial. \end{lemma}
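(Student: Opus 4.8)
The plan is to deduce the statement from the almost parallelizability of a product of spheres established in Exercise \ref{emb}, using the pullback characterisation of Lemma \ref{pull}. Note first that $E$ and $E'$ are automatically orientable: since $E\oplus\R$ is trivial we have $w_1(E)=w_1(E\oplus\R)=0$, and likewise $w_1(E')=0$. Fix orientations on $E$ and $E'$; these induce orientations on $p_1^*E$, $p_2^*E'$ and on their direct sum over $M\times N$. Write $r=\mathrm{rank}(E)$ and $s=\mathrm{rank}(E')$; the degenerate cases $r=0$ or $s=0$ are immediate, so assume $r,s\geq 1$. By Lemma \ref{pull} there are smooth maps $f\colon M\to S^r$ and $g\colon N\to S^s$ with $f^*(TS^r)\cong E$ and $g^*(TS^s)\cong E'$.

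Next I would assemble these into the single map $h=(f\circ p_1,\ g\circ p_2)\colon M\times N\to S^r\times S^s$. Using the canonical splitting $T(S^r\times S^s)\cong \mathrm{pr}_1^*(TS^r)\oplus\mathrm{pr}_2^*(TS^s)$ of the tangent bundle of a product, together with naturality of pullback, one obtains
\[
h^*\big(T(S^r\times S^s)\big)\ \cong\ (f\circ p_1)^*(TS^r)\oplus(g\circ p_2)^*(TS^s)\ \cong\ p_1^*E\oplus p_2^*E'.
\]
By Exercise \ref{emb}, $S^r\times S^s$ is almost parallelizable, that is, $T(S^r\times S^s)$ is an almost trivial oriented bundle of rank $r+s=\dim(S^r\times S^s)$; applying Lemma \ref{pull} to it produces a smooth map $\Phi\colon S^r\times S^s\to S^{r+s}$ with $\Phi^*(TS^{r+s})\cong T(S^r\times S^s)$. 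Then the composite $\psi=\Phi\circ h\colon M\times N\to S^{r+s}$ satisfies
\[
\psi^*(TS^{r+s})\ \cong\ h^*\big(T(S^r\times S^s)\big)\ \cong\ p_1^*E\oplus p_2^*E',
\]
and since $r+s$ is precisely the rank of $p_1^*E\oplus p_2^*E'$, the ``if'' direction of Lemma \ref{pull} shows that $p_1^*E\oplus p_2^*E'$ is almost trivial, as desired.

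The step I expect to be the real content is the passage through the spheres rather than the bookkeeping around it. The naive attempt, namely observing that $p_1^*E\oplus p_2^*E'\oplus\R^2\cong\R^{r+s+2}$ and then ``cancelling'' a trivial summand, does not work, because trivial summands cannot be cancelled in general; it is exactly Exercise \ref{emb} (the embedding $S^r\times S^s\hookrightarrow\R^{r+s+1}$ with trivial normal bundle) that upgrades this defect-$2$ stable triviality to genuine almost triviality. Everything else is routine manipulation of pullbacks, ranks and orientations.
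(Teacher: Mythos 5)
Your proof is correct and follows essentially the same route as the paper: pull $E$ and $E'$ back via maps $f\times g$ to $S^r\times S^s$, then use the almost parallelizability of $S^r\times S^s$ from Exercise \ref{emb} together with Lemma \ref{pull} to land in $S^{r+s}$. Your added remarks on orientability and the degenerate rank cases are fine points the paper leaves implicit, but the argument is the same.
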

\begin{proof}
By Lemma \ref{pull}, there are functions $f: M \rightarrow S^m$ and $g: N \rightarrow S^n$ such that
\[ E\simeq f^*(TS^m); \text{ and } E'\simeq g^*(TS^n).\]
By Exercise \ref{emb}, there is a function $h: S^m \times S^n \rightarrow S^{m+n}$ such that
\[ h^*(TS^{m+n})=T(S^m \times S^n).\]
Then we have
\[ (h \circ (f\times g))^*(TS^{m+n})=(f\times g)^* (h^*(TS^{m+n}))=(f \times g)^*(T(S^m \times S^n))={p_1}^*E \oplus {p_2}^*E.\]
By Lemma \ref{pull}, we conclude that ${p_1}^*E \oplus {p_2}^*E$ is almost trivial.
\end{proof}

\begin{definition}
Let $M,N$ be closed oriented manifolds of dimension $m$ and $f: M \rightarrow N$ be a smooth function.
The degree of $f$ is the number:
\[ \mathsf{deg}(f):= \int_M f^*(\omega)\]
where $\omega$ is any $m$-form on $N$ such that:
\[ \int_{N} \omega=1.\]
\end{definition}

\begin{exercise}
Show that the number $\mathsf{deg}(f)$ defined above is an integer that does not depend on the choice of $\omega$.
Show also that if $f$ and $g$ are smoothly homotopic then they have the same degree. \end{exercise}

It turns out that the degree is a complete invariant of maps from a closed oriented $m$-manifold to $S^m$. This is a theorem of Heinz Hopf, whose proof can be found in \cite{BT}.

\begin{theorem} [Hopf Degree Theorem] Let $M$ be a closed, connected, oriented
$m$-dimensional manifold and $f,g:M \rightarrow S^m$ be smooth maps.
Then $f$ and $g$ are smoothly homotopic if and only if they have the same degree.
\end{theorem}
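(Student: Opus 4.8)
The plan is to treat the two implications separately. The implication ``smoothly homotopic $\Rightarrow$ equal degree'' needs nothing new: it is exactly the statement of the degree exercise above, that $\deg$ is a smooth homotopy invariant. The entire content lies in the converse, so assume $\deg f=\deg g=d$ and set out to manufacture a smooth homotopy $f\simeq g$.

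The strategy is to put an arbitrary smooth $h\colon M\to S^m$ into a \emph{normal form} depending only on $d=\deg h$, so that $f$ and $g$ wind up homotopic to one and the same map. By Sard's theorem choose a regular value $q\in S^m$ of $h$; then $h^{-1}(q)=\{x_1,\dots,x_k\}$ is finite, each $x_i$ carries a sign $\epsilon_i=\operatorname{sgn}\det(dh_{x_i})$, and $\sum_i\epsilon_i=d$ by the signed-count description of the degree. Shrinking a ball $U\ni q$ and disjoint coordinate balls $B_i\ni x_i$, one arranges $h^{-1}(\overline U)=\bigsqcup_i W_i$ with $W_i\subset B_i$ and $h|_{W_i}\colon W_i\xrightarrow{\ \sim\ }\overline U$ a diffeomorphism. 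Post-composing with a degree-one ``collapse map'' $r\colon S^m\to S^m$ that sends $S^m\setminus U$ to a basepoint $q_0$ and maps $U$ diffeomorphically onto $S^m\setminus\{q_0\}$ yields $h'=r\circ h\simeq h$, which sends $M\setminus\bigsqcup_i W_i$ to $q_0$ and on each $W_i$ wraps $S^m$ exactly once with sign $\epsilon_i$. So $h$ is homotopic to a map concentrated on disjoint balls, each of local degree $\pm1$.

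Next I would use connectedness of $M$ to fuse the balls. For two of them, pick an embedded arc $\gamma$ in $M$ from $x_i$ to $x_j$ missing the remaining $x_\ell$; a regular neighbourhood $W$ of $W_i\cup\gamma\cup W_j$ is again a ball, and $h'|_W\colon(W,\partial W)\to(S^m,q_0)$ is constant outside two sub-balls on which it wraps with degrees $\epsilon_i$ and $\epsilon_j$. The elementary addition law for based maps of an $m$-ball into $S^m$ (two sub-wrappings slide together into one) makes $h'|_W$ homotopic rel $\partial W$ to a single wrapping of degree $\epsilon_i+\epsilon_j$; extending by $q_0$ gives a homotopy of $h'$. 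Iterating collapses everything to a single ball wrapping $S^m$ with total degree $d$, and the same local argument shows any two maps of this shape with equal $d$ are homotopic. Hence $f$ and $g$ are each homotopic to the normal form attached to $d$, and therefore to each other.

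The main obstacle is precisely this local normal-form and addition machinery: making rigorous the collapse map $r$ and the sliding-together of wrappings amounts to proving that a based map $(D^m,\partial D^m)\to(S^m,\mathrm{pt})$ is determined up to based homotopy by its degree --- essentially $\pi_m(S^m)\cong\mathbb Z$ --- which must be obtained by this very regular-value-and-cancellation technique rather than quoted. A cleaner, more structural alternative is the Pontryagin--Thom construction: homotopy classes of maps $M\to S^m$ correspond bijectively to framed cobordism classes of $0$-dimensional framed submanifolds of $M$, and for connected oriented $M$ such a class is determined by, and realises, the signed number of points, namely the degree. There the labour moves to establishing the Pontryagin--Thom correspondence (transversality and tubular-neighbourhood arguments) and to producing, for two points of opposite sign, a framed arc implementing their cancellation --- which is the step that actually consumes the hypothesis that $M$ is connected.
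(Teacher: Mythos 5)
The paper does not actually prove this theorem: it is quoted as a known result of Hopf with a pointer to the literature (\cite{BT}), so there is no internal proof to measure you against. Judged on its own, your outline is the classical Hopf--Milnor argument (regular value, signed point count, collapse to a map concentrated on disjoint balls of local degree $\pm 1$, then fusing balls along arcs using connectedness to cancel opposite signs and merge like signs), with the Pontryagin--Thom framed-cobordism description as a structural alternative; both are legitimate and standard routes, and your sketch correctly locates where the hypotheses (orientability for the signs, connectedness for the fusing arcs) are consumed. The honest caveat you state is the real one: the step ``a based map $(D^m,\partial D^m)\to (S^m,\mathrm{pt})$ is classified up to homotopy rel boundary by its degree,'' equivalently the cancellation of two oppositely framed points along an embedded arc, is the entire analytic content of the theorem, and as written it is asserted rather than proved (for $m\ge 2$ it needs an isotopy/disk-straightening argument, and the case $m=1$ needs separate elementary treatment). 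So your proposal is a correct and well-organized proof strategy, on par with the textbook treatments the paper defers to, but it is a proof sketch rather than a complete proof; to splice it into the paper one would either carry out the rel-boundary classification in detail or, as the authors chose, simply cite a reference such as \cite{BT} or Milnor's \emph{Topology from the Differentiable Viewpoint}.
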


\begin{lemma}\label{Euler}
Let $M$ be a closed oriented manifold of dimension $m$ and $E,E'$ oriented almost trivial vector bundles of rank $m$.
Then $E$ and $E'$ are isomorphic if and only if they have the same Euler class.
\end{lemma}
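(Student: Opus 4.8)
The plan is to transfer the question to homotopy classes of maps into a sphere and then apply the Hopf Degree Theorem. Since $E$ and $E'$ are oriented and almost trivial of rank $m=\dim M$, Lemma~\ref{pull} furnishes smooth maps $f,g\colon M\to S^m$ with $f^*(TS^m)\cong E$ and $g^*(TS^m)\cong E'$ as oriented bundles. One implication is immediate: isomorphic oriented bundles have the same Euler class. So the substance is the converse, and for that it suffices to show that $e(E)=e(E')$ forces $f\simeq g$, since homotopic maps pull a fixed bundle back to isomorphic bundles and then $E\cong f^*(TS^m)\cong g^*(TS^m)\cong E'$.

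To get from $e(E)=e(E')$ to $f\simeq g$ I would argue cohomologically, component by component, so assume $M$ connected. By naturality of the Euler class, $e(E)=f^*e(TS^m)$ and $e(E')=g^*e(TS^m)$. Write $H^m(S^m;\mathbb{Z})=\mathbb{Z}u$ with $u$ dual to $[S^m]$, and $H^m(M;\mathbb{Z})=\mathbb{Z}v$ with $v$ dual to $[M]$; the latter group is torsion free by Poincar\'e duality. The Poincar\'e--Hopf theorem (Appendix~A) gives $e(TS^m)=\chi(S^m)\,u$, and for any $h\colon M\to S^m$ one has $h^*u=\deg(h)\,v$ since $\langle h^*u,[M]\rangle=\deg(h)\langle u,[S^m]\rangle=\deg(h)$. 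Hence $e(E)=\chi(S^m)\deg(f)\,v$ and $e(E')=\chi(S^m)\deg(g)\,v$. As $\chi(S^m)=2\neq0$ when $m$ is even (the case at issue in Smillie's construction), equality of the two Euler classes in the torsion-free group $H^m(M;\mathbb{Z})$ gives $\deg(f)=\deg(g)$. The Hopf Degree Theorem then yields $f\simeq g$, completing the argument.

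I expect the only real obstacle to be the middle step --- pinning down $e(TS^m)$ as precisely $\chi(S^m)$ times the standard generator and checking that $H^m(M;\mathbb{Z})$ is torsion free, so that the equation ``same Euler class'' can be divided back to ``same degree''. This is exactly where the parity of $m$ is used: for $m$ odd one has $\chi(S^m)=0$, so $e(TS^m)=0$, the Euler class of every almost trivial rank-$m$ bundle vanishes, and the reduction carries no information; the lemma is meant for $m$ even, which is all that is needed downstream. Everything else --- the existence of $f$ and $g$, naturality of the Euler class, and homotopy invariance of pullbacks --- is routine once Lemma~\ref{pull} and the Hopf Degree Theorem are in hand.
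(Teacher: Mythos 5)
Your argument is the same as the paper's: produce $f,g\colon M\to S^m$ via Lemma~\ref{pull}, deduce $\deg(f)=\deg(g)$ from equality of Euler classes, and conclude with the Hopf Degree Theorem. You simply spell out the intermediate step $e(E)=\chi(S^m)\deg(f)\,v$ (which the paper leaves implicit) and correctly flag that this step, and hence the lemma as stated, really requires $m$ even --- a caveat the paper glosses over but which is harmless since only the even-dimensional case is used in Smillie's construction.
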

\begin{proof}
Clearly, if the bundles are isomorphic they have the same Euler class. Let us prove the converse. By Lemma \ref{pull},
we know that there are functions $f,g:M \rightarrow S^m$ such that
\[ f^*(TS^m) \simeq E; \text{ and } g^*(TS^m)\simeq E'.\]
Since $E$ and $E'$ have the same Euler class we conclude that $f$ and $g$ have the same degree. The Hopf degree theorem implies that $f$ and $g$ are smoothly homotopic and therefore $E $ is isomorphic to $E'$.
\end{proof}

\begin{lemma}\label{connected}
The connected sum of almost parallelizable manifolds is almost parallelizable.
\end{lemma}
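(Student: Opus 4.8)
The plan is to reduce the statement to Lemma \ref{pull}, exactly as was done for products in Lemma \ref{product}. Let $M$ and $N$ be closed oriented almost parallelizable manifolds of dimension $m$. By Lemma \ref{pull} there are smooth maps $f\colon M \to S^m$ and $g\colon N \to S^m$ with $f^*(TS^m)\simeq TM$ and $g^*(TS^m)\simeq TN$. The goal is to produce a single map $h\colon M\# N \to S^m$ with $h^*(TS^m)\simeq T(M\# N)$, and then invoke Lemma \ref{pull} in the reverse direction to conclude that $M\# N$ is almost parallelizable.

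First I would set up the connected sum carefully: remove a small open disk $D_M \subseteq M$ and $D_N \subseteq N$ and glue along the resulting boundary spheres, so that $M\# N = (M\setminus D_M) \cup_{S^{m-1}} (N\setminus D_N)$. The key geometric point is that the tangent bundle of $M\# N$ restricts to $T M$ on the piece coming from $M\setminus D_M$ (up to the standard identification) and to $TN$ on the other piece. Next I would arrange the maps $f$ and $g$ to be compatible with this decomposition. By the Hopf degree theorem (or by a direct small-isotopy argument) one may homotope $f$ so that it is constant, equal to a chosen basepoint $s_0 \in S^m$, on the disk $D_M$; likewise homotope $g$ to be constant equal to $s_0$ on $D_N$. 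Since homotopic maps pull back isomorphic bundles, these normalized maps still satisfy $f^*(TS^m)\simeq TM$ and $g^*(TS^m)\simeq TN$. Now $f$ and $g$ agree (both constant $s_0$) on a neighborhood of the gluing region, so they patch to a well-defined smooth map $h\colon M\# N \to S^m$.

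It then remains to check that $h^*(TS^m) \simeq T(M\# N)$. Over the piece $M\setminus D_M$ we have $h^*(TS^m) = f^*(TS^m)|_{M\setminus D_M} \simeq TM|_{M\setminus D_M} \simeq T(M\# N)|_{M\setminus D_M}$, and similarly over $N\setminus D_N$; the two isomorphisms can be chosen to agree over the collar neighborhood of $S^{m-1}$ where everything is trivial (both $h$ and the bundles are pulled back from a point there, hence canonically trivial), so they glue to a global isomorphism. By Lemma \ref{pull}, $T(M\# N)$ is almost trivial, i.e. $M\# N$ is almost parallelizable.

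The main obstacle is the gluing/patching step: one must be genuinely careful that the bundle isomorphisms over the two halves can be matched along the neck, rather than merely isomorphic there. Working in the collar where all the bundles in sight are pulled back from the constant map (hence come with preferred trivializations) makes the matching transparent, but it requires first normalizing $f$ and $g$ to be locally constant near the disks — which is where the Hopf degree theorem, already available to us, does the work. One should also note orientation compatibility: the connected sum of oriented manifolds is oriented, and the chosen trivializations on the neck must respect orientations, which is automatic if one fixes oriented trivializations of $TS^m$ near $s_0$ at the outset.
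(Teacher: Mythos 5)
Your proposal is correct and follows essentially the same route as the paper: pull back $TS^m$ via maps $f$ and $g$ from Lemma \ref{pull}, normalize them near the removed disks so that they can be glued (the paper uses $\pi_{m-1}(S^m)=0$ to make them agree on the boundary sphere, you make both constant there, which amounts to the same thing), and conclude via the glued map $h$ that $T(M\# N)\simeq h^*(TS^m)$. Your extra care with matching the bundle isomorphisms over the collar is a welcome refinement of a step the paper leaves implicit, but it is not a different argument.
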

\begin{proof}
Let $f:M \rightarrow S^m$ and $g: N \rightarrow S^m$ be smooth maps such that
\[ f^*(TS^m) \simeq TM \text{ and } g^*(TS^m)\simeq TN.\]
Fix $p \in M$  and $q \in N$ and coordinates $\varphi: U \rightarrow \R^m$ and $\phi: W \rightarrow \R^m$ around
$p$ and $q$ respectively. Set $X=\varphi^{-1}(B(0,1))\subseteq U$ and $Y= \phi^{-1}(B(0,1))\subseteq W$. Since $\pi_{m-1}(S^m)=0$ we may assume that
\[ (f \circ \varphi^{-1})\vert_{S^{m-1}}=(g \circ \phi^{-1}) \vert_{S^{m-1}}.\]
Then there is a well defined map
\[ h: M\#N \rightarrow S^m\]
such that $h\vert_{M-X}=f$ and $h\vert_{N-Y}=g$. We conclude that \[ h^*(TS^m)\simeq T(M \#N)\]
and therefore $M \#N$ is almost parallelizable.
\end{proof}

\begin{theorem}[Smillie]
Let $\Sigma_g$ be the closed oriented surface of genus $g$ and $P= S^1 \times S^3$. Then
\[M^4= (\Sigma_3 \times \Sigma_3)\# \underbrace{ P\# \cdots \#P}_{6\text{ times}}\]
and
\[M^6=((\Sigma_3 \times \Sigma_3)\# \underbrace{ P\# \cdots \#P}_{9\text{ times}})\times \Sigma_3\]
are closed flat manifolds with non-zero Euler characteristic. By taking products of $M^4$ and $M^6$ one obtains
closed flat manifolds with nonzero Euler characteristic in all even dimensions greater than $d=2$.
\end{theorem}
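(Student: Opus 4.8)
The plan is to exhibit flat connections on the tangent bundles of $M^4$ and $M^6$ by combining the product and connected-sum constructions developed above, and then to check that the Euler characteristics are nonzero by an independent computation. The guiding principle is that, since $TM^4$ and $TM^6$ will turn out to be almost trivial (by Lemmas \ref{pull}, \ref{product}, \ref{connected}), by Lemma \ref{Euler} it suffices to produce \emph{some} flat almost-trivial oriented rank-$m$ bundle with the correct Euler class, and then one knows it is isomorphic to the tangent bundle; a flat structure on that model bundle transports to $TM$.

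First I would treat $\Sigma_3 \times \Sigma_3$. By Milnor's theorem (Theorem \ref{Milnor}), over a genus-$3$ surface there is a flat oriented rank-two bundle $E$ with $|D(E)| = 2 < 3$; taking the external sum $p_1^*E \oplus p_2^*E$ over $\Sigma_3 \times \Sigma_3$ gives a flat oriented rank-four bundle whose Euler class integrates to $D(E)^2 = 4$, while $\chi(\Sigma_3 \times \Sigma_3) = (2-2\cdot 3)^2 = 16$. So this bundle is not $T(\Sigma_3\times\Sigma_3)$ on the nose, but it is almost trivial (Lemma \ref{product}), and so is the tangent bundle; the Euler-class deficit of $16 - 4 = 12$ must be corrected. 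Here is where the copies of $P = S^1\times S^3$ enter: $P$ is almost parallelizable (Exercise \ref{emb}) with $\chi(P) = 0$, its tangent bundle is stably trivial hence carries a flat connection coming from a trivialization of $TP \oplus \R$, and forming a connected sum changes the Euler characteristic additively by $\chi(P) - \chi(S^4) = 0 - 2 = -2$ per summand (for $4$-manifolds). Thus $(\Sigma_3\times\Sigma_3)\# P^{\#6}$ has $\chi = 16 - 12 = 4 \neq 0$, and I would check that its tangent bundle — almost parallelizable by Lemma \ref{connected} — now has Euler class matching the flat model bundle built by taking the flat rank-four bundle above, modified over the connected-sum regions by the flat stably-trivial bundles on the $P$'s; Lemma \ref{Euler} then forces an isomorphism, so $TM^4$ is flat. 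For $M^6 = M'^{} \times \Sigma_3$ with $M' = (\Sigma_3\times\Sigma_3)\# P^{\#9}$, I would check $\chi(M') = 16 - 18 = -2$, so $\chi(M^6) = (-2)(-4) = 8 \neq 0$, and apply Lemma \ref{product} to the flat bundles on $M'$ and on $\Sigma_3$ (the latter being $T\Sigma_3$ itself, which by Milnor's bound is \emph{not} flat — so one must instead use a flat rank-two bundle on $\Sigma_3$ of degree $< 3$ and absorb the discrepancy, exactly as in the $M^4$ case, by choosing the number of $P$-summands so that the product Euler classes balance). Finally, products of $M^4$ and $M^6$ give flat manifolds (product of flat connections is flat) with Euler characteristic the product of the factors', hence nonzero, in every even dimension $> 2$.

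The main obstacle I anticipate is the bookkeeping of Euler classes through the connected sums: one must verify that after attaching $k$ copies of $P$ the \emph{almost-trivial} tangent bundle $T\big((\Sigma_3\times\Sigma_3)\# P^{\#k}\big)$ has Euler class exactly that of the explicit flat model bundle, so that Lemma \ref{Euler} applies and the flat structure descends. This requires knowing how the Euler class behaves under connected sum (it is additive in $H^m$ after the obvious identifications, mirroring the additivity of $\chi$ up to the $\pm 2$ from the glued sphere) and choosing the number of $P$-summands precisely so that $\chi$ — equivalently the integrated Euler class — of the base matches $D(E)^2$ (resp.\ $D(E)\cdot\chi$ in the six-dimensional case). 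Everything else is an assembly of results already proved: Milnor's inequality supplies the flat low-degree surface bundle, Lemmas \ref{pull}--\ref{connected} supply almost-triviality of all the manifolds in sight, and Lemma \ref{Euler} is the rigidity statement that converts "same Euler class" into "isomorphic bundle," letting the flat connection on the model pass to $TM$.
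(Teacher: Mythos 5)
Your overall architecture is the paper's: use Milnor's Theorem \ref{Milnor} to get a flat rank-two bundle $E$ of degree $2$ over $\Sigma_3$, form external sums over products of surfaces, adjust the Euler characteristic by summing with copies of $P=S^1\times S^3$, and then invoke almost-parallelizability (Lemmas \ref{pull}, \ref{product}, \ref{connected}) together with the rigidity Lemma \ref{Euler} to conclude that a flat model bundle with the right Euler number is isomorphic to the tangent bundle. Your Euler characteristic computations ($\chi(M^4)=4$, $\chi(M^6)=8$) agree with the paper. But there is a genuine gap at the central step: you never actually produce a flat, almost trivial, oriented rank-$4$ bundle \emph{on $M^4$ itself} with Euler number $4$. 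Your candidate, ``the flat rank-four bundle above, modified over the connected-sum regions by the flat stably-trivial bundles on the $P$'s,'' is not a construction: you would have to specify the bundle over the necks, show that the glued connection is still flat, and compute the Euler number of the glued bundle, and this is exactly the ``bookkeeping through the connected sums'' that you yourself flag as the unresolved obstacle. (A side slip: ``stably trivial hence flat'' is false in general --- $TS^2$ is stably trivial --- though here $TP$ happens to be honestly trivial since $S^1$ and $S^3$ are parallelizable; in any case the paper never needs any flat structure on the $P$ summands.)

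The missing idea, which is how the paper disposes of this in one line, is to pull back rather than to glue: let $f\colon M^4\to \Sigma_3\times\Sigma_3$ be the degree-one map collapsing the six $P$ summands to a point, and take $f^*(\pi_1^*E\oplus\pi_2^*E)$. A pullback of a flat bundle is flat, a pullback of an almost trivial bundle is almost trivial (Lemma \ref{pull}), and since $\deg f=1$ its Euler number is $\bigl(\int_{\Sigma_3}e(E)\bigr)^2=4=\chi(M^4)$; Lemma \ref{Euler} then gives $TM^4\simeq f^*(\pi_1^*E\oplus\pi_2^*E)$, which is flat. The same trick with $z=g\times\mathrm{id}\colon M^6\to\Sigma_3\times\Sigma_3\times\Sigma_3$ and the bundle $\pi_1^*E\oplus\pi_2^*E\oplus\pi_3^*E$ (Euler number $2^3=8=\chi(M^6)$) handles the six-dimensional case, replacing your vaguer plan of combining a flat bundle on $M'=(\Sigma_3\times\Sigma_3)\#P^{\#9}$ with a flat rank-two bundle on $\Sigma_3$. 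If you insist on your gluing route it can likely be repaired (flat bundles restricted to a simply connected neck are canonically trivialized by parallel transport, so they can be matched, and one must then verify additivity of the Euler number under the gluing), but that is genuine extra work your proposal defers rather than does; the collapse-map pullback makes it unnecessary.
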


\begin{proof}
The Euler characteristic of a connected sum of closed even dimensional manifolds satisfies:
\[ \chi( M \#N)=\chi(M)+ \chi(N)-2.\]
Using this formula, we compute
\[ \chi(M^4) =\chi(\Sigma_3 \times \Sigma_3)+\chi( \underbrace{ P\# \cdots \#P}_{6\text{ times}})-2=16-10-2=4.\]
and
\[ \chi(M^6) =(\chi(\Sigma_3 \times \Sigma_3)+\chi( \underbrace{ P\# \cdots \#P}_{9\text{ times}})-2)\times \chi(\Sigma_3)=(16-16-2)\times (-4)=8.\]
It remains to prove that $M^4$ and $M^6$ are flat manifolds. Lemmas \ref{connected} and \ref{product} imply that $M^4$ and $M^6$ are almost parallelizable. Let $h: \Sigma_3 \rightarrow S^2$ be a degree one map and set $E=h^*(TS^2)$. Then $E$ is almost parallelizable and
\[ \int_{\Sigma_3}e(E)=\int_{S^2} e(TS^2)=2.\]
Lemma \ref{product} implies that  ${\pi}^*E \oplus {\pi}^*E$ is an almost trivial bundle over $\Sigma_3 \times \Sigma_3.$ Let \[f: M^4 \rightarrow \Sigma_3 \times \Sigma_3\] be the map that
sends \[\underbrace{ P\# \cdots \#P}_{6\text{ times}}\] to a point. Then $f$ has degree $1$ and therefore:
\[ \int_{M^4} e(f^*(E \oplus E))= \int_{M^4} f^*(e(\pi(E \otimes E))=\int_{\Sigma_3 \times \Sigma_3} {\pi}^*(e(E))\wedge
{\pi}^*(e(E))=\Big(\int_{\Sigma_3}e(E)\Big)^2=4.\]
We conclude that:
\[ e(TM^4)=e({\pi}^*E \oplus {\pi}^*(E))\]
and thus, by Lemma \ref{Euler}
\[ TM^4 \simeq {\pi_1}^*E \oplus {\pi_2}^*(E).\]
Milnor's inequality stated in Theorem \ref{Milnor} implies that $E$ admits a flat connection and therefore, so does $TM^4$.
Similarly, Let \[g:\Sigma_3 \times \Sigma_3\# \underbrace{ P\# \cdots \#P}_{9\text{ times}}  \rightarrow \Sigma_3 \times \Sigma_3\] be the map that
sends \[\underbrace{ P\# \cdots \#P}_{9\text{ times}}\] to a point. Then $g$ has degree one and therefore the map
\[ z:=g \times \mathsf{id}:M^6 \rightarrow \Sigma_3 \times \Sigma_3 \times \Sigma_3\]
also has degree one. This implies that
\[ \int_{M^6} e(z^*({\pi}^*E \oplus {\pi}^*E\oplus {\pi}^*E))= \Big(\int_{\Sigma_3} e(E) \Big)^3=8.\]
We conclude that
\[ e(TM^6)=e({\pi}^*E \oplus {\pi}^*E \oplus {\pi}^*E)\]
and therefore
\[ TM^6 \simeq {\pi}^*E \oplus {\pi}^*E \oplus {\pi}^*E .\]
Theorem \ref{Milnor} implies that $E$ admits a flat connection and therefore, so does $TM^6$.

\end{proof}

\subsection{The case of complete manifolds, after Sullivan and Kostant}

Here we will prove the following theorem of Kostant and Sullivan which solves Chern's conjecture in the case of complete affine manifolds.

\begin{theoremnn}[Kostant-Sullivan]\label{Theocomplete}
	If $M$ is a closed affine complete manifold then $\chi(M)=0$.	
\end{theoremnn}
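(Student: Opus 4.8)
The plan is to use the structure theorem for complete affine manifolds from Proposition \ref{develop}, which gives $M \cong \R^m/\Gamma$ with $\Gamma \subset \mathsf{Aff}(\R^m)$ a discrete, torsion-free (since it acts freely) subgroup acting properly discontinuously. Writing an element $\gamma \in \Gamma$ as $\gamma(x) = A_\gamma x + b_\gamma$, we obtain the linear holonomy representation $h\colon \Gamma \to \mathrm{GL}(m,\R)$, $\gamma \mapsto A_\gamma$, and the flat vector bundle $TM \cong \widetilde M \times_h \R^m$ is exactly $E^h$ in the notation used earlier. The Euler characteristic is $\chi(M) = \langle e(TM), [M]\rangle$, so it suffices to show that the Euler class of this flat bundle is torsion, or better, that some characteristic-number argument forces $\chi(M)=0$.

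The key idea I would pursue is the following algebraic dichotomy on the holonomy. First I would show that since $M$ is closed, $\Gamma$ acts cocompactly on $\R^m$, and then argue (this is the heart of the Kostant–Sullivan argument) that the Zariski closure $G$ of $h(\Gamma)$ in $\mathrm{GL}(m,\R)$ cannot have all its elements with $1$ as an eigenvalue on the full space unless something degenerate happens — more precisely, I would try to show that the linear part $h(\gamma)$ of every $\gamma \in \Gamma$ has $1$ as an eigenvalue. The reason: if $h(\gamma)$ did not have eigenvalue $1$, then $A_\gamma - I$ is invertible, so $\gamma$ has a fixed point in $\R^m$, contradicting that $\Gamma$ acts freely. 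Thus every element of the linear holonomy group has $1$ as an eigenvalue. Then I would invoke the theorem (a result in the spirit of Kostant–Sullivan, provable via algebraic groups: a subgroup of $\mathrm{GL}(m,\R)$ all of whose elements fix a nonzero vector, after passing to the Zariski closure and using that unipotent/reductive decomposition, admits a common invariant structure) to deduce that $h(\Gamma)$ preserves a nonzero vector, or at least that the flat bundle $TM$ has a nonzero section over a finite cover — which makes its Euler class vanish.

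More carefully, the route I expect to work: let $G$ be the Zariski closure of $h(\Gamma)$, with unipotent radical $U$ and a Levi decomposition $G = L \ltimes U$. Every element of $G$ has eigenvalue $1$ (this is a Zariski-closed condition, so it passes from $h(\Gamma)$ to $G$). One then shows the semisimple part of $G$ must be trivial, or at least that $G$ fixes a line; passing to a finite-index subgroup $\Gamma'$ of $\Gamma$ (corresponding to a finite cover $M'$ of $M$, with $\chi(M') = [\Gamma:\Gamma']\,\chi(M)$) one arranges that $h(\Gamma')$ lies in a group fixing a nonzero vector $v \in \R^m$. This $v$ defines a nowhere-zero parallel section of $TM'$, so $e(TM') = 0$, hence $\chi(M') = 0$, hence $\chi(M) = 0$.

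The main obstacle is the algebraic group step: deducing from "every element of the linear holonomy has $1$ as an eigenvalue" that the holonomy (on a finite cover) actually fixes a common nonzero vector. This is false for a single matrix's worth of naive reasoning — a group can have every element with eigenvalue $1$ without a common fixed vector only if it is "large" in a way that is obstructed here — so the argument must genuinely use the Zariski closure, the Lie–Kolchin / Borel fixed point theorem for the unipotent radical, and a representation-theoretic argument (à la Burnside or using that a semisimple group all of whose elements have eigenvalue $1$ in a given representation must act trivially on it, since the character would be constant $=\dim$) to kill the Levi part. I would expect to spend most of the proof setting up this algebraic-group dichotomy carefully, and then the topology (finite covers multiply $\chi$, a nonzero section kills the Euler class) is routine.
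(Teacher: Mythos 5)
Your setup coincides with the paper's first steps: $M\cong\R^m/\Gamma$ via Proposition \ref{develop}, the observation that freeness of the $\Gamma$-action forces every linear part $A_\gamma$ to satisfy $\det(A_\gamma-\id)=0$ (the paper calls such subgroups $1$-spectral), and the passage to the Zariski closure. The genuine gap is exactly the step you flag as the main obstacle, and it cannot be repaired with the tools you name: it is false that a linear group all of whose elements have $1$ as an eigenvalue must, even after passing to a finite-index subgroup or to its Zariski closure, fix a common nonzero vector. The group $\mathrm{SO}(3)\subset\mathrm{GL}(3,\R)$ is a connected, Zariski-closed, semisimple counterexample: every rotation of $\R^3$ has eigenvalue $1$, yet the representation is irreducible, so there is no invariant vector, and connectedness means no finite cover can help. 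Your character argument is also incorrect: an element with eigenvalue $1$ need not have trace equal to $\dim$ (a rotation by angle $\theta$ has trace $1+2\cos\theta$), so ``every element has eigenvalue $1$'' does not make the character constant and does not force the Levi factor to act trivially. Consequently the route ``common fixed vector $\Rightarrow$ nowhere-zero parallel section on a finite cover $\Rightarrow$ $e(TM')=0$'' cannot be completed from the hypothesis you actually have; extracting an invariant vector from the holonomy of a closed complete affine manifold is a far deeper matter than the Lie--Kolchin/Burnside-type algebra you invoke.

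The paper (following Kostant--Sullivan) avoids producing a section altogether, and this is the idea your proposal is missing. Having reduced the structure group of the frame bundle of $TM$ to $\overline{G}$, the Zariski closure of the linear holonomy --- which by Lemma \ref{z} is closed, $1$-spectral, and has finitely many connected components --- one kills the Euler class by Chern--Weil theory: the Lie algebra of a $1$-spectral group consists of singular matrices, and since $\mathsf{Pf}^2=\det$, the Pfaffian polynomial restricts to zero on the invariant polynomials of that Lie algebra; as the Euler class is the characteristic class corresponding to the Pfaffian, it vanishes (Lemmas \ref{compact}, \ref{s2} and \ref{lemmaspectral}, which also handle the reduction to the identity component and to a maximal compact subgroup). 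So no invariant vector and no section of $TM$ are ever needed --- only the vanishing of the invariant polynomial representing the Euler class on the reduced structure group, which follows directly from $1$-spectrality.
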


The idea of their proof is that even though the connection on $M$ may not admit a compatible metric, the Chern-Weil theory of characteristic classes can still be used to prove that the Euler class vanishes.

\begin{definition}
	We will say that a subgroup $G$ of $\mathrm{GL}(m,\R)$ is $1$-spectral if for every $ A \in G$, \[\mathsf{det}(A-\mathsf{id})=0.\]
	\end{definition}
	
\begin{definition}
	We will say that a Lie subalgebra $\mathfrak{g}$ of $\mathfrak{gl}(m,\R)$ is singular if for all $v \in \mathfrak{g}$,
	\[ \mathsf{det(v)=0}.\]
\end{definition}

The key observation of Sullivan and Kostant is that if the frame bundle of $TM$ admits a reduction to a connected compact and $1$-spectral subgroup of $\mathrm{GL}^+(m,\R)$ then the Euler class of $TM$ vanishes.

\begin{lemma}
	If $G$ is a $1$-spectral Lie subgroup of $\mathrm{GL}(m,\R)$ then its Lie algebra $\mathfrak{g}$ is singular.
\end{lemma}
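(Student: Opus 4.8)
The plan is to deduce singularity of $\mathfrak{g}$ from $1$-spectrality of $G$ by exponentiating and differentiating. Let $v \in \mathfrak{g}$ be arbitrary. Since $G$ is a Lie subgroup, the one-parameter subgroup $t \mapsto \exp(tv)$ lies in $G$ for all $t \in \R$, so the hypothesis gives $\det(\exp(tv) - \id) = 0$ for every $t$. The idea is to expand this identity for small $t$ and read off the vanishing of $\det(v)$ from the lowest-order term.

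First I would write, for $t$ near $0$,
\[
\exp(tv) - \id = tv + O(t^2) = t\big(v + t R(t)\big),
\]
where $R(t)$ is a smooth (indeed analytic) matrix-valued function of $t$. Taking determinants of the $m \times m$ matrix on the right and using multiplicativity of $\det$ together with homogeneity of degree $m$,
\[
0 = \det\big(\exp(tv) - \id\big) = t^m \det\big(v + t R(t)\big)
\]
for all $t \neq 0$ small, hence $\det(v + t R(t)) = 0$ for all such $t$. Letting $t \to 0$ and using continuity of the determinant yields $\det(v) = 0$. Since $v \in \mathfrak{g}$ was arbitrary, $\mathfrak{g}$ is singular.

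This argument is essentially routine; the only point requiring a little care is making sure $\exp(tv)$ actually lies in $G$, which is where we use that $G$ is a genuine Lie subgroup (a connected subgroup is generated by exponentials from its Lie algebra), and that the expansion $\exp(tv) = \id + tv + O(t^2)$ is valid — standard for the matrix exponential. There is no real obstacle here; the lemma is a straightforward linearization of the defining condition, and the subsequent lemmas (relating singular Lie algebras to nilpotence of the curvature and thereby to vanishing of the Pfaffian/Euler class) are where the substantive work of the Kostant–Sullivan argument lies.
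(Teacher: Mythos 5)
Your argument is correct: for $v\in\mathfrak{g}$ the curve $t\mapsto\exp(tv)$ lies in $G$ (this holds for any Lie subgroup, immersed or embedded --- the ``generated by exponentials'' remark about connectedness is not actually what is needed, only that $\exp(\mathfrak{g})\subseteq G$), so $\det(\exp(tv)-\id)=0$ for all $t$, and factoring $\exp(tv)-\id=t\bigl(v+tR(t)\bigr)$ with $R$ analytic gives $t^m\det\bigl(v+tR(t)\bigr)=0$, hence $\det(v)=0$ by continuity as $t\to 0$.

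The paper goes in the opposite direction: it works with $\mathsf{Log}$ rather than $\exp$. It picks a neighborhood $W$ of $0$ in $\mathfrak{g}$ on which $\mathsf{Exp}$ is a diffeomorphism onto $U\subseteq G$, writes any $v\in W$ as $v=\mathsf{Log}(A)$ with $\det(A-\id)=0$, takes a fixed vector $x$ of $A$, and observes that the $\mathsf{Log}$ series applied to $x$ vanishes term by term, so $v(x)=0$. This actually produces an explicit kernel vector for $v$ (the fixed vector of $A$), which is more information than $\det(v)=0$; on the other hand it only treats $v$ in a neighborhood of $0$ and must then invoke (implicitly) a homogeneity or polynomial-vanishing argument to conclude for all of $\mathfrak{g}$. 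Your route handles every $v\in\mathfrak{g}$ directly, needs no local inverse of $\exp$ and no extension step, but yields only the determinant identity, not an eigenvector. Both are legitimate linearizations of the $1$-spectral condition. One small side remark: the downstream use in the paper is via the identity $\rho(\mathsf{Pf})^2=\det$ on the singular algebra $\mathfrak{g}$, not via any nilpotence of the curvature, so your closing characterization of the later lemmas is slightly off, though this does not affect the proof you gave.
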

\begin{proof}
	It is enough to prove that there is an open subset  $W\subseteq \mathfrak{g}$ such that $0\in W$  and all matrices of $W$ are singular.
	Choose $W$ such that the exponential map $\mathsf{Exp}:W \to U\subseteq G$ is a diffeomorphism with inverse $\mathsf{Log}: U\to W$. For $v\in W$, $A\in U$ these functions are given explicitly by
	$$\mathsf{Exp}(v)=\sum_{k\geqslant0} \frac{x^{k}}{k!},$$
	$$\mathsf{Log}(A)=\sum_{k\geqslant 1} \frac{(-1)^{k+1}(A-\mathsf{id})^{k}}{k}.$$
	Any $v \in W$ is of the form $v=\mathsf{Log}(A)$ with $\mathsf{det}(A-\mathsf{id})=0.$ Thus there exists a vector $x \in \R^m$ such that $Ax=x$. Then
	\[v(x):=\sum_{k\geqslant 1} \frac{(-1)^{k+1}(A-\mathsf{id})^{k}}{k} (x)=\sum_{k\geqslant 1} \frac{(-1)^{k+1}(A-\mathsf{id})^{k}(x)}{k}=0.\]
	We conclude that $v$ is singular.
\end{proof}

\begin{lemma}\label{compact}
	Let $M$ be a manifold of dimension $m$ and $P$  a $\mathrm{GL}^{+}(m,\R)$ principal bundle over $M$  that admits a reduction to a subgroup $G$ which is compact, connected and $1$-spectral. Then the Euler class of $P$ vanishes.
\end{lemma}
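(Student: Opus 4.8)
The plan is to use Chern--Weil theory with an invariant connection adapted to the reduction, together with the fact that the Euler class is represented by the Pfaffian of the curvature. First I would fix a reduction of $P$ to the compact connected $1$-spectral subgroup $G \subseteq \mathrm{GL}^+(m,\R)$, call the reduced bundle $Q \to M$, and choose a principal connection on $Q$. Since $G$ is compact we may choose the connection so that its curvature $\Omega$ is a $2$-form on $M$ with values in the adjoint bundle $Q \times_G \mathfrak{g}$; in particular, at each point the curvature endomorphism takes values in (the image of) the Lie algebra $\mathfrak{g} \subseteq \mathfrak{gl}(m,\R)$. The induced connection on the associated bundle $P = Q \times_G \mathrm{GL}^+(m,\R)$, equivalently on the rank-$m$ vector bundle $E = Q \times_G \R^m$, then has curvature whose local matrix entries, when evaluated on any pair of tangent vectors, lie in $\mathfrak{g}$.

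Next I would invoke the previous lemma: since $G$ is $1$-spectral, its Lie algebra $\mathfrak{g}$ is singular, meaning $\det(v) = 0$ for every $v \in \mathfrak{g}$. The key point is then a purely algebraic one about the Pfaffian: the Chern--Gauss--Bonnet representative of the Euler class is, up to the normalising constant $(2\pi)^{-m/2}$, the Pfaffian $\mathsf{Pf}(\Omega)$ of the curvature, and the Pfaffian of a ($\mathfrak{so}$-valued, or here $\mathfrak{g}$-valued after the standard antisymmetrisation) matrix of $2$-forms is built so that $\mathsf{Pf}(A)^2 = \det(A)$. I would argue that because every $v \in \mathfrak{g}$ is singular, the Pfaffian polynomial vanishes identically on $\mathfrak{g}$: concretely, for $v \in \mathfrak{g}$ one has $\mathsf{Pf}(v)^2 = \det(v) = 0$, hence $\mathsf{Pf}(v) = 0$, and since $\mathsf{Pf}$ is a polynomial this forces $\mathsf{Pf}$ to vanish on the linear span of $\mathfrak{g}$ and, by multilinearity, on the curvature form $\Omega$ which is a $\mathfrak{g}$-valued $2$-form. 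Therefore the Chern--Weil representative $\mathsf{Pf}(\Omega)$ of $e(E)$ is the zero differential form, so $e(P) = e(E) = 0$ in $H^m(M;\R)$.

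The main obstacle is the compatibility between the group-theoretic notion of $1$-spectral/singular and the invariant-polynomial machinery: the Pfaffian is naturally an $\mathrm{SO}(m)$-invariant polynomial on $\mathfrak{so}(m)$, whereas our structure group $G$ sits inside $\mathrm{GL}^+(m,\R)$ and need not preserve a metric. I would handle this by using that $G$ is compact, hence conjugate into $\mathrm{SO}(m)$ (average a metric to get a $G$-invariant inner product on $\R^m$); after this conjugation $\mathfrak{g}$ becomes a subalgebra of $\mathfrak{so}(m)$ on which $\det$ still vanishes identically, and the argument above applies verbatim with the honest Pfaffian. One should also check that the Euler class computed via this metric-compatible connection agrees with the topological Euler class of the oriented bundle $P$, which is the content of the Chern--Gauss--Bonnet theorem recalled in Appendix A and is independent of the choice of connection. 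The remaining steps — writing $\mathsf{Pf}$ explicitly and verifying the multilinear vanishing — are routine linear algebra.
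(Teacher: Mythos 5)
Your proposal is correct and follows essentially the same route as the paper: both arguments are Chern--Weil arguments that reduce the vanishing of the Euler class to the vanishing of the Pfaffian on $\mathfrak{g}$, using the preceding lemma that a $1$-spectral group has singular Lie algebra together with $\mathsf{Pf}(v)^2=\det(v)=0$. The only difference is presentational: the paper phrases this universally, via the factorisation of the Chern--Weil homomorphism through $H(BG)\cong S(\mathfrak{g}^*)^G$ and restriction of invariant polynomials, while you work at the level of an explicit connection on the reduced bundle and add the (welcome) metric-averaging step that conjugates $G$ into $\mathrm{SO}(m)$ so that the Pfaffian of the curvature is literally defined.
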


\begin{proof}
The Chern-Weil homomorphism \[\mu_P: H(B\mathrm{GL}^+(m,\R))\rightarrow H(M)\] factors through the restriction map
\[\iota^*: H(B\mathrm{GL}^+(m,\R)) \rightarrow H(BG).\] Moreover, there is a commutative diagram

\[
\xymatrix{
H(B\mathrm{GL}^+(m,\R))\ar[r]^-{\iota^*} \ar[d]^-\simeq &H(BG) \ar[d]^-{\simeq}\\
   S(\mathfrak{gl}(m,\R)^*)^{\mathrm{GL}^+(m,\R)} \ar[r]^-{\rho}& S(\mathfrak{g}^*)^G
}
 \]
 where the vertical arrows are isomorphisms and $\rho$ is the restriction of polynomials. Since the Euler class corresponds to the Pfaffian polynomial, it suffices to prove that $\rho(\mathsf{Pf})=0$. For this we observe that since $\mathfrak{g}$ is singular
	\[[\rho(\mathsf{Pf})(v)]^2=\mathsf{det}(v)^2=0.\]
	We conclude that $\rho(\mathsf{Pf})=0$ and therefore the Euler class of $P$ vanishes.
	\end{proof}

\begin{lemma}\label{s2}
	Let $M$ be a manifold of dimension $m$ and $P$ a $\mathrm{GL}^{+}(m,\R)$ principal bundle over $M$ that admits a reduction to a subgroup $G$ which is closed, connected and $1$-spectral. Then the Euler class of $P$ vanishes.
\end{lemma}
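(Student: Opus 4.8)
\emph{Proof plan.} The idea is to reduce the statement to Lemma \ref{compact} by passing from $G$ to a maximal compact subgroup. First, since $G$ is a closed subgroup of $\mathrm{GL}^{+}(m,\R)$, Cartan's closed subgroup theorem shows that $G$ is an embedded Lie subgroup, and by hypothesis it is connected. By the Cartan--Iwasawa--Malcev structure theorem, $G$ admits a maximal compact subgroup $K$, which is connected, and $G$ is diffeomorphic to $K\times\R^{N}$ for some $N\geq 0$. In particular the homogeneous space $G/K$ is diffeomorphic to $\R^{N}$, hence contractible.

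Next I would upgrade the reduction of structure group. By hypothesis there is a principal $G$-subbundle $Q\subseteq P$ realizing the reduction of $P$ to $G$. Reducing the structure group of $Q$ from $G$ to $K$ is the same as choosing a section of the associated fibre bundle $Q\times_{G}(G/K)\to M$. Since the fibre $G/K$ is contractible and $M$ is a (paracompact, finite dimensional) manifold, such a section exists; it determines a principal $K$-subbundle $Q'\subseteq Q\subseteq P$. Thus $P$ admits a reduction to $K$.

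Finally one checks that $K$ is compact, connected and $1$-spectral. The first two properties hold by construction of $K$. For the third, observe that $K\subseteq G\subseteq\mathrm{GL}^{+}(m,\R)$, and since $G$ is $1$-spectral we have $\mathsf{det}(A-\mathsf{id})=0$ for every $A\in K$. Therefore Lemma \ref{compact}, applied to the reduction of $P$ to the compact connected $1$-spectral group $K$, shows that the Euler class of $P$ vanishes.

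The only genuinely non-elementary ingredient is the structure theory of connected Lie groups invoked in the first paragraph --- the existence and connectedness of a maximal compact subgroup $K$ together with the contractibility of $G/K$ --- combined with the standard fact that a fibre bundle with contractible fibres over a manifold admits a section; everything else is bookkeeping about reductions of structure group. I expect that structure-theoretic input to be the main point to justify carefully, since the Chern--Weil identification $H(BG)\cong S(\mathfrak{g}^{*})^{G}$ used in Lemma \ref{compact} is unavailable for non-compact $G$, so the passage to $K$ is essential rather than cosmetic.
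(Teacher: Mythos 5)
Your proof is correct. It rests on the same structure-theoretic pivot as the paper's argument -- pass to a maximal compact subgroup $K\subseteq G$ and exploit the contractibility of $G/K$ -- but it deploys that input differently. The paper never reduces the bundle $P$ beyond $G$: it uses the contractibility of $G/K$ only to conclude that $\pi\colon BK\to BG$ is a homotopy equivalence, and then reruns the Chern--Weil commutative diagram, identifying $H(BK)$ with $S(\frak{K}^*)^K$ and checking that the Pfaffian restricts to zero, so that $\iota^*$ kills the Euler class because $\pi^*$ is an isomorphism. You instead use the contractibility of the fibre $G/K$ of $Q\times_G(G/K)\to M$ to produce a section, i.e.\ an honest refinement of the reduction from $G$ to $K$, after which Lemma \ref{compact} applies verbatim (and you rightly verify that $K$ is compact, connected and $1$-spectral, the last two points being left implicit in the paper). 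The two routes are equivalent in content -- a lift of the classifying map $M\to BG$ through the homotopy equivalence $BK\to BG$ is the same datum as a reduction of $Q$ to $K$ -- but yours treats Lemma \ref{compact} as a black box at the price of invoking the standard section-existence fact for fibre bundles with contractible fibre over a paracompact manifold, while the paper stays entirely at the level of classifying-space cohomology and repeats a small piece of the Chern--Weil bookkeeping. Either way the essential non-elementary ingredient is, as you note, the Cartan--Iwasawa--Malcev theorem giving $K$ connected with $G/K\cong\R^N$.
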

\begin{proof}
Since $G$ is a closed subgroup of the general lineal group, it is a Lie subgroup.
	Let $K\subseteq G$ be a maximal compact subgroup. Since $G/K$ is contractible, the natural map $\pi: BK \rightarrow BG$ is a homotopy equivalence.
	The Chern-Weil homomorphism \[\mu_P: H(B\mathrm{GL}^+(m,\R))\rightarrow M\] factors through the restriction map
\[ \iota^*: H(B{\mathrm{GL}}^+(m,\R)) \rightarrow H(BG)\] so it suffices to show that $\iota^*$ sends the Euler class to zero. There is also a commutative diagram

\[\xymatrix{
 H(B\mathrm{GL}^+(m,\R))\ar[r]^-{\pi^* \circ \iota^*} \ar[d]^{\simeq} &H(BK) \ar[d]^-{\simeq}\\
   S(\mathfrak{gl}(m,\R)^*)^{\mathrm{GL}^+(m,\R)} \ar[r]^-{\rho}& S(\frak{K}^*)^K
	}\]	
By the Lemma \ref{compact}, we know that ${\rho}(\mathsf{Pf})=0$ and therefore $ \pi^* \circ \iota^*$ sends the Euler class to zero. Since $\pi^*$ is an isomorphism, we conclude that $\iota^*$ sends the Euler class to zero.	
\end{proof}

\begin{lemma}\label{lemmaspectral}
	Let $M$ be a manifold of dimension $m$ and $P$ a $\mathrm{GL}^{+}(m,\R)$ principal bundle over $M$ that admits a reduction to  a subgroup $G \subseteq \mathrm{GL}^{+}(m,\R)$ which is 1-spectral and closed. If $G$ has finite number of  connected components  then the Euler class of $P$ is zero.
\end{lemma}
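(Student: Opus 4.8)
Lemma \ref{lemmaspectral} states: if $P$ is a $\mathrm{GL}^+(m,\mathbb{R})$-bundle over $M$ that reduces to a closed, $1$-spectral subgroup $G$ with finitely many connected components, then the Euler class of $P$ vanishes.

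The natural approach: reduce to the connected case (Lemma \ref{s2}) by passing to the identity component $G_0$. But a reduction to $G$ need not reduce to $G_0$ — there's a $\pi_0(G)$-cover to deal with. So I'd pass to a finite cover of $M$.

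Let me think through this. Let $N = G/G_0$, a finite group (since $G$ has finitely many components). The reduction of $P$ to $G$ — call it $Q$, a principal $G$-bundle — gives a principal $N$-bundle $Q/G_0 \to M$, i.e., a regular covering space $\widehat{M} \to M$ with deck group $N$ (possibly after noting $Q/G_0$ is a $\pi_0$-covering). Pulling $P$ back to $\widehat{M}$: the pullback $\widehat{P}$ now reduces to $G_0$, which is closed, connected, and $1$-spectral. By Lemma \ref{s2}, $e(\widehat{P}) = 0$ in $H(\widehat{M})$. Then I need: the covering map $c: \widehat{M} \to M$ has $c^* e(P) = e(\widehat{P}) = 0$, and $c^*$ is injective on the relevant cohomology — or at least injective enough to conclude $e(P) = 0$. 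For a finite covering, $c^*: H^*(M;\mathbb{Q}) \to H^*(\widehat{M};\mathbb{Q})$ is injective (the transfer map $c_! c^* = |N| \cdot \mathrm{id}$). Since the Euler class lives in $H^m(M;\mathbb{Z})$, I should be careful about torsion — but the Chern–Weil / de Rham Euler class is a real cohomology class, and it's really the real Euler class whose vanishing is being asserted in this chain of lemmas (the Pfaffian picture in Lemmas \ref{compact}, \ref{s2} is de Rham). So working in $H^*(-;\mathbb{R})$, injectivity of $c^*$ finishes it.

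The plan, concretely: First, note $G$ closed in $\mathrm{GL}^+(m,\mathbb{R})$ implies $G$ is a Lie subgroup, and $G_0$ is closed, connected, $1$-spectral. Second, let $Q \to M$ be the $G$-reduction of $P$; form $\widehat{M} := Q/G_0$, which is a finite covering of $M$ with fiber $G/G_0$; the map $c:\widehat{M}\to M$ is a $|G/G_0|$-sheeted cover. Third, observe that $c^*P$ admits a reduction to $G_0$: indeed $c^*Q \to \widehat M$ has a canonical section of its $G/G_0$-quotient (the tautological one coming from the fact that $\widehat M$ classifies such sections), so $c^*Q$ reduces to $G_0$, hence so does $c^*P$. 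Fourth, apply Lemma \ref{s2} to conclude $e(c^*P) = c^* e(P) = 0$ in $H^m(\widehat M;\mathbb{R})$. Fifth, use the transfer homomorphism for the finite cover $c$ to see $c^*$ is injective on real cohomology, hence $e(P) = 0$.

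The main obstacle is Step 3 — checking cleanly that pulling back along $\widehat M = Q/G_0$ genuinely produces a $G_0$-reduction. The point is functorial: a $G$-bundle reduces to $G_0$ iff the associated $(G/G_0)$-bundle is trivial, i.e. admits a section; and $Q/G_0 \to M$ being precisely that associated bundle, its pullback to its own total space is tautologically trivial. This is standard but deserves a sentence. Everything else is bookkeeping: Lie subgroup-ness of closed subgroups, finiteness of the cover, and the transfer argument for injectivity of $c^*$ with real coefficients.
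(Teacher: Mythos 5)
Your argument is correct, and it rests on the same underlying mechanism as the paper's: pass from $G$ to its identity component $G_0$ (closed, connected, $1$-spectral), invoke Lemma \ref{s2} for the connected case, and use that pulling back along a finite covering is injective on real cohomology because $\pi_0(G)=G/G_0$ is finite. The difference is where the finite covering lives. The paper stays entirely at the level of classifying spaces: it notes that $BG_0\to BG$ is a finite covering, hence $\pi^*:H(BG)\to H(BG_0)$ is injective, and concludes directly that the restriction $\iota^*:H(B\mathrm{GL}^+(m,\R))\to H(BG)$ kills the Euler class, with no need to touch the particular bundle $P$ again. You instead work over the base: you build the concrete $|G/G_0|$-sheeted cover $c:\widehat M=Q/G_0\to M$ from the $G$-reduction $Q$, check (correctly, via the tautological section of $c^*(Q/G_0)$) that $c^*P$ reduces to $G_0$, apply Lemma \ref{s2} to $c^*P$, and descend with the transfer $c_!c^*=|G/G_0|\cdot\mathrm{id}$ in real cohomology. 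Your route costs one extra step the paper avoids (verifying that the pullback genuinely acquires a $G_0$-reduction, which you handle properly), but it buys concreteness: you never need the statement that $BG_0\to BG$ is a finite covering, only elementary covering-space theory over the manifold $M$; and your explicit remark that the whole chain of lemmas concerns the real (Chern--Weil) Euler class, so that the transfer argument suffices, is a point the paper leaves implicit. Both proofs are complete for the statement as intended.
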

\begin{proof}
As in the previous lemmas, it suffices to show that the restriction map:
\[ \iota^*: H(B\mathrm{GL}^+(m,\R)) \rightarrow H(BG)\]
sends the Euler class to zero. Let $G_0$ be the connected component of the identity in $G$. Then the natural map
\[ \pi: BG_0 \rightarrow BG\]
is a finite covering and therefore it induces an injective map in cohomology. Therefore it suffices to show that the map
\[ \pi^* \circ \iota^*: H(B\mathrm{GL}^+(m,\R)) \rightarrow H(BG_0)\]
sends the Euler class to zero. This is guaranteed by Lemma \ref{s2}.

\end{proof}

In view of the lemma above, we are left with the problem of showing that the frame bundle of a closed complete affine manifold admits a reduction to a closed 1-spectral group with finitely many connected components.

\begin{lemma}\label{z}
Let $G$ be a 1 spectral subgroup of $\mathrm{GL}(m,\R)$ and $\overline{G}$ its Zariski closure.
	Then:
	\begin{enumerate}
		\item  $\overline{G}$ is a subgroup of $\mathrm{GL}(m,\R)$.
		\item $\overline{G}$ is a closed Lie group in $\mathrm{GL}(m,\R)$.
		\item $\overline{G}$ is 1-spectral.
		\item $\overline{G}$ has finitely many connected components.
	\end{enumerate}
\end{lemma}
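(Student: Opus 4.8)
The plan is to handle the four statements in order, using standard facts about Zariski closures of subgroups of $\mathrm{GL}(m,\R)$ together with the elementary observation that the $1$-spectral condition is a Zariski-closed condition.

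For (1), I would use the classical fact that the Zariski closure of a subgroup of an algebraic group is again a subgroup. Concretely, if $G$ is a subgroup, then for fixed $g \in G$ the maps $x \mapsto gx$ and $x \mapsto xg$ are polynomial automorphisms of $\mathrm{GL}(m,\R)$, so they preserve Zariski closures; this shows $G \cdot \overline{G} \subseteq \overline{G}$ and $\overline{G}\cdot G \subseteq \overline{G}$. Then for fixed $h \in \overline{G}$, the map $x \mapsto xh$ is again polynomial and sends $G$ into $\overline{G}$, hence sends $\overline{G}$ into $\overline{G}$; similarly on the left. Inversion is a polynomial map on $\mathrm{GL}(m,\R)$ (Cramer's rule) fixing $G$, so it fixes $\overline{G}$. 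Thus $\overline{G}$ is a subgroup. Statement (2) is then immediate: a Zariski-closed subgroup of $\mathrm{GL}(m,\R)$ is a linear algebraic group, in particular a closed subgroup in the Euclidean topology (polynomial functions are continuous), and closed subgroups of Lie groups are Lie subgroups by Cartan's theorem.

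For (3), the key point is that the defining condition $\det(A - \mathsf{id}) = 0$ is a single polynomial equation in the matrix entries of $A$. Hence the set $Z := \{A \in \mathrm{GL}(m,\R) : \det(A-\mathsf{id}) = 0\}$ is Zariski-closed. By hypothesis $G \subseteq Z$, and since $\overline{G}$ is the smallest Zariski-closed set containing $G$, we get $\overline{G} \subseteq Z$; that is, $\overline{G}$ is $1$-spectral. Statement (4) follows from the general fact that a linear algebraic group over $\R$ (or over any field, in the Zariski topology) is a Noetherian topological space and hence has finitely many irreducible components; its identity component $\overline{G}_0$ is a finite-index normal subgroup, so in the Euclidean topology $\overline{G}$ has finitely many connected components as well. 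Alternatively one can invoke the Whitney-type theorem that a real algebraic variety has finitely many connected components.

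The main obstacle, such as it is, is purely one of how much algebraic-geometry machinery to assume: the cleanest route cites the standard structure theory of linear algebraic groups for (1), (2), and (4), but in the interest of a self-contained exposition I would give the short direct argument for (1) sketched above and for (4) reduce to the statement that a real affine algebraic variety has finitely many connected components (a consequence of Whitney's theorem, or of the finiteness of irreducible components in the Zariski topology combined with the fact that each irreducible component, being connected and Zariski-open in its closure, contributes finitely many Euclidean components). Part (3) is genuinely trivial given the right phrasing, and parts (1)–(2) are routine once one notes that multiplication and inversion are morphisms of varieties. With Lemma \ref{z} in hand, combining it with Lemma \ref{lemmaspectral} will give the desired vanishing of the Euler class for any manifold whose frame bundle reduces to a $1$-spectral subgroup, which is the remaining ingredient for the Kostant--Sullivan theorem.
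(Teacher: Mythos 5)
Your proposal is correct and follows essentially the same route as the paper: translation and inversion maps are Zariski-continuous, so the closure is a group; Zariski-closed implies Euclidean-closed (with Cartan's theorem for the Lie structure); the $1$-spectral condition $\det(A-\mathsf{id})=0$ is a polynomial, hence Zariski-closed, condition inherited by the closure; and finiteness of components comes from the fact that a real algebraic set has finitely many connected components. The only difference is that you spell out the standard citations (Cramer's rule, Whitney) a bit more explicitly than the paper does.
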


\begin{proof}
	 Since the multiplication and inverse functions are algebraic operations, they are continuous in the Zariski topology.
	 Fix $x\in G$ and consider the homeomorphism 	
	 \[\begin{array}{rcc}
	L_x: \mathrm{GL}(m,\R) & \longrightarrow & \mathrm{GL}(m,\R) \\
	 y & \longmapsto & xy
	 \end{array}.\\ \]
	 Then \[x \overline{G}= \overline{x G}\subseteq \overline{G}.\]
	 Fix now $y \in \overline{G}$ and consider the homeomorphism $R_y$ given by right multiplication by $y$. Then
	 \[  \overline{G}y=\overline{ G y}\subseteq \overline{G}.\]
	 We conclude that $\overline{G}$ is closed with respect to the product. The map $x \mapsto x^{-1}$ is a homeomorphism
	 of $\mathrm{GL}(m,\R)$ and therefore:
	 \[ \overline{G}^{-1}=\overline{G^{-1}}=\overline{G}.\]
	 We conclude that $G$ is a group.
	 The second statement is true because $\overline{G}$ is closed in the Zariski topology and therefore also in the smooth topology. The third statement holds because the determinant is a continuous function. The last statement is true because any real algebraic set has finitely many connected components. 	
	
 \end{proof}

We can now prove Chern's conjecture for complete manifolds:
\begin{theorem}[Kostant-Sullivan]\label{Theocomplete}
	If $M$ is a closed affine complete manifold then $\chi(M)=0$.
\end{theorem}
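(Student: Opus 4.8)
The plan is to produce a reduction of the frame bundle of $TM$ to a group to which Lemma~\ref{lemmaspectral} applies, so that the Euler class vanishes and hence $\chi(M)=0$. First I would invoke Proposition~\ref{develop}: since $M$ is closed, affine and complete, there is a discrete subgroup $\Gamma\subseteq\mathsf{Aff}(\R^m)$ with $M\cong\R^m/\Gamma$ and $\pi:\R^m\to M$ the universal cover. Write each $\gamma\in\Gamma$ as $\gamma(x)=L(\gamma)x+b(\gamma)$, and let $L:\Gamma\to\mathrm{GL}(m,\R)$ be the linear-part homomorphism; because $\pi$ is an affine local diffeomorphism and the standard flat connection on $\R^m$ is $\Gamma$-invariant, the flat structure on $TM$ is exactly the one whose holonomy representation (as a flat bundle) is $L$. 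In particular the frame bundle of $TM$ is the flat bundle $\widetilde M\times_L\mathrm{GL}(m,\R)$, so it admits a reduction to the subgroup $\overline{L(\Gamma)}\subseteq\mathrm{GL}(m,\R)$, the closure of the image of the holonomy; and since $M$ is orientable-after-passing-to-a-double-cover (or one argues directly), we may arrange $L(\Gamma)\subseteq\mathrm{GL}^+(m,\R)$, which does not affect $\chi$.

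The crucial point is then that $L(\Gamma)$ is $1$-spectral, i.e.\ every element of its Zariski closure $\overline{L(\Gamma)}$ fixes a nonzero vector. The key input here is a fixed-point/properness argument: since $\Gamma$ acts freely and properly discontinuously on $\R^m$ with compact quotient, no nontrivial $\gamma\in\Gamma$ has a fixed point in $\R^m$, which forces the affine equation $L(\gamma)x+b(\gamma)=x$ to have no solution, hence $\det(L(\gamma)-\id)=0$ for all $\gamma\in\Gamma$ (for $\gamma=e$ it is trivial). Thus $L(\Gamma)$ is a $1$-spectral subgroup of $\mathrm{GL}(m,\R)$ in the sense of the definition above. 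This is the step I expect to be the main obstacle, because ``properly discontinuous with compact quotient $\Rightarrow$ no affine fixed points'' must be extracted carefully; one cannot merely say the action is free, one needs that for $L(\gamma)x=x-b(\gamma)$ to be unsolvable even when $L(\gamma)$ has eigenvalue $1$ — here one uses that otherwise iterating $\gamma$ (or using the Bieberbach/properness structure) would contradict discreteness of $\Gamma$ or properness of the action. I would handle this by noting that if $\det(L(\gamma)-\id)\neq 0$ then $\gamma$ has a unique affine fixed point, contradicting freeness of the $\Gamma$-action.

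Finally I would pass to the Zariski closure: by Lemma~\ref{z}, $\overline{L(\Gamma)}$ is a closed Lie subgroup of $\mathrm{GL}(m,\R)$, it is $1$-spectral, and it has finitely many connected components; intersecting with $\mathrm{GL}^+(m,\R)$ preserves all of these properties. Since the frame bundle of $TM$ reduces to $L(\Gamma)$ and hence to $\overline{L(\Gamma)}\cap\mathrm{GL}^+(m,\R)$, Lemma~\ref{lemmaspectral} applies and shows the Euler class $e(TM)\in H^m(M)$ vanishes. Integrating over $M$ (or using that the Euler class evaluated on the fundamental class equals $\chi(M)$) gives $\chi(M)=0$. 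If one worries about orientability, replace $M$ by its orientation double cover $M'$; then $\chi(M')=2\chi(M)$ and $M'$ is still closed affine complete, so $\chi(M')=0$ forces $\chi(M)=0$.
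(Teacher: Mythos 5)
Your proposal is correct and follows essentially the same route as the paper: use Proposition~\ref{develop} to write $M\cong\R^m/\Gamma$, take the linear-part homomorphism on $\Gamma$, deduce $1$-spectrality from freeness of the action (a fixed point would exist if $\det(L(\gamma)-\id)\neq 0$), pass to the Zariski closure via Lemma~\ref{z}, and conclude with Lemma~\ref{lemmaspectral}. The only differences are cosmetic: you describe the reduction of the frame bundle as the associated flat bundle $\widetilde M\times_L \mathrm{GL}(m,\R)$ where the paper builds it explicitly from the frames $D\pi(x)$, and you add the (reasonable) remark about orientability, which the paper leaves implicit.
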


\begin{proof}
	In view of Lemma \ref{lemmaspectral}, it suffices to show that the frame bundle of $TM$ admits a reduction to
	a closed 1-spectral subgroup of $\mathrm{GL}^+(m,\R)$ which has finitely many connected components. By Proposition \ref{develop}, we know that $M$ is the quotient $ \R^m / \Gamma$ where $\Gamma \subset \mathsf{Aff}(\R^m)$ is isomorphic to the fundamental group of $M$. Consider the natural homomorphism
	\[ \lambda:  \mathsf{Aff}(\R^m) \rightarrow \mathrm{GL}(m,\R), \, Ax+b \mapsto A \]
	and let $G$ be the group $\lambda(\Gamma)$. We claim that $G$ is 1-spectral. Take $g=Ax+b \in \mathsf{Aff}(\R^m)$. If $A$ is not 1-spectral then the equation $Ax+b=x$ has a solution, which is imposible because $\Gamma$ acts freely on $\R^m$.  Lemma \ref{z} guarantees that $\overline{G}$ is a closed 1-spectral group with finitely many connected components. Therefore, it suffices to prove that the frame bundle of $TM$ admits a reduction to the group $G$ and therefore to $\overline{G}$. Consider the projection \[ \pi: \R^m \rightarrow M\] and for each $p\in M$, the following subset of the frame bundle of $T_pM$
	 \[ S_p:=\{ \phi: \R^m \rightarrow T_pM: \phi \text{ is a linear isomorphism of the form } \phi= D\pi(x) \text{ for } x \in \pi^{-1}(p)\} \]
The group $G$ acts on the right by composition and this action is free and transitive. If we set:
	 \[S:=\coprod_p S_p\subset \mathsf{Fr}(TM)\]
	 we obtain a reduction of the structure group of the frame bundle of $TM$ to $G$.
	\end{proof}

\section{The case of special affine manifolds, after Klingler}
In the case of special affine manifolds, Chern's conjecture holds.
\begin{definition}
An affine manifold $M$ is called special if it admits a covariantly constant volume form $\omega \in \Omega^m(M)$.
\end{definition}

\begin{lemma}
An affine manifold is special if and only if there exists an affine atlas for which the transition functions are of the form:
\[ f(x)=Ax+b\]
where $\mathsf{det}(A)=1$. We will call such an atlas a special atlas.
\end{lemma}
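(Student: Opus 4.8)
The plan is to read off what covariant constancy of a top-degree form says in an affine chart, using the Lemma that identifies affine structures with flat torsion-free connections: in affine coordinates the Christoffel symbols of $\nabla$ vanish, so the covariant derivative of a coordinate tensor is just its partial derivative.

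\emph{From a parallel volume form to a special atlas.} Suppose $\omega\in\Omega^m(M)$ is nowhere zero with $\nabla\omega=0$. Pick any affine chart $(U_\alpha,\varphi_\alpha)$ with coordinates $x^1,\dots,x^m$ (shrinking so that $U_\alpha$ is connected) and write $\omega|_{U_\alpha}=f_\alpha\,dx^1\wedge\cdots\wedge dx^m$. Since $\nabla_i\,dx^j=-\Gamma^j_{ik}\,dx^k=0$ in these coordinates, the Leibniz rule gives $\nabla_i\omega=\partial_i f_\alpha\,dx^1\wedge\cdots\wedge dx^m$, so $\nabla\omega=0$ forces $f_\alpha$ to be a constant $c_\alpha$, and $c_\alpha\neq 0$ because $\omega$ is nowhere zero. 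Now replace $\varphi_\alpha$ by the chart with coordinates $y^1=c_\alpha x^1$, $y^i=x^i$ for $i\geq 2$. Rescaling one coordinate is an affine transformation, so this new chart still belongs to the affine structure, and in it $\omega=dy^1\wedge\cdots\wedge dy^m$. Carrying this out for every chart yields an affine atlas in which $\omega$ is everywhere the coordinate volume form.

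First I would then note that for any two charts $(U_\alpha,\psi_\alpha)$, $(U_\beta,\psi_\beta)$ of this normalized atlas the transition map $g=\psi_\beta\circ\psi_\alpha^{-1}$ is affine, $g(x)=Ax+b$, and $g^*(dx^1\wedge\cdots\wedge dx^m)=\det(A)\,dx^1\wedge\cdots\wedge dx^m$; but this pullback must equal $dx^1\wedge\cdots\wedge dx^m$, because both $\psi_\alpha$ and $\psi_\beta$ express the same $\omega$ as the coordinate volume form. Hence $\det(A)=1$, so the normalized atlas is a special atlas. Conversely, given a special atlas $(U_\alpha,\varphi_\alpha)$, put $\omega_\alpha:=\varphi_\alpha^*(dx^1\wedge\cdots\wedge dx^m)$ on $U_\alpha$. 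On an overlap, $\varphi_\alpha\circ\varphi_\beta^{-1}$ is affine with unimodular linear part, so $\omega_\beta=\varphi_\beta^*\bigl(\varphi_\alpha\circ\varphi_\beta^{-1}\bigr)^*(dx^1\wedge\cdots\wedge dx^m)=\varphi_\beta^*(\det(A)\,dx^1\wedge\cdots\wedge dx^m)=\omega_\alpha$; hence the $\omega_\alpha$ glue to a global nowhere-vanishing $\omega\in\Omega^m(M)$. In each chart $\varphi_\alpha$ this $\omega$ has constant coefficient and the Christoffel symbols vanish, so $\nabla\omega=0$ on $U_\alpha$, and therefore on all of $M$; thus $M$ is special.

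The one point that needs care is the elementary computation that in a coordinate system with vanishing Christoffel symbols the covariant derivative of $f\,dx^1\wedge\cdots\wedge dx^m$ equals $\partial_i f\,dx^1\wedge\cdots\wedge dx^m$, i.e. that parallel transport on the top exterior power is governed entirely by the determinant of the linear transition parts; once this is in hand the rest is bookkeeping with affine changes of coordinates. I do not expect a serious obstacle — in particular no orientation issue arises, since rescaling a single coordinate by $c_\alpha$ already absorbs any sign.
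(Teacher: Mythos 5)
Your proof is correct and takes essentially the same route as the paper's: produce affine charts in which $\omega$ equals the coordinate volume form (using that $\nabla\omega=0$ forces a constant coefficient in affine coordinates, then rescaling one coordinate), note the transitions between such charts must have $\det(A)=1$, and conversely glue the coordinate volume forms $\varphi_\alpha^*(dx^1\wedge\cdots\wedge dx^m)$ of a special atlas into a parallel volume form. The paper states this very tersely, and your write-up simply supplies the details it leaves implicit.
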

\begin{proof}
Suppose $\omega \in \Omega^m(M)$ is a parallel volume form. Then the family of affine coordinates $\varphi: U \rightarrow V \subseteq \R^m$ such that:
\[ dx_1 \wedge \dots \wedge dx_m =\omega\vert_{U}\]
is a special atlas. Conversely, given a special atlas $\{ U_\alpha, \varphi_\alpha\}$ there is unique parallel volume form $\omega \in \Omega^m(M)$ such that for special affine coordinates:
\[ dx_1 \wedge \dots \wedge dx_m =\omega\vert_{U_\alpha}\]
\end{proof}

In what follows we will present Klingler's proof \cite{K} of the following remarkable theorem:
\begin{theoremnn}[Klingler]
The Euler characteristic of a closed special affine manifold is zero.
\end{theoremnn}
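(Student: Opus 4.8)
The plan is to follow, in spirit, the strategy used above for complete manifolds: the affine structure makes $TM$ a flat bundle, the special hypothesis reduces its structure group to $\mathrm{SL}(m,\R)$, and one tries to conclude that the Euler class vanishes. What is no longer available is the reduction-of-structure-group argument of Lemmas \ref{compact}--\ref{lemmaspectral}, since the linear holonomy of a non-complete special affine manifold need not be $1$-spectral; it must be replaced by information extracted from the developing map. Some such extra input is unavoidable: Smillie's examples show that a flat connection on $TM$ alone does not force $\chi(M)=0$, so the torsion free and volume preserving hypotheses have to be used essentially.

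Write $M=\widetilde M/\pi_1(M)$, apply Corollary \ref{extend} to $\widetilde M$ to obtain a developing map $D:\widetilde M\to\R^m$, and let $h:\pi_1(M)\to\mathrm{SL}(m,\R)\ltimes\R^m$ be the affine holonomy, with linear part $\rho:\pi_1(M)\to\mathrm{SL}(m,\R)$; the determinants are $1$ by the special hypothesis. In a special atlas $D$ pulls Lebesgue measure back to the parallel volume form $\widetilde\omega$ on $\widetilde M$, and $h$ acts on $\R^m$ by volume preserving affine maps, so $D$ is a volume preserving, $h$-equivariant local diffeomorphism from $(\widetilde M,\widetilde\omega)$ to $(\R^m,\mathrm{Leb})$. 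Since the flat torsion free connection on $TM$ has linear holonomy $\rho$, we have $TM\cong E_\rho$, hence
\[
\chi(M)=\langle e(TM),[M]\rangle=\langle e(E_\rho),[M]\rangle ,
\]
and it suffices to show that this Euler number is zero.

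Now I would invoke the fact, due to Sullivan and Smillie, that the Euler class of a flat bundle is represented by an explicit \emph{bounded} cocycle on the discrete group $\mathrm{GL}^{+}(m,\R)^{\delta}$: with $\Gamma=\rho(\pi_1 M)$ and $g:M\to B\Gamma$ the classifying map of the flat structure, $\chi(M)=\langle\bar e,g_*[M]\rangle$ where $\bar e\in H^m(\Gamma;\R)$ is the image of the universal Euler class and admits a bounded representative. This already yields a Milnor--Wood type estimate bounding $|\chi(M)|$ by a multiple of the simplicial volume of $M$, and in particular $\chi(M)=0$ when $\Gamma$ is amenable; but it is not enough in general, and the real goal becomes to prove that $\bar e$ vanishes in $H^m(\Gamma;\R)$. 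For this I would pass to the Zariski closure $G=\overline{\Gamma}\subseteq\mathrm{SL}(m,\R)$, a real algebraic group with finitely many connected components by the argument of Lemma \ref{z}, and analyze it through its Levi decomposition $G=S\ltimes U$: the unipotent radical $U$ is contractible and plays no role, the case of reducible $\rho$ is disposed of separately using multiplicativity of the Euler class and a dimension reduction, and one is left with the situation where $G$ is reductive. There the volume preserving equivariant developing map enters decisively: the foliated bundle $E_\rho$ carries a finite holonomy-invariant measure (coming from $\widetilde\omega$), so the Lyapunov exponents of $\rho$ along the flat foliation sum to zero since $\det\rho\equiv 1$, and this, combined with the geometry of $D$, is what lets one realize the Euler cocycle as the coboundary of a bounded measurable cochain and conclude $\bar e=0$.

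The main obstacle is precisely this final step. A flat connection has vanishing curvature Pfaffian while its Euler class need not vanish, so Chern--Weil gives nothing and there is no formal reason for the Euler number to be zero; one genuinely has to control the interaction between the developing map and the linear holonomy near the ends of the developing image $D(\widetilde M)\subseteq\R^m$ --- the only place where the noncompactness of the fibres could produce a nonzero Euler number --- and to show that the existence of the parallel volume form rules this out. I expect all of Klingler's new ideas, and the bulk of the remaining argument, to be concentrated there.
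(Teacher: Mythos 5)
Your outline is not a proof: everything hinges on the last claimed step, and that step is missing. You reduce the problem to showing that the Euler class $\bar e\in H^m(\Gamma;\R)$ of the flat bundle $E_\rho$ vanishes, and then assert that ``the Lyapunov exponents of $\rho$ along the flat foliation sum to zero since $\det\rho\equiv 1$, and this, combined with the geometry of $D$, is what lets one realize the Euler cocycle as the coboundary of a bounded measurable cochain.'' No mechanism is given for converting a parallel volume form into a bounded primitive of the Euler cocycle, and the auxiliary claims propping it up are themselves unsupported or wrong: the holonomy-invariant measure on the fibres of $E_\rho$ induced by the special structure is Lebesgue measure, which is \emph{infinite}, so there is no finite invariant measure and no Oseledets-type argument to quote; and the ``reducible case by multiplicativity plus dimension reduction'' is also only named, since a reducible linear holonomy need not split $TM$ into oriented flat even-rank summands. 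As you yourself note, Smillie's examples show boundedness of the Euler class (Sullivan--Smillie) cannot suffice, so the Milnor--Wood/amenability preamble, while correct, carries none of the load; the entire content of the theorem is concentrated in the step you leave as ``where I expect Klingler's new ideas to be.''

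For comparison, the paper's proof goes an entirely different way and never mentions bounded cohomology, Zariski closures or Levi decompositions. It equips $TM$ with the canonical para-hypercomplex structure (Proposition \ref{para}), reformulates $\chi(M)=0$ as the vanishing of the map $\lambda\colon H^m_c(TM)\to H^m(TM)$ via the Thom class (Lemma \ref{in}), and reduces this to the vanishing of $\lambda\colon\mathcal{B}^{0m}_{c,\infty}\to\mathcal{B}^{0m}_{\infty}$ between the spectral sequences of the vertical filtration (Proposition \ref{spectralt}). The developing map is used to descend, over $TM\times S^1$, the whole circle of para-complex structures $J_z$ into complexes of sheaves, so that $\lambda$ factors through the stalk at $z=1$ of a direct-image spectral sequence (Lemma \ref{factors}); the special hypothesis enters only through Lemma \ref{invariant}, where $\det A=1$ is exactly what makes $dy_1\wedge\dots\wedge dy_m$ an $\mathsf{SAff}(V)$-invariant form, and this invariant form produces a nowhere-vanishing global section of a constructible subsheaf of $\R_{S^1}$, forcing $\mathcal{E}^{0m}_{\infty}=0$ (Lemma \ref{U}, Proposition \ref{vanishingt}) and hence $\lambda=0$. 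So even granting your setup, the endgame you gesture at is not the one that is known to work, and as written your argument has a genuine, unfilled gap precisely at the point where the special affine hypothesis must be used.
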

The structure of Klingler's proof is as follows. Proposition \ref{para} shows that if $M$ is an affine manifold then
$TM$ has a canonical para-hypercomplex structure and therefore there are spectral sequences $\mathcal{B}^{ij}_r$ and
$\mathcal{B}^{ij}_{c,r}$ converging to $H(TM)$ and $H_c(TM)$ respectively. Proposition \ref{spectralt} shows that if the natural map induced by the inclusion:
\[ \lambda: \mathcal{B}^{0m}_{c,\infty}\rightarrow \mathcal{B}^{0m}_\infty\]
is zero then the Euler characteristic of $M$ is zero. Finally, Proposition \ref{vanishingt} shows that if $M$ is special affine then:
\[ \lambda: \mathcal{B}^{0m}_{c,\infty}\rightarrow \mathcal{B}^{0m}_\infty\]
is the zero map.

\subsection{Local product structures and affine manifolds}
\begin{definition}
  A local product structure on $M$ is a pair of foliations $H,V \subset TM$ such that
  \[TM=H \oplus V.\]
  Vector fields tangent to $H$ will be called horizontal and those tangent to $V$ will be called vertical.
\end{definition}

Of course, if $M = X \times Y$ then $M$ has a local product structure given by: \[H=\mathsf{Ker}(D\pi_2)\quad \text{and}\quad V =\mathsf{Ker}(D\pi_1).\] Thus, a product structure induces a local product structure.\\

Let $M$ be a manifold with a local product structure  $H\oplus V=TM$. Then there is a natural decomposition
\[\Omega(M)=\Gamma(\Lambda(H\oplus V)^*)=\Gamma\left(\Lambda H^* \otimes \Lambda V^*\right).\]
We set
\[
\Omega^{ij}(M):=\Gamma\left(\Lambda^i H^*\otimes \Lambda^j V^*\right),
\]
so that
\[ \Omega^k(M)=\bigoplus_{i+j=k}\Omega^{ij}(M).\]
A form $\eta \in \Omega^{ij}(M)$ vanishes when evaluated on more than $i$ horizontal vector fields or more than $j$ vertical vector fields.

\begin{lemma}
Let $M$ be a manifold with a local product structure $TM=H\oplus V$. Then the de-Rham operator
\[ d: \Omega(M) \rightarrow \Omega(M)\]
decomposes as a sum
\[d=d_H+d_V\]
where
\[ d_H:\Omega^{ij}(M) \rightarrow \Omega^{i+1,j}\quad \text{and}\quad d_V:\Omega^{ij}\rightarrow \Omega^{i,j+1}(M).\]
Thus, $\Omega^{ij}(M)$ becomes a double complex.
\end{lemma}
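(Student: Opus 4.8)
The plan is to reduce the statement to two local facts: first, that $d$ preserves no bidegree other than by raising the total degree by one (which is automatic since $d$ raises total degree by one), and second, that $d$ has no component that raises the horizontal degree by two or more (equivalently, no component that lowers the vertical degree). Since $d = \sum_{p,q} d_{p,q}$ with $d_{p,q}\colon \Omega^{ij}(M)\to\Omega^{i+p,\,j+q}(M)$ and $p+q=1$, the only a priori possible components are $d_{p,\,1-p}$ for $p \in \mathbb{Z}$. So everything comes down to showing $d_{p,1-p}=0$ unless $p\in\{0,1\}$, i.e. that $d$ cannot increase the horizontal degree by two or more, and cannot decrease it.

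First I would recall the invariant formula for the exterior derivative: for $\omega\in\Omega^k(M)$ and vector fields $X_0,\dots,X_k$,
\[
d\omega(X_0,\dots,X_k)=\sum_i (-1)^i X_i\,\omega(X_0,\dots,\widehat{X_i},\dots,X_k)+\sum_{i<j}(-1)^{i+j}\omega([X_i,X_j],X_0,\dots,\widehat{X_i},\dots,\widehat{X_j},\dots,X_k).
\]
Fix $\eta\in\Omega^{ij}(M)$ and evaluate $d\eta$ on a collection of $i+j+1$ vector fields that are each either horizontal or vertical. The first (Lie derivative) sum only removes one argument and applies a vector field, so it contributes to bidegrees $(i+1,j)$ and $(i,j+1)$ only. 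For the bracket sum, this is where the foliation hypothesis enters: because $H$ is a foliation, the bracket of two horizontal fields is horizontal, and because $V$ is a foliation, the bracket of two vertical fields is vertical. Thus $[X_i,X_j]$ is horizontal when both are horizontal, vertical when both are vertical, and a general section of $TM=H\oplus V$ when one is horizontal and one vertical — but in that last case the term removes one horizontal and one vertical argument and reinserts something with both components, so it again contributes only to $(i+1,j)$ and $(i,j+1)$. In no case does one produce a form of bidegree $(i+2,j-1)$, $(i-1,j+2)$, or anything further afield. This shows $d_{p,1-p}=0$ for $p\notin\{0,1\}$, so $d=d_H+d_V$ with $d_H:=d_{1,0}$ and $d_V:=d_{0,1}$ of the stated bidegrees.

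Finally, from $d^2=0$ and the decomposition $d=d_H+d_V$, comparing bidegrees gives $d_H^2=0$, $d_V^2=0$, and $d_Hd_V+d_Vd_H=0$, which is exactly the assertion that $(\Omega^{\bullet\bullet}(M),d_H,d_V)$ is a double complex. The main obstacle — though it is really a bookkeeping point rather than a deep one — is being careful with the mixed bracket terms $[X_i,X_j]$ when $X_i$ is horizontal and $X_j$ vertical: one must check that decomposing such a bracket into its horizontal and vertical parts and feeding each part back into $\eta$ never pushes the bidegree outside $\{(i+1,j),(i,j+1)\}$, which follows because $\eta$ vanishes on any argument list with more than $i$ horizontal or more than $j$ vertical entries. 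Once that is in hand the result is immediate.
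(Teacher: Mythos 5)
Your proposal is correct and follows essentially the same route as the paper: evaluate the Koszul formula for $d$ on collections of horizontal and vertical fields, use the integrability of $H$ and $V$ to control the bracket terms (decomposing mixed brackets into their $H\oplus V$ components), and conclude that $d\eta$ can only land in bidegrees $(i+1,j)$ and $(i,j+1)$. Your added remark that comparing bidegrees in $d^2=0$ yields $d_H^2=d_V^2=d_Hd_V+d_Vd_H=0$ is a fine way to finish the double-complex claim, which the paper leaves implicit.
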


\begin{proof}
Recall that for a differential form $\eta \in \Omega^k(M)$ the de-Rham operator is given by the formula
\begin{align*}
d\eta(X_0,\cdots,X_k)=&\sum_{i=0}^k(-1)^iX_i(\eta(X_0,\dots,\hat X_i,\cdots,X_k))\nonumber\\
&+\sum_{r<s}(-1)^{r+s}\eta([X_r,X_s],X_0,\dots,\hat X_r,\dots, \hat X_s,\dots,X_k).
\end{align*}
 Consider $\eta \in \Omega^{ij}(M)$ with $k=i+j$. We claim that \[d\eta(X_0, \dots ,X_{k})=0\] if there are more than
 $i+1$ horizontal vector fields. Suppose this is the case. Then the list
 $X_0,\dots,\hat X_i,\dots,X_k$ contains at least $i+1$ horizontal vector fields and therefore
\[ \eta(X_0,\cdots,\hat X_i,\cdots,X_k)=0.\]
Also, since $H$ is a foliation, the list
\[[X_r,X_s],X_0,\cdots,\hat X_r,\dots, \hat X_s,\dots,X_k\]
contains at least $i+1$ horizontal vector fields and therefore
\[\eta([X_r,X_s],X_0,\dots,\hat X_r,\dots, \hat X_s,\cdots,X_k)=0.\]
By symmetry, it is also true that $d\eta(X_0,\dots,X_k)=0$ of there are more than $j+1$ vertical vector fields.
Thus, we conclude that \[d\eta \in \Omega^{i+1,j}(M) \oplus \Omega^{i, j+1}(M),\] as claimed.

\end{proof}

We have seen that the de-Rham complex of a manifold with a local product structure decomposes as a double complex.
This induces two decreasing filtrations which we will denote as follows:
\[
F^V_r(\Omega(M))=\bigoplus_{ j\geq r}\Omega^{ij}(M);\]\[F^H_r(\Omega(M))=\bigoplus_{ i\geq r}\Omega^{ij}(M).
\]
Notice that forms in $F^V_r(\Omega^k(M))$ are precisely the $k$-forms that vanish when evaluated in $k+1-r$ horizontal vector fields, hence this filtration is just the filtration associated to the horizontal foliation $H$.
We will be concerned mostly with a special kind of local product structures called paracomplex structures, which we now introduce.

\begin{definition}
For any endomorphism of the tangent bundle $A \in \mathsf{End}(TM)$ the Nijenhuis tensor of $A$ is defined by
\[
N_A(X,Y)=-A^2[X,Y]+A([AX,Y]+[X,AY])-[AX,AY].
\]
\end{definition}

\begin{exercise}
Show that the Nijenhuis tensor is indeed a tensor.
\end{exercise}

\begin{definition}
An almost-complex structure on $M$ is an endomorphism $I\in  \mathsf{End}(TM)$ such that $I^2=-\id$. A complex structure on $M$ is an almost complex structure $I$ such that $N_I=0$. A para-complex structure on $M$ is an endomorphism $J\in \mathsf{End}(TM)$ such that $J^2=\id$ and such that $TM=H\oplus V$ where $H$ and $V$ are the eigenspaces of $1$ and $-1$ respectively, and are integrable distributions of the same dimension. A para-hypercomplex structure on $M$ is the data of a complex structure $I$ and a para-complex structure $J$ satisfying $IJ=-JI$.
\end{definition}

An almost complex structure on $M$ gives $TM$ the structure of a complex vector bundle by:
\[ z X= aX + bI(X)\]
where $z =a+bi \in \MC$.
\begin{lemma}
 Let $M$ be a manifold with a para-hypercomplex structure. Then for any $z=a+bi\in S^1\subseteq\MC$, the endomorphism $J_z \in \mathsf{End}(TM)$ given by \[J_z(X):=z J(X).\] is
 a para-complex structure.
 \end{lemma}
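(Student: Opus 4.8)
The claim is that for a para-hypercomplex manifold $M$ with complex structure $I$ and para-complex structure $J$ satisfying $IJ = -JI$, and for any $z = a+bi \in S^1 \subseteq \MC$, the endomorphism $J_z(X) = zJ(X) = aJ(X) + bIJ(X)$ is again a para-complex structure. The plan is to verify the three defining conditions in turn: that $J_z^2 = \id$, that the eigenbundles for $\pm 1$ have equal dimension, and that those eigenbundles are integrable distributions.

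First I would compute $J_z^2$ purely algebraically. Writing $J_z = aJ + bIJ$ and using $J^2 = \id$, $I^2 = -\id$, and the anticommutation $IJ = -JI$, one expands $J_z^2 = a^2 J^2 + ab(J \cdot IJ + IJ \cdot J) + b^2 (IJ)^2$. The cross term vanishes because $J(IJ) = (JI)J = -(IJ)J$, so $J(IJ) + (IJ)J = 0$; and $(IJ)^2 = IJIJ = I(JI)J = -I(IJ)J = -I^2 J^2 = \id$. Hence $J_z^2 = (a^2 + b^2)\id = \id$ since $|z| = 1$. This is the easy part.

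Next, for the eigenbundle dimensions: let $H_z, V_z$ be the $(+1)$- and $(-1)$-eigenbundles of $J_z$. Since $J_z^2 = \id$, we have $TM = H_z \oplus V_z$ with projections $\tfrac12(\id \pm J_z)$, so it suffices to show $\mathrm{rk}\, H_z = \mathrm{rk}\, V_z$. The natural move is to produce a bundle isomorphism swapping them: I claim $I$ does the job. Indeed $I J_z = I(aJ + bIJ) = aIJ - bJ = aIJ - bJ$, while $J_z I = aJI + bIJI = -aIJ - bI^2J = -aIJ + bJ = -(aIJ - bJ)$, so $IJ_z = -J_z I$, i.e. $I$ anticommutes with $J_z$. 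Therefore $I$ maps the $(+1)$-eigenbundle of $J_z$ isomorphically onto the $(-1)$-eigenbundle (if $J_z X = X$ then $J_z(IX) = -IJ_z X = -IX$), and since $I$ is invertible this gives $\mathrm{rk}\, H_z = \mathrm{rk}\, V_z$, so both equal $\tfrac12 \dim M$.

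The main obstacle is integrability of $H_z$ and $V_z$: one must show their Nijenhuis-type obstruction vanishes, equivalently that $N_{J_z} = 0$, using only that $N_J = 0$ (since $J$ is para-complex, its eigenbundles are integrable, which is the statement $N_J = 0$) and $N_I = 0$ (since $I$ is a complex structure). Here I would expand $N_{J_z}(X,Y) = -J_z^2[X,Y] + J_z([J_z X, Y] + [X, J_z Y]) - [J_z X, J_z Y]$ with $J_z = aJ + bIJ$, and collect terms by powers of $a,b$: the $a^2$-terms reassemble into $N_J$, the $b^2$-terms into (a sign times) $N_{IJ}$, and the $ab$-terms into a mixed bilinear expression. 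One then needs: (i) $N_{IJ} = 0$, which should follow since $IJ$ is itself a para-complex structure — its square is $\id$ as computed above, and one checks its eigenbundles are integrable, perhaps most cleanly by noting $IJ = J_i$ and invoking the same reasoning, or by a direct Nijenhuis computation reducing $N_{IJ}$ to combinations of $N_I$ and $N_J$; and (ii) the $ab$ cross-terms vanish identically given $N_I = N_J = 0$ and $IJ = -JI$. Step (ii) is the genuinely computational heart: it is the standard "quaternionic"-type identity showing that the Nijenhuis tensor of a linear combination of anticommuting (para-)complex structures, each integrable, is again zero. I would carry this out by the bilinear polarization trick — since all the $N$'s are tensors, it suffices to verify the identity on vector fields that are eigenvectors of $J$, where the brackets simplify — but I expect this to require careful bookkeeping of signs coming from $I^2 = -\id$ versus $J^2 = \id$. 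Once $N_{J_z} = 0$ is established, the eigenbundles $H_z$ and $V_z$ are integrable by the (para-complex analogue of the) Newlander–Nirenberg / Frobenius argument, and all three conditions in the definition of a para-complex structure hold.
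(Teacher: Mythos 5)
Your computations of $J_z^2=\id$ and of the anticommutation $IJ_z=-J_zI$ are correct, and the latter is a clean way to see that the two eigenbundles of $J_z$ have equal rank (a point the paper's proof leaves implicit). The gap is in your third step, which is the actual content of the lemma: integrability. You reduce it to two claims --- (i) $N_{IJ}=0$ and (ii) the vanishing of the mixed $ab$-term in the expansion of $N_{J_z}$ --- and prove neither; you explicitly defer the ``careful bookkeeping of signs''. Moreover, your first suggested justification of (i), that $IJ=J_i$ so one can ``invoke the same reasoning'', is circular: $J_i$ is precisely one of the structures the lemma asserts to be para-complex. As written, the proposal establishes the easy algebraic identities and only outlines a plan for the part that needs proof.

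For comparison, the paper avoids the Nijenhuis tensor of $J_z$ altogether. Writing $H,V$ for the $\pm1$ eigenbundles of $J$, it identifies the eigenbundles of $J_z$ explicitly as $(1+z)H$ and $(1+z)V$ (where $zX=aX+bIX$), and then checks involutivity of $(1+z)H$ by a single direct bracket computation: expand $[(1+z)X,(1+z)Y]$ for $X,Y\in H$, use $N_I=0$ to replace $[IX,Y]+[X,IY]$ by $I[X,Y]-I[IX,IY]$, and use that $H$ and $V$ are involutive and that $I$ swaps them (from $IJ=-JI$) to conclude the bracket lies in $(1+z)H$; the case of $(1+z)V$ is symmetric. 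If you want to complete your route, you must actually carry out the expansion $N_{J_z}=a^2N_J+b^2N_{IJ}+ab(\text{cross term})$ and derive (i) and (ii) from $N_I=N_J=0$ and the anticommutation; that computation is the heart of the lemma and is longer and less direct than the paper's eigenbundle argument. You should also state the small equivalence you use twice, namely that for an almost product structure the vanishing of the Nijenhuis tensor is equivalent to involutivity of both eigendistributions (once to pass from ``$J$ para-complex'' to $N_J=0$, once to return from $N_{J_z}=0$ to integrability); it is easy, but it is part of the argument.
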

 \begin{proof}
 First we observe that \[J_z^2=zJ(zJ)=z\bar z J^2=\id.\]
Let $H$ and $V$ denote the eigenspaces of $J$ associated to 1 and $-1$ respectively. Then the eigenspaces of $J_z$ are $(1+z)H$ and $(1+z)V$. Let us check that these distributions are involutive. Take $X,Y\in H$ and write $z=a+bi$.  Then
\begin{align*}
  [(1+z)X,(1+z)Y]=&(1+a)[X,Y]+b\big([IX,Y]+[X,IY]\big)+\\
  &a\big((1+a)[X,Y]+b\big([IX,Y]+[X,IY]\big)\big)+\\
  &b^2[IX,IY]
\end{align*}
Since we have $N_I=0$, we may replace $[IX,Y]+[X,IY]=i[X,Y]-I[IX,IY]$ in the previous expression, which yields
\begin{align*}
  [(1+z)X,(1+z)Y]=&(1+a+bI)[X,Y]+a(1+a+bI)[X,Y]+\\
  &-bI[IX,IY]-abI[IX,IY]+b^2[IX,IY]
\end{align*}
Ultimately, we get
\[
[(1+z)X,(1+z)Y]=(1+z)[X,Y]+a(1+z)[X,Y]+-b(1+z)I[IX,IY].
\]
The anticommutativity relation $JI=-IJ$ implies that $I$ interchanges the eigenspaces $H$ and $V$. In particular we have that $I[IX,IY]\in H$. Therefore $[(1+z)X,(1+z)Y]=(1+z)Z$ with $Z\in H$. A similar argument shows that $(1+z)V$ is also involutive.
\end{proof}

\begin{proposition}\label{para}
Let $M$ be an affine manifold. Then $TM$ has a canonical para-hypercomplex structure.
\end{proposition}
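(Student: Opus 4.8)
The plan is to construct the para-hypercomplex structure explicitly from an affine atlas. By the lemma identifying affine structures with flat torsion-free connections, $M$ carries such a connection $\nabla$ on $TM$ together with an atlas of affine charts $\varphi: U \to V \subseteq \R^m$ in which all Christoffel symbols vanish. Fix such a chart with coordinates $x^1, \dots, x^m$; these induce coordinates $(x^1, \dots, x^m, v^1, \dots, v^m)$ on $\pi^{-1}(U) \subseteq TM$, where $\pi: TM \to M$ is the projection, and (because the Christoffel symbols vanish) the horizontal distribution of $\nabla$ is $\langle \partial_{x^1}, \dots, \partial_{x^m}\rangle$ while the vertical distribution $\mathsf{Ker}(D\pi)$ is $\langle \partial_{v^1}, \dots, \partial_{v^m}\rangle$. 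I would then define two fields of endomorphisms of $T(TM)$ over $\pi^{-1}(U)$ by the constant-coefficient formulas
\[
J(\partial_{x^i}) = \partial_{x^i}, \quad J(\partial_{v^i}) = -\partial_{v^i}, \qquad I(\partial_{x^i}) = \partial_{v^i}, \quad I(\partial_{v^i}) = -\partial_{x^i}.
\]

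The first step is to check that $I$ and $J$ do not depend on the chosen affine chart. A transition map between affine charts has the form $(x, v) \mapsto (Ax + b, Av)$ with $A \in \mathrm{GL}(m, \R)$ \emph{constant}; since $A$ is constant, a short computation gives $\partial/\partial x^i = \sum_j A_{ji}\,\partial/\partial \tilde x^j$ and $\partial/\partial v^i = \sum_j A_{ji}\,\partial/\partial \tilde v^j$, with no mixing of the horizontal and vertical blocks. Hence the displayed formulas define the same endomorphisms in every affine chart, so $I$ and $J$ patch to globally defined elements of $\mathsf{End}(T(TM))$.

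Next I would record the algebraic identities, all immediate from the constant-coefficient formulas: $J^2 = \id$, $I^2 = -\id$, and $IJ = -JI$. The $(+1)$- and $(-1)$-eigenbundles of $J$ are $H = \langle \partial_{x^i}\rangle$ and $V = \langle \partial_{v^i}\rangle$, each of rank $m$, i.e. half the dimension of $TM$; both are integrable, being spanned in each affine chart by coordinate vector fields (equivalently, $V$ is tangent to the fibres of $\pi$, and $H$ is tangent to the horizontal sections, which are affine submanifolds of $TM$). So $J$ is a para-complex structure. It remains to see that $N_I = 0$: in each affine chart $I$ is literally the standard complex structure on an open subset of $\R^m \times \R^m \cong \MC^m$, whose coefficients are constant, so the coordinate expression for the Nijenhuis tensor gives $N_I = 0$ identically (equivalently, these charts exhibit $TM$ as a complex manifold whose transition maps $z \mapsto Az + b$, $A \in \mathrm{GL}(m, \R) \subseteq \mathrm{GL}(m, \MC)$, are holomorphic). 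Hence $I$ is a complex structure, and together with $IJ = -JI$ this produces a para-hypercomplex structure on $TM$.

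The only step needing care is the well-definedness check: one must see that the fibre coordinates $v^i$ transform purely tensorially, by the constant matrix $A$ alone with no inhomogeneous or $x$-dependent terms, so that two tensors with the same constant coefficients in different charts actually agree --- and this is precisely where the affineness (hence flatness and torsion-freeness) of $\nabla$ is used; it is also the step that breaks for a general connection. A more intrinsic alternative would define $I$ and $J$ via the splitting $T(TM) = \mathcal{H} \oplus \mathcal{V} \cong \pi^* TM \oplus \pi^* TM$ coming from $\nabla$ and compute $N_I$ directly, finding it built from the curvature and torsion of $\nabla$ and therefore zero; on that route the Nijenhuis computation is the main obstacle, which is why I would prefer the affine-chart argument.
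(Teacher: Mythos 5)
Your construction is exactly the one in the paper: the same constant-coefficient formulas for $I$ and $J$ in the coordinates on $TM$ induced by an affine chart, the same algebraic identities, the same vanishing of $N_I$ via constant coefficients (brackets of coordinate fields), and the same use of affineness of the transition maps to see that $I$ and $J$ are globally well defined. Your write-up just spells out the chart-independence computation more explicitly than the paper does; no changes needed.
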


\begin{proof}
Fix affine coordinates $x_1, \dots, x_m$ in $M$ which determine coordinates $x_1,\dots x_m, y_1,\dots,y_m$ in $TM$ and set
\[ J\Big(\frac{\partial}{\partial x_i}\Big)=\frac{\partial}{\partial x_i},\,\, J\Big(\frac{\partial}{\partial y_i}\Big)=-\frac{\partial}{\partial y_i},\]
and
\[ I\Big(\frac{\partial}{\partial x_i}\Big)=\frac{\partial}{\partial y_i}, \,\, I\Big(\frac{\partial}{\partial y_i}\Big)=-\frac{\partial}{\partial x_i}.\]
It is clear that $IJ+JI=0$, $I^2=-\id$ and the eigenspaces of $J$ are involutive. Evaluating $N_I$ in any couple of generators leads to a sum of terms of the form $\left[\frac{\partial}{\partial x_i},\frac{\partial}{\partial x_j}\right]$, $\left[\frac{\partial}{\partial y_i},\frac{\partial}{\partial y_j}\right]$ or $\left[\frac{\partial}{\partial x_i},\frac{\partial}{\partial y_j}\right]$, all of which vanish. Finally, since the transition functions are affine, the operators $I$ and $J$ are well defined.
\end{proof}

\subsection{The spectral sequence and the Euler class}

As described in the previous section, an affine structure on a manifold $M$ induces a para-hypercomplex structure on its tangent bundle $TM$.  Klingler's proof relies heavily on the spectral sequences associated to this para-hypercomplex structure.\\

Given a manifold $M$ we will denote by:
\[ \lambda: \Omega_c(M) \rightarrow \Omega(M)\]
the inclusion of differential forms with compact support into all forms. The vanishing of the Euler characteristic can be reformulated as follows.

\begin{lemma}\label{in}
  Let $M$ be a connected closed manifold of dimension $m$ and $E$ an oriented vector bundle over $M$ of rank $r$. The Euler class $e(E)$ is zero if and only if the natural morphism
  \[
  \lambda:H_c^r(E,\R)\to H^r(E,\R)
  \]
  vanishes. Here $H_c(E, \R)$ denotes the cohomology with compact support of $E$.
\end{lemma}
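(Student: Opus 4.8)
The plan is to use the Thom isomorphism together with the structure of the long exact sequence relating compactly supported cohomology of the total space, the base, and the complement of the zero section. Let $\pi: E \to M$ be the rank $r$ oriented bundle and let $s: M \to E$ be the zero section, which is a homotopy equivalence, so $\pi^*: H^*(M) \to H^*(E)$ is an isomorphism. The Thom isomorphism gives $H_c^{k}(E) \cong H^{k-r}(M)$ via the cup product with the Thom class $u \in H_c^r(E)$, and under these identifications the natural map $\lambda: H_c^r(E) \to H^r(E)$ becomes a map $H^0(M) \to H^r(M)$.

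First I would identify this map explicitly. The Thom class restricts on the zero section to the Euler class: $s^*\lambda(u) = e(E)$. Since $\pi^*$ is an isomorphism with inverse $s^*$, and $\lambda(u) = \pi^*(s^*\lambda(u)) = \pi^*(e(E))$ (using that $\lambda(u)$, as an honest cohomology class on $E$, is determined by its restriction to the zero section via the homotopy equivalence), the composite $H^0(M) \xrightarrow{\cong} H_c^r(E) \xrightarrow{\lambda} H^r(E) \xrightarrow{s^*,\cong} H^r(M)$ sends $1 \mapsto u \mapsto \lambda(u) \mapsto e(E)$. Because $M$ is connected, $H^0(M) \cong \R$ is generated by $1$, so $\lambda: H_c^r(E) \to H^r(E)$ is the zero map if and only if $e(E) = 0$ in $H^r(M)$. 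That is exactly the claim.

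The key steps in order are: (1) recall that $s: M \hookrightarrow E$ is a homotopy equivalence, hence $\pi^*$ and $s^*$ are mutually inverse isomorphisms on cohomology; (2) invoke the Thom isomorphism $H_c^{k}(E,\R) \cong H^{k-r}(M,\R)$, so that $H_c^r(E) \cong H^0(M)$ is one-dimensional, generated by the Thom class $u$; (3) recall the defining property $s^*u = e(E)$ relating the Thom class to the Euler class; (4) observe that since $H^r(E) \cong H^r(M)$ via $s^*$, the element $\lambda(u) \in H^r(E)$ corresponds precisely to $e(E)$; (5) conclude by connectedness of $M$ that $\lambda = 0$ iff $e(E) = 0$.

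The main obstacle is step (4): one must be careful that $\lambda(u)$, viewed in $H^r(E)$ rather than in $H_c^r(E)$, is genuinely the pullback $\pi^* e(E)$ and not merely something that agrees with it after restriction to a fiber. This follows because $s^*$ is an isomorphism and $s^*\lambda(u) = s^* u = e(E)$, so $\lambda(u) = \pi^* s^* \lambda(u) = \pi^* e(E)$; but making this clean requires the homotopy equivalence from step (1), so I would state and use that first. Everything else — the Thom isomorphism in the form $H_c^*(E) \cong H^{*-r}(M)$ for oriented $E$, and the identity $s^* u = e(E)$ — is standard and can be cited (e.g.\ from \cite{BT}).
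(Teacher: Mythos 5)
Your proposal is correct and follows essentially the same route as the paper's proof: both use the Thom isomorphism to see that $H^r_c(E,\R)\cong H^0(M,\R)\cong\R$ is generated by the Thom class, and both use that the zero section induces an isomorphism on cohomology together with $\sigma^*\lambda(\tau)=e(E)$ to conclude $\lambda=0$ if and only if $e(E)=0$. Your step (4), identifying $\lambda(u)=\pi^*e(E)$, just makes explicit what the paper leaves implicit.
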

\begin{proof}
Let $\tau \in H_c(E, \R)$ be the Thom class of $E$. By the Thom isomorphism
\[ H^r_c(E, \R) \simeq H^0(M, \R)\simeq \R\]
and $\tau$ generates $H_c(E,\R)$. If $\sigma \in \Gamma(E)$ is the zero section then
\[ e(E)= \sigma^* (\lambda(\tau)).\]
Since \[\sigma^*:H(E) \rightarrow H(M)\] is an isomorphism, we conclude that the map $\lambda$ is zero if and only if $e(E)=0$.
\end{proof}

Let $M$ be a closed affine manifold. We know that $TM$ is a para-hypercomplex manifold.
\begin{itemize}
\item We will denote by $\mathcal{B}^{pq}_r$ the spectral sequence associated with the filtration $F^V(\Omega(TM))$.
\item We will denote by $\mathcal{B}^{pq}_{c,r}$ the spectral sequence associated with the filtration $F^V(\Omega_c(TM))$.
\end{itemize}

\begin{proposition}\label{spectralt}
Let $M$ be a closed connected affine manifold of dimension $m$. If the morphism
\[ \lambda: \mathcal{B}^{0m}_{c, \infty}\rightarrow \mathcal{B}^{0m}_\infty\]
is zero then the Euler characteristic of $M$ is zero.
\end{proposition}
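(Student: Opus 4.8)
The plan is to reduce to Lemma~\ref{in} and then observe that the vertical filtration on $H^m(TM)$ carries no information in the relevant spot — the zero section of $TM$ witnesses the vanishing of $F^V_1H^m(TM)$. So first I would apply Lemma~\ref{in} to the bundle $E=TM$, which has rank $m=\dim M$: it says that $e(TM)=0$ if and only if the natural map $\lambda\colon H^m_c(TM,\R)\to H^m(TM,\R)$ vanishes, and since $M$ is closed and connected this is equivalent to $\chi(M)=0$. Hence it suffices to show that $\lambda\colon H^m_c(TM,\R)\to H^m(TM,\R)$ is the zero map. (One may assume $M$ oriented, so that $TM$ is an oriented bundle; otherwise pass to the orientation double cover, which is again a closed affine manifold carrying the pulled-back para-hypercomplex structure, has $\chi=2\chi(M)$, and over which the hypothesis persists because the covering map is proper and compatible with all the structures in play.)

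The one point requiring an idea is the identification of $\mathcal B^{0m}_\infty$. By definition this is the $p=0$ graded piece of the $F^V$-filtration on $H^m(TM)$, namely $\mathcal B^{0m}_\infty=H^m(TM)/F^V_1H^m(TM)$; I claim $F^V_1H^m(TM)=0$, so that the canonical projection $q\colon H^m(TM)\to\mathcal B^{0m}_\infty$ is an isomorphism. Let $\sigma\colon M\to TM$ be the zero section, with homotopy inverse the projection $\pi$ (the scaling $(t,v)\mapsto tv$ realises $\sigma\circ\pi\simeq\id_{TM}$); thus $\sigma^*\colon H^m(TM)\to H^m(M)$ is an isomorphism. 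On the other hand $\sigma^*$ annihilates $F^V_1\Omega(TM)$, because a form $\eta\in\Omega^{ij}(TM)$ with $j\geq 1$ is, in affine fibre coordinates, a sum of terms carrying a factor $dy_k$, and $\sigma^*dy_k=d(y_k\circ\sigma)=d(0)=0$. Hence $\sigma^*$ kills $F^V_1H^m(TM)$, and injectivity of $\sigma^*$ gives $F^V_1H^m(TM)=0$.

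To finish, recall that the inclusion $\lambda\colon\Omega_c(TM)\hookrightarrow\Omega(TM)$ is a morphism of $F^V$-filtered complexes, so it induces a map of the associated spectral sequences; at the $(0,m)$-entry of the $E_\infty$ pages this amounts to a commutative square with top row $\lambda\colon H^m_c(TM,\R)\to H^m(TM,\R)$, bottom row the map $\lambda\colon\mathcal B^{0m}_{c,\infty}\to\mathcal B^{0m}_\infty$ of the hypothesis, and vertical arrows the canonical projections $H^m_c(TM,\R)\to\mathcal B^{0m}_{c,\infty}$ and $q\colon H^m(TM,\R)\to\mathcal B^{0m}_\infty$. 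The bottom row is zero by hypothesis and $q$ is injective by the previous step, so a diagram chase forces $\lambda\colon H^m_c(TM,\R)\to H^m(TM,\R)$ to vanish; by Lemma~\ref{in}, $e(TM)=0$ and therefore $\chi(M)=0$.

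I do not expect a real obstacle here: the entire content is the vanishing $F^V_1H^m(TM)=0$, which falls out of the zero section, together with the functoriality of the spectral sequence under the filtered inclusion $\lambda$. In particular one never has to understand $\mathcal B^{0m}_{c,\infty}$ itself — it may even be zero, as happens for Hopf manifolds — since the hypothesis is only pushed forward along the injective map $q$.
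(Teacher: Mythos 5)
Your proof is correct, but it is organized differently from the paper's. The paper works on the compactly supported side: it invokes the Thom isomorphism to see that $H^m_c(TM)$, and hence $\bigoplus_{i+j=m}\mathcal{B}^{ij}_{c,\infty}$, is one-dimensional, and then shows that the composite of $\lambda$ with the convergence isomorphism and $\sigma^*$ kills every graded piece $\mathcal{B}^{ij}_{c,\infty}$ with positive vertical degree, so that the whole map $\lambda\colon H^m_c(TM)\to H^m(TM)$ vanishes exactly when its $(0,m)$-component does. You instead work on the ordinary-cohomology side: you prove $F^V_1H^m(TM)=0$ (any class there is represented by a closed form of vertical degree $\geq 1$, which $\sigma^*$ annihilates, while $\sigma^*$ is injective on $H^m(TM)$), so that the edge projection $q\colon H^m(TM)\to\mathcal{B}^{0m}_\infty$ is injective, and then a one-line chase in the naturality square for the filtered inclusion $\Omega_c(TM)\hookrightarrow\Omega(TM)$ finishes the argument via Lemma \ref{in}. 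The geometric input is the same in both arguments -- the zero section kills all positive vertical degree -- but your version dispenses with the Thom isomorphism, with the one-dimensionality of $H^m(TM)$, and with any analysis of $\mathcal{B}^{ij}_{c,\infty}$, which makes it a bit shorter and more transparent; the paper's version has the mild advantage of identifying explicitly where the compactly supported graded pieces go, which is in the spirit of the subsequent sheaf-theoretic arguments. One caveat: your parenthetical reduction to the orientable case via the orientation double cover asserts without justification that the hypothesis ``persists'' upstairs (this would need a transfer-type argument), but since Lemma \ref{in} and the paper's own use of the Thom isomorphism with real coefficients already presuppose orientability -- automatic in the special affine case where the proposition is applied -- this does not put you at odds with the intended statement.
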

\begin{proof}
By the Thom isomorphism we know that $H_c^m(TM)$ is a one dimensional vector space and therefore
\[ \bigoplus_{i+j=m}\mathcal{B}^{ij}_{c, \infty}\]
is also a one dimensional vector space. Let $\psi$ be the composition of the following maps:
\[ \xymatrix{
\bigoplus\limits_{i+j=m}{}{\mathcal{B}^{ij}_{c, \infty}} \ar[r]^\lambda & \bigoplus\limits_{i+j=m}{}{\mathcal{B}^{ij}_{ \infty} }\ar[r]^-{c} &\mathsf{Gr}(H^m(TM))\ar[r]^-{\tau} & H^m(TM) \ar[r]^-{\sigma^*}&H^m(M).
}\]
The map $c$ is the isomorphism given by the convergence of the spectral sequence. The map $\tau$ is the identification between
$H^m(M)$ and its associated graded, which exists because $H^m(TM)$ is one dimensional. The map $\sigma^*$ is the pull-back by the zero section. Since \[\mathcal{B}^{ij}_{c,0}=\Omega^{ji}_c(TM),\]
an element in ${\mathcal{B}^{ij}_{c, \infty}}$ is represented by a class $[\omega_{ij}]$ where $\omega_{ij}\in \Omega^{ji}_c(TM)$. Hence, the map $\psi$ is given by
\[ \psi[\omega_{ij}]=\sigma^*(\omega_{ij}).\]
This implies that \[\psi: {\mathcal{B}^{ij}_{c, \infty}}\rightarrow H^m(M)\] is the zero map if $i>0$. Since the maps $c, \tau $ and $\sigma^*$ are isomorphisms we obtain that the map
\[ \lambda: {\mathcal{B}^{ij}_{c, \infty}} \rightarrow {\mathcal{B}^{ij}_{ \infty}}\]
is zero for $i>0$. Furthermore, the following diagram is commutative
\[ \xymatrix{
\bigoplus\limits_{i+j=m}{}{\mathcal{B}^{ij}_{c, \infty}} \ar[r]^\lambda \ar[d]^-{c}& \bigoplus\limits_{i+j=m}{}{\mathcal{B}^{ij}_{ \infty} }\ar[d]^-{c}\\
\mathsf{Gr}(H^m_c(TM)) \ar[r]^-{\lambda} \ar[d]^-{\tau}& \mathsf{Gr}(H^m(TM))\ar[d]^-{\tau} \\
H_c^m(TM) \ar[r]^-{\lambda}& H^m(TM)
}\]
therefore the bottom map is zero if an only if the top map is zero if an only if the map
\[\lambda: {\mathcal{B}^{0m}_{c, \infty}}\rightarrow {\mathcal{B}^{0m}_{ \infty}}\]
is zero. Lemma \ref{in} then implies the claimed result.
\end{proof}

\subsection{A lemma about invariant forms}
Let $V$ be a finite dimensional real vector space. The vector space \[TV=V \oplus V\]
has a natural complex structure given by
\[ I(v,w):=(-w,v)\]
and therefore it is naturally a complex vector space. The group $\mathsf{Aff}(V)$ acts naturally on $TV$ via the derivative by the formula
\[ g \cdot (v,w)=(g(v), A(w))\]
for $g \in \mathsf{Aff}(V)$ of the form $g(v)=Av+b$. $S^1 \subseteq \MC$ acts on $TV$ by rotation
 \[
 \theta(v,w)=(\cos\theta v-\sin\theta w,\sin\theta v+\cos\theta w),\quad e^{i\theta}\in S^1.
 \]
 For each $\theta$, we denote by $TV_\theta$ the vector space $TV$ with the action of $\mathsf{Aff}(V)$ twisted by $\theta$
\[ g_\theta(v,w)=\theta^{-1} g (\theta(v,w)).\]

\begin{lemma}\label{invariant}
The vector space of invariant forms $\Omega^{0m}(TV_\theta)^{\mathsf{SAff}(V)}$ is independent of $\theta$ and has dimension one.
\end{lemma}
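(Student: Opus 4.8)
The plan is to reduce, uniformly in $\theta$, the computation of the invariant subspace to the elementary fact that the only $\mathsf{SL}(m,\R)$-invariant smooth functions on $\R^m$ are the constants. First I would describe $\Omega^{0m}(TV)$ concretely: writing $v_1,\dots,v_m$ for linear coordinates on the first (horizontal) copy of $V$ and $w_1,\dots,w_m$ for coordinates on the second (vertical) copy, the line bundle $\Lambda^0H^*\otimes\Lambda^m(\text{vertical})^*$ is globally trivialised by $\omega_0:=dw_1\wedge\cdots\wedge dw_m$, so every element of $\Omega^{0m}(TV)$ has the form $f\,\omega_0$ with $f\in\smooth(V\oplus V)$. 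Note that this underlying vector space does not depend on $\theta$; only the $\mathsf{Aff}(V)$-action does.

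Next I would make the twisted action explicit. A direct computation with $g(x)=Ax+b$ and the rotation $R_\theta(v,w)=(\cos\theta\,v-\sin\theta\,w,\ \sin\theta\,v+\cos\theta\,w)$ gives
\[
g_\theta(v,w)=R_{-\theta}\big(g(R_\theta(v,w))\big)=(Av+\cos\theta\,b,\ Aw-\sin\theta\,b).
\]
Thus conjugation by the rotation keeps the action affine: the linear part is still $A$ acting diagonally on the two factors, and the translation vector $b$ is merely split as $(\cos\theta\,b,-\sin\theta\,b)$. Since $dg_\theta$ maps vertical vectors to vertical vectors through $A$, and $\det A=1$ for $g\in\mathsf{SAff}(V)$, we get $g_\theta^*\omega_0=\omega_0$; hence $g_\theta^*(f\omega_0)=(f\circ g_\theta)\,\omega_0$, and $f\omega_0$ is invariant exactly when $f$ is constant along the orbits of the twisted action.

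Then I would run the orbit analysis. Taking $A=\id$ shows $f$ is invariant under all translations by the $m$-plane $L_\theta:=\{(\cos\theta\,b,-\sin\theta\,b):b\in V\}$. The linear map $u:=\sin\theta\,v+\cos\theta\,w$ is constant on $L_\theta$-cosets, is surjective onto $V$, and has kernel exactly $L_\theta$, so $f=\tilde f\circ u$ for a unique $\tilde f\in\smooth(V)$. Feeding back the full action and using the identity $u(g_\theta(v,w))=A\,u(v,w)$, one sees that $f\omega_0$ is $\mathsf{SAff}(V)$-invariant if and only if $\tilde f$ is $\mathsf{SL}(m,\R)$-invariant. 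Since $\R^m\setminus\{0\}$ is a single $\mathsf{SL}(m,\R)$-orbit and is dense, any such $\tilde f$ is constant, whence $\Omega^{0m}(TV_\theta)^{\mathsf{SAff}(V)}=\R\,\omega_0$ for every $\theta$: one-dimensional and, visibly, the same subspace of $\Omega^{0m}(TV)$ for all $\theta$.

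The only point requiring care --- the main obstacle --- is the uniformity in $\theta$: one has to observe that conjugating by $R_\theta$ neither destroys affineness nor changes the linear part, so the apparently different cases $\sin\theta=0$ and $\sin\theta\neq0$ collapse to the same $\mathsf{SL}(m,\R)$-invariance problem through the single coordinate $u=\sin\theta\,v+\cos\theta\,w$. After that the argument is the standard one for invariant functions, where smoothness (or mere continuity) is what upgrades ``constant on the dense orbit'' to ``globally constant''.
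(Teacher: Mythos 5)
Your proof is correct, and it shares the two essential ingredients with the paper's argument: the form $dw_1\wedge\cdots\wedge dw_m$ is invariant because the vertical linear part of the (twisted) action is $A$ with $\det A=1$, and a transitivity-plus-density argument pins down the coefficient (both your proof and the paper's implicitly need $\dim V\ge 2$, since transitivity of $\mathsf{SL}(m,\R)$ on $\R^m\setminus\{0\}$, respectively of $\mathsf{SAff}(V)$ on $\{w\neq 0\}$, fails for $m=1$). The organization, however, is genuinely different. The paper first treats the untwisted case: an invariant form is a pointwise multiple of $\omega$, the group acts transitively on the open dense set $\{w\neq 0\}$, so the multiple is constant there and hence everywhere; it then handles general $\theta$ by noting that $Dg$ is complex linear (so $\omega$ stays invariant) and by invoking the equivariant isomorphism $TV_\theta\cong TV_1$ to transfer the dimension count. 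You instead work uniformly in $\theta$: you compute the twisted action in closed form, $g_\theta(v,w)=(Av+\cos\theta\, b,\ Aw-\sin\theta\, b)$, use invariance under the translation directions $L_\theta$ to factor the coefficient through the surjection $u=\sin\theta\, v+\cos\theta\, w$ (whose kernel is exactly $L_\theta$ and which intertwines $g_\theta$ with $A$), and reduce to the statement that $\mathsf{SL}(m,\R)$-invariant continuous functions on $\R^m$ are constant. What your route buys is that the independence of $\theta$ becomes an honest computation rather than an appeal to the rotation $TV_\theta\to TV_1$, which is equivariant but does not preserve the fixed horizontal/vertical splitting defining $\Omega^{0m}$ (it tilts the vertical subspace), so that transfer step in the paper needs precisely the kind of explicit check you carry out; it also exhibits the dense orbit $\{u\neq 0\}$ for every $\theta$, recovering the paper's $W=\{w\neq 0\}$ at $\theta=0$. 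What the paper's route buys is brevity in the untwisted case: it argues pointwise with the value of the form at a single point of the dense orbit and never needs to describe the orbit space or the quotient map $u$.
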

\begin{proof}
Let us first prove that for $\theta=1$ the space  $\Omega^{0m}(TV_\theta)^{\mathsf{SAff}(V)}$ is one dimensional.
Fix a basis $v_1, \dots, v_m$ for $V$ which determines coordinates: $\{x_1,\dots, x_m, y_1,\dots,y_m\}$ on $TV$.
We will show that $\Omega^{0m}(TV_\theta)^{\mathsf{SAff}(V)}$ is generated by $\omega=dy_1 \wedge \dots \wedge dy_m$. Let us first prove that $\omega \in \Omega^{0m}(TV_\theta)^{\mathsf{SAff}(V)}$.  We compute
\[ g^*(\omega)(X_1, \dots ,X_m)=\omega(A X_1, \dots , AX_m)=\mathsf{det}(A) \omega(X_1, \dots , X_m)=\omega(X_1, \dots ,X_m).\]
We conclude that $g^*(\omega)=\omega$. It remains to prove that the set of invariant forms
is one dimensional. Fix a point $(v,w) \in TV$ with $w \neq 0$ and an invariant form $\eta$. Then since $\Lambda^m(V)$ is a one dimensional vector space we know that at that point $\eta$ is a multiple of $\omega$. We may assume that they coincide at that point $(v,w)$. Now the set $W$ of points $(x,y) \in TV$ such that $y \neq 0$ open and dense, and moreover the group $\mathsf{SAff}(V)$ acts transitively on it. Since both $\eta$ and $\omega$ are invariant it follows that
they coincide on $W$. Since $W$ is dense, they coincide on $TV$. We still need to prove that  $\Omega^{0m}(TV_\theta)^{\mathsf{SAff}(V)}$ is independent of $\theta$.

 Given $g(v)= Av+b\in \mathsf{SAff}(V)$ the derivative
\[ Dg(p):V \oplus V \rightarrow V \oplus V\]
is a complex linear map and therefore

\[ g_\theta^*(\omega)(X_1, \dots ,X_m)=\omega(A X_1, \dots , AX_m)=\mathsf{det}(A) \omega(X_1, \dots , X_m)=\omega(X_1, \dots ,X_m).\]
This shows that $\omega$ belongs to $\Omega^{0m}(TV_\theta)^{\mathsf{SAff}(V)}$. Since $TV_\theta$ is equivariantly isomorphic to $TV_1$ we know that $\Omega^{0m}(TV_\theta)^{\mathsf{SAff}(V)}$ is also one dimensional and therefore:
\[\Omega^{0m}(TV_\theta)^{\mathsf{SAff}(V)}=\Omega^{0m}(TV_1)^{\mathsf{SAff}(V)}.\]

\end{proof}
\subsection{Spectral sequences of sheaves}

Let $M$ be an affine manifold so that $TM$ has a natural para-hypercomplex structure and for each $z \in S^1$ there is a decomposition of the de-Rham complex
\[
\xymatrix{
\vdots& \vdots& \vdots&\\
\Omega^{02}(TM) \ar[r]^{d^z_H}\ar[u]^{d^z_V}& \Omega^{12}(TM)  \ar[r]^{d^z_H}\ar[u]^{d^z_V}&\Omega^{22}(TM)\ar[r]^{d^z_H}\ar[u]^{d^z_V} &\cdots\\
\Omega^{01}(TM) \ar[r]^{d^z_H}\ar[u]^{d^z_V}&\Omega^{11}(TM) \ar[r]^{d^z_H}\ar[u]^{d^z_V}& \Omega^{21}(TM) \ar[r]^{d^z_H}\ar[u]^{d^z_V}&\cdots \\
\Omega^{00}(TM) \ar[r]^{d^z_H}\ar[u]^{d^z_V}& \Omega^{10}(TM) \ar[r]^{d^z_H}\ar[u]^{d^z_V}&\Omega^{20}(TM) \ar[r]^{d^z_H}\ar[u]^{d^z_V}&\cdots
}\]
We will denote by $\mathcal{B}(z)^{pq}_{c,r}$ and $\mathcal{B}(z)^{pq}_{r}$ the spectral sequences associated to the
vertical filtrations of these double complexes with compact and arbitrary support, respectively.
\begin{definition}
For each $z \in S^1$ we define the complex of sheaves
\[L_z^p:= \mathsf{ker} (d^z_H: \Omega^{0p} \rightarrow \Omega^{1p}),\]
with differential $d^z_V$.
\end{definition}
\begin{exercise}
Let $M$ be a connected manifold. Show that there is a canonical difeomorphism between the tangent bundle of the universal cover of $M$ and the universal cover of the tangent bundle of $M$
\[ \widetilde{TM}\cong T(\widetilde{M}).\]
\end{exercise}
\begin{definition}
Let $M$ be an affine manifold and $\varphi: \widetilde{M}\rightarrow V$ a developing map. The submersion
$ \psi: T(\widetilde{M})\times S^1 \rightarrow TV=V \oplus V$ is defined by
\[\psi( \gamma, e^{i\theta}):=R_\theta \circ D\varphi(\gamma),\]
where $R_\theta$ is the rotation matrix
\[
R_\theta :=\left(\begin{matrix}
\cos(\theta)& \sin(\theta)\\
\sin(\theta) & \cos(\theta)
\end{matrix}\right) \in \mathsf{GL}(V\oplus V).
\]
\end{definition}

\begin{definition}
We define the complex of sheaves $\mathcal{L}^p$ over $TM \times S^1$ as the descent of the $\pi_1(TM)$ equivariant
complex of sheaves $\psi^{-1}(L^p_1)$ over $T(\widetilde{M})\times S^1$.  Also, we define the bicomplex of sheaves
$\Omega^{pq}_{TM \times S^1}$ as the descent of the equivariant bicomplex $\psi^{-1}(\Omega^{pq}(TV), d_H +d_V)$.
\end{definition}

\begin{remark}
The complex of sheaves $L^p_1$ over $TV$ resolves the constant sheaf $\R_{TV}$ and is quasi-isomorphic to the total complex of the double complex complex $\Omega^{pq}(TV)$. Since the functor $\psi^{-1}$ is exact we conclude that
the complex $\mathcal{L}^p$ resolves the constant sheaf and is quasi-isomorphic to the total complex of $\Omega^{pq}_{TM \times S^1}$. For each $z \in S^1$ the restriction of $\mathcal{L}^p$ to $TM \times \{z\}$ is the complex of sheaves
$L^p_z$ and the restriction of $\Omega^{pq}(TV)$ is the bicomplex $(\Omega^{pq}(TM), d^z_H +d^z_V)$.
\end{remark}
\begin{definition}
Consider the second projection $ p_2:TM \times S^1 \rightarrow S^1$.
\begin{itemize}
\item We will denote by $\mathcal{E}^{pq}_{c,r}$ the spectral sequence computing
\[Rp_{2!}(\R)=Rp_{2!}(\mathcal{L^\bullet}).\]
\item We will denote by $\mathcal{E}^{pq}_{r}$ the spectral sequence computing
\[Rp_{2\ast}(\R)=Rp_{2\ast}(\mathcal{L^\bullet}).\]
\item We will denote by $\lambda:\mathcal{E}^{pq}_{c,r} \rightarrow \mathcal{E}^{pq}_{r}$ the morphism of spectral sequences of sheaves induced by the morphism of functors $Rp_{2!} \rightarrow Rp_{2\ast}$.
\end{itemize}
\end{definition}

\begin{lemma}\label{factors}
Let $M$ be a closed affine manifold. Then the map
\[ \lambda: \mathcal{B}^{0m}_{c, \infty}\rightarrow \mathcal{B}^{0m}_\infty\]
factors through the vector space $(\mathcal{E}_{\infty}^{0m})_{z=1}$.
\end{lemma}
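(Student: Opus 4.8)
The plan is to compare the algebraic spectral sequences $\mathcal{B}^{pq}_{\bullet,r}$ attached to the bicomplex $\Omega^{pq}(TM)$ with the sheaf-theoretic spectral sequences $\mathcal{E}^{pq}_{\bullet,r}$ for $p_2\colon TM\times S^1\to S^1$, specializing at $z=1$. The starting observation is that, by the remark preceding the statement, the complex of sheaves $\mathcal{L}^\bullet$ on $TM\times S^1$ restricts on $TM\times\{1\}$ to the complex $L^\bullet_1$, whose total complex (via $d^1_H+d^1_V$) is quasi-isomorphic to the constant sheaf $\R_{TM}$; and the associated bicomplex of sheaves $\Omega^{pq}_{TM\times S^1}$ restricts on $TM\times\{1\}$ to $(\Omega^{pq}(TM),d_H+d_V)$, which is exactly the double complex whose vertical filtration defines $\mathcal{B}^{pq}_r$ and $\mathcal{B}^{pq}_{c,r}$. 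So $\mathcal{B}$ should be recovered from $\mathcal{E}$ by restricting to the fibre $z=1$.

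First I would make this precise at the level of sheaves. Let $i_1\colon TM=TM\times\{1\}\hookrightarrow TM\times S^1$ be the inclusion of the fibre. Proper base change for $Rp_{2!}$ gives a canonical identification of the stalk $(Rp_{2!}\mathcal{L}^\bullet)_1$ with $R\Gamma_c(TM,L^\bullet_1)=R\Gamma_c(TM,\R)$, and the associated (hypercohomology) spectral sequence of the double complex $i_1^{-1}\Omega^{pq}_{TM\times S^1}=(\Omega^{pq}(TM),d_H+d_V)$ with compact supports is precisely $\mathcal{B}^{pq}_{c,r}$. Thus taking the stalk at $1$ of the $\mathcal{E}$-spectral sequence of sheaves — equivalently, restricting along $i_1$ — produces $\mathcal{B}^{pq}_{c,r}$, and likewise $(\mathcal{E}^{pq}_r)_{z=1}$ produces $\mathcal{B}^{pq}_r$ (using here that $M$ is closed, so $TM$ is a manifold over which ordinary cohomology of the relevant sheaf complexes agrees with the stalk of $Rp_{2*}$; this is where compactness of $M$ enters). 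These identifications are compatible with the natural transformation $Rp_{2!}\to Rp_{2*}$ on one side and with $\lambda\colon\Omega_c\to\Omega$ on the other, so one obtains a commutative square in which $\lambda\colon\mathcal{B}^{0m}_{c,\infty}\to\mathcal{B}^{0m}_\infty$ is identified with the stalk at $z=1$ of $\lambda\colon\mathcal{E}^{0m}_{c,\infty}\to\mathcal{E}^{0m}_\infty$.

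Next I would use the evident factorization of a stalk through the global sections: the restriction map $(\mathcal{E}^{0m}_\infty)\to(\mathcal{E}^{0m}_\infty)_{z=1}$ of the spectral sequence of sheaves on $S^1$, evaluated at the point $1$, lets the map $\lambda$ on $\mathcal{B}$ be written as the composite
\[
\mathcal{B}^{0m}_{c,\infty}\;\cong\;(\mathcal{E}^{0m}_{c,\infty})_{z=1}\;\longrightarrow\;(\mathcal{E}^{0m}_{\infty})_{z=1}\;\longrightarrow\;(\mathcal{E}^{0m}_\infty)_{z=1}\;\cong\;\mathcal{B}^{0m}_\infty,
\]
so that in particular $\lambda$ factors through the vector space $(\mathcal{E}^{0m}_\infty)_{z=1}$, which is the claim. (The two middle arrows are, respectively, the stalk at $1$ of the sheaf morphism $\lambda\colon\mathcal{E}_{c,\infty}\to\mathcal{E}_\infty$ and the identity; the point is simply that everything on the $\mathcal{B}$-side is the $z=1$ fibre of a gadget defined over all of $S^1$.)

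The main obstacle is the bookkeeping of the identification $(\mathcal{E}^{pq}_{c,r})_{z=1}\cong\mathcal{B}^{pq}_{c,r}$ at the level of spectral sequences, not merely of abutments: one must check that proper base change is compatible with the two filtrations (the "column" filtration giving $\mathcal{E}$ from $Rp_{2!}$ of the complex $\mathcal{L}^\bullet$, and the vertical filtration $F^V$ on $\Omega_c(TM)$ giving $\mathcal{B}$), and that this compatibility persists on passing to $E_\infty$. Handling $\mathcal{E}_\infty$ (as opposed to $\mathcal{E}_c$) is slightly more delicate because one is computing $Rp_{2*}$ rather than $Rp_{2!}$ and must invoke closedness of $M$ to control the stalk; I would isolate this as a separate lemma-style remark rather than belabor it in the main argument.
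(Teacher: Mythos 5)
Your route is essentially the paper's: compare the sheaf-theoretic spectral sequences $\mathcal{E}$ with the fibrewise ones $\mathcal{B}(z)$ by taking stalks at $z=1$, use proper base change on the compactly supported side, and then invoke naturality of $Rp_{2!}\to Rp_{2*}$ against $\lambda:\Omega_c\to\Omega$ to write $\lambda$ on the $\mathcal{B}$-side as a composite passing through $(\mathcal{E}^{0m}_\infty)_{z=1}$. This is exactly the factorization
$\mathcal{B}^{0m}_{c,\infty}\xrightarrow{\ \alpha^{-1}\ }(\mathcal{E}^{0m}_{c,\infty})_{z=1}\xrightarrow{\ \lambda\ }(\mathcal{E}^{0m}_{\infty})_{z=1}\xrightarrow{\ \beta\ }\mathcal{B}^{0m}_\infty$ that the paper gives, with $\alpha$ the base-change isomorphism.

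One step of your write-up is wrong as stated, although it is not needed for the conclusion: you claim that $(\mathcal{E}^{pq}_{r})_{z=1}$ is \emph{identified} with $\mathcal{B}^{pq}_{r}$, ending your composite with an isomorphism ``$(\mathcal{E}^{0m}_\infty)_{z=1}\cong\mathcal{B}^{0m}_\infty$'', and you justify this by closedness of $M$. Closedness of $M$ does not make $p_2:TM\times S^1\to S^1$ proper --- the fibre is $TM$, which is non-compact --- so base change holds only for $Rp_{2!}$ (this is why $\alpha$ is an isomorphism), while for $Rp_{2*}$ one only has the canonical comparison morphism from the stalk to the cohomology of the fibre; the paper records this map $\beta:(\mathcal{E}^{pq}_{r})_{z=1}\to\mathcal{B}(z)^{pq}_{r}$ as merely injective, not an isomorphism. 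Since the factorization only requires a map in that direction compatible with $\lambda$, replacing your final ``$\cong$'' by the canonical morphism $\beta$ repairs the argument and makes it coincide with the paper's proof; but the purported identification of the stalk of $Rp_{2*}$ with fibre cohomology, and the appeal to compactness of $M$ for it, should be deleted.
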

\begin{proof}
By Remark \ref{com}, we know that for each $z \in S^1$ there are natural maps of spectral sequences:
\[\alpha: (\mathcal{E}^{pq}_{c,r})_z \rightarrow \mathcal{B}(z)^{pq}_{c,r}; \,\,\,\,\beta: (\mathcal{E}^{pq}_{r})_z \rightarrow \mathcal{B}(z)^{pq}_{r} .\]
Moreover, $\alpha$ is an isomorphism and $\beta$ is injective. Let us fix $z=1$. Since these morphisms are natural we obtain that the map
\[\lambda: \mathcal{B}^{0m}_{c, \infty}\rightarrow \mathcal{B}^{0m}_\infty\]
can be factored as the following composition:
\[\xymatrix{
 \mathcal{B}^{0m}_{c, \infty}\ar[r]^{\alpha^{-1}}&(\mathcal{E}^{0m}_{c,\infty})_{z=1}\ar[r]^{\lambda}&(\mathcal{E}^{0m}_{\infty})_{z=1}\ar[r]^\beta& \mathcal{B}^{0m}_\infty
}.\]
\end{proof}

\begin{lemma}\label{U}
The exists a unique open set $j:U \hookrightarrow S^1$ such that the sheaf $\mathcal{E}^{m0}_\infty$ is isomorphic to
the sheaf $j_!(\R_U)$.
\end{lemma}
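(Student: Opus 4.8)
The plan is to recognise $\mathcal{E}^{m0}_\infty$ as the bottom step of a finite filtration on a rank-at-most-one constant sheaf on $S^1$, and then to invoke the elementary classification of subsheaves of such sheaves.

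First I would record what the abutment is. Since $\mathcal{E}^{pq}_r$ is by definition the spectral sequence of the complex $\mathcal{L}^\bullet$ under $Rp_{2\ast}$, it converges to $R^{p+q}p_{2\ast}(\mathcal{L}^\bullet)=R^{p+q}p_{2\ast}(\R_{TM\times S^1})$; and because $p_2$ is the projection of the product $TM\times S^1$ onto $S^1$, $R^{q}p_{2\ast}(\R_{TM\times S^1})$ is the constant sheaf on $S^1$ with stalk $H^q(TM;\R)$. As $TM$ deformation retracts onto the zero section $M$, a closed connected $m$-manifold, the stalk $H^m(TM;\R)\cong H^m(M;\R)$ is $\R$ if $M$ is orientable and $0$ otherwise; in particular $R^{m}p_{2\ast}(\R_{TM\times S^1})$ is a constant sheaf of real vector spaces of rank $\le 1$. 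If this rank is $0$ there is nothing to do, so assume $R^{m}p_{2\ast}(\R_{TM\times S^1})\cong\R_{S^1}$.

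Next I would locate $\mathcal{E}^{m0}_\infty$ inside this abutment. The filtration index $p$ is the degree in the complex $\mathcal{L}^\bullet$, and it runs only over $0\le p\le m$, since $\mathcal{L}^p$ is a subsheaf of $\Omega^{0p}_{TM\times S^1}$ and $\Lambda^pV^\ast=0$ for $p>m$; the spectral sequence of sheaves is therefore bounded (the direct images $R^qp_{2\ast}$ also vanish for $q$ above the cohomological dimension $2m$ of the fibre) and converges as sheaves, so $\mathcal{E}^{pq}_\infty\cong\mathrm{Gr}^pR^{p+q}p_{2\ast}(\R_{TM\times S^1})$ for the induced decreasing filtration $F^\bullet$, with $F^0$ everything and $F^{m+1}=0$. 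Hence $\mathcal{E}^{m0}_\infty\cong F^mR^{m}p_{2\ast}(\R_{TM\times S^1})$ is a \emph{subsheaf} of $\R_{S^1}$.

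Finally I would classify such a subsheaf. Writing $\mathcal{F}:=\mathcal{E}^{m0}_\infty\subseteq\R_{S^1}$ and $U:=\{z\in S^1:\mathcal{F}_z\neq 0\}$, one checks that $U$ is open (a nonzero germ extends to a locally constant, hence nowhere-vanishing, section on a connected neighbourhood), that $\mathcal{F}|_U=\R_U$ (a subsheaf of $\R_U$ with all stalks nonzero contains the constant section $1$ locally, hence globally, over any connected open subset of $U$), and that a section of $\mathcal{F}$ over any open $V$ vanishes on a neighbourhood of $V\setminus U$ while, conversely, any such section is locally either a section of $\R_U=\mathcal{F}|_U$ or zero. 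This shows $\mathcal{F}(V)=j_!(\R_U)(V)$ for all $V$, i.e. $\mathcal{E}^{m0}_\infty\cong j_!(\R_U)$. Uniqueness is automatic: $U$ is the support $\{z:(\mathcal{E}^{m0}_\infty)_z\neq 0\}$, and for an open $U'$ the stalks of $j_!(\R_{U'})$ are $\R$ exactly on $U'$ and $0$ elsewhere.

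The main obstacle I anticipate is the bookkeeping of the second step: being sure that the spectral sequence of sheaves converges at the level of sheaves (not merely stalkwise) and that the bidegree $(m,0)$ sits at the \emph{bottom} of the filtration, so that $\mathcal{E}^{m0}_\infty$ comes out as a subsheaf rather than a quotient of the abutment. One should also not forget that $\mathcal{E}^{m0}_\infty$ is a sheaf of real vector spaces, which is what forces its stalks to be $0$ or $\R$ and makes the classification in the last step valid.
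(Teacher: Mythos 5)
Your proof is correct and follows essentially the same route as the paper: the paper's one-line proof likewise observes that $\mathcal{E}^{m0}_\infty$ is a (constructible) subsheaf of $\R_{S^1}$, whence its support $U$ is open and the sheaf is the extension by zero $j_!(\R_U)$. Your version simply supplies the details the paper leaves implicit --- identifying $\mathcal{E}^{m0}_\infty$ with the bottom step of the finite filtration on the abutment $R^m p_{2\ast}(\R_{TM\times S^1})\cong \R_{S^1}$, and classifying subsheaves of $\R_{S^1}$ by an elementary argument that avoids invoking constructibility.
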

\begin{proof}
The sheaf $\mathcal{E}^{m0}_\infty$ is a constructible subsheaf of $\R_{S^1}$ and therefore its support $U$ is open in $S^1$ and there is an isomorphism $\mathcal{E}^{m0}_\infty \simeq j_!(R_U)$. \end{proof}

\begin{proposition}\label{vanishingt}
Let $M$ be an closed special affine manifold. Then
\[ \mathcal{E}_{\infty}^{0m}=0.\]
\end{proposition}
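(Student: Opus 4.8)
The plan is to compute the sheaf $\mathcal{E}^{0m}_\infty$ on $S^1$ stalkwise and show it vanishes at every $z\in S^1$, using the special affine hypothesis to identify the relevant $E_\infty$-term with a space of $\mathsf{SAff}(V)$-invariant forms that must itself sit inside a bigger $E_\infty$-page and be killed there. First I would recall, via Remark \ref{com} and Lemma \ref{factors}, that the stalk $(\mathcal{E}^{0m}_{\infty})_z$ agrees with $\mathcal{B}(z)^{0m}_\infty$ computed on $T\widetilde{M}$ from $\pi_1$-invariant data, and that this in turn is governed by the complex of sheaves $L^p_z$ on $TV$, pulled back by the developing map and descended. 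So it suffices to understand, for each $z\in S^1$, the top-row $E_\infty$-term of the double complex $(\Omega^{pq}(TV), d^z_H+d^z_V)$ restricted to $\mathsf{SAff}(V)$-invariants, where $V\cong\R^m$.

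The key point is that $\mathcal{E}^{0m}_\infty$ is, by construction, a subquotient of $\mathcal{E}^{0m}_2$, which is the $d_V$-cohomology in top vertical degree of the horizontal kernel sheaf $L^\bullet_z$; the surviving classes are represented by $\mathsf{SAff}(V)$-invariant forms in $\Omega^{0m}(TV_\theta)$ (where $z=e^{i\theta}$) that are $d^z_H$-closed and $d^z_V$-closed. By Lemma \ref{invariant}, this invariant space is one-dimensional for every $\theta$, spanned by $\omega=dy_1\wedge\cdots\wedge dy_m$ in suitable coordinates. So the heart of the argument is to show this class dies on some later page: the generator $\omega$, although horizontally and vertically closed as a form, must be shown to be a coboundary in the relevant $E_r$-page, or equivalently that it does not lift to a class in $H^m$ of the total complex that is detected by the top filtration piece. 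Here is where I would exploit the special affine structure decisively: the parallel volume form on $M$ makes $\omega$ descend to a globally defined form on $TM$, but being a top vertical form $dy_1\wedge\cdots\wedge dy_m$ it is exact along the fibers of the bundle $TM\to M$ on the nose — it is $d_V$ of something — precisely when one can integrate it, which is obstructed only by the fiber being noncompact. More carefully: $\omega = d^z_V\eta$ for an explicit $\eta$ (a polynomial primitive in the $y$-variables times the volume form), and although $\eta$ is not $\mathsf{SAff}(V)$-invariant, its failure to be invariant is controlled, and on passing to the $E_\infty$-page the class $[\omega]$ becomes a boundary. Alternatively, and perhaps cleaner, I would argue that $\mathcal{E}^{0m}_\infty$ embeds in $\mathcal{B}^{0m}_\infty$ which by Proposition \ref{spectralt} is tied to $H^m(TM)$; but $H^m(TM)\cong H^m(M)$ via $\sigma^*$ and the para-hypercomplex rotation $S^1$-action forces any class in $\mathcal{E}^{0m}$ to be invariant under a circle action that has no fixed cohomology in this bidegree unless it vanishes.

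The main obstacle I anticipate is making the vanishing argument genuinely use specialness rather than just the affine structure — after all, $\mathcal{E}^{0m}_\infty$ need not vanish for general affine $M$ (otherwise one would prove all of Chern's conjecture). The delicate step is therefore pinpointing exactly where the determinant-one condition enters: it is what guarantees that the invariant generator $\omega=dy_1\wedge\cdots\wedge dy_m$ is actually present and globally defined on $TM$ (so that $\mathcal{E}^{0m}_2$ is as large as Lemma \ref{invariant} predicts), while simultaneously the noncompactness of the fibers of $TM\to M$ forces this generator to have no compactly-supported lift — and it is this mismatch, propagated through the spectral sequence comparison $\lambda$, that collapses $\mathcal{E}^{0m}_\infty$ to zero. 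I would organize the proof as: (1) reduce to a stalk computation via Lemma \ref{factors} and the remark on restrictions; (2) identify $\mathcal{E}^{0m}_2$ stalks with the invariant forms of Lemma \ref{invariant}; (3) exhibit an explicit primitive showing the generator is a differential on a later page, using the parallel volume form to make everything globally well-defined; (4) conclude $\mathcal{E}^{0m}_\infty=0$.
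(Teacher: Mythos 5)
Your proposal goes in the wrong direction at the decisive step, and it also misplaces the invariant form in the spectral sequence. Under the paper's conventions the classes of $\mathcal{E}^{0m}$ are represented by forms of top \emph{horizontal} degree (these are the ones seen by the zero section $\sigma^*$ and hence tied to the Euler class), while the invariant form $\omega=dy_1\wedge\cdots\wedge dy_m$ of Lemma \ref{invariant} has top \emph{vertical} degree and represents a class in the opposite corner $\mathcal{E}^{m0}$. More importantly, the actual proof does not kill $[\omega]$ --- it shows $[\omega]$ \emph{survives}: since $\omega$ is $\mathsf{SAff}(V)$-invariant (this is exactly where specialness enters, as you correctly suspected), it descends to a global closed section $\overline{\omega}$ of $\Omega^{0m}_{TM\times S^1}$, hence a global section $[\omega]$ of $\mathcal{E}^{m0}_\infty$; evaluating the stalk at $z=\sqrt{-1}$ and mapping to $\mathcal{B}(\sqrt{-1})^{m0}_\infty\simeq\R$ shows $[[\omega]]$ is a generator, so $[\omega]\neq 0$. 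Then Lemma \ref{U} (constructibility: $\mathcal{E}^{m0}_\infty\simeq j_!(\R_U)$ for an open $U\subseteq S^1$) forces $U=S^1$, i.e.\ $\mathcal{E}^{m0}_\infty\hookrightarrow\R_{S^1}$ is an isomorphism, and since the limit in total degree $m$ is exhausted by this edge piece, the complementary graded piece $\mathcal{E}^{0m}_\infty$ must vanish. Your step (3), exhibiting a primitive $\eta$ with $d^z_V\eta=\omega$ and arguing that $[\omega]$ becomes a boundary at $E_\infty$, would contradict this nonvanishing (and the ``controlled failure of invariance'' of $\eta$ is precisely the obstruction you cannot wave away: on the noncompact fibers the leafwise class does not die in the filtered complex on $TM$ at the relevant page).

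The second gap is strategic: you propose a stalkwise computation at every $z\in S^1$, but no direct computation is available at $z=1$ --- if it were, the circle of twisted para-complex structures would be unnecessary. The whole mechanism of Klingler's argument as presented is the rotation trick combined with sheaf theory over $S^1$: the only point where one can compute is $z=\sqrt{-1}$, and the nonvanishing there is transported to $z=1$ because a section of $j_!(\R_U)$ that is nonzero at one point forces $U=S^1$. Your proposal never invokes Lemma \ref{U} or any replacement for it, and your alternative ``$S^1$-action forces invariant classes to vanish'' argument is not substantiated; as written, neither route closes the proof.
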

\begin{proof}
It suffices to show that the inclusion morphism $\mathcal{E}^{m0}_\infty \hookrightarrow \R_{S^1}$ is an isomorphism.
By Lemma \ref{invariant} we can choose a nonzero element
$\omega \in \Omega^{0m}(TV_\theta)^{\mathsf{SAff}(V)}$ which defines a global section $\overline{\omega}$ of the sheaf
$\Omega^{0m}_{TM \times S^1}$. Since $\omega$ is closed, so is $\overline{\omega}$. This defines a global section of the
sheaf $\mathcal{E}^{m0}_1$ and therefore also a section $[\omega]$ of its quotient $\mathcal{E}^{m0}_\infty$. Let us proof that $[\omega]$ is nonzero. For each $z \in S^1$ consider the composition
\[ \xymatrix{
\mathcal{E}^{m0}_\infty(S_1) \ar[r] &(\mathcal{E}^{m0}_\infty)_z \ar[r]^\beta& \mathcal{B}(z)^{m0}_\infty,}
\]
where the first morphism is given by taking the stalk of a section. This map sends $[\omega]\in \mathcal{E}^{m0}_\infty(S_1) $
to $[[\omega]]\in  \mathcal{B}(z)^{m0}_\infty$. Let us set $z=\sqrt{-1}$.  Then  $\mathcal{B}(\sqrt{-1})^{m0}_{\infty}\simeq \R $ and $[[\omega]]$ is a generator. We conclude that $[\omega]\in \mathcal{E}^{m0}_\infty(S_1) $ is a nonzero section.
By Lemma \ref{U} we know that $\mathcal{E}^{m0}_\infty$ is of the form $j_!(\R_U)$, so it only can admit a nonzero section if
$U=S^1$ and  the map $\mathcal{E}^{m0}_\infty \hookrightarrow \R_{S^1}$ is an isomorphism.

\end{proof}

\begin{theorem}[Klingler]
The Euler characteristic of a closed special affine manifold is zero.
\end{theorem}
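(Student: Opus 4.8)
The plan is to assemble the propositions proved above into a short chain of implications, the real content having already been isolated in Proposition \ref{spectralt}, Lemma \ref{factors} and Proposition \ref{vanishingt}. First I would reduce to the connected case: the Euler characteristic is additive over connected components, and each component of a closed special affine manifold is again a closed special affine manifold, so it suffices to treat a closed \emph{connected} special affine manifold $M$ of dimension $m$.

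So assume $M$ is closed, connected and special affine. Since a special affine structure is in particular an affine structure, Proposition \ref{para} applies and $TM$ carries its canonical para-hypercomplex structure; in particular the spectral sequences $\mathcal{B}^{ij}_r$, $\mathcal{B}^{ij}_{c,r}$ and the comparison morphism $\lambda$ are defined. By Proposition \ref{spectralt} it is enough to show that the morphism
\[ \lambda: \mathcal{B}^{0m}_{c,\infty}\rightarrow \mathcal{B}^{0m}_\infty \]
is the zero map. By Lemma \ref{factors} this morphism factors through the stalk $(\mathcal{E}^{0m}_\infty)_{z=1}$ of the sheaf $\mathcal{E}^{0m}_\infty$ on $S^1$. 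Finally, because $M$ is special affine, Proposition \ref{vanishingt} gives $\mathcal{E}^{0m}_\infty=0$, so in particular its stalk at $z=1$ vanishes. Hence $\lambda$ factors through the zero vector space and is therefore zero, and Proposition \ref{spectralt} yields $\chi(M)=0$.

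As for where the difficulty really lies: this last step is a formal consequence of the machinery and poses no genuine obstacle. The substance sits inside Proposition \ref{vanishingt}, whose proof is the only place the hypothesis of a parallel volume form is actually used, via Lemma \ref{invariant} on $\mathsf{SAff}(V)$-invariant $(0,m)$-forms on $TV_\theta$ and the constructibility argument of Lemma \ref{U} identifying $\mathcal{E}^{m0}_\infty$ with $j_!(\R_U)$. Without the special hypothesis the full group $\mathrm{GL}(m,\R)$ admits no invariant volume form, the invariant form $\omega$ of Lemma \ref{invariant} is not available, and this is precisely the gap that leaves the general Chern conjecture open.
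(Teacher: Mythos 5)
Your proof is correct and follows exactly the paper's own argument: Proposition \ref{spectralt} reduces the theorem to the vanishing of $\lambda:\mathcal{B}^{0m}_{c,\infty}\rightarrow\mathcal{B}^{0m}_{\infty}$, which follows by combining Lemma \ref{factors} with Proposition \ref{vanishingt}. The added reduction to the connected case and the remarks on where the special hypothesis enters are sensible but do not change the route.
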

\begin{proof}
By Lemma \ref{factors} and Proposition \ref{vanishingt} we know that the map
\[ \lambda: \mathcal{B}^{0m}_{c, \infty}\rightarrow \mathcal{B}^{0m}_\infty\]
is zero.  Proposition \ref{spectralt} implies that in this case the Euler characteristic vanishes.
\end{proof}

\appendix

\section{Some differential geometry}
\subsection{Connections}

Given a smooth function $f=(f^{1},\dots,f^{m}):M\rightarrow\mathbb{R}^{m}$ and
a vector field $X\in\mathfrak{X}(M)$ it makes sense to consider the derivative
of $f$ in the direction of $X$
\[
X(f)=(X(f^{1}),\dots,X(f^{m}))=Df (X).
\]

On the other hand, if $\alpha\in\Gamma(E)$ is a section of a vector bundle
$E$, there is no natural way to differentiate $\alpha$ in the direction of a
vector field. A connection on a vector bundle $E$ is a rule that prescribes
how to differentiate sections of $E$ in the direction of vector fields.

\begin{definition}
Let $\pi:E\rightarrow M$ be a vector bundle. A connection $\nabla$ on $E$ is a
linear map
\[
\nabla:\mathfrak{X}(M)\otimes\Gamma(E)\rightarrow\Gamma(E);\quad
(X,\alpha)\mapsto\nabla_{X}\alpha
\]
such that for any smooth function $f\in C^{\infty}(M)$, $X\in\mathfrak{X}(M)$
and $\alpha\in\Gamma(E)$ the following two conditions are satisfied:
\begin{enumerate}
\item \[\nabla_{fX}\alpha=f\nabla_{X}\alpha,\]
\item \[\nabla_{X}\left(  f\alpha\right)  =\left(  X(f)\right)  \alpha
+f\nabla_{X}\alpha.\]
\end{enumerate}
\end{definition}

\begin{exercise}
Show that in the case where $E=M \times\mathbb{R}^{m}$ is the trivial bundle,
the directional derivative described above is a connection on $E$.
\end{exercise}

\begin{definition}
If $E$ is a vector bundle with connection, we will say that a section
$\alpha\in\Gamma(E)$ is covariantly constant if $\nabla_{X}(\alpha)=0$ for all
vector fields $X\in\mathfrak{X}(M)$.
\end{definition}

Let us now consider the case $E=TM$ and describe how a connection is expressed
in local coordinates $\varphi=(x^{1},\dots,x^{m})$. The Christoffel symbols
$\Gamma_{ij}^{k}:M\rightarrow\mathbb{R}$ are smooth functions determined by
the condition \[\nabla_{\partial_{i}}{\partial_{j}}=\sum_{k}%
\Gamma_{ij}^{k}\partial_{k}.\]
The connection $\nabla$ is
determined by the Christoffel symbols. Given vector fields
$X=\sum_{i}a^{i}\partial_{i}$ and $Y=\sum_{j}b^{j}\partial_{j}$ one computes
\begin{align*}
\nabla_{X}Y  &  =\sum_{i}a^{i}\nabla_{\partial_{i}}\left(  \sum_{j}%
b^{j}\partial_{j}\right)  =\sum_{i,j}a^{i}\nabla_{\partial_{i}}\left(
b^{j}\partial_{j}\right) \\
&  =\sum_{i,j}a^{i}\left(  \frac{\partial b^{j}}{\partial x^{i}}\partial
_{j}+b^{j}\nabla_{\partial_{i}}\partial_{j}\right) \\
&  =\sum_{i,j}a^{i}\left(  \frac{\partial b^{j}}{\partial x^{i}}\partial
_{j}+b^{j}\sum_{k}\Gamma_{ij}^{k}\partial_{k}\right) \\
&  =\sum_{i,j}a^{i}\frac{\partial b^{j}}{\partial x^{i}}\partial_{j}%
+\sum_{i,j}a^{i}b^{j}\sum_{k}\Gamma_{ij}^{k}\partial_{k}\\
&  =\sum_{k}\left(  \sum_{i}a^i\frac{\partial b^{k}}{\partial x^{i}}\partial
_{k}+\sum_{i,j}\Gamma_{ij}^{k}b^{j}a^{i}\right)  \partial_{k}.
\end{align*}

%\begin{exercise}
%Sean $X,Y$ campos vectoriales en $U$ y $\alpha,\alpha' \in \Gamma(E)$ tales que:
%\[ X(p)=X'(p); \quad \alpha (\gamma(t))= \alpha'(\gamma(t)),\]
%donde $\gamma: I \rightarrow U$ es una curva tal que:
%\[ \gamma(0)=p; \quad \gamma'(0)=X(p).\]
%Probar que para cualquier conexi\'on $\nabla$ en $E$ se tiene que:
%\[ \nabla_X \alpha (p)= \nabla_{X'} \alpha' (p).\]
%\end{exercise}

A Riemannian metric $g$ on a manifold $M$ induces a connection, called
the \emph{Levi-Civita Connection}, on the tangent bundle $TM$.

\begin{definition}
Let $\nabla$ be a connection on $TM$. The torsion of $\nabla$ is the function
\begin{align*}
T: \mathfrak{X}(M) \otimes\mathfrak{X}(M) \rightarrow\mathfrak{X}(M);
\quad(X,Y)\mapsto\nabla_{X} Y -\nabla_{Y} X - [X,Y].
\end{align*}

\end{definition}

\begin{exercise} Show that given vector fields $X,Y\in\mathfrak{X}(M) $, the
torsion satisfies

\begin{itemize}
\item Linearity with respect to functions: \[T(fX,Y)=fT(X,Y);\quad
T(X,fY)=fT(X,Y).\]

\item Skewsymmetry: \[T\left(  X,Y\right)  +T\left(  Y,X\right)  =0.\]
\end{itemize}

\end{exercise}

The previous exercise implies that one can view the torsion as a tensor

\[
T\in\Omega^{2}(M,TM)=\Gamma(\Lambda^{2}(T^{\ast}M)\otimes TM),
\]
defined by
\[
T(p)(v,w)=\nabla_{X}Y(p)-\nabla_{Y}X(p)-[X,Y](p),
\]
for any choice of vector fields $X,Y$ such that $X(p)=v$ and $Y(p)=w$.

\begin{definition}
A connection on $TM$ is called symmetric if its torsion is zero.
\end{definition}

\begin{exercise}
Show that a connection $\nabla$ is symmetric if and only if for any choice of
coordinates, the Christoffel symbols satisfy $\Gamma_{ij}^{k}=\Gamma_{ji}%
^{k}.$
\end{exercise}

\begin{definition}
A connection on a riemannian manifold $\left(  M,g\right)  $ is
compatible with the metric if
\[
{X}(g(Y,Z))=g\left(  \nabla_{X}Y,\text{ }Z\right)  +g\left(  Y,\text{ }%
\nabla_{X}Z\right)  ,
\]
for all $X,Y,Z\in\mathfrak{X}(M).$
\end{definition}

\begin{theorem}
[Levi-Civita]\label{4Teo3}Let $\left(  M,g\right)  $ be a riemannian
manifold. There exists a unique symmetric connection $\nabla$ which is
compatible with the metric. Moreover, this connection satisfies
\begin{align}
g\left(  Z,\nabla_{Y}X\right)   &  =\frac{1}{2}\left(  Xg\left(  Y,Z\right)
+Yg\left(  Z,X\right)  -Zg\left(  X,Y\right)  \right. \nonumber\\
&  \left.  -g\left(  \left[  X,Z\right]  ,Y\right)  -g\left(  \left[
Y,Z\right]  ,X\right)  -g\left(  \left[  X,Y\right]  ,Z\right)  \right)  .
\label{4ec9}%
\end{align}

\end{theorem}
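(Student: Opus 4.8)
The plan is to make the Koszul formula (\ref{4ec9}) the centrepiece: first show that \emph{any} symmetric metric connection must satisfy it (this yields uniqueness), and then reverse the formula to \emph{define} a connection and verify it has the required properties (this yields existence).

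For uniqueness, suppose $\nabla$ is symmetric and compatible with $g$. I would write the compatibility identity three times, once for each cyclic arrangement of the triple, namely for $X g(Y,Z)$, $Y g(Z,X)$ and $Z g(X,Y)$, each expanded as a sum of two terms of the form $g(\nabla_\bullet\bullet,\bullet)$. Forming the combination (first) $+$ (second) $-$ (third) and repeatedly using the symmetry relation $\nabla_A B - \nabla_B A = [A,B]$ to pair up the six terms on the right-hand side, everything collapses to a single term $2\,g(Z,\nabla_Y X)$ together with bracket terms; solving produces exactly (\ref{4ec9}). Since $g$ is nondegenerate, the right-hand side of (\ref{4ec9}) regarded as a function of $Z$ determines $\nabla_Y X$, so there is at most one such connection.

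For existence, I would run this backwards: for fixed $X,Y\in\mathfrak{X}(M)$ the right-hand side of (\ref{4ec9}) is $C^\infty(M)$-linear and pointwise in $Z$ --- the apparent derivative $Z g(X,Y)$ being compensated by the bracket terms $g([X,Z],Y)$ and $g([Y,Z],X)$ --- so by nondegeneracy of $g$ there is a unique vector field, which I name $\nabla_Y X$, representing it. It then remains to check three things. First, that $\nabla$ is a connection: $C^\infty(M)$-linearity in the subscript slot and the Leibniz identity $\nabla_Y(fX) = (Yf)X + f\nabla_Y X$ both follow by substituting $fY$, respectively $fX$, into (\ref{4ec9}) and bookkeeping the product rule together with $[fA,B] = f[A,B] - (Bf)A$; the extra terms cancel in pairs. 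Second, that $\nabla$ is torsion free: computing $g(Z,\nabla_Y X - \nabla_X Y)$ from (\ref{4ec9}), the terms even or odd under the swap $X\leftrightarrow Y$ cancel appropriately and what survives equals $g(Z,[Y,X])$, so nondegeneracy gives $\nabla_Y X - \nabla_X Y = [Y,X]$, i.e. zero torsion. Third, that $\nabla$ is metric: adding the expressions given by (\ref{4ec9}) for $g(Z,\nabla_Y X)$ and for $g(X,\nabla_Y Z)$ (the latter obtained by relabelling) and simplifying, one checks directly that the sum equals $Y g(X,Z)$.

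The clean half is uniqueness; the main obstacle is the existence half --- specifically, first confirming the tensoriality in $Z$ that makes the definition legitimate, and then the careful but routine verification of the two connection axioms and the torsion identity, all of which amount to tracking Leibniz terms and bracket identities. An alternative is to work entirely in local coordinates, where the same manipulations yield the classical formula $\Gamma_{ij}^k = \frac{1}{2}\sum_l g^{kl}\big(\partial_i g_{jl} + \partial_j g_{il} - \partial_l g_{ij}\big)$ and hence existence and uniqueness at once; but the invariant Koszul argument is preferable because it delivers the stated formula (\ref{4ec9}) directly.
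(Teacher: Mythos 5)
Your proposal is correct and follows essentially the same route as the paper: derive the Koszul formula (\ref{4ec9}) from symmetry plus compatibility to get uniqueness via nondegeneracy of $g$, then define $\nabla_Y X$ by that formula and verify the connection axioms (the paper, like you, treats the Leibniz identity $\nabla_Y(fX)=f\nabla_Y X+(Yf)X$ as the main check, leaving symmetry and metric compatibility as routine). Your additional remarks on tensoriality in $Z$ and the coordinate formula for $\Gamma^k_{ij}$ are fine but do not change the argument.
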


\begin{proof}
Any connection compatible with the metric satisfies
\[
Xg\left(  Y,Z\right)  =g\left(  \nabla_{X}Y,Z\right)  +g\left(  Y,\nabla
_{X}Z\right)  ,
\]%
\[
Yg\left(  Z,X\right)  =g\left(  \nabla_{Y}Z,X\right)  +g\left(  Z,\nabla
_{Y}X\right)
\]
\[
Zg\left(  X,Y\right)  =g\left(  \nabla_{Z}X,Y\right)  +g\left(  X,\nabla
_{Z}Y\right)  ,
\]
Adding the first two equations, subtracting the third and using the symmetry one obtains
\begin{align*}
&  Xg\left(  Y,Z\right)  +Yg\left(  Z,X\right)  -Zg\left(  X,Y\right) \\
&  =g\left(  \left[  X,Z\right]  ,Y\right)  +g\left(  \left[  Y,Z\right]
,X\right)  +g\left(  \left[  X,Y\right]  ,Z\right)  +2g\left(  Z,\nabla
_{Y}X\right)  ,
\end{align*}
which implies
\begin{align*}
g\left(  Z,\nabla_{Y}X\right)   &  =\frac{1}{2}\left(  Xg\left(  Y,Z\right)
+Yg\left(  Z,X\right)  -Zg\left(  X,Y\right)  \right. \\
&  \left.  -g\left(  \left[  X,Z\right]  ,Y\right)  -g\left(  \left[
Y,Z\right]  ,X\right)  -g\left(  \left[  X,Y\right]  ,Z\right)  \right)  .
\end{align*}
Since the metric is nondegenerate, this implies uniqueness.
In order to prove existence we define  $\nabla_{Y}%
X$
to be the unique vector field that satisfies Equation (\ref{4ec9}). In order to prove that $\nabla$ defined in this way is a connection, the only nontrivial statement is
\[\nabla_X (fY)=f \nabla_X Y +X(f) Y.\]
For this we compute
\begin{align*}
g\left(  Z,\nabla_{Y}\left(  fX\right)  \right)   &  =\frac{1}{2}\left(
fXg\left(  Y,Z\right)  +Yg\left(  Z,fX\right)  -Zg\left(  fX,Y\right)  \right.
\\
&  \left.  -g\left(  \left[  fX,Z\right]  ,Y\right)  -g\left(  \left[
Y,Z\right]  ,fX\right)  -g\left(  \left[  fX,Y\right]  ,Z\right)  \right)  .
\end{align*}
Using the equations
\[
Yg\left(  Z,fX\right)  =\left(  Yf\right)  g\left(  Z,X\right)  +fYg\left(
Z,X\right)  ,
\]%
\[
Zg\left(  fX,Y\right)  =\left(  Zf\right)  g\left(  X,Y\right)  +fZg\left(
X,Y\right)  ,
\]%
\[
g\left(  \left[  fX,Z\right]  ,Y\right)  =fg\left(  \left[  X,Z\right]
,Y\right)  -\left(  Zf\right)  g\left(  X,Y\right)
\]
\[
g\left(  \left[  fX,Y\right]  ,Z\right)  =fg\left(  \left[  X,Y\right]
,Z\right)  -\left(  Yf\right)  g\left(  X,Z\right)
\]
we obtain
\begin{align*}
g\left(  Z,\nabla_{Y}\left(  fX\right)  \right)   &  =fg\left(  Z,\nabla
_{Y}X\right)  +\frac{1}{2}\left(  2\left(  Yf\right)  g\left(  Z,X\right)
\right) \\
&  =g\left(  Z,f\nabla_{Y}X+\left(  Yf\right)  X\right)  .
\end{align*}
We leave it as an exercise to the reader to prove that $\nabla$ is symmetric and compatible with the metric.
\end{proof}

The connection described above is called the Levi-Civita
connection on $(M,g)$.

\subsection{Geodesics and the exponential map}

Here we will explain the notions of parallel transport and geodesics. It will be convenient to
first discuss some natural operations on vector bundles and connections.

\begin{exercise}
Let $\nabla$ be a connection on $\pi:E\rightarrow M$ and $f:N\rightarrow M$ a
smooth function. Then there exists a unique connection $f^{\ast}(\nabla)$ on
$f^{\ast}(E)$ such that for any $\alpha\in\Gamma(E)$, $X\in\mathfrak{X}(N)$
and $Y\in\mathfrak{X}(M)$ with $Df(p)(X(p))=Y(f(p))$ the following holds:
\begin{equation}
f^{\ast}(\nabla)_{X}(f^{\ast}(\alpha))(p)=\nabla_{Y}(\alpha)(f(p)).
\label{pull0}%
\end{equation}

\end{exercise}

Recall that we say that a section $\alpha\in\Gamma(E)$ of a vector bundle with
connection is covariantly constant if $\nabla_{X}(\alpha)=0,$ for any vector
field $X\in\mathfrak{X}(M)$. By imposing this conditions on vector bundles
over an interval one obtains the notion of parallel transport along a path.

\begin{proposition}
Let $\nabla$ be a connection on a vector bundle $\pi:E\rightarrow I$, where
$I=[a,b]$ is an interval. Given a vector $v\in E_{a}$ there exists a unique
covariantly constant section $\alpha\in\Gamma(E)$ such that $\alpha(a)=v.$
Moreover, the function $P_{a}^{b}:E_{a}\rightarrow E_{b}$ given by $P_{a}%
^{b}(v)=\alpha(b)$ is a linear isomorphism. The function $P_{a}^{b}$ is called
the parallel transport of the connection $\nabla$.
\end{proposition}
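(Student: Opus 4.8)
The plan is to reduce everything to the existence and uniqueness theorem for linear ordinary differential equations by passing to a local trivialization. Since any vector bundle over the interval $I=[a,b]$ is trivial, fix an isomorphism $E\cong I\times\R^k$ with associated constant frame $e_1,\dots,e_k$, and define the connection matrix $A\colon I\to\mathfrak{gl}(k,\R)$ by $\nabla_{\partial_t}e_j=\sum_i A_{ij}(t)\,e_i$. For a section $\alpha=\sum_j\alpha^j e_j$ the Leibniz rule in the definition of a connection gives
\[
\nabla_{\partial_t}\alpha=\sum_i\Bigl(\dot\alpha^i(t)+\sum_j A_{ij}(t)\,\alpha^j(t)\Bigr)e_i,
\]
so $\alpha$ is covariantly constant if and only if the vector-valued function $t\mapsto(\alpha^1(t),\dots,\alpha^k(t))$ solves the linear system $\dot\alpha=-A(t)\alpha$.

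Now I would invoke the basic theory of linear ODEs: for each $v\in\R^k\cong E_a$ this system has a unique solution with $\alpha(a)=v$, and since the system is linear the solution is defined on the whole of $I$ rather than merely on a subinterval. This produces the unique covariantly constant section $\alpha$ with $\alpha(a)=v$. Linearity of $P_a^b\colon v\mapsto\alpha(b)$ follows from linearity and uniqueness of the equation: if $\alpha,\beta$ are the solutions with initial data $v,w$ then $c\alpha+\beta$ is the solution with initial datum $cv+w$. Finally $P_a^b$ is an isomorphism: solving the same system backwards from $t=b$ gives a two-sided inverse $P_b^a$; equivalently, if $P_a^b(v)=0$ then $\alpha$ and the zero section are covariantly constant sections agreeing at $b$, so by uniqueness $\alpha\equiv 0$ and $v=0$, and an injective linear map between spaces of equal dimension is an isomorphism.

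If one prefers not to use triviality of bundles over $I$, one instead covers $[a,b]$ by finitely many closed subintervals on which $E$ trivializes, solves the ODE on each and glues; the overlaps are single points, so the matching is automatic and the resulting global section is smooth because it is smooth on each piece. This bookkeeping, together with the fact that linear systems have solutions on the entire parameter interval — which is precisely what makes $P_a^b$ defined globally and invertible — is the only point requiring any real care; the rest is a formal manipulation of the definitions.
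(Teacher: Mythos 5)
Your proposal is correct and follows essentially the same route as the paper's proof: trivialize $E$ over the interval, rewrite covariant constancy as a linear ODE for the coefficient functions, invoke existence and uniqueness of solutions, and deduce linearity and invertibility of $P_a^b$ from uniqueness (the paper phrases the inverse via the same ``solve backwards from $b$'' symmetry argument). Your explicit remark that linear systems have solutions on all of $[a,b]$ is a slight refinement of the paper's appeal to Picard--Lindel\"of, but the argument is the same in substance.
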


\begin{proof}
Since all vector bundles over an interval are trivializable, we may choose a frame $\{\alpha_1,\dots, \alpha_k\}$ for $E$.
There exists a one form $\theta \in \Omega^1(I, \mathsf{End}(E))$ such that:
\[ \nabla_X (\alpha_i)= \theta(X, \alpha_i).\]
Let us fix $v = \sum_i \lambda_i \alpha_i(a) \in E_a$. A section $ \alpha= \sum_i f_i \alpha_i$  is covariantly constant if it satisfies the differential equation
\[ \sum_i \nabla_{\partial_t} (f_i \alpha_i)=0,\]
which is equivalent to
\[ \sum_i \frac{\partial f_i}{\partial t} \alpha_i+ f_i\theta( \partial_t, \alpha_i)=0.\]
The Picard-Lindel\"of theorem guarantees the existence and uniqueness of a solution of this equation. In order to show that $P_a^b$ is linear it is enough to observe that if $ \alpha$ and $\beta$ are covariantly constant, so is $\alpha + \beta $. It remains to show that  $P_a^b$ is an isomorphism. Suppose that  $v\in E_a$  is such that $P_a^b(v)=0$.
By symmetry we know that there exists a unique section  $\alpha \in \Gamma(E)$ such that $\alpha(b)=0$. This section is the zero section and we conclude that $v=0$.
\end{proof}

\begin{definition}
Let $\nabla$ be a connection on $\pi:E\rightarrow M$ and $\gamma
:[a,b]\rightarrow M$ a smooth curve. The parallel transport along $\gamma$
with respect to $\nabla$ is the linear isomorphism
\[
P_{\nabla}(\gamma):E_{\gamma(a)}\rightarrow E_{\gamma(b)};\quad P_{\nabla
}(\gamma)(v)=P_{a}^{b}(v),
\]
where $P_{a}^{b}$ denotes the parallel transport associated with the vector
bundle $\gamma^{\ast}(E)$ over the interval $I=[a,b]$ with respect to the
connection $\gamma^{\ast}(\nabla)$.
\end{definition}

\begin{lemma}
Let $\gamma:[a,c]\rightarrow M$ be a curve and $b\in(a,c)$. Set $\mu
=\gamma|_{[a,b]};\quad\sigma=\gamma|_{[b,c]}.$ Then $P_{\nabla}(\gamma
)=P_{\nabla}(\sigma)\circ P_{\nabla}(\mu).$
\end{lemma}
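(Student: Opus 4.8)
The plan is to follow a single parallel section through the concatenation point $b$. Fix a vector $v \in E_{\gamma(a)}$. By the proposition on parallel transport applied to the bundle $\gamma^{\ast}(E)$ over $[a,c]$ with connection $\gamma^{\ast}(\nabla)$, there is a unique covariantly constant section $\alpha \in \Gamma(\gamma^{\ast}E)$ with $\alpha(a) = v$, and by definition $P_{\nabla}(\gamma)(v) = \alpha(c)$.

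Next I would record the compatibility of pullbacks with restriction of the parameter interval: $\gamma^{\ast}(E)|_{[a,b]}$ is canonically identified with $\mu^{\ast}(E)$, and since the characterising identity $(\ref{pull0})$ for a pullback connection is local on the base, under this identification $\gamma^{\ast}(\nabla)$ restricts to $\mu^{\ast}(\nabla)$; likewise over $[b,c]$ one obtains $\sigma^{\ast}(E)$ and $\sigma^{\ast}(\nabla)$. Hence $\alpha|_{[a,b]}$ is a covariantly constant section for $\mu^{\ast}(\nabla)$ taking the value $v$ at $a$, so uniqueness in the parallel transport proposition gives $P_{\nabla}(\mu)(v) = \alpha(b)$. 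Similarly $\alpha|_{[b,c]}$ is covariantly constant for $\sigma^{\ast}(\nabla)$ with value $\alpha(b)$ at $b$, so $P_{\nabla}(\sigma)(\alpha(b)) = \alpha(c)$.

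Combining these, $P_{\nabla}(\sigma)\big(P_{\nabla}(\mu)(v)\big) = P_{\nabla}(\sigma)(\alpha(b)) = \alpha(c) = P_{\nabla}(\gamma)(v)$, and since $v \in E_{\gamma(a)}$ was arbitrary this yields $P_{\nabla}(\gamma) = P_{\nabla}(\sigma)\circ P_{\nabla}(\mu)$.

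The only step needing any care — and it is genuinely routine — is the assertion that the restriction of a covariantly constant section to a subinterval is again covariantly constant, i.e. the compatibility of the pullback connection with restriction. This is immediate from the local nature of the defining ODE $\sum_i \frac{\partial f_i}{\partial t}\alpha_i + f_i\,\theta(\partial_t,\alpha_i) = 0$ used in the proof of the parallel transport proposition: that equation on $[a,c]$ restricts verbatim to $[a,b]$ and to $[b,c]$. No substantial obstacle is anticipated.
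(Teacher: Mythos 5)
Your argument is correct and is essentially the paper's proof, spelled out in more detail: the paper's entire proof is the observation that the restriction of a covariantly constant section of $\gamma^{\ast}(E)$ to $[a,b]$ and $[b,c]$ is again covariantly constant, which is exactly the key step you identify and justify via locality of the defining ODE and uniqueness of solutions.
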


\begin{proof}
It is enough to observe that if $\alpha \in \Gamma(\gamma^*(E))$ is covariantly constant then $ \alpha\vert_{[a,b]}$ and $\alpha\vert_{[b,c]}$ are also covariantly constant.
\end{proof}

\begin{exercise}
Show that parallel transport is parametrization invariant. That is, if $\nabla$ is a
connection on $\pi:E\rightarrow M$, $\gamma:[a,b]\rightarrow M$ is a curve and
$\varphi:[c,d]\rightarrow\lbrack a,b]$ is an orientation preserving
diffeomorphism then $P_{\nabla}(\gamma)=P_{\nabla}(\gamma\circ\varphi).$
\end{exercise}

\begin{definition}
Let $\nabla$ be a connection on $TM$. A curve $\gamma:[a,b]\rightarrow M$ is called a geodesic if the
section $\gamma' \in \Gamma( \gamma^*(TM))$ is covariantly constant with respect to the
connection $\gamma^{\ast}(\nabla)$.
\end{definition}

In local coordinates $\varphi=(x^{1},\dots,x^{m})$ where $\gamma=(u_{1}%
,\dots,u_{m})$ and $\nabla$ has Christoffel symbols $\Gamma_{ij}^{k}$ one has
$\gamma^{\prime}(t)=\sum_{i}u_{i}^{\prime}(t)\partial_{i},$
and the geodesic equation takes the form
\begin{align*}
\gamma^{\ast}(\nabla)_{\partial_{t}}(\gamma^{\prime}(t))  &  =\sum_{i}%
\gamma^{\ast}(\nabla)_{\partial_{t}}(u_{i}^{\prime}(t)\partial_{i})\\
&  =\sum_{i}\Big(u_{i}^{\prime\prime}(t)\partial_{i}+u_{i}^{\prime}%
(t)\gamma^{\ast}(\nabla)_{\partial_{t}}\partial_{i}\Big)\\
&  =\sum_{i}\Big(u_{i}^{\prime\prime}(t)\partial_{i}+u_{i}^{\prime}(t)\sum
_{j}u_{j}^{\prime}(t)\nabla_{\partial_{j}}\partial_{i}\Big)\\
&  =\sum_{i}\Big(u_{i}^{\prime\prime}(t)\partial_{i}+u_{i}^{\prime}%
(t)\sum_{j,k}u_{j}^{\prime}(t)\Gamma_{ij}^{k}\partial_{k}\Big).
\end{align*}
We conclude that $\gamma$ is a geodesic precisely when it satisfies the system
of differential equations
\begin{equation}
u_{i}^{\prime\prime}(t)+\sum_{j,k}u_{j}^{\prime}(t)u_{k}^{\prime}%
(t)\Gamma_{kj}^{i}=0,\,\,  \forall i.
\end{equation}

\begin{example}
On Euclidian space $\mathbb{R}^{m}$ the Christoffel symbols are $\Gamma
_{ij}^{k}=0,$ and therefore the differential equation for a geodesic is just
$u_{i}^{\prime\prime}(t)=0.$ We conclude that geodesics in euclidean space are
straight lines.
\end{example}

\begin{theorem}
Let $\nabla$ be a connection on $TM$. Given $v\in T_{p}M$, there
exists an interval $\left(  -\epsilon,\epsilon\right)  $ for which there is a
unique geodesic $\gamma:\left(  -\epsilon,\epsilon\right)  \rightarrow M$ such
that $\gamma\left(  0\right)  =p$ and $\gamma^{\prime}(0)=v$.
\end{theorem}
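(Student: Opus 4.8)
The plan is to reduce the geodesic equation to a first-order system and invoke the Picard--Lindel\"of theorem, exactly as was done above in the discussion of parallel transport. First I would choose local coordinates $\varphi=(x^{1},\dots,x^{m})$ on an open neighbourhood $U\ni p$. As computed just above, a curve $\gamma=(u_{1},\dots,u_{m})$ written in these coordinates is a geodesic for $\nabla$ if and only if
\[ u_{i}''(t)+\sum_{j,k}u_{j}'(t)u_{k}'(t)\Gamma_{kj}^{i}=0,\qquad \forall i. \]
Setting $x=\varphi(p)$ and $y=D\varphi(p)(v)$, a geodesic $\gamma$ with $\gamma(0)=p$ and $\gamma'(0)=v$ corresponds exactly to a solution of this system with $u(0)=x$ and $u'(0)=y$.

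Next I would pass to a first-order system on $\varphi(U)\times\R^{m}$ by introducing the auxiliary variables $w_{i}=u_{i}'$, so that the equation becomes
\[ u_{i}'=w_{i},\qquad w_{i}'=-\sum_{j,k}\Gamma_{kj}^{i}(u)\,w_{j}w_{k}. \]
The right-hand side is a smooth, in particular locally Lipschitz, function of $(u,w)$ since the Christoffel symbols are smooth. Hence the Picard--Lindel\"of theorem yields an $\epsilon>0$ and a unique solution $(u,w)\colon(-\epsilon,\epsilon)\to\varphi(U)\times\R^{m}$ with $u(0)=x$ and $w(0)=y$. Then $\gamma:=\varphi^{-1}\circ u$ is the desired geodesic through $p$ with velocity $v$.

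For uniqueness on an arbitrary common interval I would argue by connectedness. Suppose $\gamma_{1},\gamma_{2}\colon(-\epsilon,\epsilon)\to M$ are geodesics with $\gamma_{i}(0)=p$ and $\gamma_{i}'(0)=v$, and let $T$ be the set of $t$ at which $\gamma_{1}$ and $\gamma_{2}$ agree to first order, i.e. have the same value and the same velocity. The set $T$ is closed and contains $0$. If $t_{0}\in T$, then for $t$ near $t_{0}$ both curves lie in a single coordinate chart around $\gamma_{1}(t_{0})$ and both solve the first-order system above with identical initial data at $t_{0}$; the uniqueness part of Picard--Lindel\"of forces them to coincide near $t_{0}$, so $T$ is open. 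Connectedness of $(-\epsilon,\epsilon)$ gives $T=(-\epsilon,\epsilon)$.

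The argument is essentially routine and I do not expect a serious obstacle; the only point needing a little care is that ``geodesic'' is a coordinate-free notion, so one must check that the solution produced in a chart and the patching used in the uniqueness step do not depend on the chosen coordinates --- but this is immediate from the uniqueness of ODE solutions. A cleaner, equivalent formulation is to observe that geodesics are precisely the integral curves of the geodesic spray, a smooth second-order vector field on $TM$, and to invoke the existence and uniqueness theorem for flows of vector fields directly.
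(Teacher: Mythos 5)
Your proposal is correct and follows essentially the same route as the paper: write the geodesic equation in local coordinates as the second-order system $u_i''+\sum_{j,k}u_j'u_k'\Gamma_{kj}^i=0$ and apply the Picard--Lindel\"of theorem for existence and uniqueness. The extra details you supply (the reduction to a first-order system and the open-closed argument for uniqueness on the whole interval) are fine elaborations of the same argument.
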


\begin{proof}
Let $\varphi =(x^1,\dots, x^m)$  be local coordinates such that $\varphi(p)=0$. We  write $\gamma(t)=(u_1(t),\dots,u_m(t))$ and want to solve the system of equations
\[ u''_i(t) + \sum_{j,k} u'_j(t) u'_k(t) \Gamma_{kj}^i  =0.\]
This is a second order ordinary differential equation. The existence and uniqueness of solutions is guaranteed by the Pickard-Lindel\"of theorem.
\end{proof}

\begin{definition}
Let $\nabla$ be a connection on $TM$ and $ p \in M$. We define $A_p \subseteq T_pM$ as follows:
\[\ A_p:= \{ v\in T_pM: \text{there exists a geodesic } \gamma_v: [-1,1] \rightarrow M, \text{ with } \gamma_v(0)=p \text{ and }  \gamma_v'(0)=v\}.\]
The exponential map is defined by
\[ \mathsf{Exp_p}: A_p \rightarrow M; \, v \mapsto \gamma_v(1).\]
\end{definition}
The proof of the following theorem can be found in any text on riemannian geometry, for example \cite{riemannian}.
\begin{theorem}
Let $\nabla$ be a connection on $TM$ and $ p \in M$. The domain $A_p$ of the exponential map contains an open
neighborhood around $0 \in T_pM$. Moreover, the derivative of the exponential map at $0$ is the identity and therefore the exponential map is a local diffeomorphism.
\end{theorem}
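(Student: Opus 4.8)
The plan is to derive everything from two inputs: the homogeneity of the geodesic equation, and smooth dependence of solutions of ODEs on their initial conditions (the parametrized version of Picard--Lindel\"of), the previous theorem giving only existence for a single initial velocity. First I would record the rescaling property of geodesics: if $\gamma(t)=(u_1(t),\dots,u_m(t))$ solves the geodesic equation $u_i''+\sum_{j,k}u_j'u_k'\Gamma_{kj}^i=0$ on an interval $J$ and $c\in\R$, then $t\mapsto\gamma(ct)$ solves the same system on $c^{-1}J$; this is immediate from the chain rule, since the left-hand side is homogeneous of degree two in the velocities. Writing $\gamma_v$ for the maximal geodesic with $\gamma_v(0)=p$ and $\gamma_v'(0)=v$, this gives the relation $\gamma_{cv}(t)=\gamma_v(ct)$ whenever either side makes sense; in particular $cv\in A_p$ as soon as $\gamma_v$ is defined on an interval containing $[-c,c]$.

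Next I would pass the geodesic equation to a first-order system on $TM$ in local coordinates around $p$ and invoke smooth dependence on initial data: there is an open neighborhood $\mathcal{V}$ of $0$ in $T_pM$ and an $\epsilon>0$ such that for every $w\in\mathcal{V}$ the geodesic $\gamma_w$ is defined on $(-\epsilon,\epsilon)$ and the map $(w,t)\mapsto\gamma_w(t)$ is smooth. By the rescaling remark, for $w\in\mathcal{V}$ the vector $v:=\tfrac{\epsilon}{2}w$ satisfies $\gamma_v(t)=\gamma_w(\tfrac{\epsilon}{2}t)$, hence $\gamma_v$ is defined on $(-2,2)\supseteq[-1,1]$ and $v\in A_p$. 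Therefore $A_p$ contains the open neighborhood $W:=\tfrac{\epsilon}{2}\mathcal{V}$ of $0$, and on $W$ one has $\mathsf{Exp}_p(v)=\gamma_v(1)=\gamma_{2v/\epsilon}(\tfrac{\epsilon}{2})$, which exhibits $\mathsf{Exp}_p|_W$ as a composition of smooth maps and hence smooth.

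Finally I would compute the differential $D_0\mathsf{Exp}_p\colon T_0(T_pM)\cong T_pM\to T_pM$, using that $T_pM$ is a vector space so $T_0(T_pM)$ is canonically $T_pM$. Fix $v\in T_pM$. For $s$ small enough that $sv\in W$, the rescaling relation at $t=1$ gives $\mathsf{Exp}_p(sv)=\gamma_{sv}(1)=\gamma_v(s)$, so the curve $s\mapsto\mathsf{Exp}_p(sv)$ has velocity $\gamma_v'(0)=v$ at $s=0$; thus $D_0\mathsf{Exp}_p(v)=v$, i.e. $D_0\mathsf{Exp}_p=\mathrm{id}$. The inverse function theorem then shows $\mathsf{Exp}_p$ restricts to a diffeomorphism from a neighborhood of $0$ in $T_pM$ onto a neighborhood of $p$ in $M$. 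The only genuine input beyond bookkeeping with the geodesic flow's homogeneity is the appeal to smooth dependence on initial conditions, so that is where I would be most careful; everything else is formal.
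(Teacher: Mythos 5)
Your proof is correct. Note that the paper does not actually prove this theorem; it defers to the cited Riemannian geometry text, and your argument is precisely the standard one found there: the degree-two homogeneity of the geodesic equation gives the rescaling identity $\gamma_{cv}(t)=\gamma_v(ct)$, the parametrized Picard--Lindel\"of theorem (smooth dependence on initial conditions, with a uniform time of existence $\epsilon$ on a neighborhood of $0\in T_pM$) gives both that $A_p$ contains the open set $\tfrac{\epsilon}{2}\mathcal{V}$ and that $\mathsf{Exp}_p$ is smooth there, the curve $s\mapsto\mathsf{Exp}_p(sv)=\gamma_v(s)$ gives $D_0\mathsf{Exp}_p=\mathrm{id}$ under the canonical identification $T_0(T_pM)\cong T_pM$, and the inverse function theorem finishes. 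You also correctly identify the one genuinely nontrivial input: the earlier existence theorem in the paper only treats a single initial velocity, so the upgrade to smooth dependence with a uniform existence interval is exactly the step that must be invoked, and you do so appropriately.
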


\subsection{Curvature and the Chern-Gauss-Bonnet Theorem}

The Gauss-Bonnet theorem provides a formula for the Euler characteristic of a closed oriented surface $\Sigma$
\[ \chi(\Sigma)= \frac{1}{2\pi} \int_\sigma K dA.\]

Here $K$ denotes the Gaussian curvature of $\Sigma$ associated to a riemannian metric and $dA$ is the volume form determined by the metric and the orientation. This formula is remarkable because while the left hand side evidently depends only on the topology of $\Sigma$, the right hand side is a priori a geometric quantity. The extension of this formula to higher dimensions had to wait until the language of differential geometry was developed and Chern \cite{Chern} proved his generalised version of the Gauss-Bonnet theorem. The first obstruction that needs to be overcome in order to state a correct generelisation is to find a replacement for the gaussian curvature. This is provided by the Riemann curvature tensor.

\begin{definition}
Let $\nabla$ be a connection on $TM$. The riemannian curvature of $\nabla$ is the map
\[ R: \mathfrak{X}(M) \otimes \mathfrak{X}(M) \otimes \mathfrak{X}(M) \rightarrow \mathfrak{X}(M),\]
given by
\[ R_\nabla (X,Y,Z):= \nabla_X (\nabla_Y Z)- \nabla_Y (\nabla_X Z)- \nabla_{[X,Y]}Z.\]
A simple computation shows that this map is $C^\infty(M)$-linear in all the components and skew symmetric in $X$ and $Y$. Therefore, it defines a tensor
\[ R_\nabla \in \Omega^2(M, \mathsf{End}(TM))\]
which is called the Riemann curvature tensor. A connection $\nabla$ is said to be flat if $R_\nabla=0.$
\end{definition}

\begin{exercise}
Show that if $\nabla$ is the Levi-Civita connection of a riemannian manifold then for any pair of tangent vectors
$v,w \in T_pM$, the map
\[ R_\nabla(v,w,-): T_pM \rightarrow T_pM\]
is antisymmetric, i.e.
\[  \langle R_\nabla(v,w,z), z \rangle =0.\]
This is one of the Bianchi identities.
\end{exercise}

\begin{exercise} Let $ \mathfrak{so}(2n) $ be the Lie algebra of skew symmetric matrices.
The Pfaffian polynomial: \[\mathsf{Pf}: \mathfrak{so}(2n) \rightarrow \R,\] is defined by the formula

\[ \mathsf{Pf}(A)= \frac{1}{n!2^n }\sum_{\sigma \in S_{2n}} sg(\sigma) \prod_{i=1}^n a_{\sigma(2i-1),\sigma(2i)}.\]
Show that for $B \in \mathsf{End}(\R^{2n})$ and $ A \in \mathfrak{so}(2n)$ the following holds:
\begin{itemize}
\item[(a)] \[ \mathsf{Pf}(B A B^t)= \det(B) \mathsf{Pf}(A).\]
\item[(b)] \[ \mathsf{Pf}(A)^2= \det(A).\]
\end{itemize}
In particular, if $B \in SO(n)$ then
\[ \mathsf{Pf}(B A B^{-1})= \mathsf{Pf}(A).\]
\end{exercise}

\begin{exercise}
Let $V$ be a real vector space of dimension $2n$ and $ \langle \,, \rangle$ an inner product in $V$. Let $\mathfrak{so}(V)$ be the space of antisymmetric endomorphisms of $V$. That is
\[ \mathfrak{so}(V):=\{ \phi: V \rightarrow V: \langle v , \phi(v)\rangle=0\}.\]
Let $\mathsf{P}(A)=\mathsf{P}(a_{ij}): \mathfrak{so}(2n) \rightarrow \R$ be a polynomial which is invariant under the action of $SO(2n)$ i.e. such that
\[ P(BAB^{-1})=P(A)\]
for all $B \in SO(2n)$. Fix an orthonormal basis $\{ e_1, \dots, e_{2n}\}$ for $V$. For any $\omega \in \Lambda^2 (V^*) \otimes \mathfrak{so}(V)$ define $\omega_{ij} \in \Lambda^2(V^*)$ by the formula
\[ \omega_{ij}(v,w):=\langle \omega(v,w)(e_i), e_j \rangle.\]
Show that $\mathsf{P}(\omega):= \mathsf{P}(\omega_{ij})\in \Lambda(V^*)$ does not depend on the choice of orthonormal basis.
\end{exercise}

The previous exercise shows that for any riemannian manifold $(M,g)$ of dimension $2n$ there is a well defined form $\mathsf{Pf}(K)\in \Omega^{2n}(M)$ which is defined by
\[ \mathsf{Pf}(K)(p):= \mathsf{Pf}(K(p))\in \Lambda^{2n}(T^*_pM).\]
This differential form is what needs to be integrated over $M$ to obtain the Euler characteristic:

\begin{theorem}[Chern-Gauss-Bonnet]
Let $(M,g)$ be a closed oriented riemannian manifold of dimension $d=2m$ and $R$ the curvature of the Levi-Civita connection. Then
\[ \chi(M)=\Big(\frac{1}{2\pi}\Big)^n \int_M \mathsf{Pf}(K).\]
\end{theorem}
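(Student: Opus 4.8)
The plan is to give Chern's intrinsic proof: pass to the unit tangent bundle of $M$, produce there a global primitive of the Euler form, and then combine Stokes' theorem with the Poincar\'e--Hopf index theorem. Write $2n=\dim M$, so that $\mathsf{Pf}$ is the Pfaffian on $\mathfrak{so}(2n)$ introduced above. The first step is to reduce the statement to a local index computation. By the Poincar\'e--Hopf theorem, for any vector field $X$ on $M$ with finitely many zeros $x_1,\dots,x_r$ one has $\chi(M)=\sum_{i=1}^r\mathrm{ind}_{x_i}(X)$, and after a $C^{\infty}$-small perturbation we may assume every zero is nondegenerate. Hence it suffices to prove
\[ \Big(\frac{1}{2\pi}\Big)^n\int_M\mathsf{Pf}(K)=\sum_{i=1}^r\mathrm{ind}_{x_i}(X). \]

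Next I would construct a global primitive of the Euler form on the sphere bundle. Let $\pi\colon SM\to M$ be the unit tangent bundle, a fibre bundle with fibre $S^{2n-1}$. Over $SM$ the pulled-back bundle $\pi^*TM$ carries the tautological unit section $\tau$, and therefore splits orthogonally as $\pi^*TM=\R\tau\oplus\tau^{\perp}$. The pulled-back Levi--Civita connection $\nabla$ has Chern--Weil Euler form equal to $\pi^*\big((2\pi)^{-n}\mathsf{Pf}(K)\big)$, whereas any metric connection $\nabla'$ adapted to the splitting has vanishing Euler form, because $\pi^*TM$ then has a trivial line summand. The Chern--Weil transgression construction applied to $\nabla$ and $\nabla'$ then yields an explicit $(2n-1)$-form $\Phi$ on $SM$, assembled from the connection and curvature forms of the Levi--Civita connection written in an adapted orthonormal coframe, with
\[ d\Phi=\pi^*\Big(\tfrac{1}{(2\pi)^n}\,\mathsf{Pf}(K)\Big). \]
This $\Phi$ is Chern's transgression form; the precise constants in it are pinned down by the normalization of the Pfaffian.

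Then I would apply Stokes' theorem away from the zeros of $X$. The normalized field $s:=X/\lvert X\rvert$ is a section of $SM$ over $M\setminus\{x_1,\dots,x_r\}$, and $\pi\circ s=\mathrm{id}$. For small $\varepsilon>0$ put $M_\varepsilon:=M\setminus\bigcup_i B_\varepsilon(x_i)$; pulling the identity for $d\Phi$ back along $s$ and integrating over $M_\varepsilon$ gives, by Stokes' theorem,
\[ \int_{M_\varepsilon}\tfrac{1}{(2\pi)^n}\mathsf{Pf}(K)=\int_{M_\varepsilon}d(s^*\Phi)=\pm\sum_{i=1}^r\int_{\partial B_\varepsilon(x_i)}s^*\Phi, \]
so that letting $\varepsilon\to0$ reduces the theorem to evaluating $\lim_{\varepsilon\to0}\int_{\partial B_\varepsilon(x_i)}s^*\Phi$ for each $i$. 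In a local trivialization $SM|_{B_\varepsilon}\cong B_\varepsilon\times S^{2n-1}$, the restriction $s\colon\partial B_\varepsilon(x_i)\to S^{2n-1}$ is homotopic to the Gauss map of $X$ near $x_i$, which has degree $\mathrm{ind}_{x_i}(X)$; the curvature-dependent terms of $\Phi$ are $O(\varepsilon)$ on the shrinking sphere, so only the fibrewise part of $\Phi$ contributes in the limit, giving $\mathrm{ind}_{x_i}(X)$ times the integral of $\Phi$ over one fibre $S^{2n-1}$. The constants in $\Phi$ are chosen precisely so that this fibre integral is $\pm1$, with the sign that makes the boundary contributions add up to $\sum_i\mathrm{ind}_{x_i}(X)$, which by the first step is $\chi(M)$.

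The main obstacle is the construction of $\Phi$ together with its normalization. One must verify, not merely up to a constant, both that $d\Phi=\pi^*\big((2\pi)^{-n}\mathsf{Pf}(K)\big)$ and that the integral of $\Phi$ over a fibre $S^{2n-1}$ is $\pm1$; this is the single genuinely computational point of the proof, and it is where the factor $(2\pi)^n$, the combinatorial constant $1/(n!\,2^n)$ in the definition of $\mathsf{Pf}$, and the volume of $S^{2n-1}$ have to match up, via Cartan's structure equations on the orthonormal frame bundle. Everything else — the Poincar\'e--Hopf reduction, the Stokes' theorem argument, and the estimate that the horizontal terms vanish as $\varepsilon\to0$ — is routine. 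An alternative route is to argue abstractly that $\langle e(TM),[M]\rangle=\chi(M)$ and that the Chern--Weil form $(2\pi)^{-n}\mathsf{Pf}(K)$ represents $e(TM)$, but this only relocates the same normalization computation and still rests on Poincar\'e--Hopf, so the sphere-bundle argument above is the more self-contained one.
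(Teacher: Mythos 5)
The paper itself does not prove this theorem: it is stated in Appendix A as background, with the proof delegated to Chern's original article \cite{Chern}, so there is no internal argument to compare yours against. Judged on its own terms, your proposal is a faithful outline of Chern's intrinsic proof: reduce to Poincar\'e--Hopf, pull the bundle back to the unit sphere bundle $\pi\colon SM\to M$ where the tautological section splits off a trivial line, transgress between the Levi--Civita connection and an adapted connection to get $\Phi$ with $d\Phi=\pi^*\bigl((2\pi)^{-n}\mathsf{Pf}(K)\bigr)$, and then run Stokes on the complement of small balls around the zeros of a generic vector field, identifying each boundary contribution with a local index via the fibre integral of $\Phi$. All of these steps are sound, including the observation that the adapted connection has vanishing Euler form because a rank-one metric summand forces a zero row in the curvature matrix, and the remark that the horizontal terms of $s^*\Phi$ die as $\varepsilon\to 0$.

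The weakness is the one you name yourself: the existence of $\Phi$ with \emph{both} properties --- $d\Phi$ equal to the pulled-back Pfaffian form with the stated constant, and fibre integral $\pm 1$ over $S^{2n-1}$ --- is the entire analytic content of the theorem, and in your write-up it is invoked as a black box (``the Chern--Weil transgression construction yields an explicit form'') rather than constructed. Until you write $\Phi$ out in an adapted orthonormal coframe via the structure equations and check the matching of $(2\pi)^{n}$, the factor $1/(n!\,2^{n})$ in $\mathsf{Pf}$, and the volume of $S^{2n-1}$, what you have is a correct plan rather than a proof; the same applies to the sign bookkeeping hidden in your ``$\pm$'' at the Stokes step. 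Two cosmetic points: the statement in the paper mixes $d=2m$ with the exponent $n$, and your argument silently fixes this by working with $2n=\dim M$ throughout, which is the right reading; and the nondegeneracy of the zeros is convenient but not needed once you phrase the boundary terms as degrees of the Gauss maps.
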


Of course, in case the dimension of $M$ is odd, the Euler characteristic vanishes by Poincar\'e duality.

\section{Spectral Sequences}
This section contains a brief introduction to spectral sequences.
\begin{definition}
  A cohomologically graded spectral sequence of real vector spaces $E=\{E_r\}$ is a sequence of bigraded vector spaces $E_r=\oplus E_r^{p,q}$ together with differentials of degree $(r,-r+1)$
  \[
  d_r:E_r^{p,q}\to E_r^{p+r,q-r+1}
  \]
  such that $d_r^2=0$ and $E_{r+1}=H^*(E_r)$.
\begin{equation}
\begin{tikzpicture}[descr/.style={fill=white}]
\matrix(m)[matrix of math nodes, row sep=3em, column sep=2.8em,
text height=1.5ex, text depth=0.25ex]
{E_{r}^{p-r,q}&E_{r}^{p,q}&E_{r}^{p+r,q}&\\&E_{r}^{p,q-r+1}&E_{r}^{p+r,q-r+1}&E_{r}^{p+2r,q-r+1}\\};
\path[->,font=\scriptsize]
(m-1-1) edge node[right] {$d_r$} (m-2-2);
\path[->,font=\scriptsize]
(m-1-2) edge node[right] {$d_r$} (m-2-3);
\path[->,font=\scriptsize]
(m-1-3) edge node[right] {$d_r$} (m-2-4);
\path[dashed,-,font=\scriptsize]
(m-1-1) edge node[below] {} (m-1-2);
\path[dashed,-,font=\scriptsize]
(m-1-2) edge node[below] {} (m-1-3);
\path[dashed,-,font=\scriptsize]
(m-2-2) edge node[below] {} (m-2-3);
\path[dashed,-,font=\scriptsize]
(m-2-3) edge node[below] {} (m-2-4);
\path[dashed,-,font=\scriptsize]
(m-1-2) edge node[below] {} (m-2-2);
\path[dashed,-,font=\scriptsize]
(m-1-3) edge node[below] {} (m-2-3);
\end{tikzpicture}
\end{equation}
  A morphism between spectral sequences $f:E\to F$ is a sequence of linear maps $f_r:E_r\to F_r$ compatible with the differentials such that $f_{r+1}$ is the morphism induced by $f_r$, i.e.
  \[
  f_{r+1}=H^*(f_r):E_{r+1}=H^*(E_r)\to F_{r+1}=H^*(F_{r}).
  \]
  Spectral sequences form a category.
\end{definition}
We usually think of the terms in a spectral sequence as pages in a book where the index $r$ denotes the page and $(p,q)$ are coordinates in that page.\\\\
Let $Z_1=\ker(d_1)$ and $B_1=\text{Im} (d_1)$. Since $E_{2}=H^*(E_1)=Z_1/B_1$, we may regard the kernel and image of $d_2$ as quotients $\ker(d_2)=Z_2/B_1$ and $\text{Im}(d_2)=B_2/B_1$, whence $B_1\subset B_2\subset Z_2\subset Z_1$. By induction we have $E_{r+1}=Z_r/B_r$, $\ker(d_{r+1})=Z_{r+1}/B_r$, $\text{Im}(d_{r+1})=B_{r+1}/B_r$ and $B_r\subset B_{r+1}\subset Z_{r+1}\subset Z_r$. This chain of inclusions allows us to define
\[
B_\infty=\cup B_r,\qquad Z_\infty=\cap Z_r.
\]
Clearly $B_\infty\subset Z_\infty$.
\begin{definition}
  The term at infinity of a spectral sequence is $E_\infty=B_\infty/Z_\infty$.
\end{definition}
A simple example of a spectral sequence is a cochain complex $d:C^\bullet\to C^{\bullet+1}$. Taking $E_1=C$ with the differential of the complex and $E_2=H^*(C)$ with zero differential. Notice that $E_r=E_2$ for $r\geq2$, therefore the sequence stabilizes at the second page making $E_\infty=E_2$.\\
Another source of examples of spectral sequences are exact couples.
\begin{definition}
  An exact couple is a couple of bigraded vector spaces $D,E$ with linear maps $i,j,k$ such that the following diagram is exact:
  \[
\begin{tikzpicture}[descr/.style={fill=white}]
\matrix(m)[matrix of math nodes, row sep=3em, column sep=2.8em,
text height=1.5ex, text depth=0.25ex]
{D&&D\\&E&\\};
\path[->,font=\scriptsize]
(m-1-1) edge node[above] {$i$} (m-1-3);
\path[->,font=\scriptsize]
(m-1-3) edge node[right] {$j$} (m-2-2);
\path[->,font=\scriptsize]
(m-2-2) edge node[left] {$k$} (m-1-1);
\end{tikzpicture}
\]
\end{definition}
\begin{lemma}
Given an exact couple $\{D,E,i,j,k\}$ we can produce another exact couple as follows. Let $d=jk$, then $d^2=jkjk=0$ which makes $d$ a differential on $E$, we will denote classes in homology by overlines. Let $E'=H^*(E)$ and $D'=i(D)$, $i'$ the restriction of $i$ to $D'$, $j'(i(x))=\overline{j(x)}$ and $k'$ the map induced by $k$. Then $\{D',E',i',j',k'\}$ is an exact couple.
\end{lemma}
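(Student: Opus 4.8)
The plan is to check, in turn, that the proposed data $\{D',E',i',j',k'\}$ is well defined, that the three maps compose into a triangle, and that this triangle is exact at each vertex. I will use only the three exactness hypotheses of the original couple, in the form of the vanishing relations $ji=0$, $kj=0$, $ik=0$ (coming from $\mathrm{im}\,i=\ker j$, $\mathrm{im}\,j=\ker k$, $\mathrm{im}\,k=\ker i$ respectively) together with the reverse inclusions $\ker j\subseteq\mathrm{im}\,i$, $\ker k\subseteq\mathrm{im}\,j$, $\ker i\subseteq\mathrm{im}\,k$.

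First I would dispose of well-definedness. The identity $d^2=j(kj)k=0$ follows from $kj=0$, so $E'=H(E,d)=\ker d/\mathrm{im}\,d$ makes sense, and $D'=i(D)$ is visibly $i$-stable, so $i'=i|_{D'}$ is an endomorphism of $D'$. For $j'$ there are two points: every $j(x)$ is a $d$-cycle, since $d(j(x))=j(kj)(x)=0$; and the formula $j'(i(x)):=\overline{j(x)}$ is independent of the chosen preimage, since $i(x)=i(x')$ gives $x-x'\in\ker i\subseteq\mathrm{im}\,k$, say $x-x'=k(e)$, whence $j(x)-j(x')=jk(e)=d(e)$ is a boundary. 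For $k'$: if $e$ represents a class of $E'$ then $jk(e)=d(e)=0$, so $k(e)\in\ker j\subseteq\mathrm{im}\,i=D'$; moreover $k$ kills $\mathrm{im}\,d$ because $k\,d=(kj)k=0$; hence $k'(\overline e):=k(e)$ is a well-defined map $E'\to D'$.

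The substance of the proof is the three exactness verifications, each a short element chase. For $\mathrm{im}\,k'=\ker i'$: the composite $i'k'$ sends $\overline e$ to $ik(e)=0$, and conversely, if $x'=i(x)\in D'$ satisfies $i'(x')=i(x')=0$, then $x'\in\ker i\subseteq\mathrm{im}\,k$, say $x'=k(e)$; here $d(e)=jk(e)=j(x')=ji(x)=0$, so $e$ represents a class of $E'$ with $k'(\overline e)=x'$. For $\mathrm{im}\,i'=\ker j'$: one has $j'(i'(i(x)))=\overline{j(i(x))}=\overline 0$ by $ji=0$, and conversely, if $x'=i(x)$ with $\overline{j(x)}=\overline 0$, then $j(x)=jk(e)$ for some $e$, so $x-k(e)\in\ker j\subseteq\mathrm{im}\,i$, say $x-k(e)=i(w)$; applying $i$ and using $ik=0$ gives $x'=i(x)=i^2(w)=i'(i(w))\in\mathrm{im}\,i'$. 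For $\mathrm{im}\,j'=\ker k'$: $k'(j'(i(x)))=kj(x)=0$, and conversely, if $\overline e\in E'$ with $k'(\overline e)=k(e)=0$, then $e\in\ker k\subseteq\mathrm{im}\,j$, say $e=j(x)$, and then $\overline e=\overline{j(x)}=j'(i(x))$ with $i(x)\in D'$.

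I do not expect a genuine obstacle, since this is the classical derived-couple construction; the step most likely to produce slips is the bookkeeping in the three reverse inclusions, where one must ensure that the element produced genuinely lies in the subspace $D'=i(D)$ (respectively is represented by an actual $d$-cycle) rather than merely in $D$ (respectively $E$). Keeping the identities $ji=kj=ik=0$ in hand reduces each of these to a one-line chase.
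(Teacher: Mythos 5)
Your proof is correct and follows essentially the same route as the paper's: the standard derived-couple element chase, checking well-definedness of $j'$ and $k'$ and then exactness at each vertex using $\ker i=\mathrm{im}\,k$, $\ker j=\mathrm{im}\,i$, $\ker k=\mathrm{im}\,j$. If anything, you are slightly more careful than the paper in spelling out that $j(x)$ is a $d$-cycle and that $k$ annihilates boundaries (so $k'$ genuinely descends to homology), details the paper leaves implicit.
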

\begin{proof}
  The proof is elementary. First we check that $j'$ and $k'$ are well defined:
  \begin{itemize}
    \item For $j'$ suppose that $i(x)=i(y)$, which means that $x-y\in\ker(i)=\text{Im}(k)$, so $x-y=k(e)$ for some $e\in E$. Applying $j$ we get $j(x-y)=jk(e)=d(e)$, therefore $\overline{j(x)}=\overline{j(y)}$.
    \item For $k'$ we want to check that $k'(\bar e)=k(e)\in D'=\ker(j)$. Since $e$ defines a homology class we have $d(e)=jk(e)=0$, so $k(e)\in\ker(j)$.
  \end{itemize}
  Now we go for exactness:
  \begin{itemize}
    \item At $\cdots\overset{k'}{\to}D'\overset{i'}{\to}\cdots$ Clearly $i'(k'(\bar e))=ik(e)=0$. Now take $i(x)\in D'$ such that $i'(i(x))=0$, by exactness in $D$ there is $e\in E$ such that $k(e)=i(x)$, so $d(e)=jk(e)=ji(x)=0$, whence $e$ defines a class in homology and $k'(\bar e)=k(e)=i(x)$.
    \item At $\cdots\overset{i'}{\to}D'\overset{j'}{\to}\cdots$ For $i(x)\in D'$ we have $j'(i'(i(x)))=\overline{j(i(x))}=0$. Now suppose $j'(i(x))=\overline{j(x)}=0$, then $j(x)=d(e)=jk(e)$ for some $e\in E$, meaning that $x-k(e)\in\ker(j)=\text{Im}(i)$. Let $y\in D$ such that $i(y)=x-k(e)$, then $i(x)=i(x)-ik(e)=ii(y)\in \text{Im}(i')$.
    \item At $\cdots\overset{j'}{\to}E'\overset{k'}{\to}\cdots$ Take $i(x)\in D'$, then $k'(j'(i(x)))=k'(\overline{j(x)})=kj(x)=0$. Now if $k'(\bar e)=k(e)=0$, then there is $x\in D$ with $j(x)=e$, then clearly $j'(i(x))=\overline{j(x)}=\bar e$.
  \end{itemize}
\end{proof}
Consider now an initial exact couple $\{D_0,E_0,i_0,j_0,k_0\}$ such that $i_0,j_0,k_0$ have degrees $(-1,1)$, $(0,0)$, $(0,1)$ respectively. Applying the previous construction consecutively we get a sequence of exact couples $\{D_r,E_r,i_r,j_r,k_r\}$ with degrees $(-1,1)$, $(r,-r)$, $(0,1)$. Notice that $d_r=j_rk_r$ is a differential on $E_r$ of degree $(r,-r+1)$, and by construction $E_r=H^*(E_{r-1})$, making $\{E_r\}$ a spectral sequence.\\
Given a filtered cochain complex we can produce an exact couple which in turn gives rise to a spectral sequence. Suppose $\{C^\bullet,d\}$ is a cochain complex with a decreasing filtration $\cdots,F^pC\supset F^{p+1}C\supset\cdots$ of subcomplexes. The graded complex associated to this filtration is actually bigraded, let $E_0$ denote the graded complex, then $E_0^{p,q}=F^pC^{p+q}/F^{p+1}C^{p+q}$. The differential on $E_0$ is $\bar d:E_0^{p,q}\to E_0^{p,q+1}$ which is induced by $d$. Notice that $\bar d$ has degree $(0,1)$. Now consider the short exact sequence of cochain complexes
\[
0\to F^{p+1}C^\bullet\overset{i}{\longrightarrow}F^{p}C^\bullet\overset{j}{\longrightarrow} E_0^{p,\bullet}\to 0
\]
By the snake lemma we get a long exact sequence
\[
\cdots\to H^n(F^{p+1}C^\bullet)\overset{i_*}{\longrightarrow}H^n(F^{p}C^\bullet)\overset{j_*}{\longrightarrow} H^n(E_0^{p,\bullet})\overset{k}{\longrightarrow}H^{n+1}(F^{p+1}C^\bullet)\to\cdots
\]
Finally let us define $D^{p,q}_1=H^{p+q}(F^{p+1}C^\bullet)$ and $E_1^{p,q}=H^{p+q}(E_0^{p,\bullet})$. Notice that with this convention the maps $i_*,j_*,k$ have degrees $(-1,1)$, $(1,-1)$ and $(0,1)$. Clearly $\{D_1,E_1,i_*,j_*,k\}$ is an exact couple, thus we have a spectral sequence associated to the filtration.\\
We will give an explicit formula for the terms of $\{E_r\}$. We start by defining the cycles up to filtration, this is, for $r\geq0$
\begin{equation}\label{cyclesfilt}
A^{p,q}_r=\{x\in F^pC^{p+q}\quad |\quad d(x)\in F^{p+r}C^{p+q+1}\}\end{equation}
\begin{lemma}
  The terms $E_r$ of the spectral sequence are
  \begin{equation}\label{spectral}
    E_r^{p,q}C=\frac{\big(A^{p,q}_r+F^{p+1}C^{p+q}\big)}{\big(d(A^{p-r+1,q+r-2}_{r-1})+F^{p+1}C^{p+q}\big)}
  \end{equation}
\end{lemma}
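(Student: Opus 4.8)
The plan is to prove \eqref{spectral} by induction on $r$, identifying along the way the differential $d_r$ at the level of representatives. The invariant I will carry through the induction is: a class in $E_r^{p,q}$ is represented by an element $x\in A^{p,q}_r$; one has $[x]=0$ in $E_r^{p,q}$ precisely when $x\in d(A^{p-r+1,q+r-2}_{r-1})+F^{p+1}C^{p+q}$; and the differential is $d_r[x]=[dx]$, where $dx\in F^{p+r}C^{p+q+1}$ satisfies $d(dx)=0$, hence lies in $A^{p+r,q-r+1}_s$ for every $s$ and in particular represents a class in $E_r^{p+r,q-r+1}$. That "$d_r$ is apply-$d$" is the unwinding of $d_r=j_rk_r$ for the iterated derived couple constructed above, and is routine.

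For the base case one may take $r=0$, extending \eqref{cyclesfilt} to $r=-1$ by $A^{p,q}_{-1}:=F^pC^{p+q}$: then $A^{p,q}_0=F^pC^{p+q}$ and $d(A^{p+1,q-2}_{-1})=d(F^{p+1}C^{p+q-1})\subseteq F^{p+1}C^{p+q}$, so the right-hand side of \eqref{spectral} collapses to $F^pC^{p+q}/F^{p+1}C^{p+q}=E_0^{p,q}$, with $d_0$ the differential induced by $d$. (Equivalently one checks $r=1$ directly from $E_1^{p,q}=H^{p+q}(E_0^{p,\bullet})$: a cocycle is an $x\in F^pC^{p+q}$ with $dx\in F^{p+1}C^{p+q+1}$, i.e. $x\in A^{p,q}_1$, and coboundaries are $d(F^pC^{p+q-1})=d(A^{p,q-1}_0)$ modulo $F^{p+1}$.)

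For the inductive step, assume \eqref{spectral} and the description of $d_r$, and compute $E_{r+1}^{p,q}=\ker(d_r)/\operatorname{im}(d_r)$. If $x\in A^{p,q}_r$, then $d_r[x]=0$ in $E_r^{p+r,q-r+1}$ means, by the inductive denominator at that spot (whose $A_{r-1}$-index is $A^{p+1,q-1}_{r-1}$), that $dx=dw+f$ with $w\in A^{p+1,q-1}_{r-1}\subseteq F^{p+1}C^{p+q}$ and $f\in F^{p+r+1}C^{p+q+1}$; hence $d(x-w)=f\in F^{p+r+1}$, so $x-w\in A^{p,q}_{r+1}$ and $[x]=[x-w]$ in $E_r^{p,q}$. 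Conversely every element of $A^{p,q}_{r+1}$ lies in $\ker(d_r)$. So $\ker(d_r)$ is $(A^{p,q}_{r+1}+F^{p+1}C^{p+q})$ modulo the old denominator. Similarly $\operatorname{im}(d_r)$ at $(p,q)$ is the class of $d(A^{p-r,q+r-1}_r)$, using that each such $dy$ lies in $F^pC^{p+q}$ and is $d$-closed. Therefore
\[
E_{r+1}^{p,q}=\frac{A^{p,q}_{r+1}+F^{p+1}C^{p+q}}{d(A^{p-r,q+r-1}_r)+d(A^{p-r+1,q+r-2}_{r-1})+F^{p+1}C^{p+q}}.
\]
Finally, $A^{p-r+1,q+r-2}_{r-1}\subseteq A^{p-r,q+r-1}_r$ since both consist of elements with differential in $F^pC^{p+q}$, the first sitting inside $F^{p-r+1}C^{p+q-1}\subseteq F^{p-r}C^{p+q-1}$; so the second $d(\cdot)$ in the denominator is absorbed, and (using $p-r=p-(r+1)+1$ and $q+r-1=q+(r+1)-2$) this is exactly \eqref{spectral} with $r$ replaced by $r+1$.

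The main obstacle is not a single hard argument but the bookkeeping that makes all the subquotients legitimate and the induction self-consistent: one must check that $d(A^{p-r+1,q+r-2}_{r-1})\subseteq A^{p,q}_{r+1}$ (immediate from $d\circ d=0$), that $d_r$ is well-defined on the inductive description and squares to zero, and—most error-prone—that every index shift $(p,q)\mapsto(p+r,q-r+1)$ is matched correctly against the formula at each stage.
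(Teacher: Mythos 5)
Your bookkeeping is correct: granting that every class in $E_r^{p,q}$ is represented by some $x\in A_r^{p,q}$ and that $d_r[x]=[dx]$, the computation of $\ker d_r$ and $\mathrm{im}\,d_r$, the absorption $A^{p-r+1,q+r-2}_{r-1}\subseteq A^{p-r,q+r-1}_r$, and the reindexing all check out, and the base case is fine. But the statement you dismiss as ``routine'' --- that under the inductive identification the exact-couple differential $d_r=j_rk_r$ is ``apply $d$ to an $A_r$-representative'' --- is exactly the nontrivial content here, and it is not something you can quote from outside the induction. In the paper's setup $E_r$ is \emph{defined} through the derived couples, so to know what $d_r$ does on a concrete representative you must already know how elements of the abstract $E_r$ (a subquotient $Z_{r-1}/B_{r-1}$ of $E_1$) correspond to elements of $A_r^{p,q}$ --- which is the very representative description you are in the middle of proving. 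Concretely, your induction hypothesis must include both the formula \eqref{spectral} \emph{and} the compatibility of the isomorphism with $j_r,k_r$, and your inductive step never re-establishes the second part at stage $r+1$: you would have to trace $k_{r+1}$ (induced by $k$, i.e.\ by $d$) and $j_{r+1}(i_r(y))=\overline{j_r(y)}$ through $D_{r+1}=i_r(D_r)$, checking that $[dx]\in H^{p+q+1}(F^{p+1}C)$ can be rewritten as $i^{r}$ of its class in $H^{p+q+1}(F^{p+r+1}C)$ and that the resulting element of $E_{r+1}^{p+r+1,q-r}$ is again the class of $dx$. That diagram chase is doable, but it is essentially the same unwinding the paper performs directly via the map $\bar j:A^{p,q}_r+F^{p+1}C^{p+q}\to Z_{r-1}^{p,q}$ together with the identities $Z_r=k^{-1}(\mathrm{Im}(i_r))$ and $B_r=j(\ker(i_r))$; as written, your proof has outsourced its hardest step to an unproved assertion.

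So the route is genuinely different from the paper's --- you induct on the recursion $E_{r+1}=H^*(E_r,d_r)$ with concrete representatives, while the paper avoids any mention of $d_r$ and instead exhibits, for each fixed $r$, a surjection from $A^{p,q}_r+F^{p+1}C^{p+q}$ onto $Z_{r-1}^{p,q}$ with kernel-preimage exactly $d(A^{p-r+1,q+r-2}_{r-1})+F^{p+1}C^{p+q}$. Your approach has the advantage of simultaneously producing the standard description of $d_r$ on representatives (useful later, e.g.\ for the double-complex computation), but to be complete it must carry that description as part of the inductive hypothesis and verify it in the step; the paper's argument buys the lemma as stated with no claim about the differentials at all. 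Either fill in the $j_{r+1}k_{r+1}$ chase (including the well-definedness point that $i^{r}$ is not injective, handled by the derived-couple lemma), or switch to the paper's direct argument.
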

\begin{proof}
  First notice that equation (\ref{spectral}) for $r=0$ yields $E_0^{p,q}=F^pC^{p+q}/F^{p+1}C^{p+q}$. We will prove that projection $j:F^pC\to F^pC/F^{p+1}C$ induces a surjective map $\bar j: A^{p,q}_r+F^{p+1}C^{p+q}\to Z_{r-1}^{p,q}$. Furthermore, we will see that $\bar j(d(A^{p-r+1,q+r-2}_{r-1})+F^{p+1}C^{p+q})=B_{r-1}^{p,q}$ and the inverse image of $B_{r-1}^{p,q}$ is precisely $d(A^{p-r+1,q+r-2}_{r-1})+F^{p+1}C^{p+q}$. Thus $\bar j$ induces an isomorphism
  \[
  \frac{A^{p,q}_r+F^{p+1}C^{p+q}}{d(A^{p-r+1,q+r-2}_{r-1})+F^{p+1}C^{p+q}}\cong\frac{Z_{r-1}^{p,q}}{B_{r-1}^{p,q}}=E_r^{p,q}.
  \]
  For this purpose we prove the following equations:
  \begin{equation}
    Z_r=k^{-1}(\text{Im}(i_{r})),\qquad B_r=j(\ker(i_r)).
  \end{equation}
  For the first equation take $z\in Z_r$. $d_r(\bar z)=j_rk_r(\bar z)=0$ is equivalent to $k_r(\bar z)\in\ker(j_r)=\text{Im}(i_r)$. Since $k_r(z)$ is induced by the map $k$, we have $k_r(\bar z)=k(z)$, which means $k_r(\bar z)\in\text{Im}(i_r)$ if and only if $z\in k^{-1}(\text{Im}(i_{r}))$.\\
  Now take an element $j_rk_r(z)\in B_r$, then $k_r(z)\in \text{Im}(k_r)=\ker(i_r)$. $j_r$ is just a restriction of the map $j$, hence $B_r=j(\ker(i_r))$.\\\\
  Let us define the map $\bar j$. Take $x\in A^{p,q}_r+F^{p+1}C^{p+q}$ and consider its image $j(x)\in E_0^{p,q}$. Since $d(x)\in F^{p+r}C$ we have $d(j(x))=0$, therefore $j(x)$ defines an element in $H^*(E_0^{p,q})$ which we denote $\bar x$. Notice that $k(\bar x)=\overline{d(x)}\in H^*(F^{p+1}C)$, it is not always zero since $d(x)$ is not necessarily a boundary in the complex $F^{p+1}C$, furthermore, $k(\bar x)\in\text{Im}(i_{r-1})$, whence $\bar x\in k^{-1}(\text{Im}(i_{r-1}))=Z_{r-1}$.\\
  $\bar j$ is surjective: take an element $w\in Z^{p,q}_{r-1}$, hence $k(w)=i_{r-1}(y)$ for some $y\in H^{p+q+1}(F^{p+r}C)$, we will also denote a representative of $y$ by $y$. Next suppose that $j(x_1)$ is a representative of $w$, this means that $d(x_1)=k(w)$, therefore $d(x_1)$ and $y$ represent the same class in $H^{p+q+1}(F^pC)$, hence there is an $x_2\in F^pC^{p+q}$ such that $d(x_1)-y=d(x_2)$. Taking $x=x_1+x_2\in A^{p,q}_r+F^{p+1}C^{p+q}$ we get that $j(x)$ represents $w$, and $d(x)=y$, making $\bar j(x)=w$.\\
  Let us check that $\bar j(d(A^{p-r+1,q+r-2}_{r-1})+F^{p+1}C^{p+q})\subset B_{r-1}^{p,q}$. Notice that the components in $F^{p+1}C^{p+q}$ maps to zero, therefore it is enough to check that $\bar jd(A^{p-r+1,q+r-2}_{r-1})\subset B_{r-1}^{p+q}$. Let $d(w)\in d(A^{p-r+1,q+r-2}_{r-1})$, which means that $w\in F^{p-r+1}C^{p+q-1}$. We have that $d(w)$ is not necessarily a boundary in $F^{p}C^{p+q}$, but becomes a boundary after applying the inclusion $F^{p}C^{p+q}\subset F^{p-r+1}C^{p+q}$, which means that its homology class goes to zero in $H^{p+q}(F^{p-r+1}C)$. We have proved that $\bar j(d(w))\in j(\ker (i_{r-1}))=B_{r-1}$.\\
  Finally let us check that $\bar j^{-1}(B_{r-1})=d(A^{p-r+1,q+r-2}_{r-1})+F^{p+1}C^{p+q}$. Suppose $\bar j(x)\in B_{r-1}$, this is, $\bar j(x)=j(u)$ for $u\in\ker(i_{r-1})$. Then $u$ must be a boundary in $F^{p-r+1}C$, this is, $u=d(z)$. Next notice that $j(x-d(z))=0$, so $x-d(z)\in F^{p+1}C$ and $x=d(z)+x-d(z)\in d(A^{p-r+1,q+r-2}_{r-1})+F^{p+1}C^{p+q}$.
\end{proof}
\begin{corollary}
  The term at infinity of the spectral sequence is
  \begin{equation}\label{inftyterm}
    E_\infty^{p,q}C=\frac{\big(A^{p,q}_\infty+F^{p+1}C^{p+q}\big)}{\big(d(A_{\infty})^{p,q}+F^{p+1}C^{p+q}\big)},
  \end{equation}
  where
  \begin{equation}\label{cyclesfilt2}
  A^{p,q}_\infty=\cap_{r\geq 1}A_r^{p,q} \qquad\text{and}\qquad\big(d(A_{\infty})^{p,q})=\cup_{r\geq 1}d(A^{p-r+1,q+r-2}_{r-1}).
  \end{equation}
\end{corollary}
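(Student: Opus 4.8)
The plan is to read off the description of $E_\infty^{p,q}C=Z_\infty^{p,q}/B_\infty^{p,q}$ directly from the proof of the previous lemma, by restricting to it the surjection $\bar j$ constructed there and checking that the limit operations $Z_\infty=\bigcap_r Z_r$ and $B_\infty=\bigcup_r B_r$ correspond, under $\bar j$, to $A_\infty^{p,q}=\bigcap_r A_r^{p,q}$ and $\big(d(A_\infty)\big)^{p,q}=\bigcup_r d(A_{r-1}^{p-r+1,q+r-2})$. Recall that in that proof the quotient map $j\colon F^pC^{p+q}\to E_0^{p,q}$, followed by passage to cohomology of $E_0^{p,\bullet}$, carries $A_r^{p,q}+F^{p+1}C^{p+q}$ onto $Z_{r-1}^{p,q}$ and satisfies $\bar j^{-1}\big(B_{r-1}^{p,q}\big)=d(A_{r-1}^{p-r+1,q+r-2})+F^{p+1}C^{p+q}$. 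First I would keep exactly this map $j$, now taken on the domain $A_\infty^{p,q}+F^{p+1}C^{p+q}$.

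For the denominator there is nothing to do beyond a formal manipulation: preimages commute with unions, and adding the fixed subspace $F^{p+1}C^{p+q}$ commutes with the increasing union over $r$, so
\[
\bar j^{-1}\big(B_\infty^{p,q}\big)=\bigcup_{r\geq 1}\bar j^{-1}\big(B_{r-1}^{p,q}\big)=\bigcup_{r\geq 1}\Big(d(A_{r-1}^{p-r+1,q+r-2})+F^{p+1}C^{p+q}\Big)=\big(d(A_\infty)\big)^{p,q}+F^{p+1}C^{p+q}.
\]
For the numerator, $A_\infty^{p,q}\subseteq A_r^{p,q}$ for every $r$ immediately gives $j\big(A_\infty^{p,q}\big)\subseteq Z_{r-1}^{p,q}$ for every $r$, so $\bar j$ maps $A_\infty^{p,q}+F^{p+1}C^{p+q}$ into $Z_\infty^{p,q}$; what must be checked is that it is onto. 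Following the lemma, a class $w\in Z_\infty^{p,q}$ has a representative $j(x_0)$ with $x_0\in A_1^{p,q}$, and the condition $w\in Z_{r-1}^{p,q}$ is equivalent to $x_0\in A_r^{p,q}+F^{p+1}C^{p+q}$; hence $w\in Z_\infty^{p,q}$ forces $x_0\in\bigcap_{r\geq1}\big(A_r^{p,q}+F^{p+1}C^{p+q}\big)$, and once we know this intersection equals $A_\infty^{p,q}+F^{p+1}C^{p+q}$ we conclude $w\in\bar j\big(A_\infty^{p,q}+F^{p+1}C^{p+q}\big)$. Combined with the computation of $\bar j^{-1}(B_\infty^{p,q})$ above, this makes $\bar j$ descend to the asserted isomorphism, which is the statement of the corollary.

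Thus the whole argument reduces to the single identity
\[
\bigcap_{r\geq 1}\big(A_r^{p,q}+F^{p+1}C^{p+q}\big)=\Big(\bigcap_{r\geq 1}A_r^{p,q}\Big)+F^{p+1}C^{p+q},
\]
i.e. to interchanging $\bigcap_r$ with the quotient by $F^{p+1}C^{p+q}$; the inclusion $\supseteq$ is trivial, and $\subseteq$ is the only place I expect any real difficulty, since for a general filtered complex $d\big(F^{p+1}C^{p+q}\big)$ need not be closed in the topology defined by the filtration. I would dispose of it by noting that the $A_r^{p,q}$ are decreasing in $r$ and that in every situation relevant to these notes the filtration is bounded: for the vertical filtration $F^V_r(\Omega(TM))=\bigoplus_{j\geq r}\Omega^{ij}(TM)$ one has $\Omega^{ij}(TM)=0$ for $j>m$, hence $F^V_r=0$ for $r>m$, so $A_r^{p,q}$ already equals $A_\infty^{p,q}$ once $p+r>m$ and the identity holds trivially. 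With that in hand the corollary follows from the lemma by letting $r\to\infty$.
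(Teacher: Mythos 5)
Your proof is correct, and it is in substance the argument the notes leave implicit: the corollary is stated without proof, as the passage $r\to\infty$ in the formula of the preceding lemma, and your use of the map $\bar j$ (with $\bar j^{-1}(Z_{r-1})=A_r^{p,q}+F^{p+1}C^{p+q}$ inside its domain, and $\bar j^{-1}(B_\infty)=\bigcup_{r}\bar j^{-1}(B_{r-1})=d(A_\infty)^{p,q}+F^{p+1}C^{p+q}$) is exactly how that passage is made precise. What you add, and rightly single out as the only delicate point, is the interchange $\bigcap_{r\ge1}\bigl(A_r^{p,q}+F^{p+1}C^{p+q}\bigr)=A_\infty^{p,q}+F^{p+1}C^{p+q}$, and your caution is warranted: for an unbounded filtration this identity, and with it the corollary as literally stated, can fail. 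A small example: let $C^0$ have basis $x,v_1,v_2,\dots$, let $C^1$ have basis $e_1,e_2,\dots$, set $dx=e_1$, $dv_i=e_i-e_{i+1}$, and filter by $F^rC^0=\langle v_r,v_{r+1},\dots\rangle$ and $F^rC^1=\langle e_r,e_{r+1},\dots\rangle$ for $r\ge1$, with $F^0C=C$; then $x-(v_1+\cdots+v_{r-1})\in A_r^{0,0}$ shows that the class of $x$ lives in $E_r^{0,0}$ and survives every page (its image $e_r=dv_r+e_{r+1}$ dies in $E_r^{r,1-r}$), so $E_\infty^{0,0}\ne 0$, while $A_\infty^{0,0}=\ker d\cap C^0=0$ makes the right-hand side of the corollary's formula vanish. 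So the statement genuinely needs a hypothesis of the kind you invoke --- for instance the condition that for every $n$ there is $m$ with $F^mC^n=0$, which is assumed in the convergence theorem immediately following and which holds for the vertical filtration of $\Omega(TM)$ since $F^V_r=0$ for $r>m$ --- and under such a hypothesis your stabilization argument ($A_r^{p,q}=A_\infty^{p,q}$ for $r$ large) closes the proof.
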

Let $C$ be a cochain complex with a decreasing filtration of subcomplexes $\cdots \supset F^pC\supset F^{p+1}C\supset\cdots$. We denote by $E_rC$ the spectral sequence associated to the filtered complex. The cohomology of the complex may also be filtered by taking $F^pH^*(C)=\text{Im}(H^*(F^pC)\to H^*(C))$, whence we have a second spectral sequence $E_rH^*(C)$. The following theorem establishes a relation between both sequences.
\begin{theorem}
  If $C$ is a filtered complex such that $C=\cup_p F^pC$ and for every $n$ there exists an $m$ such that $F^mC^n=0$, then $E_\infty^{p,q} C=E_0^{p,q}H^*(C)$.
\end{theorem}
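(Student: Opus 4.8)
The plan is to compare the two sides of the claimed equality by realizing both as quotients of one and the same module --- the cocycles of $C$ contained in $F^pC^{p+q}$ --- and then checking that one divides by the same submodule in each case. For the left-hand side I would start from the explicit description of the infinity term proved in the Corollary above,
\[
E_\infty^{p,q}C=\frac{A^{p,q}_\infty+F^{p+1}C^{p+q}}{d(A_\infty)^{p,q}+F^{p+1}C^{p+q}},
\]
with $A^{p,q}_\infty=\bigcap_{r\geq1}A_r^{p,q}$ and $d(A_\infty)^{p,q}=\bigcup_{r\geq1}d\big(A^{p-r+1,q+r-2}_{r-1}\big)$; the two hypotheses of the theorem will be used precisely to simplify, respectively, the numerator and the denominator of this fraction.

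First I would use the boundedness hypothesis. Applying it to $n=p+q+1$ gives an $m$ with $F^mC^{p+q+1}=0$, and since the filtration is decreasing this forces $F^{p+r}C^{p+q+1}=0$ for all $r\geq m-p$. Hence for all large $r$ the set $A_r^{p,q}=\{x\in F^pC^{p+q}\mid dx\in F^{p+r}C^{p+q+1}\}$ equals $\{x\in F^pC^{p+q}\mid dx=0\}$, the nested sequence $A_r^{p,q}$ stabilizes, and therefore $A_\infty^{p,q}=Z^{p+q}(C)\cap F^pC^{p+q}$, where $Z^{p+q}(C)=\ker(d\colon C^{p+q}\to C^{p+q+1})$. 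Next I would use exhaustiveness. Unravelling the definition, $A^{p-r+1,q+r-2}_{r-1}=\{x\in F^{p-r+1}C^{p+q-1}\mid dx\in F^pC^{p+q}\}$, and as $r\to\infty$ the subgroups $F^{p-r+1}C^{p+q-1}$ exhaust $C^{p+q-1}$ because $C=\bigcup_pF^pC$; consequently $\bigcup_{r\geq1}A^{p-r+1,q+r-2}_{r-1}$ is the full preimage $d^{-1}\big(F^pC^{p+q}\big)$ inside $C^{p+q-1}$, so that $d(A_\infty)^{p,q}=B^{p+q}(C)\cap F^pC^{p+q}$ with $B^{p+q}(C)=\operatorname{Im}(d\colon C^{p+q-1}\to C^{p+q})$. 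Substituting, $E_\infty^{p,q}C$ is exhibited as a concrete subquotient of $F^pC^{p+q}$.

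On the other side, by definition $F^pH^{p+q}(C)=\operatorname{Im}\!\big(H^{p+q}(F^pC)\to H^{p+q}(C)\big)$ is exactly the image of $Z^{p+q}(C)\cap F^pC^{p+q}$ under the projection $Z^{p+q}(C)\to H^{p+q}(C)$, so $E_0^{p,q}H^*(C)=F^pH^{p+q}(C)/F^{p+1}H^{p+q}(C)$ is again a quotient of $Z^{p+q}(C)\cap F^pC^{p+q}$. It then remains to compute the two kernels. The kernel of $Z^{p+q}(C)\cap F^pC^{p+q}\twoheadrightarrow E_\infty^{p,q}C$ consists of the $z$ lying in $\big(B^{p+q}(C)\cap F^pC^{p+q}\big)+F^{p+1}C^{p+q}$, while the kernel of $Z^{p+q}(C)\cap F^pC^{p+q}\twoheadrightarrow F^pH^{p+q}/F^{p+1}H^{p+q}$ consists of those $z$ whose class lies in $F^{p+1}H^{p+q}$, i.e. those with $z\in B^{p+q}(C)+\big(Z^{p+q}(C)\cap F^{p+1}C^{p+q}\big)$. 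A short manipulation of these intersections of submodules of $C^{p+q}$ shows that both kernels equal $\big(Z^{p+q}(C)\cap F^pC^{p+q}\big)\cap\big(B^{p+q}(C)+Z^{p+q}(C)\cap F^{p+1}C^{p+q}\big)$, giving the natural isomorphism $E_\infty^{p,q}C\cong E_0^{p,q}H^*(C)$.

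I expect the main obstacle to be the two limiting arguments that evaluate $A_\infty^{p,q}$ and $d(A_\infty)^{p,q}$: one must place each hypothesis correctly --- boundedness to make the $A_r^{p,q}$ stabilize so that $A_\infty^{p,q}$ is genuinely the cocycles in $F^pC^{p+q}$, and exhaustiveness to guarantee that the union defining $d(A_\infty)^{p,q}$ already reaches all of $d^{-1}(F^pC^{p+q})$ --- and verify that nothing is lost when passing to the intersection, respectively the union. The concluding identification of the two kernels is then a purely algebraic diagram chase, routine but requiring some care with the order of the sums and intersections.
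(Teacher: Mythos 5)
Your proposal is correct and follows essentially the same route as the paper: both start from the corollary's formula for $E_\infty^{p,q}C$, use the boundedness hypothesis to identify $A_\infty^{p,q}$ with the cocycles $Z^{p+q}\cap F^pC^{p+q}$ and the exhaustion hypothesis to identify $d(A_\infty)^{p,q}$ with $B^{p+q}\cap F^pC^{p+q}$, and then compare with $E_0^{p,q}H^*(C)$ written as a quotient of $F^pZ$. The ``short manipulation'' you defer at the end is exactly the paper's displayed two-inclusion check that $F^pZ\cap(F^{p+1}Z+B)=F^pZ\cap(F^{p+1}C+F^pB)$, and it goes through as you expect.
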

\begin{proof}
  We start by considering the induced filtrations on $Z$ and $B$, the cycles and boundaries of $C$:
  \[
  F^pZ^{s}=F^pC\cap\ker\{d:C^s\to C^{s+1}\},\qquad F^pB^s=F^pC\cap\text{Im}(d:C^{s-1}\to C^s),
  \]
  then $H(F^pC)=F^pZ/F^pB$.\\
  We will compute $E_0^{p,q}H^*(C)$. Recall that $E_0^{p,q}H^*(C)=F^pH^{p+q}(C)/F^{p+1}H^{p+q}(C)$ where $F^pH(C)=\text{Im}(H(F^pC)\to H(C))$. Notice that $F^pH^{p+q}(C)=(F^pZ+B)/B$, therefore we have
  \[
  E_0^{p,q}H^*(C)=\frac{F^pZ+B}{F^{p+1}Z+B}=\frac{F^pZ}{F^pZ\cap(F^{p+1}Z+B)}.
  \]
  Furthermore, we have the equality $F^pZ\cap(F^{p+1}Z+B)=F^pZ\cap(F^{p+1}C+F^pB)$. Indeed, let $x\in F^pZ\cap(F^{p+1}Z+B)$, then $x=a+d(b)$ with $a\in F^{p+1}Z$ and $d(b)\in B$. Clearly $a\in F^{p+1}C$ and $d(b)=x-a\in F^pC$, so $x=a+d(b)\in F^{p+1}C+F^pB$. Similarly, for $x\in F^pZ\cap(F^{p+1}C+F^pB)$ we have $x=a+d(b)$ with $a\in F^{p+1}C$ and $d(b)\in F^pB\subset B$. We have that $0=d(x)=d(a)+d^2(b)=d(a)$, so $a\in F^{p+1}Z$. We have proven that
  \[
  E_0^{p,q}H^*(C)=\frac{F^pZ}{F^pZ\cap(F^{p+1}C+F^pB)}=\frac{F^pZ+F^{p+1}C}{F^pB+F^{p+1}C}
  \]
  Recall the cycles up to filtration defined in (\ref{cyclesfilt}), notice that under our hypothesis, for $r$ large enough we have $F^pZ^*=A_r^{p,*}$, whence
  \[
  F^pZ^*+F^{p+1}C^*=A_r^{p,*}+F^{p+1}C^*=A_\infty^{p,*}+F^{p+1}C^*.
  \]
  On the other hand, for any $d(b)\in F^pB$ we must have some $s$ such that $b\in F^sC$. So $d(b)\in d(A^{s,p+q-s-1}_{p-s})\subset d(A_{\infty})^{p,q}$, therefore
  \[
  F^pB^*+F^{p+1}C^*=d(A_{\infty})^{p,*}+F^{p+1}C^*.
  \]
  Combining our previous findings we get
  \[
  E_0^{p,q}H^*(C)=\frac{\big(A^{p,q}_\infty+F^{p+1}C^{p+q}\big)}{\big(d(A_{\infty})^{p,q}+F^{p+1}C^{p+q}\big)}=E_\infty^{p,q}C.
  \]
\end{proof}
The following lemma provides an alternate formula for the terms of the spectral sequence that is more convenient for computational purposes.
\begin{lemma}
  The terms of the spectral sequence associated to a filtered complex $F^\bullet C$ are:
  \begin{equation}\label{terms}
    E_r^{p,q}C=\frac{A^{p,q}_r}{d(A^{p-r+1,q+r-2}_{r-1})+A^{p+1,q-1}_{r-1}},
  \end{equation}
  where $A^{p,q}_r$ is as in (\ref{cyclesfilt}).\\
  The terms at infinity are
  \begin{equation}\label{termsinfty}
    E_\infty^{p,q}C=\frac{A^{p,q}_\infty}{d(A_\infty)^{p,q}+A^{p+1,q-1}_{\infty}},
  \end{equation}
  where $A^{p,q}_\infty$ and $d(A_\infty)^{p,q}$ are as in (\ref{cyclesfilt2}).
\end{lemma}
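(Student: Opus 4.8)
The plan is to derive both formulas from the description of the terms already obtained in equation~(\ref{spectral}),
\[
E_r^{p,q}C=\frac{A^{p,q}_r+F^{p+1}C^{p+q}}{d(A^{p-r+1,q+r-2}_{r-1})+F^{p+1}C^{p+q}},
\]
by rewriting this subquotient of $F^pC^{p+q}$ as a subquotient of $A^{p,q}_r$ via the second isomorphism theorem. First I would isolate two elementary facts. (i) Since $d(y)$ is always a cocycle and, for $y\in A^{p-r+1,q+r-2}_{r-1}$, also lies in $F^{(p-r+1)+(r-1)}C^{p+q}=F^pC^{p+q}$, one gets $d(A^{p-r+1,q+r-2}_{r-1})\subseteq A^{p,q}_s$ for every $s$; in particular the denominator in~(\ref{spectral}) is contained in the numerator. (ii) Straight from the definition~(\ref{cyclesfilt}), and using $(p+1)+(r-1)=p+r$ together with $(p+1)+(q-1)=p+q$,
\[
A^{p,q}_r\cap F^{p+1}C^{p+q}=\{\,x\in F^{p+1}C^{p+q}\mid d(x)\in F^{p+r}C^{p+q+1}\,\}=A^{p+1,q-1}_{r-1}.
\]

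Next I would compose the inclusion $A^{p,q}_r\hookrightarrow A^{p,q}_r+F^{p+1}C^{p+q}$ with the projection onto $E_r^{p,q}C$. This composite is surjective because $F^{p+1}C^{p+q}$ already lies in the denominator, and its kernel is $A^{p,q}_r\cap\bigl(d(A^{p-r+1,q+r-2}_{r-1})+F^{p+1}C^{p+q}\bigr)$. By fact~(i) the denominator sits inside $A^{p,q}_r$, so Dedekind's modular law applies and gives
\[
A^{p,q}_r\cap\bigl(d(A^{p-r+1,q+r-2}_{r-1})+F^{p+1}C^{p+q}\bigr)=d(A^{p-r+1,q+r-2}_{r-1})+\bigl(A^{p,q}_r\cap F^{p+1}C^{p+q}\bigr)=d(A^{p-r+1,q+r-2}_{r-1})+A^{p+1,q-1}_{r-1},
\]
using fact~(ii) at the last step. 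This is exactly formula~(\ref{terms}).

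For the terms at infinity I would run the same argument on equation~(\ref{inftyterm}): one again has $d(A_\infty)^{p,q}=\bigcup_{r\ge 1}d(A^{p-r+1,q+r-2}_{r-1})\subseteq A^{p,q}_\infty$ by fact~(i), and
\[
A^{p,q}_\infty\cap F^{p+1}C^{p+q}=\bigcap_{r}\bigl(A^{p,q}_r\cap F^{p+1}C^{p+q}\bigr)=\bigcap_{r}A^{p+1,q-1}_{r-1}=A^{p+1,q-1}_\infty,
\]
so the modular law turns~(\ref{inftyterm}) into $A^{p,q}_\infty/\bigl(d(A_\infty)^{p,q}+A^{p+1,q-1}_\infty\bigr)$, which is~(\ref{termsinfty}). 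I do not expect a genuine obstacle here; the only points that need care are the index arithmetic in fact~(ii) and checking the hypothesis of the modular law — namely that the relevant denominator is contained in $A^{p,q}_r$ (resp.\ $A^{p,q}_\infty$) — before pulling it out of the intersection.
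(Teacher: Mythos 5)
Your proposal is correct and follows essentially the same route as the paper: both start from equation (\ref{spectral}), establish the identity $A^{p,q}_r\cap F^{p+1}C^{p+q}=A^{p+1,q-1}_{r-1}$ and the containment $d(A^{p-r+1,q+r-2}_{r-1})\subseteq A^{p,q}_r$, and then rewrite the quotient via the second isomorphism theorem (your explicit appeal to the modular law is just a more careful spelling-out of the step the paper states as ``the isomorphism follows''). The treatment of the $E_\infty$ case by intersecting over $r$ likewise matches the paper's ``proved in a similar fashion.''
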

\begin{proof}
We have the equation
\[
A^{p,q}_r\cap F^{p+1}C^{p+q}=\{x\in F^{p+1}C^{p+q}\quad|\quad d(x)\in F^{p+r}C^{p+q+1}\}=A^{p+1,q-1}_{r-1}.
\]
Since $d(A^{p-r+1,q+r-2}_{r-1})\subset A_r^{p,q}$, the isomorphism follows
\[
\frac{A_r^{p,q}+F^{p+1}C^{p+q}}{d(A^{p-r+1,q+r-2}_{r-1})+F^{p+1}C^{p+q}}=\frac{A^{p,q}_r}{d(A^{p-r+1,q+r-2}_{r-1})+A^{p,q}_{r}\cap F^{p+1}C^{p+q}}.
\]
Equation (\ref{termsinfty}) is proved in a similar fashion.
\end{proof}
\begin{example}\label{bete}
  Given a complex a complex $C$, a simple filtration is the so called bête filtration:
  \[
  (F^pC)^n=\left\{\begin{array}{ccc}0&\text{if}& n<p\\ C^n&\text{if}& n\geq p\end{array}\right.
  \]
  In this case the spectral sequence converges to the cohomology of the complex.
\end{example}
\begin{example}
As a more interesting example we compute the first terms of the spectral sequence associated to the vertical filtration of a double complex. Suppose $\Omega$ is a first quadrant double complex:
\[
\begin{CD}
  \vdots @.\vdots @.@.\\
  @AAA@AAA@.\\
  \Omega^{0,1}@>d^{0,1}_H>>\Omega^{1,1}@>>>\cdots\\
  @Ad^{0,0}_VAA@Ad^{1,0}_VAA@.\\
  \Omega^{0,0}@>d^{0,0}_H>>\Omega^{1,0}@>>>\cdots\\
\end{CD}
\]
The chain complex is $C^n=\oplus_{i+j=n} \Omega^{i,j}$ with differential $d^{i,j}=d^{i,j}_H+d^{i,j}_V$. The filtration is then
\[
F^rC^{p+q}=\bigoplus_{\underset{j\geq r}{i+j=p+q}}\Omega^{i,j}.
\]
The zeroth term of the sequence is
\[
E_0^{p,q}=\frac{F^pC^{p+q}}{F^{p+1}C^{p+q}}=\frac{\Omega^{q,p}\oplus\Omega^{q-1,p+1}\oplus\cdots\oplus\Omega^{p+q,0}}{\Omega^{q-1,p+1}\oplus\cdots\oplus\Omega^{p+q,0}}=\Omega^{q,p}
\]
Using (\ref{terms}) we compute the first term of the sequence:
\begin{align*}
  A^{p,q}_1=&\bigoplus_{\underset{j\geq p+1}{i+j=p+q}}\Omega^{i,j}\oplus\ker(d_H^{q,p})\\
  A^{p+1,q-1}_0=&\bigoplus_{\underset{j\geq p+1}{i+j=p+q}}\Omega^{i,j}\\
  A^{p,q-1}_0=&\bigoplus_{\underset{j\geq p}{i+j=p+q-1}}\Omega^{i,j}
\end{align*}
Notice that
\[d(A^{p,q-1}_0)+A^{p+1,q-1}_0=\bigoplus_{\underset{j\geq p+1}{i+j=p+q}}\Omega^{i,j}\oplus\text{Im}(d_H^{q-1,p})\]
therefore
\[
E_1^{p,q}=\frac{\left(\bigoplus_{\underset{j\geq p+1}{i+j=p+q}}\Omega^{i,j}\right)\oplus\ker(d_H^{q,p})}{\left(\bigoplus_{\underset{j\geq p+1}{i+j=p+q}}\Omega^{i,j}\right)\oplus\text{Im}(d_H^{q-1,p})}=\frac{\ker(d_H^{q,p})}{\text{Im}(d_H^{q-1,p})}.
\]
So the page $E_1$ of the spectral sequence is just the horizontal cohomology of the double complex which we denote $E_1^{p,q}=H_H^q(\Omega^{\bullet,p})$. Recall that the differential on $E_1$ is such that \[d_1:E_1^{p,q}=H_H^q(\Omega^{\bullet,p})\to E_1^{p+1,q}=H_H^q(\Omega^{\bullet,p+1}),\] therefore the second term of the spectral sequence is
\[
E_2^{p,q}=H_V^pH_H^q(\Omega).
\]
If we use the horizontal filtration instead, the computation follows with minor changes. The filtration is
\[
F^rC^{p+q}=\bigoplus_{\underset{i\geq r}{i+j=p+q}}\Omega^{i,j}.
\]
The zeroth term is $E_0^{p,q}=\Omega^{p,q}$, the first term is the vertical cohomology $E_1^{p,q}=H_V^q(\Omega^{p,\bullet})$ and the second term is $E_2^{p,q}=H_H^pH_V^q(\Omega)$.
\end{example}
\section{Sheaves}
\subsection{Basic Definitions and Results}
All spaces considered in this section are locally compact.
\begin{definition}
  Let $X$ be a topological space. A presheaf of vector spaces $F$ over $X$ consists of the following data:
  \begin{itemize}
    \item For every open set $U\subset X$ there is a vector space $F(U)$.
    \item For every inclusion $U\subset V$ there is a linear map $F(V)\to F(U)$ called a restriction map.
  \end{itemize}
  The elements of $F(U)$ are called sections of $F$ over $U$. If $U\subset V$ and $s\in F(V)$, we often denote the image of the restriction of $s$ to $U$ by $s|_U$. These restrictions must satisfy the following conditions:
  \begin{itemize}
    \item The restriction $F(U)\to F(U)$ must be the identity.
    \item If $U\subset V\subset W$ and $s\in F(W)$, then $(s|_V)|_U=s|_U$.
  \end{itemize}
\end{definition}
In the previous definition we may change the terms ``vector spaces" and ``linear maps" for objects and morphisms in any category. In fact, another way to define a presheaf is as follows: given a topological space $X$, we have the category $O(X)$ whose objects are open subsets of $X$ and whose morphisms are inclusions. If $C$ is any other category, a presheaf over $X$ with values in $C$ is a contravariant functor from $O(X)$ to $C$. We are mostly interested in the case where $C=\text{Vect}$, the category of real vector spaces.
\begin{definition}
  Let $F$ be a presheaf over $X$ and $x\in X$. The stalk of $F$ at $x$, denoted $F_x$, is the direct limit
  \[
  F_x=\lim_{\underset{U}{\longrightarrow}}F(U),
  \]
  where the limit is taken over all open sets containing $x$. If $s\in F(U)$, the image image of $s$ under the projection $F(U)\to F_x$ is denoted $s_x$ and is called the germ of $s$ at $x$.
\end{definition}
\begin{definition}
  A sheaf over $X$ is a presheaf $F$ that satisfies the following conditions:
  \begin{itemize}
    \item If $\{U_\alpha\}_\alpha$ is an open covering of an open set $U$ and $s,t$ are sections over $U$ such that $s|_{U_\alpha}=t|_{U_\alpha}$ for every $\alpha$, then $s=t$.
    \item If $\{U_\alpha\}_\alpha$ is an open covering of an open set $U$ and there are sections $s_\alpha\in F(U_\alpha)$ for every $\alpha$ such that $s_\alpha|_{U_\alpha\cap U_\beta}=s_\beta|_{U_\alpha\cap U_\beta}$ for each pair $\alpha,\beta$, then there is a section $s\in F(U)$ such that $s|_{U_\alpha}=s_\alpha$.
  \end{itemize}
  If $F,G$ are presheaves over $X$ with values in the same category, a morphism $\varphi:F\to G$ is a family of morphisms $\varphi_U:F(U)\to G(U)$ for every open set $U\subset X$. These morphisms must be compatible with the restriction morphisms, this is, if $U\subset V$, the following diagram must be commutative:
  \[\begin{CD}
      F(V) @>\varphi_V>> G(V)\\
      @VVV @VVV\\
      F(U) @>\varphi_U>> G(U)
      \end{CD}
      \]
  Categorically, a morphism of presheaves is a natural transformation $F\rightarrow G$. A morphism of sheaves is just a morphism as presheaves.
\end{definition}
We will denote the category of presheaves over $X$ by $\textit{PSh}(X)$ and the category of sheaves by $\textit{Sh}(X)$. Notice that
$\textit{Sh}(X)$ is a full subcategory of $\textit{PSh}(X)$. Given $x\in X$, taking the stalk of a sheaf at $x$ is a functor \[(-)_x:\textit{PSh}(X)\rightarrow \text{Vect}.\] If $U\subset X$ is an open set, we define the functor \[\Gamma(U;-):\textit{PSh}(X)\rightarrow \text{Vect}\] by $\Gamma(U;F)=F(U)$. This functor is called taking sections in $U$. In particular we call $\Gamma(X;-)$ the global sections functor.
\begin{proposition}
  If $F$ is a presheaf over $X$, then there is a sheaf $F^+$ over $X$ and a morphism of presheaves $\theta:F\to F^+$ unique up to isomorphism with the property that, for any other sheaf $G$ and morphism $\lambda:F\to G$, there is a unique morphism $\widehat\lambda:F^+\to G$ such that $\lambda=\widehat\lambda\circ\theta$.\\
  In other words, the map
  \[
  \text{Hom}_{\textit{Sh}(X)}(F^+,G)\to \text{Hom}_{\textit{PSh}(X)}(F,G)
  \]
  induced by $\theta$ is an isomorphism. This means that the functor $\textit{PSh}(X)\rightarrow \textit{Sh}(X)$ given by $F\mapsto F^+$ is a left adjoint of the forgetful functor.
\end{proposition}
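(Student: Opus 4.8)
The plan is to construct $F^+$ explicitly as the sheaf of germs and then verify the universal property directly, using the separation axiom of the target sheaf $G$ to obtain both existence and uniqueness of the factorization. For each open $U\subseteq X$ I would set
\[
F^+(U):=\Big\{\, s:U\to\coprod_{x\in U}F_x \;\Big|\; s(x)\in F_x \text{ for all } x,\ \text{and } s \text{ is locally a germ}\,\Big\},
\]
where \emph{locally a germ} means that for every $x\in U$ there are an open neighborhood $V\subseteq U$ of $x$ and a section $t\in F(V)$ with $s(y)=t_y$ for all $y\in V$. The restriction maps are the obvious restrictions of functions. I would then check that $F^+$ is a sheaf: the separation axiom holds because sections are honest functions, so two that agree on each member of an open cover agree pointwise; and the gluing axiom holds because a compatible family $s_\alpha\in F^+(U_\alpha)$ patches to a single well-defined function on $U$ which is still locally a germ. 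Both checks are routine once the definition is unwound.

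Next I would define the morphism $\theta:F\to F^+$ by $\theta_U(t)(x):=t_x$, the germ of $t$ at $x$. By construction $\theta_U(t)$ is globally a germ, hence lies in $F^+(U)$, and $\theta$ commutes with restrictions, so it is a morphism of presheaves. A useful auxiliary observation is that $\theta$ induces an isomorphism $F_x\cong (F^+)_x$ on stalks for every $x$, since both sides are computed from the same germs; this is what makes $F^+$ the "best sheaf approximation" to $F$.

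For the universal property, fix a sheaf $G$ and a morphism $\lambda:F\to G$, and construct $\widehat\lambda:F^+\to G$ as follows. Given $s\in F^+(U)$, cover $U$ by open sets $V$ on each of which $s$ agrees with $\theta(t_V)$ for some $t_V\in F(V)$, and set $\sigma_V:=\lambda_V(t_V)\in G(V)$. On an overlap $V\cap V'$ the sections $t_V$ and $t_{V'}$ have equal germs at every point, so $\lambda$ gives $\sigma_V$ and $\sigma_{V'}$ equal germs there; since $G$ is separated, $\sigma_V$ and $\sigma_{V'}$ coincide on $V\cap V'$. The gluing axiom for $G$ then yields a unique $\widehat\lambda_U(s)\in G(U)$ with $\widehat\lambda_U(s)|_V=\sigma_V$, and one checks this is independent of all choices and compatible with restriction, so $\widehat\lambda$ is a morphism of sheaves with $\widehat\lambda\circ\theta=\lambda$. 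Uniqueness is immediate: if $\psi\circ\theta=\lambda$ as well, then $\psi$ and $\widehat\lambda$ agree on every section of the form $\theta(t)$, hence on the local pieces $s|_V$ above, and separation of $G$ forces $\psi=\widehat\lambda$ on all of $U$. This gives the claimed bijection $\text{Hom}_{\textit{Sh}(X)}(F^+,G)\cong\text{Hom}_{\textit{PSh}(X)}(F,G)$, exhibiting $(-)^+$ as left adjoint to the forgetful functor; uniqueness of $(F^+,\theta)$ up to canonical isomorphism then follows formally from the universal property.

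The main obstacle I expect is precisely the well-definedness and gluing in the construction of $\widehat\lambda$ — that is, showing the locally defined $\sigma_V$ agree on overlaps and assemble to an unambiguous section. This is exactly where the separation axiom of $G$ is indispensable: without it, two sections with equal germs need not coincide, so the factorization could fail to exist or fail to be unique. Everything else reduces to carefully unwinding the pointwise-with-local-condition definition of $F^+$ and invoking the sheaf axioms for $G$.
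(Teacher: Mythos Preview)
Your construction of $F^+$ and $\theta$ is exactly the one given in the paper, and your verification of the sheaf axioms and the universal property is correct (indeed more detailed than the paper, which simply states that ``the properties are readily checked''). The approaches are the same.
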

The sheaf $F^+$ is called the sheaf associated to the presheaf $F$.\\

\begin{proof}
  We define $F^+(U)$ as the set of functions $\sigma:U\to \bigsqcup_{x\in U}F_x$ such that:
  \begin{enumerate}
    \item $\sigma(x)\in F_x$.
    \item For every $x\in U$ there is a neighbourhood $V\subset U$ of $x$ and a section $s\in F(V)$ such that for every $y\in V$, we have $\sigma(y)=s_y$.
  \end{enumerate}
  The first condition states that $\sigma$ is a section with respect to the natural projection $\bigsqcup_{x\in U}F_x\to U$. The morphism $\theta:F\to F^+$ is such that for $s\in F(U)$ and $x\in U$, $\theta(s)(x)=s_x$. If we endow $\bigsqcup_{x\in U}F_x$ with the finest topology such that all the sections of the form $\theta(s)$ are continuous, then the second condition on $\sigma$ is just a requirement of continuity. The properties are readily checked.
\end{proof}

Given a sheaf $F$ over $X$ and a continuous function $h:X\to Y$, we are interested in the following two ways of transferring the data of $F$ to $Y$ via $h$:
\begin{definition}
 The direct image of $F$ is a sheaf on $Y$ defined as follows: if $W\subset Y$ is an open set, we set $h_*F(W):=F(h^{-1}(W))$.
 \end{definition}
 \begin{definition}
 Given a section $s\in F(U)$, the support of $s$, denoted $\textit{supp}(s)$, is the closure $\overline{U-Z}$ where $Z$ is the union of all open sets $V\subset U$ such that $s|_V=0$. The direct image with compact support is a sheaf $h_!(F)$ on $Y$ defined as follows: for an open set $W\subset Y$, we set:
      \[
      h_!F(W)=\{s\in F(h^{-1}(W)): h|_{\textit{supp}(s)}:\textit{supp}(s)\to W\text{ is proper}\}.
      \]
\end{definition}

Given a sheaf $G$ over $Y$, we can also transfer the data of $G$ to $X$ via $h$.
\begin{definition}
  The sheaf $h^{-1}G$ over $X$ called the inverse image of $G$ is defined as follows: if $U$ is open in $X$, then
\[U\mapsto \lim_{\underset{V}{\longrightarrow}}G(V),\]
where $V$ ranges over the open neighbourhoods of $h(U)$. This procedure produces a presheaf, taking its associated sheaf we get the inverse image $h^{-1}G$. In the case where $i:Z\hookrightarrow X$ is a subspace and $F$ is a sheaf on $X$, we will denote the inverse image $i^{-1}F=F|_Z$, the restriction of $F$ to $Z$. Also, when taking sections of $F$ over $Z$ we write $\Gamma(Z;F)=\Gamma(Z;i^{-1}F)$. The sections of $F$ over $Z$ can also be obtained as a direct limit
\begin{equation}\label{sheafeq}
\Gamma(Z;F)=\lim_{\underset{U}{\longrightarrow}}\Gamma(U;F),
\end{equation}
where $U$ ranges over all open sets containing $Z$.
\end{definition}

When taking sections with compact support of a sheaf over a space, we will write $\Gamma_c$, this is,
\[
\Gamma_c(X,F)=\{s\in\Gamma(X,F)\quad\big|\quad \text{supp}(s)\text{ is compact.}\}.
\]
\begin{proposition}\label{sheafprop}
  Let $h:Y\to X$ be a continuous function and $G$ a sheaf over $Y$. Then for every $x\in X$, the are canonical morphisms
  \begin{equation}\label{eqsheaf}
  \alpha:(h_*G)_x\to \Gamma(h^{-1}(x);G|_{h^{-1}(x)})\quad\text{and}\quad\beta:(h_!G)_x\to \Gamma_c(h^{-1}(x);G|_{h^{-1}(x)})
  \end{equation}
  The morphism $\beta$ is actually an isomorphism.
\end{proposition}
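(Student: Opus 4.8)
The plan is to produce both maps from the colimit descriptions of stalks and of sections over a subspace, and then to prove that $\beta$ is bijective by a cut-off construction relying on local compactness; throughout, $X$ and $Y$ are the (locally compact, Hausdorff) spaces in play.

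\textbf{Construction of $\alpha$ and $\beta$.} Since $(h_*G)_x=\varinjlim_{W\ni x}G(h^{-1}(W))$ with $W$ ranging over open neighbourhoods of $x$, and since each $h^{-1}(W)$ is an open set containing the fibre $h^{-1}(x)$, the restriction maps $G(h^{-1}(W))\to\Gamma(h^{-1}(x);G|_{h^{-1}(x)})$ — using the identity $\Gamma(Z;G)=\varinjlim_{U\supseteq Z}\Gamma(U;G)$ of $(\ref{sheafeq})$ — are compatible and assemble into $\alpha$. Because $(h_!G)(W)$ is a linear subspace of $G(h^{-1}(W))$, precomposing $\alpha$ with the canonical map $(h_!G)_x\to(h_*G)_x$ yields a map out of $(h_!G)_x$; I would check it lands in $\Gamma_c$ by observing that for $s\in(h_!G)(W)$ one has $\mathrm{supp}(s|_{h^{-1}(x)})\subseteq\mathrm{supp}(s)\cap h^{-1}(x)=(h|_{\mathrm{supp}(s)})^{-1}(x)$, which is compact since a proper map has compact fibres. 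This defines $\beta$.

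\textbf{Injectivity of $\beta$.} Suppose $s\in(h_!G)(W)$ and $\beta([s]_x)=0$, so $s$ vanishes on an open set $U$ with $h^{-1}(x)\subseteq U\subseteq h^{-1}(W)$; hence $\mathrm{supp}(s)$ is disjoint from $h^{-1}(x)$. Since $h|_{\mathrm{supp}(s)}$ is proper and $W$ is locally compact Hausdorff, it is a closed map, so $h(\mathrm{supp}(s))$ is closed in $W$ and misses $x$. Then $s$ restricts to $0$ over the open neighbourhood $W\setminus h(\mathrm{supp}(s))$ of $x$, so $[s]_x=0$ already in $(h_!G)_x$.

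\textbf{Surjectivity of $\beta$ — the main obstacle.} Given $t\in\Gamma_c(h^{-1}(x);G|_{h^{-1}(x)})$, set $K:=\mathrm{supp}(t)$, a compact subset of $h^{-1}(x)$. By $(\ref{sheafeq})$, $t$ is represented by a section $s'$ of $G$ over some open $U\supseteq h^{-1}(x)$. Using local compactness of $Y$, choose open sets with $K\subseteq V_0\subseteq\overline{V_0}\subseteq V_1\subseteq\overline{V_1}\subseteq U$ and $\overline{V_1}$ compact. The germs of $s'$ vanish on $h^{-1}(x)\setminus K$, so the compact set $C:=\{y\in\overline{V_1}\setminus V_0:\ s'_y\neq 0\}$ is disjoint from $h^{-1}(x)$; hence $W:=X\setminus h(C)$ is an open neighbourhood of $x$, and $s'$ vanishes on the open set $h^{-1}(W)\cap(V_1\setminus\overline{V_0})$. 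Gluing $s'|_{h^{-1}(W)\cap V_1}$ with the zero section on $h^{-1}(W)\setminus\overline{V_0}$ — they agree on the overlap just described and together cover $h^{-1}(W)$ — produces $s\in G(h^{-1}(W))$ with $\mathrm{supp}(s)\subseteq\overline{V_0}$. For compact $L\subseteq W$ the set $(h|_{\mathrm{supp}(s)})^{-1}(L)$ is a closed subset of the compact set $h^{-1}(L)\cap\overline{V_0}$, so $h|_{\mathrm{supp}(s)}$ is proper over $W$, i.e. $s\in(h_!G)(W)$; and $s|_{h^{-1}(x)}=t$ by construction, whence $\beta([s]_x)=t$. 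I expect the delicate part to be organising this cut-off — the two shrinkings (of the open set in $Y$ and of the neighbourhood of $x$) and the verification that the resulting section has proper support — since that is exactly where local compactness enters.
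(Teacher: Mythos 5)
Your proposal is correct and follows essentially the same route as the paper's proof: injectivity of $\beta$ via the closedness of $h(\mathrm{supp}(s))$ coming from properness, and surjectivity via an extension-by-zero (cut-off) over a shrunk neighbourhood $W$ of $x$, with local compactness supplying the relatively compact shrinkings. The only differences are bookkeeping ones --- you represent $t$ by a section over a neighbourhood of the whole fibre and use two nested shrinkings $V_0\subset V_1$, where the paper extends from a neighbourhood of $K$ and uses one --- and you additionally spell out the construction of $\alpha$ and $\beta$, which the paper leaves implicit.
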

\begin{proof}
  $\beta$ is injective: Take $s_x\in (h_!G)_x$ such that $\beta(s_x)=0$. A representative of $s_x$ is a section $s\in\Gamma(h^{-1}(U);G)$ such that $x\in U$ and $h|_{\textit{supp}(s)}:\text{supp}(s)\to U$ is proper. $\beta(s_x)=0$ implies that $\textit{supp}(s)$ and $h^{-1}(x)$ are disjoint, therefore $x\not\in h(\text{supp}(s))$. Since $h(\text{supp}(s))$ is closed, there is a neighbourhood of $x$ where the section vanishes, which means that $s_x=0$.\\

  $\beta$ is surjective: Take $s\in \Gamma_c(h^{-1}(x);G|_{h^{-1}(x)})$ and name $K=\text{supp(s)}$. By (\ref{sheafeq}), there is a neighbourhood $U$ of $K$ and a section $t$ over $U$ such that $t|_K=s|_K$. Restricting the open if necessary, we may assume that $t|_{U\cap h^{-1}(x)}=s|_{U\cap h^{-1}(x)}$. Next take a neighbourhood $V$ of $K$ such that $K\subset V\subset \bar V\subset U$. We have that $x\not\in h(\bar V\cap \text{supp}(t)\setminus V)$, there is an open neighbourhood $W$ of $x$ such that $h^{-1}(W)\cap\bar V\cap \text{supp}(t)\subset V$. Let us define $\widetilde s\in \Gamma(h^{-1}(W);G)$ as
  \[
  \widetilde s|_{h^{-1}(W)\setminus(\text{supp}(t)\cap\bar V)}=0,\qquad \widetilde s|_{h^{-1}(W)\cap V}=t|_{h^{-1}(W)\cap V}.
  \]
  The section $\widetilde s$ maps to $s$ via $\beta$.
\end{proof}
\subsection{Cohomology of Sheaves}
All sheaves considered in this section are over a space $X$.
\begin{definition}
  Let $O_X$ be a sheaf of rings with unit over $X$, we call $(X,O_X)$ a ringed space. A sheaf $A$ is a sheaf of modules over $O_X$ if for every open $U\subset X$, $\Gamma(U;A)$ is a module over $\Gamma(U;O_X)$ and for every inclusion of open sets $V\subset U$, the restriction map is a morphism of $\Gamma(U;O_X)$-modules.
\end{definition}
Consider a short exact sequence of sheaves of $O_X$-modules $0\to A\to B\to C\to 0$. The functor $\Gamma(X;-)$ is left exact, i.e. the sequence $0\to\Gamma(X;A)\to \Gamma(X;B)\to \Gamma(X;C)$ is exact. The map $\Gamma(X;B)\to \Gamma(X;C)$ is not surjective in general, however, there are functors $\{H^k(X;-)\}_{k\geq 0}$ that complete the sequence, this means that there is a long exact sequence
\begin{equation}\label{les}
0\to H^0(X;A)\to H^0(X;B)\to H^0(X;C)\to H^1(X;A)\to H^1(X;B)\to H^1(X;C)\to\cdots
\end{equation}
where $H^0(X;-)=\Gamma(X;-)$. The groups $H^k(X;A)$ are called the cohomology groups of $X$ with coefficients in $A$.\\

In order to construct the cohomology groups, we will use injective resolutions of sheaves. A sheaf $I$ is said to be injective if for every injective morphism $i:A\hookrightarrow B$ and every morphism $\phi:A\to I$ there is an extension of $\phi$ to $B$. That is, a morphism $\widetilde\phi:B\to I$ such that $\widetilde\phi \circ i=\phi$.\\

A resolution of $A$ is a sequence of sheaves $\{I^k\}_{k\geq0}$ and morphisms $\{d^k\}_{k\geq0}$ such that
\[
0\to A\longrightarrow I^0\overset{d^0}{\longrightarrow} I^1\longrightarrow\cdots
\]
is exact. An injective resolution is one where every sheaf $I^k$ is injective. We will denote resolutions by $0\to A\overset{d^\bullet}{\longrightarrow}I^\bullet$.

\begin{proposition}
  Every sheaf of $O_X$-modules has an injective resolution. This is often stated as the existence of enough injective objects in the category of sheaves of $O_X$-modules over $X$.
\end{proposition}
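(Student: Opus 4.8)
The plan is to first establish that the category of sheaves of $O_X$-modules has \emph{enough injective objects}, i.e. that every such sheaf admits a monomorphism into an injective one, and then to build the resolution by the standard iteration.

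First I would invoke the corresponding fact from commutative algebra: every module over a ring embeds into an injective module. (This is classical — it follows, for instance, from Baer's criterion together with the observation that any module is a submodule of a product of injective cogenerators.) Applying this to the stalks, for each point $x \in X$ I choose a monomorphism $A_x \hookrightarrow I_x$ of $O_{X,x}$-modules with $I_x$ injective.

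Next I would promote these stalkwise embeddings to an embedding of sheaves. For $x \in X$ let $\iota_x \colon \{x\} \hookrightarrow X$ be the inclusion and write $x_* I_x := (\iota_x)_* I_x$ for the skyscraper sheaf supported at $x$ with stalk $I_x$. The key point is that $x_* I_x$ is an injective sheaf of $O_X$-modules: the functor $(\iota_x)_*$ is right adjoint to the inverse-image functor $(\iota_x)^{-1}$, which is just ``take the stalk at $x$'' and is exact since stalks are filtered colimits; and a right adjoint of an exact functor preserves injective objects. As an arbitrary product of injectives is injective, the sheaf
\[ I^0 := \prod_{x \in X} x_* I_x \]
is injective of $O_X$-modules. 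The chosen embeddings together with the adjunction units assemble into a morphism $A \to I^0$, and this morphism is a monomorphism because on each stalk it factors the injection $A_x \hookrightarrow I_x$. This proves there are enough injectives.

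Finally I would iterate. Given the monomorphism $A \hookrightarrow I^0$, set $C^1 := \operatorname{coker}(A \to I^0)$, choose a monomorphism $C^1 \hookrightarrow I^1$ into an injective sheaf, and let $d^0 \colon I^0 \to I^1$ be the composite $I^0 \twoheadrightarrow C^1 \hookrightarrow I^1$. Repeating with $C^2 := \operatorname{coker}(d^0)$, and so on, produces an exact sequence $0 \to A \to I^0 \xrightarrow{d^0} I^1 \xrightarrow{d^1} \cdots$ with every $I^k$ injective, which is the desired injective resolution. The only substantive input in the whole argument is the module-theoretic fact that every module over a ring embeds in an injective module; the remaining ingredients — skyscrapers of injectives are injective, products of injectives are injective, and the resolution-building iteration — are purely formal.
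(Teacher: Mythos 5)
Your argument is correct, and it is the standard one (Godement's embedding into a product of skyscraper sheaves, as in Hartshorne III.2.2). Note that the paper itself states this proposition without proof --- it appears in the review appendix on sheaf theory as a quoted fact --- so there is no proof in the text to compare yours against; you are supplying exactly the argument the paper omits. All the steps you use are sound: stalkwise embedding into injective $O_{X,x}$-modules, injectivity of the skyscrapers via the adjunction with an exact left adjoint, injectivity of products, the monomorphism checked on stalks through the projections, and the usual cokernel iteration to build the resolution. The only point worth phrasing a bit more carefully is the adjunction: to conclude that $(\iota_x)_* I_x$ is injective as a sheaf of $O_X$-modules (not merely as an abelian sheaf), you should use the adjunction in the category of $O$-modules, whose left adjoint is $\iota_x^* = \iota_x^{-1}(-)\otimes_{\iota_x^{-1}O_X} O_{X,x}$; since the point carries the ring $O_{X,x}$, this pullback is literally the stalk functor and is exact, so your conclusion stands unchanged.
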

 Now let us take an arbitrary sheaf $A$ and an injective resolution $0\to A\overset{d^\bullet}{\longrightarrow}I^\bullet$. Consider the sequence obtained when we remove $A$
 \[
 0\to I^0\overset{d^0}{\longrightarrow} I^1\overset{d^1}{\longrightarrow}I^2\longrightarrow\cdots
 \]
 Applying the functor $\Gamma(X;-)$ we get a complex
 \[
 0\to \Gamma(X;I^0)\overset{\Gamma(X;d^0)}{\longrightarrow} \Gamma(X;I^1)\overset{\Gamma(X;d^1)}{\longrightarrow}\Gamma(X;I^2)\longrightarrow\cdots
 \]
 The homology groups of this complex are the cohomology groups of $X$ with values in $A$:
 \[
 H^k(X;A)=\frac{\ker(\Gamma(X;d^k))}{\text{Im}(\Gamma(X;d^{k-1}))}
 \]
 In particular we have $H^0(X;A)=\ker(\Gamma(X;d^0))=\Gamma(X;A)$ since $\Gamma(X;-)$ is left exact.\\

 The following proposition states that the cohomology groups depend only on $A$ and not on the choice of injective resolution.
 \begin{proposition}
   If $0\to A\overset{d^\bullet}{\longrightarrow}I^\bullet$ and $0\to A\overset{d'^\bullet}{\longrightarrow}I'^\bullet$ are injective resolutions of $A$, then there is a morphism of resolutions $\varphi^\bullet:I^\bullet\to I'^\bullet$ lifting the identity
   \[
   \begin{CD}
     0@>>> A@>>> I^0@>d^0>> I^1@>>> \cdots\\
     @. @V\text{Id}_AVV @V\varphi^0VV @V\varphi^1VV @.\\
     0@>>> A@>>> I'^0@>>d'^0> I'^1@>>> \cdots\\
   \end{CD}
   \]
   This morphism induces an isomorphism at the level of homology.
 \end{proposition}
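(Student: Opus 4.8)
The plan is to prove this in two stages: first construct the chain map $\varphi^\bullet$ by the standard inductive lifting argument, and then show that \emph{any} chain map between injective resolutions lifting the identity is a homotopy equivalence of complexes of sheaves, so that applying the additive functor $\Gamma(X;-)$ yields an isomorphism on cohomology.

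For the construction of $\varphi^\bullet$ I would induct on the degree, treating the augmentations $A\hookrightarrow I^0$ and $A\hookrightarrow I'^0$ as the maps ``$d^{-1}$'' and ``$d'^{-1}$'' so that a single argument covers all steps. To build $\varphi^0:I^0\to I'^0$, precompose the augmentation $A\hookrightarrow I'^0$ with $\mathrm{id}_A$ to get a morphism $A\to I'^0$; since $I'^0$ is injective, extend it along the injection $A\hookrightarrow I^0$ to a morphism $\varphi^0:I^0\to I'^0$, which by construction commutes with the augmentations. Assume $\varphi^0,\dots,\varphi^k$ have been built compatibly with the differentials and consider $d'^k\circ\varphi^k:I^k\to I'^{k+1}$. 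Since $d'^k\circ\varphi^k\circ d^{k-1}=d'^k\circ d'^{k-1}\circ\varphi^{k-1}=0$, this map annihilates $\mathrm{Im}(d^{k-1})$ and hence factors through $I^k/\mathrm{Im}(d^{k-1})$; exactness of the resolution gives $\mathrm{Im}(d^{k-1})=\ker(d^k)$, so $I^k/\mathrm{Im}(d^{k-1})\cong\mathrm{Im}(d^k)\hookrightarrow I^{k+1}$, and injectivity of $I'^{k+1}$ lets us extend along this inclusion to obtain $\varphi^{k+1}:I^{k+1}\to I'^{k+1}$ with $\varphi^{k+1}\circ d^k=d'^k\circ\varphi^k$.

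For the second stage, the same inductive scheme produces a chain map $\psi^\bullet:I'^\bullet\to I^\bullet$ lifting $\mathrm{id}_A$, and an entirely parallel induction shows that any chain map $\delta^\bullet:I^\bullet\to I^\bullet$ lifting $0:A\to A$ is null-homotopic: one constructs $h^k:I^k\to I^{k-1}$ with $\delta^k=d^{k-1}h^k+h^{k+1}d^k$ degree by degree, the obstruction at each stage vanishing again because image equals kernel in the resolution. Applying this to $\delta^\bullet=\psi^\bullet\circ\varphi^\bullet-\mathrm{id}_{I^\bullet}$, which lifts $0$, yields $\psi^\bullet\circ\varphi^\bullet\simeq\mathrm{id}_{I^\bullet}$, and symmetrically $\varphi^\bullet\circ\psi^\bullet\simeq\mathrm{id}_{I'^\bullet}$. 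Since $\Gamma(X;-)$ is additive and functorial it carries these chain homotopies to chain homotopies, so $\Gamma(X;\varphi^\bullet)$ and $\Gamma(X;\psi^\bullet)$ induce mutually inverse maps on cohomology; in particular $\varphi^\bullet$ induces isomorphisms $H^k(X;A)\xrightarrow{\ \sim\ }H^k(\Gamma(X;I'^\bullet))$, which is the assertion.

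The main obstacle is the inductive step of the lifting construction: one must verify that $d'^k\circ\varphi^k$ genuinely descends to $I^k/\mathrm{Im}(d^{k-1})$ and that this quotient embeds into $I^{k+1}$, both of which rely on the exactness of the resolutions — this is the one place where ``resolution'' is used rather than merely ``complex of injective sheaves.'' Everything else, including the homotopy construction and the passage through $\Gamma(X;-)$, is a mechanical repetition of the same diagram chase and is routine.
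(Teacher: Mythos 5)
Your proof is correct: the paper states this proposition without proof (it is the standard comparison theorem for injective resolutions), and your inductive lifting of the identity followed by the homotopy-uniqueness argument is exactly the standard argument the text implicitly invokes. Establishing the stronger fact that $\varphi^\bullet$ is a homotopy equivalence, and then applying the additive functor $\Gamma(X;-)$, is the right reading of ``induces an isomorphism at the level of homology,'' since that is precisely how the proposition is used to show the cohomology groups are independent of the chosen injective resolution.
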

 \begin{proposition}
   Let $0\to A\to B\to C\to 0$ be a short exact sequence of sheaves. If $0\to A\overset{d^\bullet_A}{\longrightarrow}I^\bullet_A$ and $0\to C\overset{d^\bullet_C}{\longrightarrow}I^\bullet_C$ are injective resolutions for $A$ and $C$ respectively, then $0\to B\longrightarrow I^\bullet_A\oplus I^\bullet_C$ is an injective resolution for $B$. Furthermore, $0\to I^\bullet_A\to I^\bullet_A\oplus I^\bullet_C\to I^\bullet_C\to 0$ is a short exact sequence of resolutions.
 \end{proposition}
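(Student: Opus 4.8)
The statement is the dual Horseshoe Lemma, and the plan is to carry out the classical construction inside the abelian category of sheaves of $O_X$-modules, which by the preceding results has enough injectives and all the exactness machinery we need. Write $f\colon A\hookrightarrow B$ and $g\colon B\twoheadrightarrow C$ for the two maps of the short exact sequence, and let $\varepsilon_A\colon A\to I^0_A$, $\varepsilon_C\colon C\to I^0_C$ denote the augmentations of the two given resolutions.

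First I would build an augmentation $\varepsilon_B\colon B\to I^0_A\oplus I^0_C$. The second coordinate is simply $\varepsilon_C\circ g$. For the first coordinate, observe that $f$ is a monomorphism and $I^0_A$ is injective, so $\varepsilon_A$ extends along $f$ to some $\phi\colon B\to I^0_A$ with $\phi\circ f=\varepsilon_A$; put $\varepsilon_B:=(\phi,\varepsilon_C\circ g)$. Applying the snake lemma to the evident morphism from $0\to A\to B\to C\to 0$ to $0\to I^0_A\to I^0_A\oplus I^0_C\to I^0_C\to 0$ shows at once that $\varepsilon_B$ is a monomorphism and that its cokernel sits in a short exact sequence $0\to\operatorname{coker}\varepsilon_A\to\operatorname{coker}\varepsilon_B\to\operatorname{coker}\varepsilon_C\to 0$.

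Now I would iterate. Because $I^\bullet_A$ resolves $A$, the truncated complex $0\to\operatorname{coker}\varepsilon_A\to I^1_A\to I^2_A\to\cdots$ (augmented by the map induced by $d^0_A$) is an injective resolution of $\operatorname{coker}\varepsilon_A$, and likewise for $C$. Hence the short exact sequence of cokernels above is itself equipped with injective resolutions of its outer terms, and the construction of the previous paragraph applies to it, yielding a monomorphism $\operatorname{coker}\varepsilon_B\to I^1_A\oplus I^1_C$ whose cokernel again fits in an analogous short exact sequence. Splicing $\varepsilon_B$ with the composite $I^0_A\oplus I^0_C\twoheadrightarrow\operatorname{coker}\varepsilon_B\hookrightarrow I^1_A\oplus I^1_C$ and repeating produces a complex $0\to B\to I^0_A\oplus I^0_C\to I^1_A\oplus I^1_C\to\cdots$ which is exact (at each spot the image equals the kernel, since every differential was manufactured as a cokernel projection followed by a monomorphism), and each term is injective because a finite direct sum of injectives is injective. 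This is the desired injective resolution of $B$.

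It remains to identify the short exact sequence of resolutions. By construction the differentials of $I^\bullet_A\oplus I^\bullet_C$ are upper triangular, $d^n(x,y)=(d^n_A x+s^n y,\ d^n_C y)$ for suitable morphisms $s^n\colon I^n_C\to I^{n+1}_A$, so the inclusion $I^\bullet_A\hookrightarrow I^\bullet_A\oplus I^\bullet_C$ and the projection $I^\bullet_A\oplus I^\bullet_C\twoheadrightarrow I^\bullet_C$ are morphisms of complexes, and they are compatible with $\varepsilon_A,\varepsilon_B,\varepsilon_C$ precisely by the way $\varepsilon_B$ was defined. Since in each degree $0\to I^n_A\to I^n_A\oplus I^n_C\to I^n_C\to 0$ is split exact, the sequence $0\to I^\bullet_A\to I^\bullet_A\oplus I^\bullet_C\to I^\bullet_C\to 0$ is a short exact sequence of resolutions. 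I do not expect a genuine obstacle here: the one place demanding attention is the inductive bookkeeping, namely checking that the augmentation into $I^n_A\oplus I^n_C$ produced at each stage has exactly the cokernel needed to feed the next stage, which is what the snake lemma guarantees; everything else is formal.
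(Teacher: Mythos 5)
Your proof is correct: it is the standard (injective-version) Horseshoe Lemma argument, with the augmentation $\varepsilon_B=(\phi,\varepsilon_C\circ g)$ obtained from injectivity of $I^0_A$, the snake lemma giving both the monomorphism and the short exact sequence of cokernels, and the inductive splicing yielding exactness and the upper-triangular differentials that make the degreewise split sequence $0\to I^\bullet_A\to I^\bullet_A\oplus I^\bullet_C\to I^\bullet_C\to 0$ a short exact sequence of resolutions. The paper states this proposition without proof, so there is no argument to diverge from; your write-up supplies exactly the construction the paper leaves implicit.
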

 Applying the snake lemma to the sequence above one obtains (\ref{les}).\\

 \subsection{The Functors $Rh_*,Rh_!$ and the Induced Spectral Squences}
 The procedure applied to construct sheaf cohomology may be generalized to arbitrary left exact functors. Let $A$ be a sheaf over $X$ and $0\to A\overset{d^\bullet}{\longrightarrow}I^\bullet$ an injective resolution of $A$. Removing $A$ we obtain a (non-exact) sequence
 \[
 0\to I^0\overset{d^0}{\longrightarrow} I^1\overset{d^1}{\longrightarrow}I^2\longrightarrow\cdots
 \]
Now let $\mathcal{F}:\textit{Sh}(X)\to C$ be a left exact functor into an abelian category $C$. Applying $\mathcal{F}$ to the previous sequence we obtain a complex
\begin{equation}\label{complex}0\to \mathcal{F}(I^0)\overset{\mathcal{F}(d^0)}{\longrightarrow} \mathcal{F}(I^1)\overset{\mathcal{F}(d^1))}{\longrightarrow}\mathcal{F}(I^2)\longrightarrow\cdots\end{equation}
The $n$-th derived functor of $\mathcal{F}$, denoted $R^n\mathcal{F}$, is the $n$-th cohomology of the previous complex. Left exactness of $\mathcal{F}$ implies that $R^0\mathcal{F}=\mathcal{F}$.\\
If $h:X\to Y$ is a continuous function, then $h_*$ and $h_!$ are left-exact functors $\textit{Sh}(X)\to\textit{Sh}(Y)$. In particular we have the derived functors $Rh_*$ and $Rh_!$.
\begin{remark}
  The derived functors $Rh_*$ and $Rh_!$ may be computed using the spectral sequence associated to the bête filtration explained in Example \ref{bete} for the complex (\ref{complex}).
\end{remark}
\begin{proposition}
  The morphisms (\ref{eqsheaf}) from Proposition \ref{sheafprop} may be extended to morphisms
  \[
  \alpha^p:(R^ph_*(G))_x\to H^p(h^{-1}(x);G_{h^{-1}(x)})\quad\text{and}\quad\beta^p:(R^ph_!(G))_x\to H^p_c(h^{-1}(x);G_{h^{-1}(x)})
  \]
  As before, $\beta^p$ is an isomorphism.
\end{proposition}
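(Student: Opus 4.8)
The plan is to carry out every computation with a single injective resolution. First I would fix an injective resolution $0\to G\overset{d^\bullet}{\longrightarrow}I^\bullet$ of $G$ as a sheaf over $Y$. Injective sheaves are flabby, hence $c$-soft on the locally compact space $Y$, and both properties are inherited by restriction to the fibre $h^{-1}(x)$, which is closed in $Y$ and therefore itself locally compact. Consequently $I^\bullet|_{h^{-1}(x)}$ is a flabby, $c$-soft resolution of $G|_{h^{-1}(x)}$, so
\[
H^p\big(h^{-1}(x);G|_{h^{-1}(x)}\big)=H^p\big(\Gamma(h^{-1}(x);I^\bullet|_{h^{-1}(x)})\big),\qquad
H^p_c\big(h^{-1}(x);G|_{h^{-1}(x)}\big)=H^p\big(\Gamma_c(h^{-1}(x);I^\bullet|_{h^{-1}(x)})\big).
\]
On the other side, flabby and $c$-soft sheaves are acyclic for $h_*$ and for $h_!$, so $R^ph_*G$ and $R^ph_!G$ are the $p$-th cohomology sheaves of $h_*I^\bullet$ and $h_!I^\bullet$; since the stalk functor at $x$ is exact, $(R^ph_*G)_x=H^p\big((h_*I^\bullet)_x\big)$ and $(R^ph_!G)_x=H^p\big((h_!I^\bullet)_x\big)$.

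Next I would apply Proposition \ref{sheafprop} in each degree $k$ to the injective sheaf $I^k$, obtaining canonical morphisms
\[
\alpha_k:(h_*I^k)_x\to\Gamma(h^{-1}(x);I^k|_{h^{-1}(x)}),\qquad
\beta_k:(h_!I^k)_x\to\Gamma_c(h^{-1}(x);I^k|_{h^{-1}(x)}),
\]
each $\beta_k$ being an isomorphism. These morphisms are natural in the sheaf, so they commute with the differentials induced by $d^\bullet$; that is, $\{\alpha_k\}_k$ and $\{\beta_k\}_k$ are morphisms of cochain complexes $(h_*I^\bullet)_x\to\Gamma(h^{-1}(x);I^\bullet|_{h^{-1}(x)})$ and $(h_!I^\bullet)_x\to\Gamma_c(h^{-1}(x);I^\bullet|_{h^{-1}(x)})$. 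Taking $p$-th cohomology and inserting the identifications of the previous paragraph produces the maps $\alpha^p$ and $\beta^p$; by construction they reduce to $\alpha$ and $\beta$ when $p=0$. Since $\{\beta_k\}_k$ is a degreewise isomorphism of complexes it induces isomorphisms on cohomology, so $\beta^p$ is an isomorphism. Finally I would check independence of the resolution: any two injective resolutions of $G$ are chain homotopy equivalent via maps lifting $\id_G$, and $h_*$, $h_!$, $(-)_x$, $\Gamma$, $\Gamma_c$ and restriction all preserve chain homotopies, so $\alpha^p$ and $\beta^p$ are well defined.

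The hard part will be the softness and flabbiness bookkeeping that the argument rests on: one must know that the restriction of an injective (equivalently, flabby) sheaf on $Y$ to the closed fibre $h^{-1}(x)$ is acyclic for $\Gamma_c$ — this is exactly where local compactness of the fibre enters — and that injective sheaves on $Y$ are acyclic for $h_!$, so that $R^ph_!G$ genuinely is the cohomology sheaf of $h_!I^\bullet$. These are standard facts about sheaves on locally compact spaces, but they do the real work; granting them, the rest is a formal diagram chase built on Proposition \ref{sheafprop}.
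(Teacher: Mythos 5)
Your proposal supplies an argument where the paper gives none: in the text this proposition is stated and immediately attributed to the proper base change theorem, with a pointer to \cite{Ka}, so there is no in-paper proof to compare against. Your route is the standard one from the literature: resolve $G$ injectively on $Y$, observe that by the paper's definition of derived functors $(R^ph_!G)_x$ and $(R^ph_*G)_x$ are the cohomologies of the stalk complexes $(h_!I^\bullet)_x$ and $(h_*I^\bullet)_x$, apply Proposition \ref{sheafprop} degreewise, and use that the degreewise maps $\beta_k$ are isomorphisms of complexes. Granting the standard facts you yourself flag (injective sheaves are flabby, flabby implies c-soft on a locally compact Hausdorff space, c-softness passes to the locally closed fibre, c-soft sheaves are $\Gamma_c$-acyclic, plus the $\Gamma_c$-analogue of the acyclic-resolution lemma that the paper proves only for $\Gamma(X;-)$), your argument for the isomorphism $\beta^p$ is correct and is essentially the proof in \cite{Ka}; what the paper's citation buys is brevity, what your argument buys is an actual reduction of the derived statement to the degree-zero Proposition \ref{sheafprop} already proved in the text.

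One step should be repaired, though it does not affect the main claim. You assert that flabbiness is inherited by restriction to the closed fibre and use this to write $H^p\big(h^{-1}(x);G|_{h^{-1}(x)}\big)=H^p\big(\Gamma(h^{-1}(x);I^\bullet|_{h^{-1}(x)})\big)$. Restriction to a closed subspace does not preserve flabbiness in general (unlike restriction to open subsets, and unlike c-softness, which does pass to locally closed subspaces), and c-softness alone does not give $\Gamma$-acyclicity without paracompactness-type hypotheses on the fibre; this is precisely why the proposition asks only for a morphism $\alpha^p$ while reserving the isomorphism for $\beta^p$. The fix is easy: since the inverse image functor is exact, $I^\bullet|_{h^{-1}(x)}$ is still a resolution of $G|_{h^{-1}(x)}$, so lifting the identity to a map of resolutions into an injective resolution $J^\bullet$ of $G|_{h^{-1}(x)}$ gives a canonical map $H^p\big(\Gamma(h^{-1}(x);I^\bullet|_{h^{-1}(x)})\big)\to H^p\big(h^{-1}(x);G|_{h^{-1}(x)}\big)$, and composing with your cochain-level $\alpha_k$ produces $\alpha^p$ without any flabbiness claim. (Also, your parenthetical ``injective (equivalently, flabby)'' overstates: injective implies flabby but not conversely.) The $\beta^p$ part is untouched by all of this, since it only uses the c-soft facts.
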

This result is known as the proper base change theorem and may be consulted in \cite{Ka}.
\begin{remark}\label{com}
  The morphism $\alpha$ ($\beta$) induces a morphism between the spectral sequence computing the derived functors of $h_*$ (respectively $h_!$) over a point $x$ and the spectral sequence computing the sheaf cohomology $H^p(h^{-1}(x);G_{h^{-1}(x)})$ (respectively $H^p_c(h^{-1}(x);G_{h^{-1}(x)})$)
\end{remark}
 \subsection{Acyclic, Flabby, Soft, Fine and Constructible Sheaves}
 Injective sheaves are inadequate for computational purposes of sheaf cohomology. In this section we introduce sheaves that are better suited for the task. Once again, all sheaves considered are over $X$.
 A sheaf $L$ is called acyclic if $H^k(X;L)=0$ for all $k>0$.
 \begin{proposition}
   Let $0\to A\overset{d^\bullet}{\longrightarrow}L^\bullet$ be a resolution of $A$ by acyclic sheaves. Then there is a natural isomorphism
   \[
   \gamma:H^k(\Gamma(X;L^\bullet))\to H^k(X;A).
   \]
   This means that sheaf cohomology may be computed using acyclic resolutions instead of injective ones.
 \end{proposition}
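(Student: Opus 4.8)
The plan is to prove this by the classical device of \emph{dimension shifting}: break the resolution into short exact sequences and feed them into the long exact sequence of sheaf cohomology, using acyclicity to make the connecting maps isomorphisms. First I would introduce the syzygy sheaves. Set $Z^0:=A$ and, for $k\geq 1$, let $Z^k:=\ker(d^k\colon L^k\to L^{k+1})=\operatorname{im}(d^{k-1})$. Exactness of the resolution gives, for every $k\geq 0$, a short exact sequence of sheaves
\[ 0 \longrightarrow Z^k \longrightarrow L^k \longrightarrow Z^{k+1} \longrightarrow 0, \]
in which the second arrow is $d^k$ with codomain restricted to its image. Its long exact cohomology sequence reads
\[ \cdots \to H^n(X;L^k) \to H^n(X;Z^{k+1}) \xrightarrow{\ \partial\ } H^{n+1}(X;Z^k) \to H^{n+1}(X;L^k) \to \cdots . \]

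Next I would exploit acyclicity: since $H^n(X;L^k)=0$ for all $n\geq 1$, the connecting map $\partial\colon H^n(X;Z^{k+1})\to H^{n+1}(X;Z^k)$ is an isomorphism for every $n\geq 1$. Iterating gives, for $k\geq 1$, a chain of isomorphisms
\[ H^k(X;A)=H^k(X;Z^0) \xleftarrow{\ \partial\ } H^{k-1}(X;Z^1) \xleftarrow{\ \partial\ } \cdots \xleftarrow{\ \partial\ } H^1(X;Z^{k-1}). \]
It remains to compute $H^1(X;Z^{k-1})$. The tail of the long exact sequence of $0\to Z^{k-1}\to L^{k-1}\to Z^k\to 0$, together with $H^1(X;L^{k-1})=0$, gives $H^1(X;Z^{k-1})\cong \Gamma(X;Z^k)\big/\operatorname{im}\!\big(\Gamma(X;L^{k-1})\to\Gamma(X;Z^k)\big)$. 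Now I would identify this with $H^k(\Gamma(X;L^\bullet))$: because $\Gamma(X;-)$ is left exact, $\Gamma(X;Z^k)=\ker(\Gamma(X;d^k))$ is exactly the space of $k$-cocycles of $\Gamma(X;L^\bullet)$, and the map $\Gamma(X;L^{k-1})\to\Gamma(X;Z^k)$ induced by $d^{k-1}$ becomes, after composing with the inclusion into $\Gamma(X;L^k)$, just $\Gamma(X;d^{k-1})$, so its image is the space of $k$-coboundaries. Hence the quotient is $H^k(\Gamma(X;L^\bullet))$, and composing with the chain above defines the isomorphism $\gamma$. The case $k=0$ is immediate: $H^0(X;A)=\Gamma(X;A)=\ker(\Gamma(X;d^0))=H^0(\Gamma(X;L^\bullet))$.

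For naturality, a morphism $A\to A'$ together with a compatible morphism of acyclic resolutions $L^\bullet\to L'^\bullet$ induces morphisms $Z^k\to Z'^k$, hence a morphism of the short exact sequences, hence — by naturality of the long exact sequence of sheaf cohomology — a commutative ladder relating the two chains of connecting isomorphisms; the final identification with cohomology of global sections is visibly natural, so $\gamma$ is. To see $\gamma$ is compatible with the definition of $H^k(X;A)$ via injective resolutions (and in particular independent of the chosen acyclic resolution), I would note that injective resolutions are themselves acyclic resolutions, and that there is a comparison morphism lifting $\mathrm{id}_A$ from any acyclic resolution to any injective one (using injectivity of the target term by term); functoriality of $\gamma$ then forces it to agree with the injective-resolution construction.

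The main obstacle here is not a deep idea but bookkeeping: keeping the indices straight in the dimension shift, checking that each connecting map is genuinely an isomorphism (this is precisely where acyclicity is used), and verifying the slightly delicate point that the resulting map coincides with the one coming from injective resolutions, so that ``$H^k(X;A)$'' denotes the same object on both sides. An alternative that fits well with Appendix~B is to take a Cartan–Eilenberg resolution $I^{\bullet,\bullet}$ of the complex $L^\bullet$ by injectives and run the two spectral sequences of the double complex $\Gamma(X;I^{\bullet,\bullet})$: one collapses by acyclicity to $H^k(\Gamma(X;L^\bullet))$, the other computes the hypercohomology of $L^\bullet$, which equals $H^k(X;A)$ since $L^\bullet$ resolves $A$.
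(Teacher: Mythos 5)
Your proof is correct and follows essentially the same route as the paper: your syzygy sheaves $Z^k$ are exactly the paper's $M^k$, and both arguments use the short exact sequences $0\to M^k\to L^k\to M^{k+1}\to 0$, acyclicity to turn the connecting maps into isomorphisms $H^p(X;M^{k+1})\cong H^{p+1}(X;M^k)$ for $p\geq 1$, and the $p=0$ segment to identify $H^1(X;M^{k-1})$ with $H^k(\Gamma(X;L^\bullet))$. The extra remarks on naturality and compatibility with injective resolutions go slightly beyond what the paper records, but the core argument coincides.
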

 \begin{proof}
   We split the resolution into short exact sequences
   \[
   0\to M^{k}\to L^k\to M^{k+1}\to 0
   \]
   where $M^k=\ker(d^k)=\text{Im}(d^{k-1})$; in particular $M^0=A$. The induced long exact sequence in this case takes the form
   \[
   \cdots\to H^p(X;M^k)\longrightarrow H^p(X;L^k)\longrightarrow H^p(X;M^{k+1})\longrightarrow H^{p+1}(X;M^k)\longrightarrow H^{p+1}(X;L^k)\to\cdots
   \]
   Since $L^\bullet$ is acyclic, we find isomorphisms $H^p(X;M^{k+1})\cong H^{p+1}(X;M^k)$ for each $p>0$.\\

   When $p=0$ we have the following segment of the sequence
   \[
   0\to\Gamma(X;M^k)\longrightarrow \Gamma(X;L^k)\longrightarrow \Gamma(X;M^{k+1})\longrightarrow H^{1}(X;M^k)\to 0
   \]
   which gives an isomorphism
   \[
   H^{k+1}(\Gamma(X;L^\bullet))=\frac{\Gamma(X;M^{k+1})}{\text{Im}(\Gamma(X;L^k)\to \Gamma(X;M^{k+1}))}\cong H^{1}(X;M^k)
   \]
   Combining the previous isomorphisms we get
   \[
   H^{k+1}(\Gamma(X;L^\bullet))\cong H^{1}(X;M^k)\cong H^{2}(X;M^{k-1})\cong\cdots\cong H^{k+1}(X;M^0)=H^{k+1}(X;A)
   \]
 \end{proof}
 The following types of sheaves are examples of acyclic sheaves. The reader may consult the proofs in Theorems 5.5, 9.11 and 9.16 of \cite{Br}.
 \begin{definition}Let $A$ be a sheaf over $X$.
 \begin{itemize}
  \item  $A$ is flabby if for every open $U\subset X$, the restriction map $\Gamma(X;A)\to\Gamma(U;A)$ is surjective.
  \item $A$ is soft if for every closed $Z\subset X$, the restriction map $\Gamma(X;A)\to\Gamma(Z;A)$ is surjective.
  \item If $A$ is a sheaf of $O_X$-modules, $A$ is called fine if for every open cover $\{U_i\}$ of $X$ there is a partition of unity, i.e., a family of sections $s_i\in\Gamma(X;O_X)$ such that $\text{supp}(s_i)\subset U_i$ and $\sum_i s_i=1$. The sum is assumed to be locally finite, this means that for every $x\in X$ there is a neighbourhood where all but finite of the sections vanish.
   \end{itemize}
 \end{definition}
 \begin{example}
   Let $X$ be a smooth manifold. For every open $U\subset X$ let $C^{\infty}(U)$ denote the sheaf of smooth functions on $U$ with real values. $C^\infty$ is easily seen to be a sheaf of rings over $X$, hence $(X, C^\infty)$ is a ringed space. We know that the spaces of differential forms $\Omega^k(U)$ are modules over $C^\infty(U)$, so $\Omega^k$ can be seen to be a sheaf of $C^\infty$-modules. The existence of partitions of unity over $X$ guarantees that the sheaves $\Omega^k$ are fine sheaves.\\ \\
   Consider now the constant sheaf $\mathbb{R}_X$ that assigns the ring $\mathbb{R}$ to every non-empty open set of $X$ with restrictions given by the identity. The inclusion $\mathbb{R}\hookrightarrow C^\infty(U)=\Omega^0(U)$ as constant functions provides a map of sheaves $\mathbb{R}_X\to\Omega^0$. Then the de Rham differential gives us a sequence
   \[
   0\to\mathbb{R}_X\longrightarrow\Omega^0\overset{d}{\longrightarrow}\Omega^1\overset{d}{\longrightarrow}\cdots
   \]
   For every $x\in X$ we have a neighbourhood $U$ diffeomorphic to $\mathbb{R}^n$, by Poincare's Lemma we have that
   \[
   0\to\mathbb{R}\longrightarrow\Omega^0(U)\overset{d}{\longrightarrow}\Omega^1(U)\overset{d}{\longrightarrow}\cdots
   \]
   is exact. Then the sequence is exact at the level of sheaves and $0\to\mathbb{R}_X\overset{d}{\longrightarrow}\Omega^\bullet$ is an acyclic resolution for the constant sheaf. The cohomology of $X$ with values in the constant sheaf $\mathbb{R}_X$ is just the de Rham cohomology of $X$:
   \[
   H^\bullet(X;\mathbb{R}_X)=H^\bullet_{\text{DR}}(X).
   \]
 \end{example}
 Finally, we define constructible sheaves.
 \begin{definition}
 $A$ is said to be constructible if there is a partition $X=\bigsqcup X_i$ where the $X_i$ are locally closed subsets and $A|_{X_i}$ is a locally constant sheaf for every $i$.
 \end{definition}
\thebibliography{10}

\bibitem{B}
J.P. Benzecri, {\em Vari\'et\'es localment plates}, Ph.D thesis, Princeton University 1955.
\bibitem{BT}
R. Bott, L. Tu, {\em Differential forms in algebraic topology}, Graduate Texts in Mathematics, Springer Verlag, 1982.
\bibitem{Br}
G.E. Bredon, {\em Sheaf Theory}, Graduate Texts in Mathematics, Springer Verlag, 1997.
\bibitem{Bu}
D. Burde, {\em Left invariant affine structures on nilpotent Lie groups}, Habilitationsschrift, D\"usseldorf, 1999.
\bibitem{FZ}
H. Feng and W. Zhang,
{\em Superconnections and affine manifolds},  arxiv:1603.07248.
\bibitem{Chern}
Chern, Shiing-Shen  { \em On the curvatura integra in Riemannian manifold}, Annals of Mathematics, 46 (4): 674-684.

\bibitem{Goldman1}
W. Goldman,
{\em Two papers that changed my life: Milnor's seminal work on flat manifolds and flat bundles}, arxiv:1108.0216 in Celebration of Milnor's 80 birthday.

\bibitem{Goldman2}
W. Goldman
{\em Geometric structures on manifolds}, Lecture notes available at http://www.math.umd.edu/~wmg/work.html.

\bibitem{Hirsch}
M. Hirsch, {\em Differential Topology}, Graduate Texts in Mathematics, Springer Verlag, 1976.
\bibitem{Ka}
M. Kashiwara, P. Schapira, {\em Sheaves on Manifolds}, Graduate Texts in Mathematics, Springer Verlag, 2002.
\bibitem{KS}
B. Kostant and D. Sullivan,
{\em The Euler characteristic of an affine space form is zero},
Bull. Amer. Math. Soc. 81 (1975), no.5, 937-938.
\bibitem{riemannian}
J. Lee, {\em Riemannian manifolds: an introduction to curvature}, Graduate Texts in Mathematics, Springer Verlag (1997).
\bibitem{Milnor}
J. Milnor,
{\em On the existence of a connection with curvature zero}, Comment. Math. Helv. 32 (1958),
215-223.
\bibitem{Milnor2}
J. Milnor, {\em On fundamental groups of complete affinely flat manifolds}, Advances in Math. 25
(1977), 178-187.
\bibitem{MS}
J. Milnor, J. Stasheff, {\em Characteristic classes}, Princeton University Press, 1974.
\bibitem{K}
B. Klingler,
{\em Chern's conjecture for special affine manifolds}, Annals of Mathematics 186 (2017), 1-27.

\bibitem{Smillie}
J. Smillie,
{\em Flat manifolds with non-zero Euler characteristics}, Comment. Math. Helv. 52 (1977),
no.3, 453-455.
\end{document}